\numberwithin{equation}{section}
\newtheorem{theorem}{Theorem}[section]
\newtheorem{lemma}[theorem]{Lemma}
\newtheorem{conjecture}[theorem]{Conjecture}
\newtheorem{corollary}[theorem]{Corollary}
\newtheorem{claim}{Claim}[theorem]
\newtheorem{proposition}[theorem]{Proposition}
\newtheorem*{claim*}{Claim}
\crefname{section}{§\hspace{-3pt}}{§§}
\Crefname{section}{Section}{§§}
\theoremstyle{definition}
\newtheorem{definition}[theorem]{Definition}
\theoremstyle{remark}
\newtheorem{remark}[theorem]{Remark}
\newtheorem*{ack*}{Acknowledgements}
\newtheorem*{main*}{Main Theorem}
\newcommand{\diag}{\operatorname{diag}}
\newcommand{\ev}{\operatorname{ev}}
\newcommand{\inte}{\operatorname{int}}
\newcommand{\supp}{\operatorname{supp}}
\newcommand{\spec}{\bigsqcup_{i=1}^{\ell}(X_{i}\setminus Y_{i})}
\newcommand{\NN}{\mathbb{N}}
\pgfplotsset{every tick label/.append style={font=\Large}}
\pgfplotsset{xlabel/.append style={font=\Large}}
\renewcommand\qedsymbol{/\hspace{-2pt}/}
\tikzset{
	%Define standard arrow tip
	>=stealth',
	%Define style for boxes
	punkt/.style={
		rectangle,
		rounded corners,
		draw=black, very thick,
		text width=6.5em,
		minimum height=2em,
		text centered,
	},
}
\begin{document}

\title[The Stable Rank of Diagonal ASH Algebras]{The Stable Rank of Diagonal ASH Algebras and Crossed Products by Minimal Homeomorphisms}

\author{Mihai Alboiu}
\thanks{The first author was supported by an NSERC CGS D grant.}
\thanks{Dept. of Mathematics, Yale University, 
	442 Dunham Lab, 10 Hillhouse Ave, New Haven, CT, 06511, USA} 
\thanks{e-mails: mihai.alboiu@yale.edu (M. Alboiu), jlutley@gmail.com (J. Lutley)}
\address{Department of Mathematics, Yale University, 
442 Dunham Lab, 10 Hillhouse Ave, New Haven, CT, 06511, USA} 
\email{mihai.alboiu@yale.edu}

\author{James Lutley}
\email{jlutley@gmail.com}

\begin{abstract} We introduce a subclass of recursive subhomogeneous algebras, in which each of the pullback maps is diagonal in a suitable sense. We define the notion of a diagonal map between two such algebras and show that every simple inductive limit of these algebras with diagonal bonding maps has stable rank one. As an application, we prove that for any infinite compact metric space $T$ and minimal homeomorphism $h\colon T\to T$, the associated dynamical crossed product $\mathrm{C^*}(\mathbb{Z},T,h)$ has stable rank one. This affirms a conjecture of Archey, Niu, and Phillips. We also show that the Toms--Winter Conjecture holds for such crossed products.
\end{abstract}

\maketitle

\textbf{Keywords:} Approximately subhomogeneous algebras; Recursive subhomogeneous algebras; Stable rank one; Dynamical crossed products.

\textbf{2020 Mathematics Subject Classification:} Primary 46L05; Secondary 46L55.

\section{Introduction} 

With the aim of formulating a notion of dimension for a $\mathrm{C}^{*}$-algebra, in \cite{rie}, Rieffel  introduced the concept of stable rank. The \textit{stable rank} of a unital $\mathrm{C}^{*}$-algebra $A$ is the least natural number $n$ for which the set of all $n$-tuples of $A$ that generate $A$ as a left ideal is dense in $A^{n}$; if no such integer exists, the stable rank is said to be $\infty$. Of particular note is the instance when the stable rank is one. In \cite{rie}, Proposition 3.1, it is shown that a unital $\mathrm{C}^{*}$-algebra has stable rank one if and only if the set of invertible elements is dense within the algebra. An important problem in the field has been to determine when a $\mathrm{C}^{*}$-algebra has stable rank one. 

In \cite{Ror}, R\o{}rdam supplied one of the first major results concerning stable rank. He showed that the tensor product of a simple unital stably finite $\mathrm{C}^{*}$-algebra and a UHF algebra has stable rank one. This was followed by a result of D\u{a}d\u{a}rlat, Nagy,  N{\'{e}}methi, and Pasnicu, who proved in \cite{4Rom} that a simple unital inductive limit of full matrix algebras (those of the form $C(X,M_{n}(\mathbb{C}))$ for a compact Hausdorff space $X$) always has stable rank one assuming there is a uniform upper bound on the dimensions of the base spaces in the finite stage algebras. Later, in \cite{Ror2}, R\o{}rdam also showed that every simple unital finite $\mathrm{C}^{*}$-algebra that absorbs the Jiang--Su algebra, $\mathcal{Z}$, tensorially has stable rank one. 

Villadsen proved in \cite{vil1} that the converse to the result in \cite{4Rom} does not hold by constructing a unital simple limit of full matrix algebras, whose base space dimensions were not uniformly bounded above, yet which nonetheless still had stable rank one. He went on, in \cite{vil2}, to construct a class of simple unital AH algebras---inductive limits of homogeneous $\mathrm{C}^{*}$-algebras (those whose irreducible representations all have the same dimension)---of arbitrary stable rank, thereby affirming the subtleties present in the problem of stable rank. 

Given compact metric spaces $X$ and $Z$, together with continuous functions $\lambda_{1},\ldots,\lambda_{k}\colon Z\to X$, there is a naturally induced $^*$-homomorphism from $C(X,M_{n}(\mathbb{C}))$ to $C(Z,M_{nk}(\mathbb{C}))$ given by
$$
f\mapsto\diag(f\circ\lambda_{1},\ldots,f\circ\lambda_{k}).
$$
These induced maps between full matrix algebras are referred to as \textit{diagonal}. They have been used to construct a rich class of examples in the field, including those of Goodearl in \cite{Good} and Villadsen in \cite{vil1}.

Just over a decade ago, another stable rank one result was obtained by Elliott, Ho, and Toms in \cite{EHT}. Their paper, which stemmed from Ho's work in \cite{Ho}, showed that the condition of bounded dimension in \cite{4Rom} could be replaced with the assumption that all of the bonding maps in the inductive limit are diagonal.

In this present paper, we extend the AH stable rank one result of Elliott, Ho, and Toms in \cite{EHT} to a suitable class of approximately subhomogeneous (ASH) algebras---inductive limits of subhomogeneous $\mathrm{C}^{*}$-algebras (those whose irreducible representations all have dimension at most some fixed integer). 

The building-block algebras in the AH setting are full matrix algebras, whose primitive quotients are intrinsically matrix unit compatible. This internal compatibility is crucial to obtaining the stable rank one result in \cite{EHT}. To achieve this for the subhomogeneous building blocks in the ASH setting, it is necessary, therefore, to consider only subhomogeneous algebras whose primitive quotients fit together in a compatible (i.e. matrix unit compatible) way. We restrict our attention to a subclass of recursive subhomogeneous algebras.

Recursive subhomogeneous algebras are a particularly tractable class of unital subhomogeneous algebras introduced by Phillips in \cite{phil}, which are iterative pullbacks of full matrix algebras. In order to ensure the aforementioned compatibility, it is necessary that all the pullback maps be diagonal in a suitable sense and we call such algebras diagonal subhomogeneous (DSH) algebras. We are then able to define the notion of a diagonal map between two DSH algebras, which sends each point in the spectrum of the range algebra to an ordered list of eigenvalues in the domain algebra. It turns out that this setup is enough to extend the results in \cite{EHT}; more specifically, every simple inductive limit of DSH algebra with diagonal bonding maps has stable rank one (see \autoref{main}). 

DSH algebras arise naturally in the study of dynamical crossed products. The orbit-breaking subalgebras of crossed products introduced by Q. Lin in \cite{Lin} (see also \cite{LP} and \cite{LP2}) following the work of Putnam in \cite{Put} are examples of DSH algebras. Using our stable rank one theorem for inductive limits and results of Archey and Phillips developed in \cite{arcphil}, we are able to prove a conjecture of Archey, Niu, and Phillips stated in the same paper (Conjecture 7.2); namely, that for an infinite compact metric space $T$ and a minimal homeomorphism $h\colon T\to T$, the dynamical crossed product $\mathrm{C^*}(\mathbb{Z},T,h)$ has stable rank one (see \autoref{main2}). Using a result of Thiel in \cite{thi}, we are also able to show that, for such crossed products, classifiability is determined solely by strict comparison, thereby affirming the Toms--Winter Conjecture for simple dynamical crossed products (see \autoref{main3}).  

This paper is organized as follows. \Cref{ch: DSH} is dedicated to structure and basic properties of DSH algebras. In \cref{sub: intro}, we formally introduce the class of DSH algebras, the notion of a diagonal map between two such algebras, and we prove some basic lemmas concerning this class, which are used throughout the remainder of the paper. The aim of \cref{quotients} is to show that quotients of DSH algebras remain DSH, and that diagonal maps between two DSH algebras remain diagonal when passing to quotients; this allows one to assume that the bonding maps in \autoref{main} are injective. Finally, in \cref{sub: hom}, we show that every homogeneous DSH algebra is a full matrix algebra. 

\Cref{ch: stable rank} contains the main results of the paper. The proof of \autoref{main} is quite technical and relies on several lemmas, which are established in \cref{sc: prelim lemmas} and \cref{sc: main lemmas}. In \cref{sc: outline of proof}, we outline the significance of these lemmas and illustrate how they come together to prove \autoref{main} in \cref{sc: main proof}. Lastly, in \cref{sc: cp}, we discuss the significance of \autoref{main} in the setting of minimal dynamical crossed products, and we establish \autoref{main2} and \autoref{main3}. 

Throughout the paper, we use $\mathbb{N}$ to denote the set of strictly positive integers and the symbol $\subset$ to denote non-strict set inclusion. Given a $\mathrm{C}^*$-algebra $A$, we let $\hat{A}$ denote the set of equivalence classes of non-zero irreducible representations of $A$ equipped with the hull-kernel topology. If $A$ is unital, we use $1_{A}$ to denote the unit of $A$. For $n\in\NN$, we use the shorthand $M_{n}$ to refer to the matrix algebra $M_{n}(\mathbb{C})$. When speaking about a matrix $D\in M_{n}$, we denote the $(i,j)$-entry of $D$ by $D_{i,j}$, and we let $1_{n}$ denote the identity matrix in $M_{n}$.

\section{Diagonal Subhomogeneous (DSH) Algebras}
\label{ch: DSH}

In this section we introduce the class of diagonal subhomogeneous (DSH) algebras that we deal with in this paper and examine their basic properties and structure. In \cref{sub: intro}, we define what a DSH algebra is and the notion of a diagonal map between two such algebras. We discuss some basic properties and notions concerning these algebras that are used throughout the remainder of the section and beyond. The chief purpose of \cref{quotients} is to prove that given any inductive limit of DSH algebras with diagonal bonding maps, one may always assume the maps in the sequence are injective. 

In Corollary 1.8 of \cite{phil}, Phillips shows that every unital homogeneous $\mathrm{C}^{*}$-algebra (regardless of its Dixmier--Douady class) has a recursive subhomogeneous decomposition. This follows by using the pullback maps to appropriately adjoin various pieces of the spectrum over which the algebra is locally trivial. In the DSH setting, where the pullback maps preserve the matrix units of the primitive quotients in a very strong sense, such a bonding is possible only if the homogeneous algebra is in fact a full matrix algebra, as we show in \cref{sub: hom}.

\subsection{Introductory Definitions and Basic Properties}
\label{sub: intro}
Let us start off by recalling the definition of a recursive subhomogeneous algebra.

\begin{definition}[\cite{phil}, Definition 1.1]
	A \textit{recursive subhomogeneous algebra} is a $\mathrm{C}^*$-algebra given by the following recursive definition.
	\begin{enumerate}
		\item If $X$ is a compact metric space and $n\geq 1$, then $C(X,M_{n})$ is a recursive subhomogeneous algebra. 
		\item If $A$ is a recursive subhomogeneous algebra, $X$ is a compact metric space, $Y\subset X$ is closed, $\varphi\colon A\to C(Y, M_{n})$ is any unital $^*$-homomorphism, and $\rho\colon C(X,M_{n})\to C(Y, M_{n})$ is the restriction $^*$-homomorphism, then the pullback
		$$
		A\oplus_{C(Y, M_{n})}C(X,M_{n}):=\{(a,f)\in A\oplus C(X,M_{n}):\varphi(a)=\rho(f)\}
		$$ 
		is a recursive subhomogeneous algebra. 
	\end{enumerate}
\end{definition}

Therefore, if $A$ is a recursive subhomogeneous algebra, there are compact metric spaces $X_{1},\ldots, X_{ l}$ (the \textit{base spaces} of $A$), closed subspaces $Y_{1}:=\varnothing$, $Y_{2}\subset X_{2},\ldots, Y_{ l}\subset X_{ l}$, positive integers $n_{1},\ldots, n_{ l}$, $\mathrm{C}^*$-algebras  $A^{(i)}\subset\bigoplus_{j=1}^{i}C(X_{j},M_{n_{j}})$ for $1\leq i\leq  l$, and unital $^*$-homomorphisms $\varphi_{i}\colon A^{(i)}\to C(Y_{i+1},M_{n_{i+1}})$ for $1\leq i\leq  l-1$ such that:
\begin{enumerate}
	\item $A^{(1)}=C(X_{1},M_{n_{1}})$;
	\item for all $1\leq i\leq  l-1$
	$$
	A^{(i+1)}=\{(a,f)\in A^{(i)}\oplus C(X_{i+1},M_{n_{i+1}}):\varphi_{i}(a)=f|_{Y_{i+1}}\};
	$$
	\item $A=A^{( l)}$.
\end{enumerate}	
Simply put, 
$$
A=\bigg[\cdots\Big[\left[C_{1}\oplus_{C_{2}'}C_{2}\right]\oplus_{C_{3}'}C_{3}\Big]\cdots\bigg]\oplus_{C_{ l}'}C_{ l},
$$
where $C_{i}:=C(X_{i},M_{n_{i}})$, $C_{i}':=C(Y_{i},M_{n_{i}})$, and the maps $\varphi_{1},\ldots,\varphi_{ l-1}$ are used in the pullback. In this case, we say the length of the composition sequence is $ l$. As shown in \cite{phil}, the decomposition of a recursive subhomogeneous is highly non-unique. We make the same tacit assumption adopted in that paper: unless otherwise specified, every recursive subhomogeneous algebra comes equipped with a decomposition of the form given above. In particular, we refer to the number $ l$ above as the \textit{length of $A$}.

Since for all $1\leq i\leq l$, we have $A^{(i)}\subset\bigoplus_{j=1}^{i}C(X_{j},M_{n_{j}})$, we can view each element $f\in A^{(i)}$ as $(f_{1},\ldots,f_{i})$, where $f_{j}\in C(X_{j},M_{n_{j}})$ for all $1\leq j\leq i$. For $1\leq i\leq  l$ and $x\in X_{i}$, we have the usual \textit{evaluation map} $\ev_{x}\colon A\to M_{n_{i}}$ given by $\ev_{x}(f):=f_{i}(x)$ for all $f\in A$. We let $\mathfrak{s}(A):=\min\{n_{1},\ldots,n_{ l}\}$ and $\mathfrak{S}(A):=\max\{n_{1},\ldots,n_{ l}\}$.

The chief reasons for working with recursive subhomogeneous algebras are that they are very convenient computationally and they also allow us to carry forward much of the structure intrinsic to a full matrix algebra. There is, however, no restriction on the pullback maps used to join together the full matrix algebras in the recursive decomposition. In particular, the pullback maps need not piece together the matrix units of the various primitive quotients in a compatible way. Therefore, in order to harness the internal matrix unit compatible structure of a full matrix algebra, one must ensure that the pullback maps used in the recursive decomposition preserve the matrix units of each full matrix algebra. An effective way to achieve this is to require the pullback maps to be diagonal in an appropriate sense, which we now make clear. 

\begin{definition}[DSH algebras]
	\label{DSH def}
	A $\mathrm{C}^*$-algebra $A$ is a \textit{diagonal subhomogeneous (DSH) algebra} (of length $ l$) provided that it is a recursive subhomogeneous algebra (of length $ l$) (with a decomposition as described above), and for all $1\leq i\leq  l-1$ and $y\in Y_{i+1}$, there is a list of points $x_{1}\in X_{i_{1}}\setminus Y_{i_{1}},\ldots, x_{t}\in X_{i_{t}}\setminus Y_{i_{t}}$ such that for all $f\in A^{(i)}$, 
	$$
	\varphi_{i}(f)(y)=\diag(f_{i_{1}}(x_{1}),\ldots,f_{i_{t}}(x_{t})).
	$$
	We say $y$ \textit{decomposes into} $x_{1},\ldots,x_{t}$, that each $x_{j}$ is a point \textit{in the decomposition of $y$}, and that $x_{j}$ \textit{begins at index $1+n_{i_{1}}+\cdots+n_{i_{j-1}}$ down the diagonal of $y$}. 	Given $1\leq j\leq i$ and $y'\in Y_{j}$, we say that $y'$ is \textit{in the decomposition of $y$} if there exists a $1\leq k\leq n_{i}$ with the property that for all $f\in A^{(i)}$ there are matrices $P\in M_{k-1}$ and $Q\in M_{n_{i}-n_{j}-(k-1)}$ such that $f_{i}(y)=\diag(P,f_{j}(y'),Q)$. 
\end{definition}

Whenever we work with a DSH algebra of length $ l$ we adopt, unless otherwise specified, the same notation for the decomposition used above. Thus, if $A$ is a DSH algebra of length $ l$, we can view $A$ as the set of all $f:=(f_{1},\ldots,f_{ l})\in\bigoplus_{i=1}^{ l}C(X_{i},M_{n_{i}})$ such that for all $1\leq i< l$ and $y\in Y_{i+1}$, 
$$
f_{i+1}(y)=\diag\left(f_{i_{1}}(x_{1}),\ldots,f_{i_{t}}(x_{t})\right).
$$
As is shown in \autoref{unique} below, the decomposition of $y$ is unique up to the re-indexing of identical points; that is, if $y$ decomposes into $x_{1},\ldots,x_{t}$ and $z_{1},\ldots,z_{s}$, then $s=t$ and, for $1\leq j\leq s$, $x_{j}=z_{j}$.  

\begin{definition}[Diagonal maps between DSH algebras]
	\label{diagonal map}
	Given two DSH algebras $A_{1}$ and $A_{2}$ of lengths $ l_{1}$ and $ l_{2}$ and with base spaces $X_{1}^{1},\ldots,X_{ l_{1}}^{1}$ and $X_{1}^{2},\ldots,X_{ l_{2}}^{2}$, respectively, we say that a $^*$-homomorphism $\psi\colon A_{1}\to A_{2}$ is \textit{diagonal} provided that for all $1\leq i\leq  l_{2}$ and $x\in X_{i}^{2}$, there are points $x_{1},\ldots,x_{t}$ with $x_{j}\in X_{i_{j}}^{1}$ such that for all $f\in A_{1}$, 
	$$
	\psi(f)_{i}(x)=\diag(f_{i_{1}}(x_{1}),\ldots,f_{i_{t}}(x_{t})).
	$$
	We say that $x$ \textit{decomposes into} $x_{1},\ldots,x_{t}$.
\end{definition}

Note that if $Y_{1}^{1}\subset X_{1}^{1},\ldots ,Y_{ l_{1}}^{1}\subset X_{ l_{1}}^{1}$ and $Y_{1}^{2}\subset X_{1}^{2},\ldots ,Y_{ l_{2}}^{2}\subset X_{ l_{2}}^{2}$ are the corresponding closed subsets of the base spaces in \autoref{diagonal map}, then, owing to the decomposition structure of $A_{1}$ and $A_{2}$, we get an equivalent definition by replacing $X_{i}^{2}$ and $X_{i_{j}}^{1}$ above with $X_{i}^{2}\setminus Y_{i}^{2}$ and $X_{i_{j}}^{1}\setminus Y_{i_{j}}^{1}$, respectively. Note, too, that by definition diagonal maps are automatically unital.

For the remainder of \cref{sub: intro}, let us assume that $A$ is a DSH algebra of length $ l$. The following lemma provides us with a description of the spectrum of $A$.

\begin{lemma}[\cite{phil}, Lemma 2.1]
	\label{DSH spectrum}
	The map $x\mapsto [\ev_{x}]$ defines a continuous bijection
	$$
	\spec\to \hat{A},
	$$
	(where, recall,  $Y_{1}:=\varnothing$) whose restriction to each $X_{i}\setminus Y_{i}$ is a homeomorphism onto its image. In particular every irreducible representation of $A$ is unitarily equivalent to $\ev_{x}$ for some $x\in \spec$.
\end{lemma}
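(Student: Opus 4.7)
The plan is to prove the statement by induction on the length $\ell$ of the DSH decomposition, exploiting the pullback structure. Since every DSH algebra is in particular a recursive subhomogeneous algebra, this is Phillips' Lemma 2.1 verbatim, but the proof sketch below highlights the pieces I would walk through.

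\textbf{Base case.} For $\ell = 1$, we have $A = C(X_1, M_{n_1})$. It is classical that every irreducible representation of such an algebra is unitarily equivalent to evaluation at some point $x \in X_1$, that distinct points give inequivalent representations, and that the assignment $x \mapsto [\ev_x]$ is a homeomorphism onto $\hat A$.

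\textbf{Inductive step.} Suppose the result holds for $A^{(i)}$. Writing $A^{(i+1)} = A^{(i)} \oplus_{C(Y_{i+1}, M_{n_{i+1}})} C(X_{i+1}, M_{n_{i+1}})$, there is a short exact sequence
$$
0 \longrightarrow I \longrightarrow A^{(i+1)} \longrightarrow A^{(i)} \longrightarrow 0,
$$
where the kernel $I$ consists of pairs $(0,f)$ with $f|_{Y_{i+1}}=0$, so $I \cong C_0(X_{i+1}\setminus Y_{i+1}, M_{n_{i+1}})$. I would then recall the general fact that $\hat{A^{(i+1)}}$ decomposes as the disjoint union $\hat I \sqcup \widehat{A^{(i)}}$, where the representations in $\hat I$ are precisely those that are non-zero on the ideal, and those in $\widehat{A^{(i)}}$ are the ones factoring through the quotient. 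Under this identification, $\hat I \cong X_{i+1}\setminus Y_{i+1}$ by standard continuous-trace theory, with each point $x$ corresponding to $\ev_x$; and by induction, $\widehat{A^{(i)}}$ is in bijection with $\bigsqcup_{j=1}^{i}(X_j\setminus Y_j)$ via the evaluations. Assembling these gives the bijection $\bigsqcup_{j=1}^{i+1}(X_j\setminus Y_j) \to \widehat{A^{(i+1)}}$.

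\textbf{Surjectivity of each $\ev_x$.} To ensure that evaluation at $x\in X_i\setminus Y_i$ really is irreducible, I need to produce, for any target matrix $a\in M_{n_i}$, an element $f\in A$ with $f_i(x)=a$. Because $x$ lies in the open set $X_i\setminus Y_i$, one can choose a scalar bump $\chi\in C(X_i)$ with $\chi(x)=1$ and $\supp(\chi)\subset X_i\setminus Y_i$; then the tuple having $\chi\cdot a$ in the $i$-th slot and zero elsewhere lies in $A$ and evaluates to $a$ at $x$. Hence $\ev_x$ is surjective onto $M_{n_i}$, which is simple, so $\ev_x$ is irreducible. Inequivalent points give inequivalent irreducibles because either their kernels differ on the component $C(X_i,M_{n_i})$, or the target dimensions $n_i$ differ.

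\textbf{Continuity and the homeomorphism property.} Continuity of $x \mapsto [\ev_x]$ amounts to showing that for any $a\in A$ the function $x\mapsto \|\ev_x(a)\|$ is lower semicontinuous on $\bigsqcup(X_i\setminus Y_i)$, which is immediate since on each $X_i\setminus Y_i$ the norm $\|f_i(x)\|$ is actually continuous in $x$. For the homeomorphism-onto-image assertion on each piece, I would observe that the open set $X_i\setminus Y_i$ sits as the spectrum of a subquotient of $A$ (namely the ideal appearing at stage $i$ in the iterated pullback), and the bijection between $X_i\setminus Y_i$ and the spectrum of that subquotient is a homeomorphism by the commutative case applied to the matrix-valued continuous functions on $X_i\setminus Y_i$.

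\textbf{Main obstacle.} The essential point is identifying $\hat I$ with the piece $X_{i+1}\setminus Y_{i+1}$ inside $\hat{A^{(i+1)}}$ and verifying that the inductive decomposition at stage $i+1$ is a \emph{topological} disjoint union rather than a mere set-theoretic one; concretely, one must check that representations factoring through $A^{(i)}$ do not appear as limits of representations of the ideal. This is where the hypothesis $x\notin Y_{i+1}$ is used: any net in $X_{i+1}\setminus Y_{i+1}$ converging in the hull-kernel topology to a point of $\bigsqcup_{j\le i}(X_j\setminus Y_j)$ would have to accumulate on $Y_{i+1}$ in $X_{i+1}$, which is excluded on each individual stratum. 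Once that separation is verified, the rest of the argument is bookkeeping.
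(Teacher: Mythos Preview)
The paper does not prove this lemma; it is simply quoted from Phillips with the citation \cite{phil}, Lemma 2.1, and no proof appears in the text. So there is nothing in the paper to compare against directly. Your inductive strategy via the short exact sequence
\[
0 \longrightarrow C_0(X_{i+1}\setminus Y_{i+1},M_{n_{i+1}}) \longrightarrow A^{(i+1)} \longrightarrow A^{(i)} \longrightarrow 0
\]
is the standard one and is essentially how Phillips argues. That said, two points in your sketch need correction.

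First, the surjectivity argument as written does not work. You claim that for $x\in X_i\setminus Y_i$ the tuple with $\chi\cdot a$ in the $i$th slot and zero elsewhere lies in $A$. This fails whenever $x$ (or any point in the support of $\chi$) appears in the decomposition of some $y\in Y_j$ for $j>i$: the pullback constraint at stage $j$ would force $f_j(y)$ to have a nonzero block, contradicting $f_j\equiv 0$. The correct route is the one already implicit in your induction: prove surjectivity of $\ev_x$ on $A^{(i)}$ (where your bump construction does work, since $i$ is the top slot), and then observe that the coordinate projection $A^{(\ell)}\to A^{(i)}$ is surjective (iterated Tietze extension through the pullbacks), so $\ev_x$ on $A$ factors as a surjection followed by an irreducible representation.

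Second, and more seriously, your ``main obstacle'' paragraph misreads the statement. The lemma asserts only a \emph{continuous bijection} globally, with the homeomorphism property holding on each stratum $X_i\setminus Y_i$ separately. The decomposition $\hat A=\bigsqcup_i(X_i\setminus Y_i)$ is \emph{not} a topological disjoint union: nets in $X_{i+1}\setminus Y_{i+1}$ approaching $Y_{i+1}$ absolutely do converge in the hull-kernel topology to the lower-stratum points appearing in the diagonal decomposition of the limit (this is precisely why $\hat A$ is typically non-Hausdorff). What you actually need for the per-stratum homeomorphism is that the image of $X_i\setminus Y_i$ is the spectrum of an ideal in a quotient of $A$, hence locally closed in $\hat A$, and that the hull-kernel topology on the spectrum of $C_0(X_i\setminus Y_i,M_{n_i})$ agrees with the metric topology on $X_i\setminus Y_i$. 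There is no separation to verify between strata; drop that paragraph and replace it with the locally-closed-subset observation.
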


We often tacitly refer to a given irreducible representation $\ev_{x}$ simply as $x$, since we view such an element both as an irreducible representation and as a point in $X_{i}$. 

\begin{remark}
	A subset $D\subset X_{i}\setminus Y_{i}$ can be viewed as a subset both of $X_{i}$ and of $\hat{A}$. We denote by $\overline{D}^{X_{i}}$ the closure of $D$ with respect to the topology on $X_{i}$. With one or two exceptions, when speaking about open and closed subsets of $X_{i}$ in this paper, we mean with respect to the topology on $X_{i}$; such subsets could, in general, include points in $Y_{i}$, in which case they would not even be a subset of the spectrum. In any case, for subsets of $X_{i}\setminus Y_{i}$, the topology is always made explicit.
\end{remark}

\begin{lemma}
	\label{unique}
	Suppose $2\leq i\leq  l$ and $y\in Y_{i}$. If $y$ decomposes into $x_{1},\ldots,x_{t}$ and $z_{1},\ldots,z_{s}$, then $s=t$ and for $1\leq j\leq s$, $x_{j}=z_{j}$.   
\end{lemma}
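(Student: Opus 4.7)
The plan is to leverage the matrix equation that both decompositions yield for $\varphi_{i-1}(f)(y)$ as $f$ ranges over $A^{(i-1)}$. Writing $x_j \in X_{i_j}\setminus Y_{i_j}$ and $z_j \in X_{k_j}\setminus Y_{k_j}$, the hypothesis says that
$$
\diag(f_{i_1}(x_1),\ldots,f_{i_t}(x_t)) = \varphi_{i-1}(f)(y) = \diag(f_{k_1}(z_1),\ldots,f_{k_s}(z_s))
$$
for every $f \in A^{(i-1)}$, and in particular $n_{i_1}+\cdots+n_{i_t} = n_i = n_{k_1}+\cdots+n_{k_s}$. I will first show that the two block-size partitions of $n_i$ coincide, and then read off the equality of the corresponding blocks one at a time.

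The key (and technically most delicate) step is forcing the block boundaries of the two decompositions to agree, which I expect to be the main obstacle. Let $\alpha_j := n_{i_1}+\cdots+n_{i_j}$ and $\beta_j := n_{k_1}+\cdots+n_{k_j}$ be the cumulative block endpoints on the left and the right, respectively, so that $\alpha_t = \beta_s = n_i$. Suppose for contradiction that $\{\alpha_1,\ldots,\alpha_t\} \neq \{\beta_1,\ldots,\beta_s\}$, and let $p$ be the smallest element of their symmetric difference, say $p\in\{\alpha_j\}\setminus\{\beta_j\}$; note that $p < n_i$ since $n_i$ lies in both sets. On the left decomposition, position $p$ is the last index of one block and position $p+1$ is the first of the next, so the $(p, p+1)$-entry of $\varphi_{i-1}(f)(y)$ vanishes for every $f\in A^{(i-1)}$. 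On the right, however, $p$ and $p+1$ both sit strictly inside a common block, say the $j'$-th, and that same entry equals $(f_{k_{j'}}(z_{j'}))_{a,\,a+1}$ for the appropriate $a$. So $(f_{k_{j'}}(z_{j'}))_{a,\,a+1} = 0$ for all $f$. But by \autoref{DSH spectrum}, $f \mapsto f_{k_{j'}}(z_{j'})$ is an irreducible representation of $A^{(i-1)}$, hence (being finite-dimensional) surjects onto $M_{n_{k_{j'}}}$; this contradicts the forced vanishing of an off-diagonal entry.

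Once block boundaries agree we automatically get $s=t$ and $n_{i_j}=n_{k_j}$ for every $j$. The proof then finishes immediately: reading off the $j$-th diagonal block on both sides of the displayed equation yields $f_{i_j}(x_j) = f_{k_j}(z_j)$ for every $f \in A^{(i-1)}$, so $\ev_{x_j}$ and $\ev_{z_j}$ coincide as irreducible representations of $A^{(i-1)}$. Applying the bijection of \autoref{DSH spectrum} to $A^{(i-1)}$, whose spectrum is $\bigsqcup_{r=1}^{i-1}(X_r \setminus Y_r)$, gives $x_j = z_j$ for each $j$, as required.
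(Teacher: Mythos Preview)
Your proof is correct. Both your argument and the paper's exploit irreducibility of the evaluation maps $\ev_{x_j}$ on $A^{(i-1)}$, but the organization differs. You first force the two block-size partitions of $n_i$ to agree (using surjectivity of a single irreducible evaluation to hit an off-diagonal entry), and only then match the points via the bijection in \autoref{DSH spectrum}. The paper's proof skips the block-boundary step entirely: it directly separates $x_1$ from $z_1$ using Proposition~4.2.5 of \cite{dix}, producing $f$ with $\ev_{x_1}(f)=0$ and $\ev_{z_1}(f)$ the identity, which contradicts the diagonal equation at the $(1,1)$-entry regardless of block sizes; since $x_1=z_1$ automatically gives $i_1=k_1$ and hence $n_{i_1}=n_{k_1}$, one strips off the first block and inducts. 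Your route is perfectly valid, but the paper's handles block sizes and point equality in a single stroke.
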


\begin{proof}
	By \autoref{DSH spectrum}, $A$ is liminary and $x_{1}=z_{1}$ if and only if $\ev_{x_{1}}=\ev_{z_{1}}$. Hence, if $x_{1}\not=z_{1}$, Proposition 4.2.5 of \cite{dix} furnishes a function $f\in A$ such that $\ev_{x_{1}}(f)$ and $\ev_{z_{1}}(f)$ are the $0$ and identity matrix of appropriate sizes, respectively. This contradicts the assumption that
	\begin{equation}
	\label{2 decomps}
	\diag(\ev_{x_{1}}(f),\ldots,\ev_{x_{t}}(f))=f_{i}(y)=\diag(\ev_{z_{1}}(f),\ldots,\ev_{z_{s}}(f)).
	\end{equation}
	Therefore, $x_{1}=z_{1}$. Continuing inductively, we see that if $s<t$, $s>t$, or $x_{j}\not=z_{j}$ for some $j$, then \autoref{2 decomps} is violated. Hence, \autoref{unique} follows.
	%We see from \autoref{DSH spectrum} that $A$ is liminary and that given distinct $w_{1},\ldots,w_{k}\in\bigsqcup_{i=1}^{ l}(X_{i}\setminus Y_{i})$, the representations $\ev_{w_{1}},\ldots,\ev_{w_{k}}$ of $A$ are irreducible and pairwise inequivalent. It follows from Proposition 4.2.5 of \cite{dix} that there is a function $f\in A$ such that $\ev_{w_{1}}(f)\not=0$ and $\ev_{w_{j}}(f)=0$ for $j=2,\ldots,k$. From this it is readily seen that $s=t$ and for $1\leq j\leq s$, $x_{j}=z_{j}$. 
\end{proof}

By \autoref{DSH spectrum} and \autoref{DSH def}, given $y\in \bigsqcup_{i=1}^{ l}X_{i}$, either $\ev_{y}$ is an irreducible representation of $A$ or, if $y$ is in some $Y_{i}$, $\ev_{y}$ splits up into irreducible representations of $A$. The following definition categorizes the elements in the base spaces depending on the indices at which these irreducible representations occur.

\begin{definition}
	\label{Bik def}
	Given $1\leq i\leq  l$ and $1\leq k\leq n_{i}$, we define $B_{i,k}$ to be the set of points in $X_{i}$ that have an irreducible representation beginning at index $k$ down their diagonal. For $k\leq 0$, we set $B_{i,k}:=\varnothing$.
\end{definition}

%We make the following observations about the $B_{i,k}$'s defined above, all of which are direct consequences of \autoref{DSH def}, \autoref{DSH spectrum}, and \autoref{unique}.

The following rudimentary observations about the $B_{i,k}$'s defined above will be very helpful in the proofs of the lemmas in \cref{sc: prelim lemmas} and \cref{sc: main lemmas}.

\begin{lemma}
	\label{Bik}
	\mbox{}
	\begin{enumerate}
		\item $B_{i,1}=X_{i}$ for all $1\leq i\leq  l$.
		\item If $1\leq i\leq  l$ and $k>1$, then $B_{i,k}\subset Y_{i}$.	In particular, $B_{1,k}=\varnothing$ for $k>1$.
		\item If $2\leq i\leq  l$ and $y\in Y_{i}$ decomposes into $x_{1}\in X_{i_{1}}\setminus Y_{i_{1}},\ldots,x_{t}\in X_{i_{t}}\setminus Y_{i_{t}}$, then $y\in B_{i,k}$ if and only if $k=1+n_{i_{1}}+\cdots+n_{i_{j-1}}$ for some $1\leq j\leq t$. In particular, $B_{i,k}=\varnothing$ for all $n_{i}-(\mathfrak{s}(A)-1)< k\leq n_{i}$, where, recall, $\mathfrak{s}(A):=\min\{n_{j}:1\leq j\leq  l\}$. 
	\end{enumerate}
\end{lemma}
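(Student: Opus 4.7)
The plan is to treat each of the three items directly from Definition 2.5, leveraging the decomposition structure recorded in Definition 2.3 together with the spectrum description in \autoref{DSH spectrum} and, crucially, the uniqueness of the decomposition proved in \autoref{unique}. In broad terms, every claim reduces to matching the combinatorics of indices at which irreducible representations (necessarily of the form $\ev_{x'}$ for some $x'\in X_{i'}\setminus Y_{i'}$) appear on the diagonal of $f_{i}(x)$ against the list $1,\,1+n_{i_{1}},\,1+n_{i_{1}}+n_{i_{2}},\ldots$ provided by the DSH decomposition of $x$.

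For (1), I would split on whether $x\in X_{i}$ lies in $Y_{i}$ or not. If $x\in X_{i}\setminus Y_{i}$ then, by \autoref{DSH spectrum}, $\ev_{x}$ is itself irreducible and $f_{i}(x)$ occupies the full $n_{i}\times n_{i}$ block, so $x$ trivially has an irreducible representation beginning at index $1$. If $x\in Y_{i}$, the DSH decomposition gives $x_{1}\in X_{i_{1}}\setminus Y_{i_{1}}$ beginning at index $1+n_{i_{0}}+\cdots+n_{i_{0}}=1$ (empty sum). In both cases $x\in B_{i,1}$, giving $B_{i,1}=X_{i}$. For (2), I would run the contrapositive. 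If $x\in X_{i}\setminus Y_{i}$, then by \autoref{DSH spectrum} the only irreducible representation attached to $x$ is $\ev_{x}$ itself, which occupies the whole matrix starting at index $1$; there is no proper block decomposition of $f_{i}(x)$ imposed by the pullback construction (since $x\notin Y_{i}$), so no irreducible representation begins at any index $k>1$. Hence $B_{i,k}\subset Y_{i}$ for $k>1$, and since $Y_{1}=\varnothing$ we get $B_{1,k}=\varnothing$ as claimed.

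For (3), fix $y\in Y_{i}$ with decomposition $y\mapsto(x_{1},\ldots,x_{t})$, $x_{j}\in X_{i_{j}}\setminus Y_{i_{j}}$. By Definition 2.3 the $x_{j}$ are exactly the irreducible representations appearing down the diagonal of $y$, and $x_{j}$ starts at index $k_{j}:=1+n_{i_{1}}+\cdots+n_{i_{j-1}}$. The forward implication is immediate from this. For the reverse, suppose some irreducible representation $\ev_{x'}$ begins at index $k$ down the diagonal of $y$; then the resulting block decomposition of $f_{i}(y)$ (for all $f\in A$) must coincide with the decomposition given by $(x_{1},\ldots,x_{t})$ up to the re-indexing ruled out by \autoref{unique}, so $k$ must equal some $k_{j}$. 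This gives the stated equivalence. For the ``in particular'' part, the maximal index at which an irreducible representation can begin is $k_{t}=1+n_{i}-n_{i_{t}}\leq 1+n_{i}-\mathfrak{s}(A)=n_{i}-(\mathfrak{s}(A)-1)$; combined with (2), no $y$ in $X_{i}$ contributes to $B_{i,k}$ for $k$ strictly beyond this bound.

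The only place a real (as opposed to bookkeeping) obstacle could arise is in (2) and the reverse implication of (3), where one must rule out spurious ``extra'' irreducible representations sitting inside the diagonal of a given $x\in X_{i}$. Both of these rely on the same essential fact: the decomposition recorded in Definition 2.3 is the \emph{only} way to write $f_{i}(x)$ as a direct sum of evaluations at points of $\bigsqcup_{j}X_{j}\setminus Y_{j}$. This is precisely what \autoref{unique} supplies, so once that lemma is in hand the argument is little more than a careful reading of the definitions.
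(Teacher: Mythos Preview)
Your proposal is correct and follows essentially the same approach as the paper's own proof: both split on whether $x\in Y_{i}$ for (1) and (2), invoke \autoref{DSH spectrum} to see that points of $X_{i}\setminus Y_{i}$ carry only the single irreducible $\ev_{x}$ at index $1$, and appeal to \autoref{unique} to rule out irreducible blocks at indices other than the $k_{j}=1+n_{i_{1}}+\cdots+n_{i_{j-1}}$ in (3). Your explicit computation $k_{t}=1+n_{i}-n_{i_{t}}\leq n_{i}-(\mathfrak{s}(A)-1)$ for the ``in particular'' clause is a detail the paper leaves implicit but is exactly what is needed.
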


\begin{proof}
	Fix $1\leq i\leq l$ and suppose $y\in X_{i}$. If $y$ belongs to $Y_{i}$ and decomposes into $x_{1},\ldots,x_{t}$, then by \autoref{DSH def}, $x_{1}$ begins at index $1$ down the diagonal of $y$, and so $y\in B_{i,1}$. If $y\notin Y_{i}$, then by \autoref{DSH spectrum}, $y$ is irreducible and trivially begins at index $1$ down its diagonal, and, moreover, cannot have any irreducible representation beginning at any index $k\geq 2$. This proves \textit{(1)} and \textit{(2)}. To prove \textit{(3)}, suppose $y$ belongs to $Y_{i}$ and decomposes into $x_{1}\in X_{i_{1}}\setminus Y_{i_{1}},\ldots,x_{t}\in X_{i_{t}}\setminus Y_{i_{t}}$. By \autoref{DSH def}, $y\in B_{i,k}$ if $k=1+n_{i_{1}}+\cdots+n_{i_{j-1}}$ for some $1\leq j\leq t$. Since \autoref{unique} shows that the decomposition of $y$ is unique, it follows that $y$ cannot belong to any other $B_{i,k}$, which establishes \textit{(3)}.
\end{proof}

The following lemma allows us to approximate any point in some $Y_{i}$ by irreducible representations in $X_{i}\setminus Y_{i}$ (with respect to the topology on $X_{i}$). 

\begin{lemma}
	\label{empty interior}
	For each $2\leq i\leq  l$, we may assume $\inte(Y_{i})=\varnothing$.
\end{lemma}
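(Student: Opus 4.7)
The plan is to exploit the fact that points in $\inte(Y_i)$ carry no independent data: for every $f \in A$ and every $y \in Y_i$, the value $f_i(y)$ is forced by the relation $\varphi_{i-1}(f_1,\ldots,f_{i-1})(y) = f_i(y)$, so the restriction of $f_i$ to the open set $\inte(Y_i)$ is redundant. The strategy is to replace $X_i$ by the smaller compact space $X_i' := X_i \setminus \inte(Y_i) = \overline{X_i \setminus Y_i}^{X_i}$, modify the decomposition of $A$ accordingly, and verify that the resulting DSH algebra is isomorphic to $A$ and satisfies the desired emptiness condition.

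First I would set $X_i' := \overline{X_i \setminus Y_i}^{X_i}$ and $Y_i' := X_i' \cap Y_i$, and observe that $X_i' \setminus Y_i' = X_i \setminus Y_i$ is dense in $X_i'$ by construction. Consequently, any non-empty relatively open subset of $X_i'$ meets $X_i' \setminus Y_i'$ and therefore cannot be contained in $Y_i'$, which yields $\inte_{X_i'}(Y_i') = \varnothing$. I would then build a modified decomposition $A'$ with base spaces $X_1,\ldots,X_{i-1},X_i',X_{i+1},\ldots,X_{l}$ and subsets $Y_1,\ldots,Y_{i-1},Y_i',Y_{i+1},\ldots,Y_{l}$. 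The new pullback $\varphi_{i-1}'$ is the composition of $\varphi_{i-1}$ with the restriction map $C(Y_i,M_{n_i}) \to C(Y_i',M_{n_i})$, while $\varphi_i$ descends to a diagonal map $\varphi_i'$ in the new setting because the points appearing in its diagonal decompositions already lie in $\bigsqcup_j (X_j \setminus Y_j)$, which is preserved under the reduction.

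Next I would establish the isomorphism $A \cong A'$ via the natural restriction $\Phi(f_1,\ldots,f_{l}) := (f_1,\ldots,f_{i-1},f_i|_{X_i'},f_{i+1},\ldots,f_{l})$. For its inverse, given $(g_1,\ldots,g_{l}) \in A'$ I would extend $g_i$ to $\tilde g_i \in C(X_i,M_{n_i})$ by setting $\tilde g_i := g_i$ on $X_i'$ and $\tilde g_i := \varphi_{i-1}(g_1,\ldots,g_{i-1})$ on $\inte(Y_i)$. The two pieces agree on the common boundary $\partial\bigl(\inte(Y_i)\bigr) \cap X_i' \subset Y_i'$, because the pullback condition in $A'$ forces $g_i|_{Y_i'} = \varphi_{i-1}(g_1,\ldots,g_{i-1})|_{Y_i'}$; the pasting lemma then supplies the continuity of $\tilde g_i$. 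The required pullback conditions for membership in $A$ are routinely checked, the condition at index $i$ reducing to its analogue in $A'$ because $\varphi_i$ evaluates its $i$-th input only at points of $X_i \setminus Y_i = X_i' \setminus Y_i'$, where $\tilde g_i$ coincides with $g_i$.

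Finally I would iterate this construction for $i = 2, \ldots, l$; the modifications at different indices do not interact, since altering $X_i$ only affects the $i$-th coordinate and the codomain of $\varphi_{i-1}$. The main technical hurdle is the continuity of the extension $\tilde g_i$ at the boundary between $X_i'$ and $\inte(Y_i)$, together with the persistence of the diagonal structure of $\varphi_i$; both issues dissolve once one notes the identity $X_i' \setminus Y_i' = X_i \setminus Y_i$, which guarantees that the spectrum of $A$ and the entire diagonal apparatus threaded through it are preserved by the reduction.
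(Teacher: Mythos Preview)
Your proposal is correct and follows essentially the same approach as the paper: both replace $X_i$ by $X_i' = X_i \setminus \inte(Y_i)$ (which equals your $\overline{X_i\setminus Y_i}^{X_i}$), define the isomorphism via restriction, and recover surjectivity by gluing $g_i$ on $X_i'$ with $\varphi_{i-1}(g_1,\ldots,g_{i-1})$ on $Y_i$ using the pasting lemma. You are slightly more explicit than the paper in verifying that the later pullback maps $\varphi_i$ are unaffected (since they only sample points of $X_i\setminus Y_i = X_i'\setminus Y_i'$), but the argument is the same.
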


\begin{proof}
	Fix $1\leq i\leq l-1$. Let $Y_{i+1}':=Y_{i+1}\setminus\inte(Y_{i+1})$ and $X_{i+1}':=X_{i+1}\setminus\inte(Y_{i+1})$. We have the following commutative diagram of restriction $^*$-homomorphisms:
	$$
	\begin{tikzcd}
	C(X_{i+1},M_{n_{i+1}}) \arrow{r}{\rho} \arrow[swap]{d}{\lambda} & C(Y_{i+1},M_{n_{i+1}}) \arrow{d}{\tau} \\
	C(X_{i+1}',M_{n_{i+1}}) \arrow{r}{\rho'} & C(Y_{i+1}',M_{n_{i+1}})
	\end{tikzcd}
	$$ 
	Let $B^{(i+1)}:= A^{(i)}\oplus_{C(Y_{i+1}',M_{n_{i+1}})}C(X_{i+1}',M_{n_{i+1}})$, where the connecting $^*$-homomorphism is $\varphi_{i}':=\tau\circ\varphi_{i}\colon A^{(i)}\to C(Y_{i+1}',M_{n_{i+1}})$. Let us show that $A^{(i+1)}$ is isomorphic to $B^{(i+1)}$. Given $a\in A^{(i)}$ and $f\in C(X_{i+1},M_{n_{i+1}})$ with $(a,f)\in A^{(i+1)}$, define $\Gamma\colon A^{(i+1)}\to B^{(i+1)}$ by $\Gamma((a,f)):=(a,\lambda(f))$. Note that $\varphi_{i}'(a)=\tau(\varphi_{i}(a))=\tau(\rho(f))=\rho'(\lambda(f))$, so that $\Gamma$ is well defined. It is easy to see that $\Gamma$ is a $^*$-homomorphism. To see that $\Gamma$ is injective, suppose $(a,f)\in A^{(i+1)}$ with $(a,\lambda(f))=\Gamma((a,f))=(0,0)$. Then $a=0$ and so $f|_{Y_{i+1}}=\varphi_{i}(a)=0$, which, together with the fact that $\lambda(f)=0$, yields that $f=0$. For surjectivity suppose $a\in A^{(i)}$ and $g\in C(X_{i+1}',M_{n_{i+1}})$ with $(a,g)\in B^{(i+1)}$. Then $\varphi_{i}(a)|_{Y_{i+1}'}=g|_{Y_{i+1}'}$, so that the function $h\colon X_{i+1}\to M_{n_{i+1}}$ defined to be $\varphi_{i}(a)(x)$ for $x\in Y_{i+1}$ and $g(x)$ for $x\in X_{i+1}'$ is well defined and continuous. Moreover, $\varphi_{i}(a)=h|_{Y_{i+1}}$, which implies $(a,h)\in A^{(i+1)}$ and $\Gamma((a,h))=(a,\lambda(h))=(a,g)$, proving surjectivity. 
\end{proof}

The lemma following guarantees that a function in $A$ will be invertible provided it is an invertible matrix in every primitive quotient of $A$.

\begin{lemma}
	\label{non-invertible at point}
	Suppose $f\in A$ and that for all $1\leq i\leq l$ and $x\in X_{i}\setminus Y_{i}$, the matrix $f_{i}(x)$ is invertible in $M_{n_{i}}$. Then $f$ is invertible in $A$.
\end{lemma}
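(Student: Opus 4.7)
The plan is to construct an inverse for $f$ pointwise and then verify it lies in $A$ by using the diagonal compatibility of the DSH decomposition.

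First, I would upgrade the hypothesis from points in $X_i\setminus Y_i$ to \emph{all} points in $\bigsqcup_{i=1}^{l} X_i$. For any $y\in Y_i$ (with $i\geq 2$), \autoref{DSH def} gives a decomposition $y\rightsquigarrow x_1,\ldots,x_t$ with $x_j\in X_{i_j}\setminus Y_{i_j}$, and
$$
f_i(y)=\diag\!\left(f_{i_1}(x_1),\ldots,f_{i_t}(x_t)\right).
$$
By assumption each block $f_{i_j}(x_j)$ is invertible in $M_{n_{i_j}}$, and a block-diagonal matrix is invertible exactly when each block is. Hence $f_i(y)$ is invertible in $M_{n_i}$ for every $y\in X_i$ and every $i$.

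Next I would define $g=(g_1,\ldots,g_l)\in\bigoplus_{i=1}^{l}C(X_i,M_{n_i})$ by $g_i(y):=f_i(y)^{-1}$. Continuity of $g_i$ is immediate because matrix inversion is continuous on the (open) set of invertible matrices, and by the previous paragraph $f_i$ takes values in this set on all of $X_i$. To see that $g\in A$, I only need to check the DSH compatibility at each $y\in Y_{i+1}$: if $y$ decomposes into $x_1,\ldots,x_t$, then
$$
g_{i+1}(y)=f_{i+1}(y)^{-1}=\diag\!\left(f_{i_1}(x_1),\ldots,f_{i_t}(x_t)\right)^{-1}=\diag\!\left(g_{i_1}(x_1),\ldots,g_{i_t}(x_t)\right),
$$
which is exactly the compatibility condition required for $g\in A$.

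Finally, since $f_i(y)g_i(y)=g_i(y)f_i(y)=1_{n_i}$ pointwise, we have $fg=gf=1_A$, so $f$ is invertible in $A$. The only nontrivial point in the argument is the first step---extending invertibility from $X_i\setminus Y_i$ to all of $X_i$---and this is handled uniformly by the block-diagonal form guaranteed by the DSH structure together with the uniqueness of decompositions (\autoref{unique}); everything else is routine.
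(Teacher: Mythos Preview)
Your proof is correct and follows essentially the same approach as the paper: extend invertibility to all of $X_i$ via the block-diagonal form at points of $Y_i$, define $g_i=f_i^{-1}$ pointwise, and verify the DSH compatibility using that the inverse of a block-diagonal matrix is the block-diagonal matrix of inverses. The only minor remark is that the appeal to \autoref{unique} is unnecessary here---existence of a decomposition suffices---but this does not affect the validity of the argument.
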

\begin{proof}
	Owing to the diagonal decomposition at points in $Y_{i}$, we may assume that $f_{i}(x)$ is an invertible matrix for all $1\leq i\leq l$ and $x\in X_{i}$. Define $g\in\bigoplus_{i=1}^{ l }C(X_{i},M_{n_{i}})$ to be $(g_{1},\ldots,g_{ l })$, where for $1\leq i\leq  l $ and $z\in X_{i}$, $g_{i}(z):=f_{i}(z)^{-1}$. 
	Since $g$ is the inverse of $f$ in $\bigoplus_{i=1}^{ l }C(X_{i},M_{n_{i}})$, to prove the lemma, we need only to verify that $g\in A$. Suppose $1\leq i\leq  l -1$ and that $y\in Y_{i+1}$ decomposes into $x_{1}\in X_{i_{1}}\setminus Y_{i_{1}},\ldots,x_{t}\in X_{i_{t}}\setminus Y_{i_{t}}$. Then,
	$$
	\begin{aligned}
		g_{i+1}(y)&=f_{i+1}(y)^{-1}\\
		&=\diag((f_{i_{1}}(x_{1}),\ldots,f_{i_{t}}(x_{t}))^{-1}\\
		&=\diag((f_{i_{1}}(x_{1})^{-1},\ldots,f_{i_{t}}(x_{t})^{-1})\\
		&=\diag(g_{i_{1}}(x_{1}),\ldots,g_{i_{t}}(x_{t})),
	\end{aligned}
	$$ 
	so that $g\in A$. 
\end{proof}	

This next lemma illustrates a particular circumstance in which a set which is open in one of the base spaces of $A$ is open when viewed as a subset of the spectrum.

\begin{lemma}
	\label{open in spectrum}
	Suppose $1\leq i\leq \ell$. If $U\subset X_{i}\setminus Y_{i}$ is open with respect to the topology on $X_{i}$ and has the property that no point in $U$ appears in the decomposition of any point in $Y_{j}$ for any $j>i$, then $U$ is open with respect to the hull-kernel topology on $\hat{A}$. 
\end{lemma}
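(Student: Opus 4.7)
The plan is to show that $U$ is open in $\hat{A}$ by exhibiting a closed two-sided ideal $I \triangleleft A$ whose Jacobson support $\hat{I} := \{[\pi] \in \hat{A} : I \not\subset \ker \pi\}$ coincides with $U$ (under the identification from \autoref{DSH spectrum}). The natural candidate is
$$
I := \{f \in A : \ev_z(f) = 0 \text{ for every } z \in \spec \text{ with } [\ev_z] \notin U\},
$$
which, as an intersection of kernels of irreducible representations, is automatically a closed two-sided ideal. By construction $\hat{I} \subset U$, so everything reduces to showing the reverse inclusion: for each $x \in U$ we must produce an element $f \in I$ with $\ev_x(f) \neq 0$.

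To build such an $f$, I would fix $x \in U$ and use normality of the compact metric space $X_i$ to choose a continuous $g \colon X_i \to [0,1]$ with $g(x) = 1$ and $\supp(g) \subset U$ (recall $U$ is open in $X_i$). Then define $f \in \bigoplus_{j=1}^{\ell} C(X_j, M_{n_j})$ by setting $f_i := g \cdot 1_{n_i}$ and $f_j := 0$ for $j \neq i$. Assuming $f$ lies in $A$, evaluation at $x$ gives $\ev_x(f) = 1_{n_i} \neq 0$, and for any $z \in X_j \setminus Y_j$ with $[\ev_z] \notin U$ one has $\ev_z(f) = f_j(z) = 0$ (either because $j \neq i$, or because $j = i$ and $z \notin U$ forces $g(z) = 0$), so $f \in I$.

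The one real content of the proof is the verification that $f \in A$, i.e.\ that the prescribed tuple satisfies the diagonal pullback conditions from \autoref{DSH def}. For $y \in Y_m$ decomposing into $x_1 \in X_{i_1} \setminus Y_{i_1}, \ldots, x_t \in X_{i_t} \setminus Y_{i_t}$, I need
$$
f_m(y) = \diag(f_{i_1}(x_1), \ldots, f_{i_t}(x_t)).
$$
Split into cases based on $m$ versus $i$. If $m < i$, then all $i_k < i$, so every $f_{i_k}(x_k)$ vanishes and $f_m(y) = 0$, matching. If $m = i$, then $y \in Y_i$ implies $y \notin U$, so $f_i(y) = g(y) \cdot 1_{n_i} = 0$, while each $i_k < i$ again gives the right-hand side zero. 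The crucial case is $m > i$: here some $i_k$ may equal $i$, but the hypothesis ensures the corresponding $x_k$ lies outside $U$, hence $g(x_k) = 0$ and $f_i(x_k) = 0$; combined with $f_m(y) = 0$, the diagonal condition holds.

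This is the only place the hypothesis that no point of $U$ appears in a decomposition of any $y \in Y_j$ with $j > i$ is used, and it is precisely what prevents the bonding relations of the DSH decomposition from forcing $f$ to be nonzero somewhere in $\hat{A} \setminus U$. I expect no other obstacle: the rest of the argument is a bookkeeping exercise, since by \autoref{DSH spectrum} the map $z \mapsto [\ev_z]$ restricts to a homeomorphism on $X_i \setminus Y_i$, so identifying $U$ as a subset of $X_i$ and as a subset of $\hat{A}$ causes no issue.
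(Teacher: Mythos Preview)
Your proof is correct and follows essentially the same approach as the paper: both arguments construct, for each $x \in U$, an element of $A$ that is nonzero at $x$ but vanishes on the complement of $U$ in the spectrum, by taking a bump function on the $i$th coordinate and zero elsewhere, and then verifying the diagonal pullback relations using the hypothesis. Your version simply packages this inside the ideal $I$ and spells out the case analysis in more detail than the paper does.
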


\begin{proof}
	Let $x\in U$ be arbitrary. Put $g_{j}\equiv0$ for $j<i$ and define $g_{i}\in C(X_{i},M_{n_{i}})$ to be any function such that $g_{i}(x)\not=0$ and $g_{i}|_{X_{i}\setminus U}\equiv 0$. Since $Y_{i}\subset X_{i}\setminus U$, $g_{i}$ vanishes on $Y_{i}$, so that $\varphi_{i-1}((g_{1},\ldots,g_{i-1}))=0=g_{i}|_{Y_{i}}$; thus,  $(g_{1},\ldots,g_{i})\in A^{(i)}$. For $j>i$, set $g_{j}\equiv 0$. Since no point in $U$ is in the decomposition of any point in $Y_{j}$ for any $j>i$, it follows inductively that $g:=(g_{1},\ldots,g_{\ell})\in A$. This proves that $U$ is open in $\hat{A}$. 
\end{proof}	

The final lemma in this subsection shows that if a point $x\in X_{i}$ is not in the decomposition of any point in some $Y_{j}$, then there must be an open neighbourhood of $x$ in $X_{i}$ consisting only of points which also do not appear in the decomposition of any point in $Y_{j}$. 

\begin{lemma}
	\label{not decomposition}
	Suppose $1\leq i < j \leq\ell$ and let $F\subset X_{i}$ denote the set of points that are in the decomposition of a point in $Y_{j}$. Then, $F$ is closed in $X_{i}$. 
\end{lemma}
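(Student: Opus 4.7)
The plan is to show that the complement $X_i \setminus F$ is open by a sequential argument. I would start with a convergent sequence $(x_n)$ in $F$ with limit $x_0 \in X_i$ and aim to conclude that $x_0 \in F$.

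For each $n$, I would choose a witness $y_n \in Y_j$ such that $x_n$ is in the decomposition of $y_n$. In the case $x_n \in X_i \setminus Y_i$, this witness is supplied by the immediate decomposition in \autoref{DSH def}; in the case $x_n \in Y_i$, it is supplied by the extended ``in the decomposition'' clause of the same definition. Either way, one extracts a uniform block description: there is an index $k_n \in \{1,\ldots,n_j - n_i + 1\}$ such that for every $f \in A$,
\[
f_j(y_n) \;=\; \diag\bigl(P_n^f,\; f_i(x_n),\; Q_n^f\bigr),
\]
with $P_n^f \in M_{k_n - 1}$ and $Q_n^f \in M_{n_j - n_i - k_n + 1}$.

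Next, compactness of $Y_j$ together with finiteness of the possible positions lets me pass to a subsequence along which $y_n \to y_0 \in Y_j$ and $k_n$ stabilizes to some value $k$. Fixing $f \in A$ and letting $n \to \infty$, continuity gives $f_j(y_n) \to f_j(y_0)$ and $f_i(x_n) \to f_i(x_0)$. Reading off matrix entries, the $n_i \times n_i$ block of $f_j(y_0)$ at position $k$ equals $f_i(x_0)$, while the entries connecting this block to the rest of $f_j(y_0)$ vanish (they vanished at every $y_n$). Hence $f_j(y_0) = \diag(P^f, f_i(x_0), Q^f)$ for some $P^f, Q^f$ of the appropriate sizes.

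Finally, I would conclude $x_0 \in F$ by a case split. If $x_0 \in Y_i$, the block equation is exactly the extended ``in the decomposition'' condition of \autoref{DSH def}, so $x_0 \in F$. If instead $x_0 \in X_i \setminus Y_i$, then $\ev_{x_0}$ is irreducible by \autoref{DSH spectrum}, so the block $f_i(x_0)$ in $f_j(y_0)$ is an irreducible factor; comparing this against the irreducible decomposition of $\ev_{y_0}$ coming from the immediate decomposition of $y_0$, one can invoke \autoref{unique} (in the same spirit as its Dixmier-separation proof) to force $f_i(x_0)$ to coincide with exactly one of the irreducible summands of $\ev_{y_0}$, placing $x_0$ in the immediate decomposition of $y_0$. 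The main obstacle I anticipate is the possibility that the witnesses $x_n$ lie alternately in $Y_i$ and $X_i \setminus Y_i$, or even that all $x_n$ lie in $Y_i$ while the limit sits in $X_i \setminus Y_i$; the uniform block description above is precisely what makes both cases collapse to one limit argument, with \autoref{unique} clinching the irreducible case.
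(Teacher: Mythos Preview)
Your proposal is correct and follows essentially the same route as the paper: a sequential closure argument using compactness of $Y_j$ to extract a convergent subsequence of witnesses, stabilizing the block position $k$, passing to the limit to obtain $f_j(y_0)=\diag(P,f_i(x_0),Q)$, and then splitting on whether $x_0\in Y_i$. One small point: in the irreducible case the paper does not invoke \autoref{unique} per se but directly applies Dixmier's Proposition~4.2.5 to separate $x_0$ from the points in the decomposition of $y_0$, which is exactly what you describe as ``in the same spirit as its Dixmier-separation proof''; you might as well cite Dixmier directly rather than route through \autoref{unique}.
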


\begin{proof}
	Suppose $(z_{n})_{n}$ is a sequence of points in $F$ converging to $x\in X_{i}$. For each $n\in\NN$, there is a $y_{n}\in Y_{j}$ with the property that $z_{n}$ is in the decomposition of $y_{n}$. Since $Y_{j}$ is compact, we may pass to a subsequence to conclude that there is a $y\in Y_{j}$ such that $y_{n}\to y$. Passing to a further subsequence, we may assume that there is a $1\leq k\leq n_{j}$ such that for all $n\in \NN$, the representation $\ev_{z_{n}}$ begins at index $k$ down the diagonal of $\ev_{y_{n}}$. Suppose $y$ decomposes into $x_{1}\in X_{i_{1}}\setminus Y_{i_{1}},\ldots,x_{t}\in X_{i_{t}}\setminus Y_{i_{t}}$. Let us show that $x\in F$ by proving it is in the decomposition of $y$. Let $f\in A$ be arbitrary. For each $n\in\NN$, there are matrices $P_{n}\in M_{k-1}$ and $Q_{n}\in M_{n_{j}-n_{i}-(k-1)}$ such that $f_{j}(y_{n})=\diag(P_{n},f_{i}(z_{n}),Q_{n})$. Since $\lim_{n\to\infty}f_{j}(y_{n})=f_{j}(y)$, there are matrices $P\in M_{k-1}$ and $Q\in M_{n_{j}-n_{i}-(k-1)}$ such that 
	\begin{equation}
	\label{decomp equation}
	f_{j}(y)=\diag(P,f_{i}(x),Q).
	\end{equation}
	If $x\in Y_{i}$, it follows by definition that $x$ is in the decomposition of $y$. If $x\in X_{i}\setminus Y_{i}$ and $x$ is not in the decomposition of $y$, then we may use Proposition 4.2.5 of \cite{dix} to find a function $g\in A$ that is non-zero at $x$, but vanishes at all points in the decomposition of $y$, which implies that $g_{j}(y)=0$ and contradicts \autoref{decomp equation}. Thus, $x$ is in the decomposition of $y$ and, hence, $x\in F$.
\end{proof}

\subsection{Quotients of DSH Algebras}
\label{quotients}

In Proposition 3.1 of \cite{phil}, Phillips shows that the class of (separable) recursive subhomogeneous algebras is closed under the taking of quotients. The recursive decomposition of the quotient is not explicitly constructed from that of the original algebra, but rather is furnished using a characterization of (separable) recursive subhomogeneous algebras (see \cite{phil}, Theorem 2.16).

We show in this subsection that associated to any quotient $B$ of a DSH algebra $A$, there is a DSH algebra (see \autoref{isomorphic quotient}) whose decomposition is canonically obtained from the decomposition of $A$, and which is isomorphic to $B$ (see \autoref{quotient DSH}). We are then able to prove (see \autoref{injective}), that the diagonality of maps between two DSH algebras is preserved when passing to quotients, thus allowing us to assume that the bonding maps in \autoref{main} are injective.

Let $A$ be a DSH algebra of length $ l$. Suppose we have a non-zero $\mathrm{C}^*$-algebra $B$ and a surjective $^*$-homomorphism $\rho\colon A\to B$. This yields an injective single-valued map $\hat{\rho}\colon\hat{B}\to\hat{A}$ given by $\hat{\rho}([\pi]):=[\pi\circ \rho]$. For $1\leq i\leq  l$, define $X_{i}':=\overline{X_{i}\cap \hat{\rho}(\hat{B})}^{X_{i}}$ and $Y_{i}':=X_{i}'\cap Y_{i}$. Recall that these definitions make sense by \autoref{DSH spectrum}.

\begin{lemma}
	\label{image closed}
	$\hat{\rho}(\hat{B})$ is closed in $\hat{A}$. 
\end{lemma}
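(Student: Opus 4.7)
The plan is to reduce the statement to a standard fact about the hull-kernel topology: the image $\hat{\rho}(\hat{B})$ coincides with the hull of the ideal $\ker\rho$, and hulls of closed two-sided ideals are by definition closed in $\hat{A}$.

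First I would set $I := \ker \rho$, a closed two-sided ideal of $A$, and verify the set-theoretic equality
\[
\hat{\rho}(\hat{B}) \;=\; \{[\pi] \in \hat{A} : \pi(I) = 0\}.
\]
The inclusion $\subseteq$ is automatic, since any $[\sigma]\in\hat{B}$ gives $\sigma\circ\rho$ vanishing on $\ker\rho$. For $\supseteq$, if $\pi$ is irreducible on $A$ and vanishes on $I$, then $\pi$ factors uniquely through the quotient as $\pi = \sigma\circ\rho$; the resulting $\sigma$ has the same image as $\pi$, hence is irreducible, and $[\pi] = \hat{\rho}([\sigma])$. This is a routine piece of general C*-theory (see \cite{dix}, Proposition 2.11.2 type arguments).

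Once this identification is in place, the lemma follows from the defining property of the Jacobson (hull-kernel) topology: the closed subsets of $\hat{A}$ are exactly the hulls of closed two-sided ideals of $A$ (cf.\ \cite{dix}, 3.1.2). Applied to $I = \ker\rho$, this immediately gives that $\hat{\rho}(\hat{B})$ is closed. I do not anticipate any serious obstacle; the only point that requires a brief check is that the factorisation of an irreducible $\pi$ through $\rho$ yields an irreducible representation of $B$, and this is elementary. The DSH structure of $A$ and the concrete description of $\hat{A}$ via $\bigsqcup_i (X_i\setminus Y_i)$ given by \autoref{DSH spectrum} play no role in this particular lemma, although they will be used in subsequent results of \cref{quotients} to identify the closed set $\hat{\rho}(\hat{B})$ explicitly with $\bigsqcup_i (X_i'\setminus Y_i')$.
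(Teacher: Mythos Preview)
Your proposal is correct and is essentially the same argument as the paper's, just packaged more abstractly. The paper takes $[\pi]$ in the closure, unwinds the hull-kernel definition to get $\ker\pi\supset\bigcap_{[\tau]\in\hat{B}}\ker(\tau\circ\rho)=\ker\rho$, and then factors $\pi$ through $\rho$; you identify $\hat{\rho}(\hat{B})$ directly with the hull of $\ker\rho$ and invoke that hulls are closed, which amounts to the same factorisation step.
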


\begin{proof}
	Suppose $[\pi]\in\overline{\hat{\rho}(\hat{B})}$. Then 
	$$
	\ker\pi\supset\bigcap_{[\sigma]\in\hat{\rho}(\hat{B})}\ker\sigma=\bigcap_{[\tau]\in \hat{B}}
	\ker\hat{\rho}([\tau])=\bigcap_{[\tau]\in\hat{B}}\ker(\tau\circ\rho).
	$$
	Note that $a\in\bigcap_{[\tau]\in\hat{B}}\ker(\tau\circ\rho)$ if and only if $\rho(a)\in\bigcap_{[\tau]\in\hat{B}}\ker\tau$ if and only if $\rho(a)=0$. Hence, $\ker\pi\supset\ker\rho$. Thus, the irreducible representation $\tau$ of $B$ given by $\tau(b):=\pi(a)$, where $a$ is any lift of $b$ under $\rho$ is well defined. Therefore, $[\pi]=[\tau\circ\rho]=\hat{\rho}([\tau])\in\hat{\rho}(\hat{B})$, so that $\overline{\hat{\rho}(\hat{B})}\subset\hat{\rho}(\hat{B})$. 
\end{proof}

\begin{lemma}
	\label{quotient decomp}
	Suppose $1\leq i\leq  l$ and $y\in Y_{i}'$. If $1\leq j<i$ and $x\in X_{j}\setminus Y_{j}$ is in the decomposition of $y$, then $x\in X_{j}\cap\hat{\rho}(\hat{B})\subset X_{j}'$.  
\end{lemma}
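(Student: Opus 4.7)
The plan is to exploit the fact that membership in $\hat{\rho}(\hat{B})$ is equivalent to having $\ker \rho$ sit inside the kernel of the corresponding irreducible representation, and then to pass this containment from $y$ down to each point in its diagonal decomposition.

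First I would unravel the definitions: since $y \in Y_i' = X_i' \cap Y_i$ and $X_i' = \overline{X_i \cap \hat{\rho}(\hat{B})}^{X_i}$, there exists a sequence $(y_n) \subset X_i \cap \hat{\rho}(\hat{B})$ converging to $y$ in the topology of $X_i$. For each $n$, the fact that $y_n \in \hat{\rho}(\hat{B})$ means (cf.\ the proof of \autoref{image closed}) that there is an irreducible representation $\tau_n$ of $B$ with $\ev_{y_n} = \tau_n \circ \rho$, hence $\ker \rho \subset \ker \ev_{y_n}$. Equivalently, for every $f \in \ker \rho$ we have $f_i(y_n) = 0$ for all $n$.

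Next, continuity of $f_i \in C(X_i, M_{n_i})$ together with $y_n \to y$ in $X_i$ forces $f_i(y) = 0$ for every $f \in \ker \rho$. Now bring in the diagonal structure: writing the decomposition $y$ into $x_1 \in X_{i_1}\setminus Y_{i_1},\ldots,x_t \in X_{i_t}\setminus Y_{i_t}$ (with $x = x_m$ for some $m$), \autoref{DSH def} gives
$$
0 = f_i(y) = \diag(f_{i_1}(x_1), \ldots, f_{i_t}(x_t)),
$$
so in particular $\ev_x(f) = f_{i_m}(x_m) = 0$. This holds for every $f \in \ker \rho$, so $\ker \rho \subset \ker \ev_x$. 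Hence $\ev_x$ factors through $\rho$: there is a representation $\tau$ of $B$ with $\ev_x = \tau \circ \rho$. Because $\rho$ is surjective, $\tau$ has the same image (and commutant) as $\ev_x$, and since $\ev_x$ is irreducible (by \autoref{DSH spectrum}, as $x \in X_j \setminus Y_j$), so is $\tau$. Therefore $[\ev_x] = \hat{\rho}([\tau]) \in \hat{\rho}(\hat{B})$, and since $x \in X_j$ we conclude $x \in X_j \cap \hat{\rho}(\hat{B}) \subset X_j'$.

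There is no real obstacle here: the argument is a clean transfer of the condition ``$\ker \rho$ vanishes'' from $y$ to its diagonal entries. The only point that deserves mild care is that the approximating sequence $(y_n)$ is chosen in $X_i$, not in $\hat{A}$, so one must use continuity of the component $f_i$ on $X_i$ rather than hull-kernel continuity; the diagonal relation at $y \in Y_i$ then does the rest of the work automatically.
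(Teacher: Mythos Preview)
Your proof is correct. The approach differs from the paper's in a modest but genuine way: the paper argues that the approximating sequence $(z_n)$ converges to $\ev_x$ in the hull-kernel topology on $\hat{A}$ and then invokes \autoref{image closed} (closedness of $\hat{\rho}(\hat{B})$) to conclude. You instead bypass the hull-kernel topology entirely, working directly with the kernel characterization: you pass the condition $\ker\rho \subset \ker\ev_{y_n}$ to the limit via continuity of $f_i$ on $X_i$, and then push it down to each diagonal block to obtain $\ker\rho \subset \ker\ev_x$, from which the factoring through $B$ is immediate. Your route is slightly more self-contained (it essentially absorbs the relevant content of \autoref{image closed} into the argument) and arguably more elementary, since it never needs to verify convergence in the spectrum; the paper's route, on the other hand, isolates the closedness of $\hat{\rho}(\hat{B})$ as a reusable lemma. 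One small point of precision: strictly speaking $y_n \in \hat{\rho}(\hat{B})$ gives $\ev_{y_n}$ unitarily equivalent to $\tau_n \circ \rho$ rather than equal, but since unitary equivalence preserves kernels this does not affect your argument.
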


\begin{proof}
	Since $y\in Y_{i}'$, we have $y\in X_{i}'=\overline{X_{i}\cap\hat{\rho}(\hat{B})}^{X_{i}}$. Choose a sequence $(z_{n})_{n}$ in $X_{i}\cap\hat{\rho}(\hat{B})$ such that $z_{n}\to y$ with respect to the topology on $X_{i}$. Let us show that $(\ev_{z_{n}})_{n}\to \ev_{x}$, with respect to the hull-kernel topology on $\hat{A}$. Suppose $U$ is an open set in $\hat{A}$ containing $\ev_{x}$. Then there is a function $f\in A$ that is non-zero at $x$, but vanishes at each point in $\hat{A}\setminus U$. Since $x$ is in the decomposition of $y$, this implies that $f_{i}(y)\not=0$. Since $z_{n}\to y$ in $X_{i}$ and since $f_{i}$ is continuous, there is an $n_{0}$ such that for all $n\geq n_{0}$, $f_{i}(z_{n})\not=0$. In particular, this means that for all $n\geq n_{0}$, $\ev_{z_{n}}\in U$. Therefore, $\ev_{z_{n}}\to \ev_{x}$ in $\hat{A}$. Now, by \autoref{image closed}, $\hat{\rho}(\hat{B})$ is closed and, hence, what we've shown implies that $\ev_{x}\in\hat{\rho}(\hat{B})$. Therefore, $x\in X_{j}\cap\hat{\rho}(\hat{B})\subset X_{j}'$. 
\end{proof}

In the following lemma, we construct a DSH algebra from $A$ over the base spaces $X_{i}'$, where the pullback maps are just restrictions of the pullback maps in the definition of $A$ (the $\varphi_{i}$'s). We show afterwards (see \autoref{quotient DSH}) that this new DSH algebra is isomorphic to the quotient $B$.

\begin{proposition}
	\label{isomorphic quotient}
	There is a DSH algebra $D$ of length $ l$ with the following properties:
	\begin{enumerate}
		\item $D^{(1)}=C(X_{1}',M_{n_{1}})$; 
		\item for all $1\leq i\leq  l$, $D^{(i)}\subset\bigoplus_{j=1}^{i}C(X_{j}',M_{n_{j}})$;
		\item for all $1\leq i < l$, %$f\in D^{(i)}$, and $y\in Y_{i+1}'$, 
		the pullback map $\tau_{i}\colon D^{(i)}\to C(Y_{i+1}',M_{n_{i+1}})$ is given by $\tau_{i}(f)(y):=\diag(f_{i_{1}}(x_{1}),\ldots,f_{i_{t}}(x_{t}))$, where $x_{1},\ldots,x_{t}$ are the points in the decomposition of $y$ coming from the definition of $A$;
		\item for $1\leq i<  l$, $D^{(i+1)}=D^{(i)}\oplus_{C(Y_{i+1}',M_{n_{i+1}})}C(X_{i+1}',M_{n_{i+1}})$ with pullback map $\tau_{i}$;
		\item for all $1\leq i\leq  l$, if $(f_{1},\ldots,f_{i})\in D^{(i)}$, there is a $(g_{1},\ldots,g_{i})\in A^{(i)}$ such that for all $1\leq j\leq i$, $g_{j}|_{X_{j}'}=f_{j}$. 
	\end{enumerate}
\end{proposition}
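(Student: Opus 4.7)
The plan is to construct $D$ inductively, producing $D^{(i)}$ together with the pullback map $\tau_i$ at each stage, while simultaneously maintaining property (5) as the induction hypothesis. For the base case $i=1$, set $D^{(1)} := C(X_1', M_{n_1})$, so (1) is tautological. Since $X_1'$ is a closed subset of $X_1$, property (5) at stage $1$ is just Tietze extension applied entry-wise to matrix-valued functions.

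For the inductive step, assume $D^{(i)}$ has been constructed satisfying (1)--(5) for the length-$i$ truncation. Rather than defining $\tau_i \colon D^{(i)} \to C(Y_{i+1}', M_{n_{i+1}})$ directly from the formula in (3), it is cleanest to set $\tau_i(f) := \varphi_i(g)|_{Y_{i+1}'}$, where $g \in A^{(i)}$ is any lift of $f$ provided by (5). The key well-definedness check is that if $g,g' \in A^{(i)}$ are two such lifts, then $\varphi_i(g - g')$ vanishes on $Y_{i+1}'$. Here \autoref{quotient decomp} is indispensable: any $y \in Y_{i+1}'$ decomposes in $A$ into points $x_1 \in X_{i_1}\setminus Y_{i_1}, \ldots, x_t \in X_{i_t}\setminus Y_{i_t}$, and \autoref{quotient decomp} places each $x_j$ in $X_{i_j}'$, whence $(g-g')_{i_j}(x_j) = 0$ and the diagonality of $\varphi_i$ gives $\varphi_i(g-g')(y) = 0$. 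This argument simultaneously shows that $\tau_i$ is a unital $*$-homomorphism and that the formula in (3) holds, so $D^{(i+1)}$ defined via the pullback in (4) is genuinely a DSH algebra, and (1)--(4) follow.

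The substantive step, and the main obstacle, is establishing property (5) at stage $i+1$. Given $(f_1, \ldots, f_{i+1}) \in D^{(i+1)}$, the inductive hypothesis supplies a lift $(g_1, \ldots, g_i) \in A^{(i)}$ with $g_j|_{X_j'} = f_j$, and the pullback compatibility in $D^{(i+1)}$ together with the construction of $\tau_i$ forces $\varphi_i((g_1, \ldots, g_i))|_{Y_{i+1}'} = f_{i+1}|_{Y_{i+1}'}$. I then define a candidate $h \colon X_{i+1}' \cup Y_{i+1} \to M_{n_{i+1}}$ by $h := f_{i+1}$ on $X_{i+1}'$ and $h := \varphi_i((g_1, \ldots, g_i))$ on $Y_{i+1}$; these agree on the overlap $X_{i+1}' \cap Y_{i+1} = Y_{i+1}'$, so $h$ is continuous on the closed union $X_{i+1}' \cup Y_{i+1} \subset X_{i+1}$. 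Entry-wise Tietze extension then yields $g_{i+1} \in C(X_{i+1}, M_{n_{i+1}})$ extending $h$, and $(g_1, \ldots, g_{i+1})$ is the required element of $A^{(i+1)}$ restricting to $(f_1, \ldots, f_{i+1})$, closing the induction. The genuine technical hurdle throughout is ensuring that decomposition points of elements of $Y_{i+1}'$ never escape the auxiliary family $\{X_j'\}$; \autoref{quotient decomp} precisely guarantees this, and without it the formula in (3) would not even make sense.
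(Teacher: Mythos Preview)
Your proof is correct and follows essentially the same approach as the paper: both construct $D$ inductively, define $\tau_i$ via a lift to $A^{(i)}$ and restriction of $\varphi_i$, invoke \autoref{quotient decomp} for well-definedness and the formula in (3), and establish (5) at stage $i+1$ by gluing $f_{i+1}$ on $X_{i+1}'$ with $\varphi_i(g)$ on $Y_{i+1}$ and extending by Tietze. The organization and key steps match almost exactly.
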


\begin{proof}
	Let us proceed by induction on $i$. Define $D^{(1)}:=C(X_{1}',M_{n_{1}})$ so that \textit{(1)} holds. Since $X_{1}'$ is closed in $X_{1}$, we may extend a function in $D^{(1)}$ to a function in $A^{(1)}=C(X_{1},M_{n_{1}})$, so that \textit{(5)} holds when $i=1$. Now, fix $1\leq i\leq  l-1$ and assume that we have defined $D^{(1)},\ldots,D^{(i)}$ and $\tau_{1},\ldots,\tau_{i-1}$ satisfying conditions \textit{(1)} to \textit{(5)}. Let us show how to define $\tau_{i}$ and $D^{(i+1)}$. Given $(f_{1},\ldots, f_{i})\in D^{(i)}$, use \textit{(5)} to get $(g_{1},\ldots,g_{i})\in A^{(i)}$ such that $g_{j}|_{X_{j}'}=f_{j}$ for $1\leq j\leq i$. Define $\tau_{i}\colon D^{(i)}\to C(Y_{i+1}',M_{n_{i+1}})$ by $\tau_{i}((f_{1},\ldots,f_{i})):=\varphi_{i}((g_{1},\ldots,g_{i}))|_{Y_{i+1}'}$. 
	
	To see that $\tau_{i}$ is a well-defined $^*$-homomorphism satisfying \textit{(3)}, suppose $(h_{1},\ldots,h_{i})\in A^{(i)}$ also restricts coordinate-wise to $(f_{1},\ldots,f_{i})$. If $y\in Y_{i+1}'$ decomposes into $x_{1}\in X_{i_{1}}\setminus Y_{i_{1}},\ldots,x_{t}\in X_{i_{t}}\setminus Y_{i_{t}}$, then by \autoref{quotient decomp}, we have $x_{1}\in X_{i_{1}}',\ldots,x_{t}\in X_{i_{t}}'$. Hence,
	$$
	\begin{aligned}
	\varphi_{i}((g_{1},\ldots,g_{i}))(y)&=\diag(g_{i_{1}}(x_{1}),\ldots,g_{i_{t}}(x_{t}))\\
	&=\diag(f_{i_{1}}(x_{1}),\ldots,f_{i_{t}}(x_{t}))\\
	&=\diag(h_{i_{1}}(x_{1}),\ldots,h_{i_{t}}(x_{t}))\\
	&=\varphi_{i}((h_{1},\ldots,h_{i}))(y).
	\end{aligned}
	$$
	Therefore, $\tau_{i}$ satisfies \textit{(3)} and is independent of the choice of extension used. Moreover, $\tau_{i}((f_{1},\ldots,f_{i}))$ is continuous, being the restriction of a continuous function. Thus, $\tau_{i}$ is well defined and it is clearly a $^*$-homomorphism since $\varphi_{i}$ is.  
	
	Next, define $D^{(i+1)}:=D^{(i)}\oplus_{C(Y_{i+1}',M_{n_{i+1}})}C(X_{i+1}',M_{n_{i+1}})$, using $\tau_{i}$ as the pullback map. This ensures that \textit{(2)} and \textit{(4)} hold, and so we just need to verify \textit{(5)}. Suppose $(d,f)\in D^{(i+1)}$, where $d\in D^{(i)}$ and $f\in C(X'_{i+1},M_{n_{i+1}})$. By the inductive hypothesis, we may apply \textit{(5)} to $d$ to obtain a $b\in A^{(i)}$ such that $b_{j}|_{X_{j}'}=d_{j}$ for all $1\leq j\leq i$. Let $g:=\varphi_{i}(b)\in C(Y_{i+1},M_{n_{i+1}})$. If $y\in X_{i+1}'\cap Y_{i+1}=Y_{i+1}'$, then $g(y)=\varphi_{i}(b)(y)=\tau_{i}(d)(y)=f(y)$. Thus, since $X_{i+1}'$ and $Y_{i+1}$ are both closed in $X_{i+1}$ and since $f$ and $g$ agree on their intersection, they have a common extension $h\in C(X_{i+1},M_{n_{i+1}})$. Since $\varphi_{i}(b)=g=h|_{Y_{i+1}}$, we have $(b,h)\in A^{(i+1)}$, and since $h|_{X_{j+1}'}=f$, it follows that \textit{(5)} holds. 
\end{proof}

\begin{proposition}
	\label{quotient DSH}
	Let $D=D^{( l)}$ be the DSH algebra constructed in \autoref{isomorphic quotient}. There is a $^*$-isomorphism $\Gamma\colon B\to D$ given coordinate-wise by $\Gamma(b)_{i}:=a_{i}|_{X_{i}'}$ for $1\leq i\leq  l$, where $a\in A$ is any lift of $b$ under $\rho$. In particular, the quotient $B$ is a DSH algebra.
\end{proposition}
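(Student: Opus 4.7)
The plan is to reduce everything to a single identification of $\ker\rho$: for any $a\in A$, we have $\rho(a)=0$ if and only if $a_i|_{X_i'}=0$ for every $1\le i\le\ell$. Once this equivalence is established, well-definedness, injectivity, and the pullback compatibility needed to see $\Gamma(b)\in D$ all follow almost immediately, while surjectivity reduces to invoking property (5) of \autoref{isomorphic quotient}.

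First I would prove this key equivalence. Reusing the computation in the proof of \autoref{image closed}, $\ker\rho=\bigcap_{[\sigma]\in\hat\rho(\hat B)}\ker\sigma$. Under \autoref{DSH spectrum}, the elements of $\hat\rho(\hat B)$ sitting over $X_i\setminus Y_i$ are precisely those $x$ with $\ev_x\in\hat\rho(\hat B)$, and by definition $X_i'$ is the closure of this set in $X_i$. If $\rho(a)=0$ then $a_i(x)=\ev_x(a)=0$ for every such $x$, and continuity of $a_i$ pushes the vanishing to all of $X_i'$. Conversely, if each $a_i|_{X_i'}=0$, then $a$ is annihilated by every irreducible representation in $\hat\rho(\hat B)$, hence $a\in\ker\rho$.

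Next I would verify that $\Gamma$ is well-defined and lands in $D$. If $a,a'$ are two lifts of $b$, then $a-a'\in\ker\rho$, so by the equivalence $a_i|_{X_i'}=a'_i|_{X_i'}$ for every $i$. To check $\Gamma(b)\in D$, fix $y\in Y_{i+1}'$; by \autoref{quotient decomp} the decomposition points $x_1\in X_{i_1}\setminus Y_{i_1},\ldots,x_t\in X_{i_t}\setminus Y_{i_t}$ of $y$ all lie in the corresponding $X_{i_j}'$, so restricting the identity $a_{i+1}(y)=\diag(a_{i_1}(x_1),\ldots,a_{i_t}(x_t))$, valid for any lift, recovers exactly the pullback compatibility built into $D$ via $\tau_i$. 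That $\Gamma$ is a $*$-homomorphism is automatic, since each coordinate is a restriction.

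Finally, the key equivalence makes injectivity immediate: if $\Gamma(b)=0$ with lift $a$, then $a_i|_{X_i'}=0$ for all $i$, whence $\rho(a)=b=0$. For surjectivity, given $d\in D$, property (5) of \autoref{isomorphic quotient} supplies an $a\in A$ with $a_i|_{X_i'}=d_i$ for every $i$, and then $\Gamma(\rho(a))=d$ by definition. The ``in particular'' clause is then immediate, since $D$ is DSH by \autoref{isomorphic quotient}. The main obstacle is the key equivalence, specifically its forward direction: one must combine the description of $\ker\rho$ from \autoref{image closed} with continuity of coordinate evaluations to pass from vanishing on $X_i\cap\hat\rho(\hat B)$ to vanishing on its closure $X_i'$ in $X_i$. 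Everything else is bookkeeping against the construction in \autoref{isomorphic quotient}.
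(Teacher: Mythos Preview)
Your proposal is correct and follows essentially the same approach as the paper. The only difference is organizational: you isolate the equivalence $\rho(a)=0 \iff a_i|_{X_i'}=0$ for all $i$ as a single lemma up front, from which well-definedness and injectivity both drop out, whereas the paper verifies these two properties separately (but using the same underlying ingredients---the identification $\ker\rho=\bigcap_{[\sigma]\in\hat\rho(\hat B)}\ker\sigma$, continuity of coordinate evaluations, and density of $X_i\cap\hat\rho(\hat B)$ in $X_i'$).
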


\begin{proof}
	Let us first show that $\Gamma(b)$ is independent of the choice of lift. Fix $1\leq i\leq  l$ and suppose $g,h\in A$ satisfy $\rho(g)=\rho(h)$. We must show that $g_{i}|_{X_{i}'}=h_{i}|_{X_{i}'}$. Note that $\overline{X_{i}'\setminus Y_{i}'}^{X_{i}}=X_{i}'$. Indeed, $X_{i}'$ is closed with respect to the topology on $X_{i}$, and so the fact that $\overline{X_{i}'\setminus Y_{i}'}^{X_{i}}$ is a subset of $X_{i}'$ is clear; for the reverse inclusion, if $z\in X_{i}'$, there is a sequence $(z_{n})_{n}\subset \hat{\rho}(\hat{B})\cap X_{i}\subset X_{i}'\setminus Y_{i}\subset X_{i}'\setminus Y_{i}'$ that converges to $z$ in $X_{i}$. Hence, by continuity, it suffices to show that $g_{i}|_{X_{i}'\setminus Y_{i}'}=h_{i}|_{X_{i}'\setminus Y_{i}'}$. To this end, suppose $x\in X_{i}'\setminus Y_{i}'$. Then $x\in \overline{X_{i}\cap\hat{\rho}(\hat{B})}^{X_{i}}=\overline{(X_{i}\setminus Y_{i})\cap\hat{\rho}(\hat{B})}^{X_{i}}$ and $x\notin Y_{i}$. By \autoref{image closed} and \autoref{DSH spectrum}, $(X_{i}\setminus Y_{i})\cap \hat{\rho}(\hat{B})$ is closed in $X_{i}\setminus Y_{i}$ in the subspace topology coming from $X_{i}$. Thus,
	$$
	\begin{aligned}
	x\in \overline{(X_{i}\setminus Y_{i})\cap\hat{\rho}(\hat{B})}^{X_{i}}\cap (X_{i}\setminus Y_{i})
	=\overline{(X_{i}\setminus Y_{i})\cap\hat{\rho}(\hat{B})}^{X_{i}\setminus Y_{i}}
	=(X_{i}\setminus Y_{i})\cap\hat{\rho}(\hat{B})\subset\hat{\rho}(\hat{B}).
	\end{aligned}
	$$
	Therefore, there is a $[\pi]\in \hat{B}$ such that $[\pi\circ\rho]=\hat{\rho}([\pi])=[\ev_{x}]$. But this implies that $g-h\in\ker \ev_{x}$ since $g-h\in \ker\rho$. Hence, $g_{i}(x)=h_{i}(x)$, as desired. Moreover, $\Gamma(b)_{i}$ belongs to $C(X_{i}',M_{n_{i}})$, being the restriction of a continuous function. To see that $\Gamma(b)$ respects the decomposition structure of $D$, suppose  $y\in Y_{i}'$ decomposes into $x_{1}\in X_{i_{1}}'\setminus Y_{i_{1}}', \ldots, x_{t}\in X_{i_{t}}'\setminus Y_{i_{t}}'$. Then,
	$$
	\begin{aligned}
	\Gamma(b)_{i}(y)=a_{i}(y)
	=\diag(a_{i_{1}}(x_{1}),\ldots, a_{i_{t}}(x_{t}))
	=\diag(\Gamma(b)_{i_{1}}(x_{1}),\ldots,\Gamma(b)_{i_{t}}(x_{t})).
	\end{aligned}
	$$ 
	Therefore, $\Gamma$ is well defined and it is straightforward to check that it is a $^*$-homomorphism. We have left only to check that it is a bijection. 
	
	To see that $\Gamma$ is injective, suppose $b\in B$ and $a\in A$ is such that $\rho(a)=b$. Assume that $\Gamma(b)=0$. Let $\pi$ be an arbitrary irreducible representation of $B$. To show that $b=0$, it suffices to show that $\pi(b)=0$. Note that $[\pi\circ\rho]=\hat{\rho}([\pi])\in\hat{\rho}(\hat{B})$. Thus, for some $1\leq i\leq  l$, there is an $x\in (X_{i}\setminus Y_{i})\cap \hat{\rho}(\hat{B})\subset X_{i}'$ such that $[\pi\circ \rho]=[\ev_{x}]$.  Since $\ev_{x}(a)=a_{i}(x)=\Gamma(b)_{i}(x)=0$, it follows that $\pi(b)=\pi(\rho(a))=0$. Thus, $\Gamma$ is injective.
	
	To see that $\Gamma$ is surjective, suppose $d\in D$. By property \textit{(5)} in \autoref{isomorphic quotient}, there is a $g\in A$ such that $g_{i}|_{X_{i}'}=d_{i}$ for all $1\leq i\leq l$. Let $h=\rho(g)\in B$ and observe that for all $1\leq i\leq  l$, we have $\Gamma(h)_{i}=g_{i}|_{X_{i}'}=d_{i}$. Thus, $\Gamma(h)=d$, so $\Gamma$ is surjective. 
	
	We have shown that $\Gamma$ is a $^*$-isomorphism, from which it follows that $B$ is a DSH algebra. 
\end{proof}

\begin{proposition}
	\label{injective}
	Given an inductive limit 
	$$ 
	A_{1}\overset{\psi_{1}}{\longrightarrow} A_{2}\overset{\psi_{2}}{\longrightarrow} A_{3}\overset{\psi_{3}}{\longrightarrow}\cdots\longrightarrow A:=\varinjlim A_{i}
	$$
	of DSH algebras with diagonal maps, there exist DSH algebras $D_{1},D_{2},\ldots$ and injective diagonal maps $\psi_{i}'\colon D_{i}\to D_{i+1}$ such that 
	$$
	D_{1}\overset{\psi_{1}'}{\longrightarrow} D_{2}\overset{\psi_{2}'}{\longrightarrow} D_{3}\overset{\psi_{3}'}{\longrightarrow}\cdots\longrightarrow A.
	$$
\end{proposition}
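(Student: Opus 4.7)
The plan is to take the standard quotient construction that kills the eventually-vanishing part of each algebra and verify that, at each stage, the resulting object remains DSH and the induced bonding map remains diagonal. For each $i\in\NN$, let $\psi_{\infty,i}\colon A_{i}\to A$ denote the canonical map into the limit, and set $J_{i}:=\ker\psi_{\infty,i}$; since $A=\varinjlim A_{i}$, we have $J_{i}=\bigcup_{j\geq i}\ker\psi_{j,i}$, where $\psi_{j,i}:=\psi_{j-1}\circ\cdots\circ\psi_{i}$. Put $D_{i}:=A_{i}/J_{i}$ with quotient map $\rho_{i}\colon A_{i}\to D_{i}$. By \autoref{quotient DSH} each $D_{i}$ is a DSH algebra; moreover, via the isomorphism $\Gamma_{i}$ of that proposition, we may identify $D_{i}$ explicitly with the DSH algebra produced by \autoref{isomorphic quotient}, whose base spaces $(X_{k}^{i})'$ are the closures (in $X_{k}^{i}$) of $X_{k}^{i}\cap\hat{\rho}_{i}(\hat{D_{i}})$ and whose bonding maps are restrictions of those of $A_{i}$.

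Since $\psi_{i}(J_{i})\subset J_{i+1}$ (anything killed in the limit from $A_{i}$ is still killed from $A_{i+1}$), there is a unique induced $^*$-homomorphism $\psi_{i}'\colon D_{i}\to D_{i+1}$ with $\psi_{i}'\circ\rho_{i}=\rho_{i+1}\circ\psi_{i}$. Injectivity of $\psi_{i}'$ follows from the definition of the $J_{i}$'s: if $\rho_{i+1}(\psi_{i}(a))=0$ then $\psi_{i}(a)\in J_{i+1}$, so $\psi_{j,i+1}(\psi_{i}(a))=0$ for some $j\geq i+1$, whence $\psi_{j,i}(a)=0$, i.e.\ $a\in J_{i}$. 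A standard inductive-limit argument then yields $\varinjlim(D_{i},\psi_{i}')\cong A$.

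The main (and only nontrivial) step is diagonality of $\psi_{i}'$. Under the identifications provided by \autoref{isomorphic quotient} and \autoref{quotient DSH}, I need to show that for every $1\leq k\leq l_{i+1}$ and every $x\in(X_{k}^{i+1})'\setminus(Y_{k}^{i+1})'$, the decomposition $x\mapsto(x_{1},\ldots,x_{t})$ provided by the diagonality of $\psi_{i}$ satisfies $x_{r}\in(X_{k_{r}}^{i})'$ for each $r$. For such an $x$, the evaluation $\ev_{x}$ descends from $D_{i+1}$, so there is an irreducible representation $\pi$ of $D_{i+1}$ with $\ev_{x}=\pi\circ\rho_{i+1}$. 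Precomposing with $\psi_{i}$ and using diagonality of $\psi_{i}$ gives
$$
\diag(\ev_{x_{1}},\ldots,\ev_{x_{t}})=\ev_{x}\circ\psi_{i}=\pi\circ\rho_{i+1}\circ\psi_{i}=\pi\circ\psi_{i}'\circ\rho_{i}.
$$
In particular the left-hand side factors through $\rho_{i}$, so $\ker\rho_{i}\subset\bigcap_{r}\ker\ev_{x_{r}}$, forcing each individual $\ev_{x_{r}}$ to factor through $\rho_{i}$; hence $x_{r}\in X_{k_{r}}^{i}\cap\hat{\rho}_{i}(\hat{D_{i}})\subset(X_{k_{r}}^{i})'$, as required. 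Reading the identity above through $\Gamma_{i},\Gamma_{i+1}$ shows that, for $f\in D_{i}$, $\psi_{i}'(f)_{k}(x)=\diag(f_{k_{1}}(x_{1}),\ldots,f_{k_{t}}(x_{t}))$, which is exactly the diagonality condition for $\psi_{i}'$.

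I expect this verification that each $x_{r}$ lies in the restricted base space $(X_{k_{r}}^{i})'$ to be the main obstacle, since it is the only place where one must interact with the nontrivial description of the DSH decomposition of the quotient rather than formal properties of quotients and inductive limits. All other steps---well-definedness of $\psi_{i}'$, injectivity, computation of the limit, and interpretation of the diagonal decomposition via the identifications $\Gamma_{i}$---are either routine or immediate from \autoref{isomorphic quotient} and \autoref{quotient DSH}.
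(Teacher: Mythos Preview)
Your proof is correct and follows essentially the same route as the paper: pass to quotients $D_i=A_i/\ker\psi_{\infty,i}$, invoke \autoref{isomorphic quotient} and \autoref{quotient DSH} to realize each $D_i$ as a DSH algebra over the restricted base spaces, and then verify that the induced maps are diagonal by checking that the decomposition points $x_r$ land in the restricted spectrum. The only cosmetic difference is in this last verification: the paper decomposes $\ev_x\circ\psi_n'$ into irreducibles of $D_n$ and uses Dixmier's separation result to force $x_j\in\{z_1,\dots,z_k\}$, whereas you argue the kernel inclusion $\ker\rho_i\subset\bigcap_r\ker\ev_{x_r}$ directly; these are equivalent, and your phrasing is arguably cleaner.
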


\begin{proof}
	For $n\in\NN$, let $\mu_{n}\colon A_{n}\to A$ denote the map in the construction of the inductive limit and consider the surjective map $\kappa_{n}\colon A_{n}\to A_{n}/\ker\mu_{n}=:B_{n}$. The induced map $\nu_{n}\colon B_{n}\to B_{n+1}$ given by $\nu_{n}(\kappa_{n}(a)):=\kappa_{n+1}(\psi_{n}(a))$ for all $a\in A_{n}$ is well defined and injective. Furthermore, $\varinjlim(B_{n},\{\nu_{n}\}_{n})=A$. Let $X_{1}^{n},\ldots,X_{ l(n)}^{n}$ denote the base spaces of $A_{n}$ and let $Y_{1}^{n},\ldots,Y_{ l(n)}^{n}$ denote the corresponding closed subsets. Let $D_{n}$ denote the DSH algebra given by \autoref{isomorphic quotient} and isomorphic to $B_{n}$ (with base spaces $\overline{X_{i}^{n}\cap\hat{\kappa}_{n}(\hat{B_{n}})}^{X_{i}^{n}}=:Z_{i}^{n}$ and corresponding closed subsets $Z_{i}^{n}\cap Y_{i}^{n}=:W_{i}^{n}$ for $1\leq i\leq  l(n)$). By \autoref{quotient DSH}, the injective map $\nu_{n}$ drops down to an injective map $\psi_{n}'\colon D_{n}\to D_{n+1}$ given by $\psi_{n}'(d)_{i}:=\psi_{n}(a)_{i}|_{Z_{i}^{n+1}}$ for all $1\leq i\leq  l(n+1)$, where $a\in A_{n}$ is any coordinate-wise extension of $d$. Moreover, $\varinjlim(D_{n},\{\psi'_{n}\}_{n})=A$.
	
	We need to check that $\psi'_{n}$ is diagonal. Fix $1\leq i\leq  l(n+1)$ and suppose $x\in Z_{i}^{n+1}\setminus W_{i}^{n+1}\subset X^{n+1}_{i}\setminus Y_{i}^{n+1}$ decomposes into $x_{1}\in X_{i_{1}}^{n}\setminus Y_{i_{1}}^{n},\ldots,x_{t}\in X_{i_{t}}^{n}\setminus Y_{i_{t}}^{n}$ under the diagonal map $\psi_{n}$. We need to show that $x_{j}\in Z_{i_{j}}^{n}\setminus W_{i_{j}}^{n}$ for all $1\leq j\leq t$. Since $\ev_{x}\circ\psi_{n}'$ is a $^*$-representation of $D_{n}$, it is unitarily equivalent to a finite direct sum of irreducible representations $\ev_{z_{1}},\ldots,\ev_{z_{k}}\in\bigsqcup_{s=1}^{ l(n)}(Z_{s}^{n}\setminus W_{s}^{n})\subset \hat{A}_{n}$. Fix $1\leq j\leq t$. If $x_{j}\notin\{z_{1},\ldots,z_{k}\}$, then by Proposition 4.2.5 of \cite{dix}, there is a function $a\in A_{n}$ such that $\ev_{z_{s}}(a)=0$ for all $1\leq s\leq k$, but $\ev_{x_{j}}(a)\not=0$. Since $x_{j}$ is in the decomposition of $x$ under $\psi_{n}$, this implies that $\ev_{x}(\psi_{n}(a))$ is both zero and non-zero simultaneously. Therefore, it must be that $x_{j}\in \{z_{1},\ldots,z_{k}\}$ and, thus, that $x_{j}\in Z_{i_{j}}^{n}\setminus W_{i_{j}}^{n}$, as desired.
\end{proof}

\subsection{Homogeneous DSH Algebras}
\label{sub: hom}

Suppose $A$ is an $n$-homogeneous DSH algebra. We show in this subsection that there is a compact metric space $X$ such that $A$ is isomorphic to $C(X,M_{n})$.

\begin{proposition}
	\label{homcomb}
	Let $X_{1},X_{2}$ be compact metric spaces. Let $Y_{2}$ be a closed subset of $X_{2}$. Let $\varphi\colon C(X_{1},M_{n})\to C(Y_{2},M_{n})$ be a unital $^*$-homomorphism and suppose the associated pullback $C(X_{1},M_{n})\oplus_{C(Y_{2},M_{n})}C(X_{2},M_{n})$ is a DSH algebra. Then, there is a compact metric space $Z^{*}$ such that
	$C(X_{1},M_{n})\oplus_{C(Y_{2},M_{n})}C(X_{2},M_{n})$ is isomorphic to $C(Z^{*},M_{n})$.
\end{proposition}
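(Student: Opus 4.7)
The strategy is to observe that the DSH hypothesis, together with the fact that both matrix algebras have size $n$, forces the pullback map $\varphi$ to be pointwise evaluation at a single point, and then to realize the pullback as functions on an adjunction space.

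First I would analyze $\varphi$. Since the pullback is a DSH algebra and its first-stage full matrix algebra is $M_n$, for each $y\in Y_2$ the diagonal decomposition gives points $x_1,\ldots,x_t\in X_1$ (note $Y_1=\varnothing$) with $\varphi(f)(y)=\diag(f(x_1),\ldots,f(x_t))$. Matching matrix sizes forces $tn=n$, hence $t=1$. Thus there is a single-valued map $\lambda\colon Y_2\to X_1$ with $\varphi(f)(y)=f(\lambda(y))$. Continuity of $\lambda$ follows by a standard argument: if $y_k\to y$ in $Y_2$ and (passing to a subsequence) $\lambda(y_k)\to x$ in $X_1$, then continuity of $\varphi(f)$ yields $f(\lambda(y))=f(x)$ for every $f\in C(X_1,M_n)$, forcing $x=\lambda(y)$; alternatively, continuity follows from \autoref{DSH spectrum} together with the continuity of $[\ev_y]\mapsto[\ev_{\lambda(y)}]$ under the hull-kernel topology.

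Next I would construct $Z^*$ as the adjunction space: take the disjoint union $X_1\sqcup X_2$ and identify each $y\in Y_2$ with $\lambda(y)\in X_1$. Since $Y_2\subset X_2$ is closed and $\lambda$ is continuous, the quotient $Z^*$ is Hausdorff (the equivalence classes are either singletons in $X_1\setminus\lambda(Y_2)$, singletons in $X_2\setminus Y_2$, or sets of the form $\{x\}\cup\lambda^{-1}(x)$ for $x\in\lambda(Y_2)$, and closed saturated sets can be separated using Tietze-style extensions in the compact metric setting). Being a Hausdorff quotient of the compact metric space $X_1\sqcup X_2$ with closed equivalence classes, $Z^*$ is compact, Hausdorff, and second countable, hence compact metric by Urysohn's metrization theorem.

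Finally I would define the isomorphism $\Gamma\colon C(Z^*,M_n)\to C(X_1,M_n)\oplus_{C(Y_2,M_n)}C(X_2,M_n)$ by pulling back along the canonical inclusions $X_1\hookrightarrow Z^*$ and $X_2\hookrightarrow Z^*$: for $F\in C(Z^*,M_n)$, set $\Gamma(F):=(F|_{X_1},F|_{X_2})$. The identification $y\sim\lambda(y)$ guarantees $F|_{X_2}(y)=F|_{X_1}(\lambda(y))=\varphi(F|_{X_1})(y)$, so $\Gamma(F)$ lies in the pullback. Injectivity is clear because $X_1\cup X_2$ surjects onto $Z^*$, and surjectivity follows from the universal property of the adjunction space: given a compatible pair $(f_1,f_2)$, the piecewise-defined function on $X_1\sqcup X_2$ descends to a continuous function on $Z^*$ precisely because $f_2(y)=\varphi(f_1)(y)=f_1(\lambda(y))$ for all $y\in Y_2$.

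The main subtlety I expect is checking that $Z^*$ is genuinely Hausdorff (hence metrizable) under the quotient topology; once that is secured, everything else is a direct bookkeeping exercise. In particular, the crux of the argument—and the place where the DSH hypothesis does real work—is the reduction from a diagonal decomposition with $t$ terms to the case $t=1$, which is what enables the clean adjunction-space description rather than a more complicated fibered structure.
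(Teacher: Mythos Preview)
Your proposal is correct and follows essentially the same approach as the paper: both extract a single-valued continuous map $Y_2\to X_1$ from the DSH hypothesis, form the adjunction space $Z^*=X_1\cup_\lambda X_2$, verify it is compact metric (the Hausdorff check being the only real work), and identify the pullback with $C(Z^*,M_n)$ via the obvious restriction/gluing map. Your size-counting argument for $t=1$ is slightly more direct than the paper's appeal to the uniqueness lemma, and the paper carries out the Hausdorff verification by an explicit four-case analysis rather than invoking the general adjunction-space fact you cite, but the architecture is the same.
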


\begin{proof}
	For a given $y\in Y_{2}$, we know by \autoref{unique} that the point it decomposes into is unique; alternatively, note that if there were two distinct points in the decomposition of $y$ under $\varphi$, then these two points could not be separated by any function in $C(X_{1},M_{n})$. Denote this unique point by $\tau(y)$.
	
	\begin{claim}
		\label{closed and continuous}
		$\tau\colon Y_{2}\to X_{1}$ is a closed and continuous map. 
	\end{claim}
	
	\renewcommand\qedsymbol{/\hspace{-2pt}/\hspace{-2pt}/\hspace{-2pt}/}
	\begin{proof}[Proof of \autoref{closed and continuous}]
		Suppose first that $(y_{n})_{n}$ is a sequence in $Y_{2}$ converging to a point $y$ and let $f\in C(X_{1},M_{n_{1}})$ be arbitrary. As $\varphi(f)$ is continuous,
		$$
		\lim_{n}f(\tau(y_{n}))=\lim_{n}\varphi(f)(y_{n})=\varphi(f)(y)=f(\tau(y)),
		$$
		which proves that $(\tau(y_{n}))_{n}$ converges to $\tau(y)$, since functions in $C(X_{1},M_{n_{1}})$ separate points. Thus, $\tau$ is continuous.
		
		To see that $\tau$ is closed, fix a closed subset $F$ of $Y_{2}$, and suppose that $(x_{n})_{n}$ is a sequence in $\tau(F)$ converging to a point $x\in X_{1}$. Choose, for each $n$, a point $y_{n}\in F$ with $\tau(y_{n})=x_{n}$. Since $F$ is compact, we may assume (by passing to a subsequence) that $(y_{n})_{n}$ converges to a point $y$ in $F$. Letting $f\in C(X_{1},M_{n_{1}})$ be arbitrary, it follows that 
		$$
		f(x)=\lim_{n}f(x_{n})=\lim_{n}f(\tau(y_{n}))=\lim_{n}\varphi(f)(y_{n})=\varphi(f)(y)=f(\tau(y)).
		$$ 
		Since this holds for all $f\in C(X_{1},M_{n_{1}})$, it must be that $x=\tau(y)\in \tau(F)$, which proves that $\tau$ is closed, completing the proof of \autoref{closed and continuous}. 
	\end{proof}
	
	Continuing with the proof of \autoref{homcomb}, let $Z:=X_{1}\sqcup X_{2}$. Then $Z$ is a compact metric space. Given $z\in Z$, we define $[z]$ as follows:
	$$
	[z]:=
	\begin{aligned}
	\begin{cases}
	\{z\}&\text{ if }z\in X_{2}\setminus Y_{2}\\
	\{z\}\cup\tau^{-1}(z)&\text{ if }z\in X_{1}\\
	\{\tau(z)\}\cup\tau^{-1}(\tau(z))&\text{ if }z\in Y_{2}.
	\end{cases}
	\end{aligned}
	$$
	Let $Z^{*}:=\{[z]:z\in Z\}$ and let $p\colon Z\to Z^{*}$ denote the canonical surjection $p(z):=[z]$. Then $Z^{*}$ is a collection of sets that partition $Z$. We equip it with the quotient topology induced by $p$; that is, a set $U\subset Z^{*}$ is open in $Z^{*}$ if and only if $p^{-1}(U)$ is open in $Z$. Since $Z$ is compact, so is $Z^{*}$. To establish that $Z^{*}$ is in fact a metric space, it suffices to ensure that it is Hausdorff. Indeed, letting $w(Y)$ denote the smallest cardinality of a basis for a given topological space $Y$, it follows by Theorem 3.1.22 of \cite{eng} that $w(Z^{*})\leq w(Z)$. Since a compact Hausdorff space is metrizable if and only if it has a countable basis, showing that $Z^{*}$ is Hausdorff would guarantee that it is also metrizable. 
	
	\begin{claim}
		\label{hausdorff}
		$Z^{*}$ is Hausdorff.
	\end{claim}
	
	\begin{proof}[Proof of \autoref{hausdorff}]
		Suppose $z_{1},z_{2}\in Z$ with $[z_{1}]\not=[z_{2}]$. Let us show that $[z_{1}]$ and $[z_{2}]$ can be separated by open sets in $Z^{*}$. Without loss of generality, we must be in one of the following four cases. 
		
		Case one: $z_{1},z_{2}\in (X_{2}\setminus Y_{2})\cup (X_{1}\setminus \tau(Y_{2}))$. In this case, it is easy to see (since $Y_{2}$ and $\tau(Y_{2})$ are closed) that we may choose open sets $U_{1}\ni z_{1}$ and $U_{2}\ni z_{2}$ in $Z$ that are disjoint and such that $U_{i}\subset X_{2}\setminus Y_{2}$ if $z_{i}\in X_{2}\setminus Y_{2}$ and $U_{i}\subset X_{1}\setminus \tau(Y_{2})$ if $z_{i}\in X_{1}\setminus \tau(Y_{2})$. Since $p|_{(X_{2}\setminus Y_{2})\cup (X_{1}\setminus \tau(Y_{2}))}$ is a bijection, the sets $p(U_{1})$ and $p(U_{2})$ are open in $Z^{*}$, disjoint, and contain $[z_{1}]$ and $[z_{2}]$, respectively. 
		
		Case two: $z_{1}\in X_{1}\setminus \tau(Y_{2})$ and $z_{2}\in Y_{2}\cup \tau(Y_{2})$. Choose disjoint sets $U_{1}\ni z_{1}$ and $U_{2}\supset \tau(Y_{2})$ that are open in $X_{1}$. Let $V_{1}:=p(U_{1})\ni[z_{1}]$ and $V_{2}:=p(U_{2}\cup X_{2})\ni[z_{2}]$ and note that $V_{1}\cap V_{2}=\varnothing$. Since $p^{-1}(V_{1})=U_{1}$ and $p^{-1}(V_{2})=U_{2}\cup X_{2}$ are both open in $Z$, it follows that $V_{1}$ and $V_{2}$ are open in $Z^{*}$. 
		
		Case three: $z_{1}\in X_{2}\setminus Y_{2}$ and $z_{2}\in Y_{2}\cup \tau(Y_{2})$. Choose disjoint sets $U_{1}\ni z_{1}$ and $U_{2}\supset Y_{2}$ that are open in $X_{2}$. Let $V_{1}:=p(U_{1})\ni[z_{1}]$ and $V_{2}:=p(X_{1}\cup U_{2})\ni[z_{2}]$ and note that $V_{1}\cap V_{2}=\varnothing$. Since $p^{-1}(V_{1})=U_{1}$ and $p^{-1}(V_{2})=X_{1}\cup U_{2}$ are both open in $Z$, it follows that $V_{1}$ and $V_{2}$ are open in $Z^{*}$. 
		
		Case four: $z_{1},z_{2}\in Y_{2}\cup\tau(Y_{2})$. We may assume without loss of generality that $z_{1},z_{2}\in \tau(Y_{2})$. Choose sets $U_{1}\ni z_{1}$ and $U_{2}\ni z_{2}$, which are open in $X_{1}$ and satisfy $\overline{U_{1}}\cap\overline{U_{2}}=\varnothing$. Since $\tau$ is continuous, there are open subsets $V_{1}$ and $V_{2}$ of $X_{2}$ such that $\tau^{-1}(U_{1})=V_{1}\cap Y_{2}$ and $\tau^{-1}(U_{2})=V_{2}\cap Y_{2}$. Choose disjoint open subsets $W_{1}$ and $W_{2}$ of $X_{2}$ containing $\tau^{-1}(\overline{U_{1}})$ and $\tau^{-1}(\overline{U_{2}})$, respectively. Put $\mathcal{O}_{1}:=V_{1}\cap W_{1}$ and $\mathcal{O}_{2}:= V_{2}\cap W_{2}$ and let $\mathcal{E}_{1}:=p(\mathcal{O}_{1}\cup U_{1})$ and $\mathcal{E}_{2}:=p(\mathcal{O}_{2}\cup U_{2})$. Note that $[z_{1}]\in \mathcal{E}_{1}$ and $[z_{2}]\in\mathcal{E}_{2}$. Let us show that $\mathcal{E}_{1}$ and $\mathcal{E}_{2}$ are disjoint and open in $Z^{*}$. Suppose $t_{1}\in \mathcal{O}_{1}\cup U_{1}$ and $t_{2}\in \mathcal{O}_{2}\cup U_{2}$. Assume first that $t_{1}\in U_{1}$. If $t_{2}\in U_{2}\cup (X_{2}\setminus Y_{2})$, then $[t_{1}]\not=[t_{2}]$ since $U_{1}\cap U_{2}=\varnothing$. If instead $t_{2}\in \mathcal{O}_{2}\cap Y_{2}$, then $\tau(t_{2})\in U_{2}$, so that we again have $[t_{1}]\not=[t_{2}]$. A symmetric analysis shows that $[t_{1}]\not=[t_{2}]$ if $t_{2}\in U_{2}$. Thus, we may assume $t_{1}\in\mathcal{O}_{1}$ and $t_{2}\in\mathcal{O}_{2}$. If either $t_{1}$ or $t_{2}$ is in $X_{2}\setminus Y_{2}$, then $[t_{1}]\not=[t_{2}]$ since $t_{1}\not=t_{2}$, as $\mathcal{O}_{1}$ and $\mathcal{O}_{2}$ are disjoint. If instead $t_{1}\in\mathcal{O}_{1}\cap Y_{2}$ and $t_{2}\in\mathcal{O}_{2}\cap Y_{2}$, then $\tau(t_{1})\in U_{1}$, $\tau(t_{2})\in U_{2}$, and once again $[t_{1}]\not=[t_{2}]$. It follows that $\mathcal{E}_{1}\cap\mathcal{E}_{2}=\varnothing$. 
		
		It remains to be shown that $\mathcal{E}_{1}$ and $\mathcal{E}_{2}$ are open in $Z^{*}$. Owing to the symmetry of the setup, we only show that $\mathcal{E}_{1}$ is open in $Z^{*}$, and to do this it is sufficient to prove that $p^{-1}(\mathcal{E}_{1})\cap X_{1}=U_{1}$ and $p^{-1}(\mathcal{E}_{1})\cap X_{2}=\mathcal{O}_{1}$. Assume we are given $t\in p^{-1}(\mathcal{E}_{1})\cap X_{1}$. Thus $[t]\in p(\mathcal{O}_{1})\cup p(U_{1})$. If there is an $s\in U_{1}$ such that $[t]=[s]$, then, since both $t$ and $s$ lie in $X_{1}$, it must be that $t=s$. If instead there is an $s\in\mathcal{O}_{1}$ such that $[t]=[s]$, then it follows that $s\in Y_{2}$, and hence, that $\tau(s)\in U_{1}$. Thus, $[t]=[s]=[\tau(s)]$, from which we deduce as before that $t=\tau(s)\in U_{1}$. Therefore, we may conclude that $p^{-1}(\mathcal{E}_{1})\cap X_{1}\subset U_{1}$, and hence, $p^{-1}(\mathcal{E}_{1})\cap X_{1}=U_{1}$. Now, suppose that we are given $t\in p^{-1}(\mathcal{E}_{1})\cap X_{2}$. As before, $[t]\in p(\mathcal{O}_{1})\cup p(U_{1})$. Suppose first that there is an $s\in\mathcal{O}_{1}$ such that $[t]=[s]$. If either $t$ or $s$ is in $X_{2}\setminus Y_{2}$, then $t=s\in \mathcal{O}_{1}$; otherwise, it must be that $s,t\in Y_{2}$ and, in particular, $\tau(t)=\tau(s)\in U_{1}$. Therefore, $t\in V_{1}\cap W_{1}=\mathcal{O}_{1}$. If instead there is an $s\in U_{1}$ such that $[t]=[s]$, then it must be that $t\in Y_{2}$ and $\tau(t)=s\in U_{1}$, which  implies (as above) that $t\in\mathcal{O}_{1}$. Thus, $p^{-1}(\mathcal{E}_{1})\cap X_{2}\subset \mathcal{O}_{1}$, and hence, $p^{-1}(\mathcal{E}_{1})\cap X_{2}=\mathcal{O}_{1}$. Therefore, by our analysis, this implies that $\mathcal{E}_{1}$ and $\mathcal{E}_{2}$ are open in $Z^{*}$. This completes the proof of \autoref{hausdorff}.  
	\end{proof}

	\renewcommand\qedsymbol{/\hspace{-2pt}/}
	Returning to the proof of \autoref{homcomb}, define $\Lambda\colon C(X_{1},M_{n})\oplus_{C(Y_{2},M_{n})}C(X_{2},M_{n})\to C(Z^{*},M_{n})$ by:
	$$
	\Lambda((f,g))([z]):=
	\begin{aligned}
	\begin{cases}
	f(z)&\text{ if }z\in X_{1}\\
	g(z)&\text{ if }z\in X_{2}.
	\end{cases}
	\end{aligned}
	$$
	To conclude the proof, let us show that $\Lambda$ is a well-defined $^*$-isomorphism. To see that $\Lambda$ is well defined, suppose $z_{1},z_{2}\in Z$ and that $[z_{1}]=[z_{2}]$. Unless $z_{1}=z_{2}$, this implies that one of the two points is in the decomposition of the other. Assume without loss of generality that $\tau(z_{2})=z_{1}$. Then for all $(f,g)\in C(X_{1},M_{n})\oplus_{C(Y_{2},M_{n})}C(X_{2},M_{n})$, we have $g(z_{2})=\varphi(f)(z_{2})=f(\tau(z_{2}))=f(z_{1})$. This shows that $\Lambda$ is well defined. It is clear that $\Lambda$ is an injective $^*$-homomorphism. To see surjectivity, suppose $h\in C(Z^{*},M_{n})$ and define $f:=h\circ p|_{X_{1}}\in C(X_{1},M_{n})$ and $g:=h\circ p|_{X_{2}}\in C(X_{2},M_{n})$. Given $y\in Y_{2}$, we have
	$$
	g(y)=h([y])=h([\tau(y)])=f(\tau(y))=\varphi(f)(y),
	$$ 
	so that $(f,g)\in C(X_{1},M_{n})\oplus_{C(Y_{2},M_{n})}C(X_{2},M_{n})$. Moreover, $\Lambda((f,g))=h$, proving that $\Lambda$ is surjective. The proof of \autoref{homcomb} is now complete. 
\end{proof}

Applying \autoref{homcomb} inductively, we obtain the following corollary.

\begin{corollary}
	\label{homogeneous DSH}
	Every $n$-homogeneous DSH algebra is isomorphic to a full matrix algebra, i.e., isomorphic to $C(X,M_{n})$ for some compact metric space $X$.
\end{corollary}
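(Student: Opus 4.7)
The plan is to induct on the length $\ell$ of the DSH decomposition of $A$, using \autoref{homcomb} to collapse the last two stages at each step. Before starting the induction, I would normalize the decomposition: by \autoref{DSH spectrum}, every irreducible representation of $A$ is of the form $\ev_x$ for some $x\in X_i\setminus Y_i$ and has dimension $n_i$, so $n$-homogeneity forces $n_i=n$ whenever $X_i\setminus Y_i\neq\varnothing$. After invoking \autoref{empty interior} to discard any stages with $X_i\subset Y_i$, we may assume $n_i=n$ for all $i$. It then follows from \autoref{Bik}(3) that each $y\in Y_{i+1}$ decomposes into exactly one point, since the matrix at $y$ has size $n$ and splits as a block-diagonal sum of summands of sizes $n_{i_j}=n$.

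For the base case $\ell=1$, $A=C(X_1,M_n)$ and there is nothing to prove. For the inductive step, I would note that $A^{(\ell-1)}$ is itself a DSH algebra of length $\ell-1$ (with the induced decomposition) whose irreducible representations all have dimension $n$, and hence is $n$-homogeneous; by the inductive hypothesis there is a compact metric space $X'$ and an isomorphism $\Phi\colon A^{(\ell-1)}\to C(X',M_n)$ obtained by iteratively applying the isomorphism $\Lambda$ produced in \autoref{homcomb}. The key point to establish here is that, by inspection of the explicit formula for $\Lambda$, this $\Phi$ preserves pointwise evaluation on the nose (with no unitary conjugation): for every $x\in X_i\setminus Y_i$ with $i<\ell$, there is a unique $x'\in X'$ with $\ev_x(f)=\ev_{x'}(\Phi(f))$ for all $f\in A^{(\ell-1)}$.

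Setting $\tilde\varphi_{\ell-1}:=\varphi_{\ell-1}\circ\Phi^{-1}\colon C(X',M_n)\to C(Y_\ell,M_n)$, I would consider the pullback
\[
B:=C(X',M_n)\oplus_{C(Y_\ell,M_n)}C(X_\ell,M_n),
\]
which is manifestly isomorphic to $A$. For each $y\in Y_\ell$, the original diagonal structure gives a unique $x\in X_i\setminus Y_i$ with $\varphi_{\ell-1}(f)(y)=f_i(x)$ for all $f\in A^{(\ell-1)}$, and hence $\tilde\varphi_{\ell-1}(g)(y)=g(x')$ for all $g\in C(X',M_n)$. Thus $B$ is a length-$2$ DSH algebra, so \autoref{homcomb} yields a compact metric space $Z^*$ with $A\cong B\cong C(Z^*,M_n)$, closing the induction.

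The main subtlety, and the only step deserving real care, is the claim that the inductively constructed $\Phi$ intertwines evaluations literally rather than up to unitary equivalence; everything else reduces to a direct application of \autoref{homcomb}. Once this intertwining property is extracted from the explicit form of $\Lambda$, diagonality of $\varphi_{\ell-1}$ transports cleanly to diagonality of $\tilde\varphi_{\ell-1}$, and the induction runs without further obstacles.
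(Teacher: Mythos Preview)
Your proposal is correct and follows essentially the same route as the paper: the paper's proof is the one-line sentence ``Applying \autoref{homcomb} inductively, we obtain the following corollary,'' and your argument is a careful unpacking of exactly that induction. In particular, you correctly isolate the one nontrivial point the paper leaves implicit---that the isomorphism $\Lambda$ of \autoref{homcomb} intertwines evaluations literally (by its explicit formula $\Lambda((f,g))([z])=f(z)$ or $g(z)$), so diagonality of $\varphi_{\ell-1}$ transports to the collapsed pullback and the induction closes.
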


%\begin{remark}
%	If the base spaces $X_{1},X_{2}$ in the given DSH algebra in \autoref{homcomb} are metric spaces, then the resulting base space $Z^{*}$ is also metrizable. To see this, let $w(Y)$ denote the smallest cardinality of a basis for a given topological space $Y$. If $X_{1}$ and $X_{2}$ are metric spaces, so too is the space $Z$, constructed in the proof of \autoref{homcomb}. By Theorem 3.1.22 of \cite{eng}, $w(Z^{*})\leq w(Z)$. Since a compact Hausdorff space is metrizable if and only if it has a countable basis, this shows that $Z^{*}$ is metrizable. 
%\end{remark}

\section{Stable Rank}
\label{ch: stable rank}

This section focuses on simple inductive limits of DSH algebras with diagonal bonding maps. \cref{sc: main proof} contains the principal result, which states that every limit algebra of this type necessarily has stable rank one (see \autoref{main}). In \cref{sc: cp}, \autoref{main} is applied to obtain two results about simple dynamical crossed products. Given an infinite compact metric space $T$ and a minimal homeomorphism $h\colon T\to T$, we show that every orbit-breaking subalgebra of the induced dynamical crossed product $\mathrm{C}^{*}(\mathbb{Z},T,h)$ associated to any non-isolated point is a simple inductive limit of DSH algebras with diagonal maps (see \autoref{Ax}). Consequently, we are able to show that $\mathrm{C}^{*}(\mathbb{Z},T,h)$ has stable rank one (see \autoref{main2}), and that $\mathcal{Z}$-stability is determined for such an algebra by strict comparison of positive elements (see \autoref{main3}). 

The proof of \autoref{main} is quite technical and requires several lemmas, which are developed in \cref{sc: prelim lemmas} and \cref{sc: main lemmas}. In \cref{sc: prelim lemmas} facts concerning continuous paths of unitary matrices are established. These results are used in \cref{sc: main lemmas} to construct certain unitary elements in DSH algebras that are needed to prove \autoref{main}. Before formulating these lemmas, in \cref{sc: outline of proof}, we provide a more detailed overview of how they come together to prove \autoref{main}, and we compare and contrast our approach to that used by Elliott, Ho, and Toms in \cite{EHT}.

\subsection{Outline of the Proof of the Main Theorem}
\label{sc: outline of proof}

\cref{sc: main lemmas} consists of all of the lemmas that are used in the proof of \autoref{main} in \cref{sc: main proof}, with the following dependency diagram:

\begin{figure}[H]
	\centering
	\begin{tikzpicture}[node distance=1cm, auto]
	\node (dummy) {};
	\node[punkt, inner sep=5pt, below=2.2cm of dummy] (main) {\autoref{main}};
	\node[punkt, inner sep=5pt, left=2.9cm of dummy] (far out) {\autoref{far out}};
	\node[punkt, inner sep=5pt, below=1cm of far out] (block on set) {\autoref{block on set}};
	\node[punkt, inner sep=5pt, below=1cm of block on set] (cross shift) {\autoref{cross shift}};
	\node[punkt, inner sep=5pt, below=1cm of cross shift] (lower triangular) {\autoref{lower triangular}};
	\node[punkt, inner sep=5pt, left=2cm of far out] (simplicity) {\autoref{simplicity}};
	\node[punkt, inner sep=5pt, below=.5cm of simplicity] (perturb) {\autoref{perturb}};
	\node[punkt, inner sep=5pt, below=.5cm of perturb] (big enough lemma) {\autoref{big enough lemma}};
	\node[punkt, inner sep=5pt, below=3.4cm of perturb] (indicators) {\autoref{indicators}};
	\node[punkt, inner sep=5pt, left=0.5cm of indicators] (indicatorprep) {\autoref{indicatorprep}};
	
	\draw [->, thick] (far out) to [bend left=10] (main);  
	\draw [->, thick] (block on set) to [bend left=5] (main);  
	\draw [->, thick] (cross shift) to [bend right=5] (main);  
	\draw [->, thick] (lower triangular) to [bend right=10] (main);
	\draw [->, thick] (simplicity) -- (far out); 
	\draw [->, thick] (indicatorprep) -- (indicators);  
	\draw [->, thick] (perturb) to [bend left=5] (far out);
	\draw [->, thick] (big enough lemma) to [bend right=5] (far out);
	\draw [->, thick] (indicators) to [bend left=6] (far out);
	\draw [->, thick] (indicators) to [bend left=10] (cross shift);
	\draw [->, thick] (indicators) to [bend right=10] (lower triangular);
	\end{tikzpicture}	
	\caption{Dependency chart for the main lemmas used in the proof of \autoref{main}.}
	\label{chart}
\end{figure}

\noindent
Let us now outline the importance of each of these lemmas and give a brief overview of how they are used to prove \autoref{main}.

Our general strategy for proving that a simple inductive limit of DSH algebras with diagonal maps has stable rank one is essentially the one in \cite{EHT}. We start with a given element $f$ in the limit algebra $A$, which may be assumed to lie in some finite-stage building block $A_{j}$. If $f$ is invertible, then there is nothing to prove, and so we may assume that $f$ is not invertible. The goal is then to show that the image $\psi_{j',j}(f)$ of $f$ in a later stage algebra $A_{j'}$ is close to an invertible in $A_{j'}$. 

If we approximate $\psi_{j',j}(f)$, multiply this approximation by unitaries, approximate again, multiply the new approximation by unitaries, and show that an element thus obtained is close to an invertible, then, upon unpacking the approximations, it follows that $\psi_{j',j}(f)$ is close to an invertible in $A_{j'}$. Finally, as R\o rdam observed in \cite{Ror}, every nilpotent element of a unital $\mathrm{C}^*$-algebra is close to an invertible. Therefore, it suffices to show that an element, obtained from $\psi_{j',j}(f)$ as above, is nilpotent. 

To execute this strategy, we proceed as follows. In \autoref{perturb}, we first use \autoref{non-invertible at point} to show that there is a point in one of the base spaces $X_{i}$ of $A_{j}$ at which $f_{i}$ is a non-invertible matrix. After multiplying by unitary matrices on the left and right we obtain a new matrix whose first row and column contain only zeros (or one that has a \textit{zero cross at index $1$} (see \autoref{zero cross})). We show that after perturbing $f$ slightly, we may multiply this perturbation $f'$ on the left and right by unitaries $w,v\in A_{j}$, so that $wf'v$ has a zero cross at index $1$ not just at one point, but over a whole open subset of the spectrum of $A_{j}$.

By \autoref{injective}, we may assume the maps in the given sequence are injective. Hence, in \autoref{far out}, we may apply our simplicity criterion (\autoref{simplicity}) with the open subset of the spectrum obtained above to conclude that in some later stage algebra $A_{j'}$, the diagonal image $\psi_{j',j}(wf'v)$ has ``many'' (see the following paragraphs) zero crosses at every point in each base space of $A_{j'}$; because of simplicity and the fact that the maps in the sequence are diagonal, this ``many'' may be taken (using \autoref{big enough lemma}) to be as large as desired. We are then able to construct unitaries $V,V'\in A_{j'}$ that organize the location of these zero crosses, so that the element $f''=V\psi_{j',j}(f')V'$ has ``many'' zero crosses occurring at tractable locations at each point in every base space of $A_{j'}$. 

We use \autoref{block on set} to approximate $f''$ by a function $g\in A_{j'}$ that preserves the zero crosses of $f''$ at each point, and, in addition, extends the block-diagonal structure of the algebra to neighbourhoods of the closed subsets of the base spaces (the $Y_{i}$'s in the definition of $A_{j'}$). This allows us, in \autoref{cross shift}, to conjugate $g$ by a unitary $W\in A_{j'}$, so that in the resulting conjugation $g'=WgW^{*}$, the zero crosses of $g$ are grouped together into block zero crosses at every point in each of the base spaces of $A_{j'}$.

The unitaries $V,V'$, and $W$ above are constructed in such a way that at every point in each base space the \textit{bandwidth}, which measures how far a non-zero entry can occur from the diagonal in a matrix (see \autoref{bandwidth}), of $g'$ at that point is bounded above by a quantity independent of $j'$. Thus, by ensuring that the ``many'' above is at least as large as this upper bound, we are able to construct a unitary $W'$ in \autoref{lower triangular} that shifts the block zero cross mentioned above so that $g'W'$ is strictly lower triangular at each point. This ensures that $g'W'$ is nilpotent and yields the desired result. 

The unitaries $V,V',W$, and $W'$ above are all defined using continuous paths of unitaries between permutation matrices (see \autoref{unitaries} and \autoref{transposition prod}). In \autoref{indicatorprep}, we construct certain indicator-function-like elements of DSH algebras, the final versions of which (\autoref{indicators}) help to define $V,V',W$, and $W'$ by allowing us to implement the continuous paths of unitary matrices constructed in \cref{sc: prelim lemmas} in the DSH framework. Their job is to tell the continuous paths used in defining these unitaries which rows and columns to shift around, so as to ensure that they respect the decomposition structure of the algebra and that the zero crosses are achieved in the target locations.

The proof of \autoref{main} shares many similarities with the original AH proof of Elliott, Ho, and Toms found in \cite{EHT}. In particular, in the case that all of the DSH algebras in the context of \autoref{main} are homogeneous (hence, by \autoref{homogeneous DSH}, full matrix algebras), the unitaries $V,V',W$, and $W'$ constructed above essentially reduce the those constructed in \cite{EHT}. For a more in-depth analysis of this, see \S 5.1 of \cite{alb}, where it is also observed that the AH proof does not require the full matrix algebra building blocks in the inductive limit to be separable. In our ASH setting, however, separability is necessary since the indicator-function-like elements constructed in \autoref{indicators}, which are not required in the AH case, rely on the assumption that the base spaces of any given DSH algebra are metrizable.

\subsection{Preliminary Lemmas}
\label{sc: prelim lemmas}

The purpose of this subsection is to introduce some continuous paths of unitary matrices and prove certain facts about them. These paths will be used in the sequel to construct the unitaries in the DSH algebras used in the proof of the main result.

\begin{definition}[Zero cross]
	\label{zero cross}
	Given a matrix $D\in M_{n}$ and $1\leq k\leq n$, we say that $D$ has a \textit{zero cross at index $k$} provided that each entry in the $k$th row and column of $D$ is $0$. 
\end{definition}

\begin{definition}[Bandwidth of a matrix]
	\label{bandwidth}
	Given a matrix $D\in M_{n}$, we let 
	$$
	\mathfrak{r}(D):=\min\{m\geq 0:D_{i,j}=0\text{ whenever }|i-j|\geq m\}
	$$ 
	if it exists, and $\mathfrak{r}(D):= n$ otherwise, and we call this number the \textit{bandwidth} of $D$. 
\end{definition}

\begin{definition}[see \cite{EHT}]
	\label{unitaries}
	Given $n\in\NN$ and a permutation $\pi\in S_{n}$, let $U[\pi]$ denote the permutation unitary in $M_{n}$ obtained from the identity matrix by moving the $i$th row to the $\pi(i)$th row. If we are given a transposition $(i \ j) \in S_{n}$, let $u_{(i \  j)}\colon[0,1]\to \mathcal{U}(M_{n})$ denote a continuous path of unitaries with the following properties:
	\begin{enumerate}
		\item $u_{(i \ j)}(0)=1_{n}$;
		\item $u_{(i \ j)}(1)=U[(i \ j)]$;
		\item for all $0\leq \theta\leq 1$, $u_{(i \ j)}(\theta)$ may only differ from the identity matrix at entries $(i,i)$, $(i,j)$, $(j, i)$, and $(j,j)$. 
	\end{enumerate}
\end{definition}

\begin{lemma}
	\label{untouched crosses}
	Let $n,M,l\in\NN$ with $l+M-1\leq n$, and let $(\xi_{l},\ldots,\xi_{l+M-1})$ be a vector in $[0,1]^{M}$. Put 
	$$
	U:=\prod_{t=1}^{M-1}u_{(l \ l+t)}(\xi_{l+t})\in\mathcal{U}(M_{n}),
	$$
	where each $u_{(l \ l+t)}\colon [0,1]\to M_{n}$ is a connecting path of unitaries as described in \autoref{unitaries}.
	\begin{enumerate}[label=(\alph*)]
		\item Suppose $D\in M_{n}$, $\xi\in [0,1]$, and $(l_{1} \ l_{2})\in S_{n}$. If $D$ has a zero cross at index $l\not=l_{1},l_{2}$, then so does $u_{(l_{1} \ l_{2})}(\xi)Du_{(l_{1} \ l_{2})}(\xi)^{*}$. 
		\item Suppose $D\in M_{n}$. If $D$ has a zero cross at index $l'\in\{1,\ldots,n\}\setminus \{l,\ldots,l+M-1\}$, then so does $UDU^{*}$. 
		\item Suppose $D\in M_{n}$ is such that for all $l\leq l'\leq l+M-1$, $D$ has a zero cross at index $l'$ whenever $\xi_{l'}>0$. If at least one of $\xi_{l},\ldots,\xi_{l+M-1}$ is $1$, then $UDU^{*}$ has a zero cross at index $l$.
	\end{enumerate}
\end{lemma}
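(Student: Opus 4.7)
The plan is to analyze the action of $U$ on standard basis vectors, exploiting the fact that each factor $u_{(l\ l+t)}(\xi_{l+t})$ differs from $1_n$ only in the $2\times 2$ block at rows and columns $l$ and $l+t$.

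For part (a), since $l \neq l_1, l_2$, both $u_{(l_1\ l_2)}(\xi)$ and its adjoint have row $l$ equal to $e_l^T$ and column $l$ equal to $e_l$. A direct computation then yields $e_l^T(uDu^*) = e_l^T D u^* = 0$ and $(uDu^*)e_l = uDe_l = 0$, invoking the zero cross of $D$ at index $l$. Part (b) follows from (a) by a straightforward induction on the factors of $U$: since $l' \notin \{l, \ldots, l+M-1\}$, for each $t \in \{1, \ldots, M-1\}$ we have $l' \neq l, l+t$, so each successive conjugation preserves the zero cross at $l'$.

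For part (c), I would reformulate the conclusion in terms of the vector $v := U^* e_l$: since $U$ is unitary, $UDU^*$ has a zero cross at $l$ if and only if $Dv = 0$ and $D^*v = 0$. Setting $S := \{l+t : 0 \leq t \leq M-1,\ \xi_{l+t} > 0\}$, the hypothesis on $D$ gives $e_j \in \ker D \cap \ker D^*$ for every $j \in S$, so it suffices to prove $v \in \mathrm{span}\{e_j : j \in S\}$. Two easy observations carry most of the work. First, each factor of $U$ (and of $U^*$) acts as the identity on indices outside $\{l, \ldots, l+M-1\}$, so $v$ is supported on this set. Second, if $\xi_{l+t} = 0$ for some $1 \leq t \leq M-1$, then $u_{(l\ l+t)}(0) = 1_n$ and no other factor touches row $l+t$, so $v_{l+t} = 0$. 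If $\xi_l > 0$, these observations already show that $v$ is supported on $S$, and we are done.

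The delicate case, which I expect to be the main obstacle, is $\xi_l = 0$: one must still show $v_l = 0$. Here I would invoke the standing assumption that some $\xi_{l+s} = 1$ with $1 \leq s \leq M-1$ and, choosing such an $s$ maximal, factor $U = W_1 \cdot U[(l\ l+s)] \cdot W_2$, where $W_1 := \prod_{t=1}^{s-1} u_{(l\ l+t)}(\xi_{l+t})$ and $W_2 := \prod_{t=s+1}^{M-1} u_{(l\ l+t)}(\xi_{l+t})$. Then $v = W_2^* \cdot U[(l\ l+s)] \cdot W_1^* e_l$. The vector $W_1^* e_l$ is supported on $\{l, \ldots, l+s-1\}$ by the first observation above, so applying the transposition $U[(l\ l+s)]$ produces a vector $w$ with $w_l = (W_1^* e_l)_{l+s} = 0$ and $w$ supported on $\{l+1, \ldots, l+s\}$. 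Since each factor of $W_2^*$ acts nontrivially only on coordinates $l$ and $l+t$ with $t > s$, and both such coordinates of $w$ vanish, $W_2^*$ acts as the identity on $w$, giving $v_l = 0$ as required.
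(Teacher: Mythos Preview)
Your proof is correct. Parts (a) and (b) are essentially identical to the paper's argument, just phrased in terms of basis vectors rather than ``linear combinations of rows and columns.''

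For part (c) you take a genuinely different route. The paper works directly with the matrix $D$ and tracks its zero crosses through the successive conjugations: it strips off the trivial factors, then splits into the cases where none of the remaining $\xi_{l_s}$ equal $1$ (so $\xi_l=1$ and every conjugation acts on two zero crosses, hence fixes $D$) versus where some $\xi_{l_s}=1$ (so one permutation factor transports the zero cross at $l_s$ to $l$, and the remaining factors again act only on zero crosses). Your reformulation via $v=U^{*}e_l$ converts the problem into a pure support statement about a single vector, which you then verify by two short observations plus one explicit factorisation. The vector viewpoint avoids the paper's inner case split (Case one/Case two) and makes transparent \emph{why} the hypothesis ``some $\xi_{l+s}=1$'' is exactly what is needed: the permutation factor $U[(l\ l+s)]$ is what kills the $l$-coordinate of $v$. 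A minor remark: you choose $s$ maximal, but maximality is never used --- any $s$ with $\xi_{l+s}=1$ works in your argument.
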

\begin{proof}
	Let us start by proving \textit{(a)}. Suppose $D$ has a zero cross at index $l\not=l_{1},l_{2}$. By property (3) of \autoref{unitaries}, the $l_{1}$th and $l_{2}$th columns of $Du_{(l_{1} \ l_{2})}(\xi)^{*}$ are linear combinations of the $l_{1}$th and $l_{2}$th columns of $D$, while every other column is identical to its corresponding column in $D$. Since $l\not=l_{1},l_{2}$ and since every entry in the $l$th row of $D$ is zero, it follows that $Du_{(l_{1} \ l_{2})}(\xi)^{*}$ has a zero cross at index $l$. A similar analysis involving rows shows that $u_{(l_{1} \ l_{2})}(\xi)Du_{(l_{1} \ l_{2})}(\xi)^{*}$ has a zero cross at index $l$, which proves \textit{(a)}. Looking at the definition of $U$, we see that \textit{(b)} follows from $M-1$ applications of \textit{(a)}. 
	
	Let us now prove \textit{(c)}. Suppose that $D$ has a zero cross at index $l'$ whenever $\xi_{l'}>0$ and that at least one of $\xi_{l},\ldots,\xi_{l+M-1}$ is $1$. Let $T:=\{l+1\leq q\leq l+M-1:\xi_{q}>0\}$. If $\xi_{l+t}=0$, we have $u_{(l \ l+t)}(\xi_{l+t})=1_{n}$. Hence, 
	$$
	U:=
	\begin{aligned}
	\begin{cases}
	u_{(l \ l_{1})}(\xi_{l_{1}})\cdots u_{(l \ l_{r})}(\xi_{l_{r}}) & \text{if }T=\{l_{1}<\cdots<l_{r}\}\\
	\qquad\qquad\quad1_{n} & \text{if }T=\varnothing.
	\end{cases}
	\end{aligned}
	$$
	If $T=\varnothing$, then, since at least one of $\xi_{l},\ldots,\xi_{l+M-1}$ is $1$, it must be that $\xi_{l}=1$. Hence, $UDU^{*}=D$ has a zero cross at index $l$ in this case by the assumption in the lemma. Thus, we may assume $T\not=\varnothing$, so that $D$ has zero crosses at indices $l_{1},\ldots,l_{r}$. We consider two cases.
	
	Case one: $\xi_{l_{s}}<1$ for all $1\leq s\leq r$. In this case, as we argued above, it must be that $D$ has a zero cross at index $l$. When conjugating $D$ by $u_{(l \ l_{r})}(\xi_{l_{r}})$, we can see by property (3) of \autoref{unitaries} that $u_{(l \ l_{r})}(\xi_{l_{r}})$ is only acting on two zero crosses (the one at index $l$ and the one at index $l_{r}$) of $D$ and, hence, 
	$$
	u_{(l \ l_{r})}(\xi_{l_{r}})Du_{(l \ l_{r})}(\xi_{l_{r}})^{*}=D.
	$$
	From this we can inductively see that $UDU^{*}=D$, which has a zero cross at index $l$. 
	
	Case two: $\xi_{l_{s}}=1$ for some $1\leq s\leq r$. Let 
	$$
	D':=\left(\prod_{p=s+1}^{r}u_{(l \ l_{p})}(\xi_{l_{p}})\right)D\left(\prod_{p=s+1}^{r}u_{(l \ l_{p})}(\xi_{l_{p}})\right)^{*}.
	$$
	Then $r-s$ applications of $(a)$ show that $D'$ has zero crosses at indices $l_{1},\ldots,l_{s}$. Note that 
	$$
	\begin{aligned}
	UDU^{*}
	=\left(\prod_{p=1}^{s}u_{(l \ l_{p})}(\xi_{l_{p}})\right)D'\left(\prod_{p=1}^{s}u_{(l \ l_{p})}(\xi_{l_{p}})\right)^{*}
	=\left(\prod_{p=1}^{s-1}u_{(l \ l_{p})}(\xi_{l_{p}})\right)E\left(\prod_{p=1}^{s-1}u_{(l \ l_{p})}(\xi_{l_{p}})\right)^{*},
	\end{aligned}
	$$ 
	where $E:=U[(l \ l_{s})]D'U[(l \ l_{s})]^{*}$. 
	Since $D'$ has a zero cross at index $l_{s}$, conjugating it by $U[(l \ l_{s})]$ brings this zero cross to index $l$, so that the matrix $E$ has zero crosses at indices $l,l_{1},\ldots,l_{s-1}$. Hence, as in the argument used in case one, the matrix $E$ is unaltered when conjugated by $\prod_{p=1}^{s-1}u_{(l \ l_{p})}(\xi_{l_{p}})$. Therefore, $UDU^{*}=E$, which has a zero cross at index $l$. This proves \textit{(c)} and establishes the lemma.
\end{proof}

\begin{definition}
	\label{matrix conjugator}
	Let $n\in\mathbb{N}$. For $1\leq i\leq j\leq n$, let $\delta_{j}^{i}\colon[0,1]\to[0,1]$ be given by the following definition:
	$$
	\begin{aligned}
	\delta_{j}^{i}(\xi):=
	\begin{cases}
	0&\text{ if }0\leq\xi\leq\frac{i-1}{j}\\
	\text{linear}&\text{ if }\frac{i-1}{j}\leq\xi\leq\frac{i}{j}\\
	1&\text{ if }\frac{i}{j}\leq\xi\leq 1. 
	\end{cases}
	\end{aligned}
	$$
	Moreover, for $1\leq i<j\leq n$, let $w_{j}^{i}\in C([0,1],M_{n})$ be the unitary defined by 
	$$
	w_{j}^{i}(\xi):=u_{(i \ i+1)}(\delta_{j-i}^{j-i}(\xi)) u_{(i+1 \ i+2)}(\delta_{j-i}^{j-i-1}(\xi))\cdots u_{(j-1 \ j)}(\delta_{j-i}^{1}(\xi)),
	$$	
	where the unitaries $u_{(k \ k+1)}\colon [0,1]\to M_{n}$ are those of \autoref{unitaries}, and set $w_{i}^{i}\equiv 1_{n}$. In particular, 
	\begin{equation}
	\label{permutation}
	\begin{aligned}
	w_{j}^{i}(1)=u_{(i \ i+1)}(1)\cdots u_{(j-1 \ j)}(1)=U[(i \  \ i+1 \ \cdots \ j)].
	\end{aligned}
	\end{equation}
\end{definition}

\begin{lemma}
	\label{matrix1}
	Suppose $D\in M_{n}$ has a zero cross at index $j$. Then, $\mathfrak{r}(w_{j}^{1}(1)Dw_{j}^{1}(1)^{*})\leq \mathfrak{r}(D)$ and, for $2\leq i\leq j$, $\mathfrak{r}(w_{j}^{i}(1)Dw_{j}^{i}(1)^{*})\leq\mathfrak{r}(D)+1$. 
\end{lemma}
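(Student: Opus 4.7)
The key observation is the identity \eqref{permutation}, which tells us that $w_j^i(1) = U[(i\ i{+}1\ \cdots\ j)]$ is simply the permutation unitary associated to the cyclic shift. Thus conjugation by $w_j^i(1)$ permutes the rows and columns of $D$ according to $\sigma = (i\ i{+}1\ \cdots\ j)$: if $D' := w_j^i(1) D w_j^i(1)^*$, then $D'_{k,l} = D_{\sigma^{-1}(k),\sigma^{-1}(l)}$, where $\sigma^{-1}$ fixes every index outside $\{i,\ldots,j\}$, sends $i$ to $j$, and sends $k$ to $k-1$ for $i+1 \leq k \leq j$. The plan is to track what this relabeling does to entries that could violate the claimed bandwidth bound, using the zero cross at $j$ to handle the single ``wraparound'' index.

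I would first dispose of the case $i=1$. Here the permutation is $(1\ 2\ \cdots\ j)$, and the above formula shows that row $1$ and column $1$ of $D'$ are gathered from row $j$ and column $j$ of $D$, both of which are zero. So $D'$ has all zeros in its first row and column. For any entry $D'_{k,l}$ with $|k-l| \geq \mathfrak{r}(D)$ and $k,l \geq 2$, I would check three subcases based on whether each of $k,l$ lies in $\{2,\ldots,j\}$ or in $\{j+1,\ldots,n\}$. In each case $\sigma^{-1}(k)$ is either $k$ or $k-1$, and likewise for $l$; crucially, when $k \leq j$ and $l > j$ (or vice versa), $k$ decreases by $1$ while $l$ stays the same, which increases $|\sigma^{-1}(k) - \sigma^{-1}(l)|$ rather than decreasing it. Thus $|\sigma^{-1}(k) - \sigma^{-1}(l)| \geq \mathfrak{r}(D)$ in every case, forcing $D'_{k,l} = 0$. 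This gives $\mathfrak{r}(w_j^1(1) D w_j^1(1)^*) \leq \mathfrak{r}(D)$.

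For $2 \leq i \leq j$ the argument is the same in spirit, with one extra subtlety. Now $\sigma^{-1}$ fixes $\{1,\ldots,i-1\} \cup \{j+1,\ldots,n\}$, so conjugation produces a potential mismatch only at indices straddling $i$ and $j$. The $i$th row and column of $D'$ again become zero (they pull back from the $j$th row and column of $D$). For an entry $D'_{k,l}$ with $k,l \neq i$ and $|k-l| \geq \mathfrak{r}(D)+1$, I would split into the five subcases according to whether each of $k,l$ lies in $\{1,\ldots,i-1\}$, $\{i+1,\ldots,j\}$, or $\{j+1,\ldots,n\}$. In each case $\sigma^{-1}$ shifts each index by at most $1$, so $|\sigma^{-1}(k) - \sigma^{-1}(l)| \geq |k-l| - 1 \geq \mathfrak{r}(D)$, giving $D_{\sigma^{-1}(k),\sigma^{-1}(l)} = 0$.

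The only real ``obstacle'' is keeping the case analysis clean; the zero cross at index $j$ is precisely what is needed to absorb the one index ($i$) where $\sigma^{-1}$ jumps by more than $1$, and the extra unit in the bandwidth bound reflects the fact that when $i > 1$ the shift has a genuine ``defect'' at the boundary between the fixed region and the cycled region, whereas for $i=1$ that defect is swept out to the zero row and column.
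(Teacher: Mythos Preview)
Your argument is correct and is essentially the same as the paper's: both exploit \eqref{permutation} to reduce to conjugation by the cyclic permutation $U[(i\ i{+}1\ \cdots\ j)]$ and then track how far entries can move from the diagonal, using the zero cross at $j$ to absorb the one index where the shift wraps around. The paper phrases this via block-region pictures whereas you do it by an explicit index computation, but the content is identical.
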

\begin{proof}
	By \autoref{permutation}, $w_{j}^{1}(1)=U[(1 \ 2 \ \cdots \ j)]$. Consider the matrix $D$ broken up into the four regions created by the zero cross at $j$, together with the matrix $w_{j}^{1}(1)Dw_{j}^{1}(1)^{*}$:
	
	\begin{figure}[H]
		\begin{minipage}{0.5\textwidth}
			\centering
			\includegraphics[scale=1]{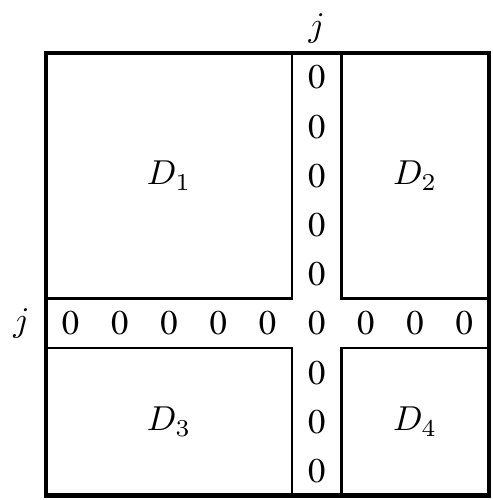}
			\caption{The Matrix $D$}
		\end{minipage}\hfill
		\begin{minipage}{.5\textwidth}
			\centering
			\includegraphics[scale=1]{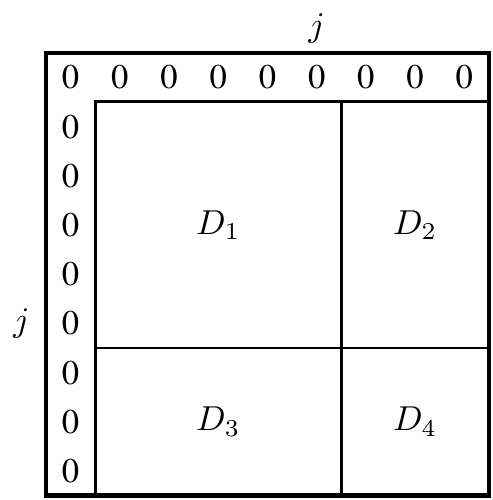}
			\caption{The Matrix $w_{j}^{1}(1)Dw_{j}^{1}(1)^{*}$}
		\end{minipage}
	\end{figure}
	\noindent
	Since no non-zero entry gets shifted away from the diagonal, $\mathfrak{r}(w_{j}^{1}(1)Dw_{j}^{1}(1)^{*})\leq\mathfrak{r}(D)$.
	
	Suppose now that $2\leq i\leq j$. If $i=j$, then the desired inequality is trivial, so we may assume that $i<j$. By \autoref{permutation}, $w_{j}^{i}(1)=U[(i \ i+1 \ \cdots \ j)]$. Consider the matrix $D$ broken up into the following nine regions created by the zero cross at $j$ and the $i$th row and column, together with the matrix $w_{j}^{i}(1)Dw_{j}^{i}(1)^{*}$:
	
	\begin{figure}[H]
		\begin{minipage}{0.5\textwidth}
			\centering
			\includegraphics[scale=1]{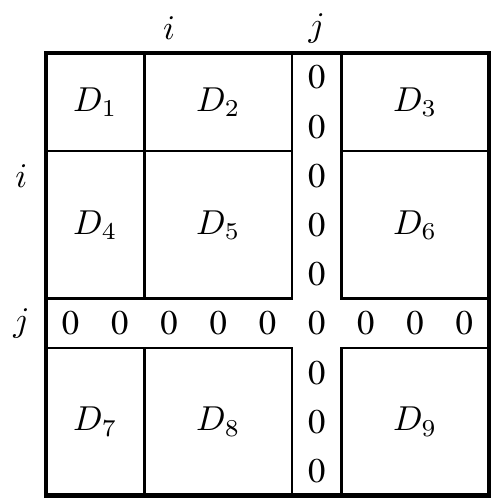}
			\caption{The Matrix $D$}
		\end{minipage}\hfill
		\begin{minipage}{.5\textwidth}
			\centering
			\includegraphics[scale=1]{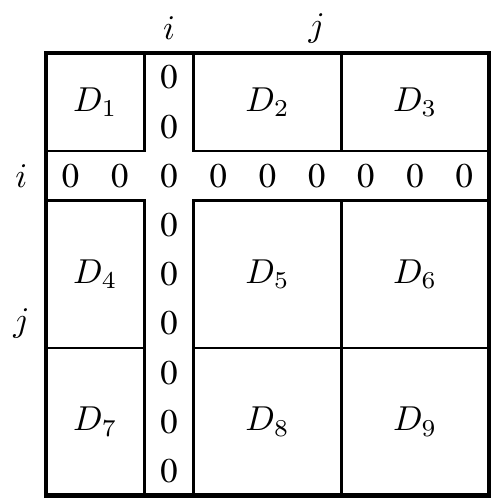}
			\caption{The Matrix $w_{j}^{i}(1)Dw_{j}^{i}(1)^{*}$}
		\end{minipage}
	\end{figure}
	\noindent
	
	With the exception of $D_{2}$ and $D_{4}$, which get shifted one unit away from the diagonal, no entry in the other seven regions is moved away from the diagonal. Therefore,  $\mathfrak{r}(w_{j}^{i}(1)Dw_{j}^{i}(1)^{*})\leq \mathfrak{r}(D)+1$, which proves \autoref{matrix1}. 
\end{proof}

\begin{lemma}
	\label{matrix2}
	Suppose $D\in M_{n}$ has a zero cross at index $j$. If $1\leq i\leq j$ and $\xi\in [0,1]$, then $\mathfrak{r}(w_{j}^{i}(\xi)D w_{j}^{i}(\xi)^{*})\leq \mathfrak{r}(D)+2$. 
\end{lemma}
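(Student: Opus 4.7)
The plan is to use the piecewise-linear structure of the functions $\delta_{j-i}^k$ to collapse $w_j^i(\xi)$ into a full cyclic permutation preceded by a single non-permutation unitary, and then to combine \autoref{matrix1} with a short entrywise analysis that exploits the given zero cross. At any fixed $\xi$ there is a unique $r \in \{1, \ldots, j-i\}$ with $\xi \in [\tfrac{r-1}{j-i}, \tfrac{r}{j-i}]$; for that $r$ we have $\delta_{j-i}^k(\xi) = 1$ for $k < r$, $\delta_{j-i}^k(\xi) = 0$ for $k > r$, and $\alpha := \delta_{j-i}^r(\xi) \in [0,1]$. Since $u_{(k,k+1)}(0) = 1_n$ and $u_{(k,k+1)}(1) = U[(k \ k+1)]$, the product in \autoref{matrix conjugator} collapses to
\[
w_j^i(\xi) \;=\; u_{(j-r,\,j-r+1)}(\alpha)\cdot U[(j-r+1,\,j-r+2,\,\ldots,\,j)].
\]
Writing $P := U[(j-r+1, \ldots, j)] = w_j^{j-r+1}(1)$, $u := u_{(j-r,\,j-r+1)}(\alpha)$, and $D' := P D P^*$, this reduces the problem to bounding $\mathfrak{r}(u D' u^*)$.

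The cyclic permutation $P$ sends $j$ to $j-r+1$, so the zero cross of $D$ at index $j$ is transported to a zero cross of $D'$ at index $j-r+1$; \autoref{matrix1} applied at the starting index $j-r+1$ then gives $\mathfrak{r}(D') \leq \mathfrak{r}(D)+1$. For the final step I would argue $\mathfrak{r}(u D' u^*) \leq \mathfrak{r}(D')+1$ directly from the definitions: since $u$ differs from $1_n$ only in rows and columns $\{j-r, j-r+1\}$, the entry $(uD'u^*)_{k,l}$ equals $D'_{k,l}$ when neither $k$ nor $l$ lies in $\{j-r, j-r+1\}$; is automatic when both do (because $|k-l|\leq 1$); and in the mixed case (say $k \in \{j-r, j-r+1\}$ and $l \notin \{j-r, j-r+1\}$) reduces to
\[
(uD'u^*)_{k,l} \;=\; u_{k,j-r}\,D'_{j-r,l} \;+\; u_{k,j-r+1}\,D'_{j-r+1,l},
\]
where the second summand vanishes thanks to the zero cross of $D'$ at $j-r+1$. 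Requiring the remaining summand to be nonzero forces $|j-r - l| < \mathfrak{r}(D')$, hence $|k-l| \leq \mathfrak{r}(D')$. The symmetric mixed case is handled identically. Stringing the two bounds together yields $\mathfrak{r}(w_j^i(\xi) D w_j^i(\xi)^*) = \mathfrak{r}(u D' u^*) \leq \mathfrak{r}(D')+1 \leq \mathfrak{r}(D)+2$.

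The main subtlety I anticipate is the index bookkeeping: one must verify that the decomposition above produces a cyclic permutation $P$ whose action places the transported zero cross precisely at index $j-r+1$, which is exactly one of the two indices on which the leftover non-permutation factor $u$ acts. This alignment, enforced by the left-to-right ordering of the factors in \autoref{matrix conjugator}, is what lets the zero cross annihilate one of the two terms produced by the $2\times 2$ conjugation, saving a $+1$ in the bandwidth bound. Without exploiting it one would only obtain $\mathfrak{r}(D) + 3$, which would be too weak for the downstream applications in \cref{sc: main proof}.
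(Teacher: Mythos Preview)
Your proof is correct and follows essentially the same route as the paper: both arguments collapse $w_j^i(\xi)$ into a single transposition-like factor $u_{(j-r,\,j-r+1)}(\alpha)$ times the full cycle $w_j^{j-r+1}(1)$, invoke \autoref{matrix1} for the cycle, and then bound the effect of the remaining $2\times 2$ conjugation by $+1$.

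One small correction to your closing commentary: the zero cross of $D'$ at index $j-r+1$ is not actually what saves the extra $+1$ in the final step. Conjugation by any $u_{(a,\,a+1)}(\alpha)$ increases bandwidth by at most $1$ generically, since a nonzero $(k,l)$-entry of $uD'u^*$ in the mixed case forces either $D'_{a,l}\neq 0$ or $D'_{a+1,l}\neq 0$, and in both cases $|k-l|\leq \mathfrak{r}(D')$ because $|k-a|,|k-(a+1)|\leq 1$. The paper carries out exactly this generic region analysis without appealing to the transported zero cross. The place where the zero cross is genuinely indispensable is the application of \autoref{matrix1} to bound $\mathfrak{r}(D')$; your argument uses it correctly there. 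So your proof stands, but the remark that one would otherwise obtain $\mathfrak{r}(D)+3$ is not accurate.
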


\begin{proof}
	Fix $1\leq i\leq j$ and $\xi\in[0,1]$. If $\xi=0$ or if $i=j$, then $w_{j}^{i}(\xi)=1_{n}$ and the result is trivial. Hence, we may assume $i<j$ and $\xi\in(0,1]$. Let $1\leq k\leq j-i$ be the unique integer such that $\xi\in\left(\frac{k-1}{j-i},\frac{k}{j-i}\right]$. Then,
	\begin{equation}
	\label{wij}
	\begin{aligned}
	&w_{j}^{i}(\xi)\\
	=&u_{(i \ i+1)}(0)\cdots u_{(j-k-1\ j-k)}(0)u_{(j-k \ j-k+1)}(\delta_{j-i}^{k}(\xi))u_{(j-k+1 \ j-k+2)}(1)\cdots u_{(j-1 \ j)}(1)\\
	=&u_{(j-k \ j-k+1)}(\delta_{j-i}^{k}(\xi))w^{j-k+1}_{j}(1).
	\end{aligned}
	\end{equation}
	Let $D':=w^{j-k+1}_{j}(1) D w^{j-k+1}_{j}(1)^{*}$. By \autoref{matrix1}, $\mathfrak{r}(D')\leq \mathfrak{r}(D)+1$. Now, consider the conjugation of $D'$ by $u_{(j-k \ j-k+1)}(\delta_{j-i}^{k}(\xi))$, which we denote by $E$. The entries of $D'$ affected by this conjugation lie in one of the following three regions:
	
	\begin{figure}[H]
		\begin{minipage}{0.333\textwidth}
			\includegraphics[scale=.81]{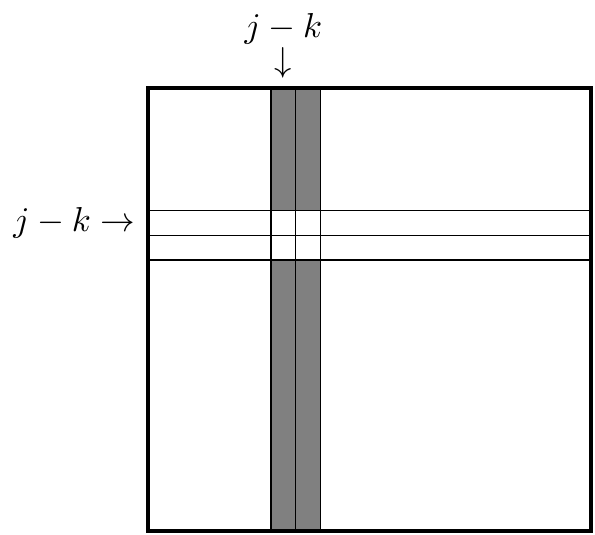}
			\caption{Region A}
			\label{regionA1}
		\end{minipage}\hfill
		\begin{minipage}{0.333\textwidth}
			\includegraphics[scale=.81]{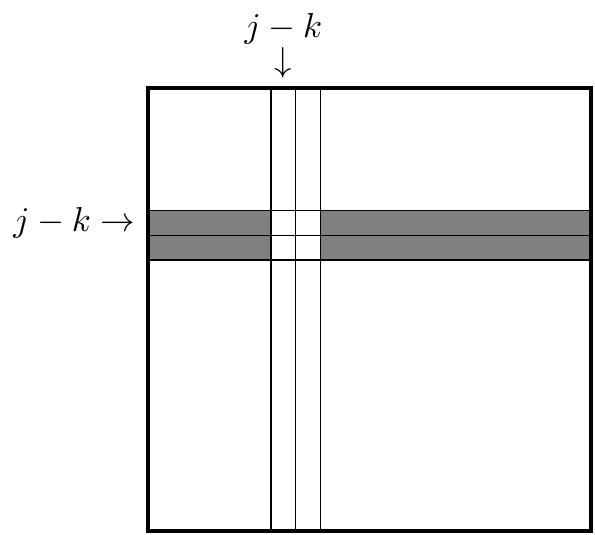}
			\caption{Region B}
			\label{regionB1}
		\end{minipage}\hfill
		\begin{minipage}{0.333\textwidth}
			\includegraphics[scale=.81]{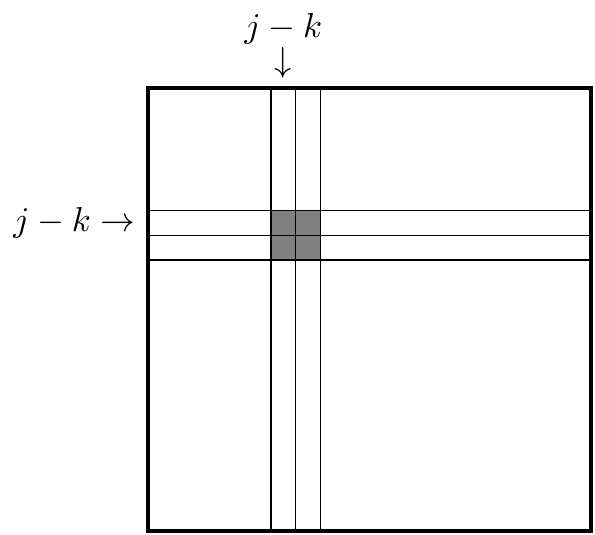}
			\caption{Region C}
			\label{regionC1}
		\end{minipage}
	\end{figure}

	We make the following observations:
	\begin{itemize}
		\item An entry in $E$ lying in \nameref{regionA1} will be a linear combination of the two corresponding shaded entries in $D'$ lying in the same row;
		\item An entry in $E$ lying in \nameref{regionB1} will be a linear combination of the two corresponding shaded entries in $D'$ lying in the same column;
		\item An entry in $E$ lying in \nameref{regionC1} will be a linear combination of the four shaded entries in $D'$ lying in \nameref{regionC1}.
	\end{itemize}
	We see that in all instances, a non-zero entry in $E$ never appears more than one unit further away from the diagonal than a non-zero entry in $D'$. Thus,	
	$$
	\mathfrak{r}(u_{(j-k \ j-k+1)}(\delta_{j-i}^{k}(\xi))D'u_{(j-k \ j-k+1)}(\delta_{j-i}^{k}(\xi))^{*})\leq \mathfrak{r}(D')+1\leq\mathfrak{r}(D)+2,
	$$
	which proves \autoref{matrix2}. 
\end{proof}

\begin{lemma}
	\label{matrix}
	Suppose $n\in\NN$ and $1\leq z_{1}<z_{2}<\cdots<z_{m}\leq n$. There is a unitary $W\in C([0,1],M_{n})$  with the following properties:
	\begin{enumerate}[label=(\alph*)]
		\item $W(0)=1_{n}$;
		\item  if $T\in M_{n}$ has zero crosses at indices $z_{1},\ldots,z_{m}$, then $W(1)TW(1)^{*}$ has zero crosses at indices $1,2,\ldots,m$ and $\mathfrak{r}(W(\xi)TW(\xi)^{*})\leq\mathfrak{r}(T)+2$ for all $\xi\in[0,1]$;
		\item if $b\in\NN$, $T\in M_{n\times b}$, and the rows of $T$ at indices $z_{1},\ldots,z_{m}$  consist entirely of zeros, then the first $m$ rows of $W(1)T$ consist entirely of zeros and for all $\xi\in[0,1]$, 
		$$
		\mathfrak{r}\left(
		\begin{pmatrix}
		0_{n\times n} & W(\xi)T\\
		0_{b\times n} & 0_{b\times b}
		\end{pmatrix}
		\right)\leq\mathfrak{r}\left(
		\begin{pmatrix}
		0_{n\times n} & T\\
		0_{b\times n} & 0_{b\times b}
		\end{pmatrix}
		\right);
		$$  
		\item if $b\in\NN$, $T\in M_{b\times n}$, and the columns of $T$ at indices $z_{1},\ldots,z_{m}$ consist entirely of zeros, then the first $m$ columns of $TW(1)^{*}$ consist entirely of zeros and for all $\xi\in[0,1]$, 
		$$
		\mathfrak{r}\left(
		\begin{pmatrix}
		0_{n\times n} & 0_{n\times b}\\
		TW(\xi)^{*} & 0_{b\times b}
		\end{pmatrix}
		\right)\leq\mathfrak{r}\left(
		\begin{pmatrix}
		0_{n\times n} & 0_{n\times b}\\
		T & 0_{b\times b}
		\end{pmatrix}
		\right).
		$$ 
	\end{enumerate} 
\end{lemma}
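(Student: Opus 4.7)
The plan is to construct $W$ as a \emph{staged} product of the paths $w_{z_i}^{i}$ from \autoref{matrix conjugator}. Subdivide $[0,1]$ into $m$ equal intervals and, for $\xi \in [(i-1)/m,\, i/m]$, put
\[
W(\xi) := w_{z_i}^{i}\bigl(m\xi - (i-1)\bigr) \cdot w_{z_{i-1}}^{i-1}(1) \cdots w_{z_1}^{1}(1),
\]
so that at any value of $\xi$ exactly one factor is ``in progress'' while the earlier factors sit at their permutation endpoints and the later factors are at $1_n$. The identities $w_{z_i}^{i}(0)=1_n$ and $w_{z_i}^{i}(1)=U[(i\ i+1\ \cdots\ z_i)]$ guarantee that $W$ is continuous across the subinterval boundaries and that $W(0) = 1_n$, giving (a).

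For (b), define iteratively $T^{(0)} := T$ and $T^{(i)} := w_{z_i}^{i}(1)\, T^{(i-1)}\, w_{z_i}^{i}(1)^*$. Since the cycle $(i\ i+1\ \cdots\ z_i)$ sends $z_i \mapsto i$ and fixes every index outside $\{i,\ldots,z_i\}$, a straightforward induction shows that $T^{(i)}$ has zero crosses at $1,\ldots,i$ and at $z_{i+1},\ldots,z_m$; in particular $W(1) T W(1)^* = T^{(m)}$ has zero crosses at $1,\ldots,m$. For the bandwidth, the key observation is that in the nine-region decomposition used in the proof of \autoref{matrix1} (applied to $T^{(i-1)}$ with special index $i$ and zero-cross index $z_i$), the two ``problematic'' blocks $D_2$ and $D_4$ --- the only blocks whose entries are shifted one unit away from the diagonal --- lie entirely in the rows or columns indexed by $\{1,\ldots,i-1\}$, and these are already zero thanks to the previously installed zero crosses. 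Hence $\mathfrak{r}(T^{(i)}) \leq \mathfrak{r}(T^{(i-1)})$ for every $i$, and inductively $\mathfrak{r}(T^{(i)}) \leq \mathfrak{r}(T)$. For $\xi \in [(i-1)/m,\, i/m]$ with $\eta := m\xi - (i-1)$,
\[
W(\xi)\, T\, W(\xi)^* = w_{z_i}^{i}(\eta)\, T^{(i-1)}\, w_{z_i}^{i}(\eta)^*,
\]
and since $T^{(i-1)}$ has a zero cross at $z_i$, \autoref{matrix2} yields $\mathfrak{r}(W(\xi) T W(\xi)^*) \leq \mathfrak{r}(T^{(i-1)}) + 2 \leq \mathfrak{r}(T) + 2$.

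For (c), at $\xi = 1$ the unitary $W(1)$ permutes rows so that row $z_i$ of $T$ is moved to row $i$, and as each such row of $T$ is zero by hypothesis, the first $m$ rows of $W(1) T$ vanish. The bandwidth of the given padded matrix is determined by the smallest row index at which a non-zero entry appears, so the bound reduces to showing that this smallest index is non-decreasing along the path. A parallel iterative argument (without the right-conjugation) shows that applying $w_{z_i}^{i}(1)$ on the left pushes the zero row at index $z_i$ into index $i$ (growing the prefix of zero rows by one) without introducing non-zero entries at smaller indices; and for intermediate $\eta$, the partial path $w_{z_i}^{i}(\eta)$ mixes rows only within the range $\{i,\ldots,z_i\}$, leaving rows $1,\ldots,i-1$ undisturbed and hence zero. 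Property (d) is symmetric: taking the adjoint identifies right multiplication by $W(\xi)^*$ on a matrix with zero columns at $z_1,\ldots,z_m$ with left multiplication by $W(\xi)$ on its conjugate transpose, reducing the claim to (c).

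The main technical challenge is the $+2$ bandwidth bound in (b): a naive simultaneous product of the $w_{z_i}^{i}(\xi)$'s would have up to $m$ partial transpositions active at a given $\xi$ and could a priori push the bandwidth by as much as $2m$. The staging trick is what tames this, reducing the uniform-in-$\xi$ estimate to a single application of \autoref{matrix2} on $T^{(i-1)}$, whose zero-cross structure inherited from the earlier completed steps prevents any accumulated bandwidth growth.
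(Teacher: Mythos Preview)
Your construction is a close variant of the paper's: both build $W$ as a staged product in which exactly one $w$-path is active on each subinterval of $[0,1]$ (the paper encodes the staging through the functions $\delta_m^k$, you through explicit subdivision). The one substantive difference is that the paper uses $w_{z_i}^{1}$ for every stage, whereas you use $w_{z_i}^{i}$. With the paper's choice, \autoref{matrix1} (case $i=1$) gives $\mathfrak r(T^{(i)})\le\mathfrak r(T^{(i-1)})$ for free at each completed stage. With your choice one only gets $\mathfrak r(T^{(i)})\le\mathfrak r(T^{(i-1)})+1$ from \autoref{matrix1} directly, and you correctly patch this by observing that the two ``bad'' blocks $D_2,D_4$ in that lemma lie in rows/columns $1,\dots,i-1$, which are already zero thanks to the previously installed zero crosses. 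That refinement is valid, so part~(b) goes through.

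The argument for (c), however, has a genuine gap. Your reduction ``the bandwidth of the padded matrix is determined by the smallest row index at which a non-zero entry appears'' is not correct: for a non-zero entry at position $(r,n+c)$ the distance from the diagonal is $n+c-r$, so the bandwidth depends on the column as well. What you must show is that left multiplication by $W(\xi)$ never moves non-zero content to a \emph{strictly smaller} row index; equivalently, every non-zero entry $(r,c)$ of $W(\xi)T^{(i-1)}$ arises from a non-zero entry $(r',c)$ of $T^{(i-1)}$ with $r'\le r$. Merely observing that $w_{z_i}^{i}(\eta)$ fixes rows $1,\dots,i-1$ is not enough, since mixing within rows $i,\dots,z_i$ could in principle push content upward. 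The missing step (which the paper carries out explicitly for its version) is to factor $w_{z_i}^{i}(\eta)=u_{(z_i-k\ z_i-k+1)}(\beta)\cdot U[(z_i-k+1\ \cdots\ z_i)]$ as in \autoref{matrix2}, note that the permutation factor shifts the zero row at $z_i$ up to $z_i-k+1$ and all other affected rows \emph{down}, and then check that the single transposition-path factor mixes row $z_i-k$ only with the (now zero) row $z_i-k+1$, so any new non-zero entry lands at row index $\ge z_i-k$. Once you add this, (c) follows, and (d) by adjoints as you say.
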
	

\begin{proof}	
	For $1\leq i\leq j\leq n$, let $\delta_{j}^{i}$ and $w_{j}^{i}\in C([0,1],M_{n})$ be given as in \autoref{matrix conjugator}. Define 
	$$
	W:=(w_{z_{m}}^{1}\circ\delta_{m}^{m}) \cdots(w_{z_{1}}^{1}\circ\delta_{m}^{1}),
	$$	
	which is a unitary in $C([0,1],M_{n})$.
	
	By \autoref{unitaries} and \autoref{matrix conjugator}, $W(0)=w_{z_{m}}^{1}(0)\cdots w_{z_{1}}^{1}(0)=1_{n}$, so that \textit{(a)} holds. Since \textit{(d)} follows immediately from \textit{(c)} by taking adjoints, only \textit{(b)} and \textit{(c)} remain to be verified. Let $\sigma$ denote the permutation 
	$$
	(1 \ 2 \ \cdots \ z_{m})(1 \ 2 \ \cdots \ z_{m-1})\cdots(1 \ 2 \ \cdots \ z_{1})\in S_{n}
	$$
	and note that $\sigma(z_{k})=m-k+1$ for $1\leq k\leq m$. Hence, if $T$ is any matrix in $M_{n}$ (resp. $M_{n\times b}$) with zero crosses (resp. rows) at indices $z_{1},\ldots,z_{m}$, then $U[\sigma]TU[\sigma]^{*}$ (resp. $U[\sigma]T$) has zero crosses (resp. rows) at indices $1,\ldots,m$. Since $W(1)=U[\sigma]$ by \autoref{permutation}, this proves the first half of \textit{(b)} and \textit{(c)}.
	
	Let us now establish the bandwidth approximations in \textit{(b)} and \textit{(c)}. Fix $\xi\in[0,1]$. If $\xi=0$, the results are trivial, and so we may assume $\xi\in(0,1]$. Let $1\leq k\leq m$ be the unique integer such that $\xi\in\left(\frac{k-1}{m},\frac{k}{m}\right]$. Then we may write
	$$
	\begin{aligned}
	W(\xi)&=w_{z_{m}}^{1}(0)\cdots w_{z_{k+1}}^{1}(0) w_{z_{k}}^{1}(\delta_{m}^{k}(\xi)) w_{z_{k-1}}^{1}(1)\cdots w_{z_{1}}^{1}(1)\\
	&=w_{z_{k}}^{1}(\delta_{m}^{k}(\xi)) w_{z_{k-1}}^{1}(1)\cdots w_{z_{1}}^{1}(1).
	\end{aligned}
	$$
	Suppose first that $T\in M_{n}$ has zero crosses at indices $z_{1},\ldots,z_{m}$. Then by \autoref{matrix1}, $\mathfrak{r}(w_{z_{1}}^{1}(1)Tw_{z_{1}}^{1}(1)^{*})\leq \mathfrak{r}(T)$ since $T$ has a zero cross at index $z_{1}$. Moreover, $z_{1}-1$ applications of part \textit{(a)} of \autoref{untouched crosses} shows that $w_{z_{1}}^{1}(1)Tw_{z_{1}}^{1}(1)^{*}$ has a zero cross at indices $z_{2},\ldots,z_{m}$, since $z_{2},\ldots,z_{m}$ are not among the indices affected by the conjugation. Hence, we may apply \autoref{matrix1} again to conclude that 
	$$
	\mathfrak{r}(w_{z_{2}}^{1}(1)w_{z_{1}}^{1}(1)Tw_{z_{1}}^{1}(1)^{*}w_{z_{2}}^{1}(1)^{*})\leq\mathfrak{r}(w_{z_{1}}^{1}(1)Tw_{z_{1}}^{1}(1)^{*})\leq \mathfrak{r}(T).
	$$
	Continuing inductively in this way, it follows that $\mathfrak{r}(D)\leq\mathfrak{r}(T)$, where 
	$$
	D=w_{z_{k-1}}^{1}(1)\cdots w_{z_{1}}^{1}(1)Tw_{z_{1}}^{1}(1)^{*}\cdots w_{z_{k-1}}^{1}(1)^{*}
	$$
	and, moreover, $D$ has a zero cross at indices $z_{k},\ldots,z_{m}$. Thus, by \autoref{matrix2},
	$$
	\mathfrak{r}(W(\xi)TW(\xi)^{*})=\mathfrak{r}(w_{z_{k}}^{1}(\delta_{m}^{k}(\xi))Dw_{z_{k}}^{1}(\delta_{m}^{k}(\xi))^{*})\leq\mathfrak{r}(D)+2\leq \mathfrak{r}(T)+2,
	$$ 
	which yields the approximation in \textit{(b)}.
	
	To complete the proof of \textit{(c)} suppose $T\in M_{n\times b}$ and that the rows of $T$ at indices $z_{1},\ldots,z_{m}$ consist entirely of zeros. Following the lines of the proof of \autoref{matrix1}, we have
	$$
	\mathfrak{r}\left(
	\begin{pmatrix}
	0_{n\times n} & w_{z_{k-1}}^{1}(1)\cdots w_{z_{1}}^{1}(1)T\\
	0_{b\times n} & 0_{b\times b}
	\end{pmatrix}
	\right)\leq\mathfrak{r}\left(
	\begin{pmatrix}
	0_{n\times n} & T\\
	0_{b\times n} & 0_{b\times b}
	\end{pmatrix}\right)
	$$
	by \autoref{permutation} since only rows of zeros are shifted up when multiplying $T$ on the left by $w_{z_{k-1}}^{1}(1)\cdots w_{z_{1}}^{1}(1)$, while non-zero entries remain in place or are shifted down towards the diagonal. Similar reasoning to that used when deducing \autoref{wij} shows that there exist $\beta\in[0,1]$ and $1\leq p\leq z_{k}-1$ such that
	$$
	w_{z_{k}}^{1}(\delta_{m}^{k}(\xi))=u_{(z_{k} - p \ z_{k}-p+1)}(\beta) w_{z_{k}}^{z_{k}-p+1}(1).
	$$
	Since the $z_{k}$th row of a given matrix remains unchanged when multiplying on the left by $w_{z_{k-1}}^{1}(1)\cdots w_{z_{1}}^{1}(1)$, the $z_{k}$th row of $w_{z_{k-1}}^{1}(1)\cdots w_{z_{1}}^{1}(1)T$ contains only zeros. Hence,  multiplying this given matrix on the left by $w_{z_{k}}^{z_{k}-p+1}(1)=U[(z_{k}-p+1 \ \cdots \ z_{k})]$ shifts the zero row from index $z_{k}$ to index $z_{k}-p+1$, while shifting the rows $z_{k}-p+1,\ldots,z_{k}-1$ down by one towards the diagonal. Thus, 
	$$
	\mathfrak{r}\left(
	\begin{pmatrix}
	0_{n\times n} & E\\
	0_{b\times n} & 0_{b\times b}
	\end{pmatrix}
	\right)\leq
	\mathfrak{r}\left(
	\begin{pmatrix}
	0_{n\times n} & w_{z_{k-1}}^{1}(1)\cdots w_{z_{1}}^{1}(1)T\\
	0_{b\times n} & 0_{b\times b}
	\end{pmatrix}
	\right)\leq\mathfrak{r}\left(
	\begin{pmatrix}
	0_{n\times n} & T\\
	0_{b\times n} & 0_{b\times b}
	\end{pmatrix}\right),
	$$
	where $E:=U[(z_{k}-p+1 \ \cdots \ z_{k})]w_{z_{k-1}}^{1}(1)\cdots w_{z_{1}}^{1}(1)T$. Now, $E$ and $u_{(z_{k}-p \ z_{k}-p+1)}(\beta)E$ may differ only on rows $z_{k}-p$ and $z_{k}-p+1$, where these two rows of the latter matrix are linear combinations of the same two rows of $E$. From this and the fact that the $z_{k}-p+1$ row of $E$ consists only of zeros, it is clear that for a given column $\lambda$, the $(z_{k}-p,\lambda)$- or $(z_{k}-p+1,\lambda)$-entry of $u_{(z_{k}-p \ z_{k}-p+1)}(\beta)E$ can be non-zero only if the $(z_{k}-p,\lambda)$ entry of $E$ is non-zero. Hence, 
	$$
	\mathfrak{r}\left(
	\begin{pmatrix}
	0_{n\times n} & u_{(z_{k}-p \ z_{k}-p+1)}(\beta)E\\
	0_{b\times n} & 0_{b\times b}
	\end{pmatrix}
	\right)\leq
	\mathfrak{r}\left(
	\begin{pmatrix}
	0_{n\times n} & E\\
	0_{b\times n} & 0_{b\times b}
	\end{pmatrix}
	\right)\leq
	\mathfrak{r}\left(
	\begin{pmatrix}
	0_{n\times n} & T\\
	0_{b\times n} & 0_{b\times b}
	\end{pmatrix}\right).
	$$
	Since $W(\xi)T=u_{(z_{k}-p \ z_{k}-p+1)}(\beta)E$, this establishes the bandwidth approximation in \textit{(c)}, thus completing the proof of the lemma. 
\end{proof}

\begin{definition}
	\label{transposition prod}
	Let $N\in\NN$. For $j,k,n\in\NN$ satisfying $N\leq j\leq k\leq n$, we define
	$$
	\sigma^{n}_{j,k}:=(j-N+1 \ \ k-N+1)\cdots(j \ k)\in S_{n}.
	$$
	We define $u_{j,k}^{n}\colon[0,1]\to M_{n}$ to be the unitary
	$$
	u^{n}_{j,k}(\xi):=u_{(j-N+1  \ k-N+1)}(\xi)\cdots u_{(j \ k)}(\xi),
	$$
	where $u_{(i \ i')}\colon [0,1]\to M_{n}$ is a continuous path of unitaries defined as in \autoref{unitaries}. 
\end{definition}

\begin{remark}
	\label{commuting perms}
	Note that in the definition above, if $j\leq n-N$, then $\sigma^{n}_{j,n}$ is the permutation in $S_{n}$ that interchanges $j-N+1,\ldots,j$ and $n-N+1,\ldots,n$; moreover, in this case, all of the factors in the definition of $u^{n}_{j,n}(\xi)$ (for any $\xi\in[0,1]$) commute with each other by \autoref{unitaries}. 
\end{remark}

\begin{lemma}
	\label{factormain}
	Suppose $N,n,k,i\in\NN$ satisfy $N\leq k\leq i-N$ and $i\leq n-N$, and that $\xi\in[0,1]$. Then,
	$$
	u^{n}_{k,n}(\xi)=U[\sigma^{n}_{i,n}]u^{n}_{k,i}(\xi)U[\sigma^{n}_{i,n}],
	$$
	where $u^{n}_{k,n}(\xi)$, $u^{n}_{k,i}(\xi)$, and $\sigma^{n}_{i,n}$ are each products of $N$ factors as defined in \autoref{transposition prod}.
\end{lemma}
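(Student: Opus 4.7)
The plan is to expand both sides in their defining products and exploit the fact that conjugation by a permutation unitary simply relabels the support indices of each factor. First I would observe that, by the hypothesis $i\le n-N$ together with \autoref{commuting perms}, the permutation $\sigma^{n}_{i,n}$ decomposes as a product of $N$ pairwise disjoint transpositions; in particular $\sigma^{n}_{i,n}$ is an involution, so $U[\sigma^{n}_{i,n}]$ is self-adjoint and the two-sided expression in the statement really is the conjugation $U[\sigma^{n}_{i,n}]\,(\,\cdot\,)\,U[\sigma^{n}_{i,n}]^{*}$.

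The central computational input is the naturality of the paths $u_{(a\ b)}$ under permutation conjugation: by property $(3)$ of \autoref{unitaries}, $u_{(a\ b)}(\xi)$ perturbs $1_{n}$ only on the $\{a,b\}$-block, while $U[\pi]$ acts on the standard basis by $e_{s}\mapsto e_{\pi(s)}$. Assuming the canonical (index-symmetric) choice of these paths, which is already implicit in the paper's use of them in \autoref{matrix conjugator} and the ensuing lemmas, one has
\begin{equation*}
U[\pi]\, u_{(a\ b)}(\xi)\, U[\pi]^{*}\;=\;u_{(\pi(a)\ \pi(b))}(\xi)
\end{equation*}
for every $\pi\in S_{n}$, transposition $(a\ b)$, and $\xi\in[0,1]$. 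This is a routine entry-by-entry check using $(U[\pi]AU[\pi]^{*})_{ij}=A_{\pi^{-1}(i),\pi^{-1}(j)}$.

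Granted this, the lemma reduces to index bookkeeping. Writing $u^{n}_{k,i}(\xi)=\prod_{s=1}^{N}u_{(k-N+s\ \ i-N+s)}(\xi)$ as in \autoref{transposition prod}, the hypothesis $k\le i-N$ gives $k-N+s\le k<i-N+1$, and combining with $i\le n-N$ also yields $k-N+s\le n-2N<n-N+1$; hence each index $k-N+s$ lies outside both $\{i-N+1,\ldots,i\}$ and $\{n-N+1,\ldots,n\}$ and is fixed by $\sigma^{n}_{i,n}$, while $\sigma^{n}_{i,n}(i-N+s)=n-N+s$ by definition. Applying the naturality identity factor-by-factor and using that conjugation by a unitary is a $*$-homomorphism, the conjugated product rearranges to $\prod_{s=1}^{N}u_{(k-N+s\ \ n-N+s)}(\xi)=u^{n}_{k,n}(\xi)$, which is the claimed equality.

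The only genuine obstacle is the naturality step: \autoref{unitaries} specifies the paths $u_{(a\ b)}$ only up to their support and boundary values, so for the displayed identity to hold on the nose one must fix a canonical, index-symmetric path (for instance the standard rotation on the two-dimensional block). This convention is harmless and is implicit elsewhere in the paper; once it is adopted, no further machinery is required beyond the index calculation above.
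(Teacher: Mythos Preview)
Your argument is correct and lands on the same computational core as the paper's proof, namely that conjugating $u_{(a\ b)}(\xi)$ by a permutation unitary relabels its support indices. The paper, however, does not invoke the full naturality identity $U[\pi]u_{(a\ b)}(\xi)U[\pi]^{*}=u_{(\pi(a)\ \pi(b))}(\xi)$ for arbitrary $\pi$; instead it first expands both $U[\sigma^{n}_{i,n}]$ and $u^{n}_{k,i}(\xi)$ into their $N$ factors, verifies (via the inequalities $k+j\le i-N<i+j'$ and $i+j<n+j'$) that $U[(i+j'\ n+j')]$ commutes with $u_{(k+j\ i+j)}(\xi)$ whenever $j\ne j'$, rearranges the product so that each $u_{(k+j\ i+j)}(\xi)$ is sandwiched by its matching $U[(i+j\ n+j)]$, and only then appeals to the special case $U[(a\ b)]u_{(c\ b)}(\zeta)U[(a\ b)]=u_{(c\ a)}(\zeta)$. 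Your route is more direct---one conjugation distributed over the product, no commutation step---while the paper's route makes do with the weaker special case of naturality (conjugation by a single transposition that shares one index with the path), at the cost of the extra commutation bookkeeping. Both rely on the same implicit convention you flag: that the paths $u_{(a\ b)}$ are chosen canonically so that permutation conjugation literally relabels them, an assumption the paper also uses without comment when it calls its special case ``elementary.''
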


\begin{proof}
	By definition,
	\begin{equation}
	\label{negprod}
	U[\sigma^{n}_{i,n}]u^{n}_{k,i}(\xi)
	=\left(\prod_{j=-(N-1)}^{0}U[(i+j \ \ n+j)]\right)\left(\prod_{j=-(N-1)}^{0}u_{(k+j \  i+j)}(\xi)\right).
	\end{equation}
	Note that for any $-(N-1)\leq j,j'\leq 0$, 
	$$
	i+j\leq i<n-(N-1)\leq n+j'
	$$
	and 
	$$
	k+j\leq i-N+j\leq i-N<i-(N-1)\leq i+j'.
	$$
	Thus, whenever $-(N-1)\leq j,j'\leq 0$ with $j\not=j'$, the permutations $(i +j' \ \ n+j')$ and $(k+j \ \ i+j)$ are disjoint, and hence, $U[(i +j' \ \ n+j')]$ and $u_{(k+j \ \  i+j)}(\xi)$ commute. Hence, \autoref{negprod} can be restated as
	$$
	U[\sigma^{n}_{i,n}]u^{n}_{k,i}(\xi)=\prod_{j=-(N-1)}^{0}U[(i+j \ \ n+j)]u_{(k+j \  i+j)}(\xi).
	$$
	By the same reasoning,
	$$
	\begin{aligned}
	U[\sigma^{n}_{i,n}]u^{n}_{k,i}(\xi)U[\sigma^{n}_{i,n}]
	&=\prod_{j=-(N-1)}^{0}U[(i+j \ \ n+j)]u_{(k+j \  i+j)}(\xi)\prod_{j=-(N-1)}^{0}U[(i+j \ \ n+j)]\\
	&=\prod_{j=-(N-1)}^{0}U[(i+j \ \ n+j)]u_{(k+j \  i+j)}(\xi)U[(i+j \ \ n+j)].
	\end{aligned}
	$$
	It is elementary to see that $U[(a \ b)]u_{(c \ b)}(\zeta)U[(a \ b)]=u_{(c \ a)}(\zeta)$ whenever $c\leq b\leq a$ and $\zeta\in[0,1]$. Hence,
	$$
	U[\sigma^{n}_{i,n}]u^{n}_{k,i}(\xi)U[\sigma^{n}_{i,n}]
	=\prod_{j=-(N-1)}^{0}u_{(k+j \ n+j)}(\xi)=u^{n}_{k,n}(\xi),
	$$
	which proves the lemma.
\end{proof}

\begin{definition}
	For integers $1\leq j\leq k\leq n$, define 
	$$
	\gamma^{n}_{j,k}:=(j \ \ j+1 \ \cdots \ k)\in S_{n}.
	$$
\end{definition}

\begin{lemma}
	\label{calculation}
	Let $N,n,t\in\NN$ with $1\leq N<t\leq n-N$, and let $(\xi_{N+1},\ldots,\xi_{t-1})$ be a vector in $[0,1]^{t-1-N}$ whose final $N-1$ entries consist only of zeros. Then,	
	$$
	\begin{aligned}
		U[\gamma^{n}_{1,n}]^{N}\left(\prod_{k=N}^{t-2}u^{n}_{k,n}(\xi_{k+1})\right)U[\sigma^{n}_{t-1,n}]
		=U[\gamma^{n}_{1,t-1}]^{N}\left(\prod_{k=N}^{t-2}u^{n}_{k,t-1}(\xi_{k+1})\right)U[\gamma^{n}_{t,n}]^{N},
	\end{aligned}
	$$
	where $u_{k,n}^{n}(\xi_{k+1})$, $u_{k,t-1}^{n}(\xi_{k+1})$, and $\sigma_{t-1,n}^{n}$ are each products of $N$ factors as defined in \autoref{transposition prod}.
\end{lemma}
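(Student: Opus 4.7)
The plan is to reduce the left-hand side to the right-hand side in three stages: truncate the product using the vanishing hypothesis, apply \autoref{factormain} to conjugate each remaining factor by $U[\sigma^{n}_{t-1,n}]$, and verify a combinatorial identity in $S_{n}$.

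First I would observe that for $k\in\{t-N,\ldots,t-2\}$ the index $k+1$ lies in $\{t-N+1,\ldots,t-1\}$, so by hypothesis $\xi_{k+1}=0$ and hence $u^{n}_{k,n}(\xi_{k+1})=u^{n}_{k,t-1}(\xi_{k+1})=1_{n}$. Thus the products on both sides may be taken to run over $k\in\{N,\ldots,t-1-N\}$ without loss. For each such $k$, the inequalities $N\leq k\leq (t-1)-N$ and $t-1\leq n-N$ hold, so \autoref{factormain} applied with $i=t-1$ yields
$$
u^{n}_{k,n}(\xi_{k+1})=U[\sigma^{n}_{t-1,n}]\,u^{n}_{k,t-1}(\xi_{k+1})\,U[\sigma^{n}_{t-1,n}].
$$
Since $\sigma^{n}_{t-1,n}$ is a product of disjoint transpositions, $U[\sigma^{n}_{t-1,n}]^{2}=1_{n}$; substituting the above identity for each factor and telescoping the interior pairs of $U[\sigma^{n}_{t-1,n}]$'s gives
$$
\prod_{k=N}^{t-2}u^{n}_{k,n}(\xi_{k+1})=U[\sigma^{n}_{t-1,n}]\Big(\prod_{k=N}^{t-2}u^{n}_{k,t-1}(\xi_{k+1})\Big)U[\sigma^{n}_{t-1,n}].
$$
Plugging this into the LHS of the lemma and using $U[\sigma^{n}_{t-1,n}]^{2}=1_{n}$ once more to cancel the rightmost $U[\sigma^{n}_{t-1,n}]$ with the one already present in the LHS reduces the problem to proving the permutation identity
$$
U[\gamma^{n}_{1,n}]^{N}\,U[\sigma^{n}_{t-1,n}]=U[\gamma^{n}_{1,t-1}]^{N}\,U[\gamma^{n}_{t,n}]^{N}
$$
and then pulling $U[\gamma^{n}_{t,n}]^{N}$ across the product on the RHS.

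The commutation of $U[\gamma^{n}_{t,n}]^{N}$ with the product is immediate: $\gamma^{n}_{t,n}$ is supported on $\{t,\ldots,n\}$, while each $u^{n}_{k,t-1}(\xi_{k+1})$ (for $k\leq t-2$) is supported on $\{1,\ldots,t-1\}$. The remaining identity in $S_{n}$ I would verify by a direct case analysis: on $\{1,\ldots,t-1-N\}$ both sides send $i$ to $i+N$; on $\{t-N,\ldots,t-1\}$ both sides send $i$ to $i+N-t+1\in\{1,\ldots,N\}$; on $\{t,\ldots,n-N\}$ both sides again send $i$ to $i+N$; and on $\{n-N+1,\ldots,n\}$ both sides send $i$ to $i+N+t-n-1\in\{t,\ldots,t+N-1\}$. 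For the LHS one first applies the block swap $\sigma^{n}_{t-1,n}$ and then the cyclic shift $\gamma^{n}_{1,n}{}^{N}$ (which wraps indices in $\{n-N+1,\ldots,n\}$ around to $\{1,\ldots,N\}$); for the RHS the two cyclic shifts act independently on the disjoint blocks $\{1,\ldots,t-1\}$ and $\{t,\ldots,n\}$.

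The main obstacle, such as it is, lies in this permutation identity: the computation is entirely elementary but requires careful bookkeeping of the wrap-around of $\gamma^{n}_{1,n}{}^{N}$ near $i=n$, of $\gamma^{n}_{1,t-1}{}^{N}$ near $i=t-1$, and of the block-swap action of $\sigma^{n}_{t-1,n}$. The earlier truncation and telescoping steps are essentially mechanical once \autoref{factormain} is in hand.
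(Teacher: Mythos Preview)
Your proof is correct and follows essentially the same approach as the paper's: truncate using the vanishing hypothesis, apply \autoref{factormain} and telescope via $U[\sigma^{n}_{t-1,n}]^{2}=1_{n}$, reduce to the permutation identity $U[\gamma^{n}_{1,n}]^{N}U[\sigma^{n}_{t-1,n}]=U[\gamma^{n}_{1,t-1}]^{N}U[\gamma^{n}_{t,n}]^{N}$, and commute $U[\gamma^{n}_{t,n}]^{N}$ past the product by disjointness of supports. The only difference is cosmetic: the paper asserts the permutation identity as ``elementary'' without details, whereas you supply the explicit four-case verification.
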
	
\begin{proof}
	If $t-1=N$, then the products on either side of the equality above are empty and the equation reduces to
	\begin{equation}
		\label{elementary}
		U[\gamma^{n}_{1,n}]^{N}U[\sigma^{n}_{t-1,n}]=U[\gamma^{n}_{1,t-1}]^{N}U[\gamma^{n}_{t,n}]^{N}.
	\end{equation}
	By \autoref{commuting perms}, it is elementary to see that \autoref{elementary} holds. Therefore, for the remainder of the proof, we may assume that $t-2\geq N$. 
	
	For any $N\leq k\leq t-1-N$, we may apply \autoref{factormain} (recalling that $t-1\leq n-N$) to conclude that
	\begin{equation}
		\label{factor}
		u^{n}_{k,n}(\xi_{k+1})=U[\sigma^{n}_{t-1,n}]u^{n}_{k,t-1}(\xi_{k+1})U[\sigma^{n}_{t-1,n}].
	\end{equation}
	
	In fact, \autoref{factor} holds for all $N\leq k\leq t-2$. Indeed, if $t-N\leq k\leq t-2$, then by the assumption of the lemma, it must be that $\xi_{k+1}=0$. In this case, \autoref{factor} reduces to $1_{n}=U[\sigma^{n}_{t-1,n}]^{2}$, which holds by \autoref{commuting perms}. 
	
	Therefore,  
	$$
	\begin{aligned}
		\prod_{k=N}^{t-2}u^{n}_{k,n}(\xi_{k+1})&=\prod_{k=N}^{t-2}U[\sigma^{n}_{t-1,n}]u^{n}_{k,t-1}(\xi_{k+1})U[\sigma^{n}_{t-1,n}]\\
		&=U[\sigma^{n}_{t-1,n}]\left(\prod_{k=N}^{t-2}u^{n}_{k,t-1}(\xi_{k+1})\right)U[\sigma^{n}_{t-1,n}],
	\end{aligned}
	$$
	which, together with \autoref{elementary}, yields that
	\begin{equation}
		\label{penultimate}
		\begin{aligned}
			U[\gamma^{n}_{1,n}]^{N}\prod_{k=N}^{t-2}u^{n}_{k,n}(\xi_{k+1})
			=U[\gamma^{n}_{1,t-1}]^{N}U[\gamma^{n}_{t,n}]^{N}\left(\prod_{k=N}^{t-2}u^{n}_{k,t-1}(\xi_{k+1})\right)U[\sigma^{n}_{t-1,n}].
		\end{aligned}
	\end{equation}
	Moreover, for each $k=N,\ldots,t-2$, the indices in each transposition-like unitary factor in $u^{n}_{k,t-1}(\xi_{k+1})$ are distinct from $t,\ldots,n$. Hence,  $U[\gamma^{n}_{t,n}]^{N}$ and  $\prod_{k=N}^{t-2}u^{n}_{k,t-1}(\xi_{k+1})$ commute, and so, using \autoref{penultimate}, it follows that
	$$
	\begin{aligned}
		U[\gamma^{n}_{1,n}]^{N}\prod_{k=N}^{t-2}u^{n}_{k,n}(\xi_{k+1})
		=
		U[\gamma^{n}_{1,t-1}]^{N}\left(\prod_{k=N}^{t-2}u^{n}_{k,t-1}(\xi_{k+1})\right)U[\gamma^{n}_{t,n}]^{N}U[\sigma^{n}_{t-1,n}].
	\end{aligned}
	$$
	\autoref{calculation} then follows by multiplying $U[\sigma^{n}_{t-1,n}]$ on the right of both sides in the above expression. 
\end{proof}

\begin{definition}
	\label{ltmatrix}
	Let $N\in\NN$. Given $n\in\NN$ with $n\geq N$, let $W_{n}\in C([0,1]^{n},M_{n})$ be the unitary
	\begin{equation}
	\label{Wn}
	W_{n}(\xi_{1},\ldots,\xi_{n}):=U[\gamma^{n}_{1,n}]^{N}\left(\prod_{k=N}^{n-1}u^{n}_{k,n}(\xi_{k+1})\right),
	\end{equation}
	where $u_{k,n}^{n}$ is the product of $N$ factors as in \autoref{transposition prod}.
	We adopt the convention that $W_{n}:=1_{n}$ if $n=N$.
\end{definition}

\begin{lemma}
	\label{Wn reduction}
	Let $N\in\NN$. Suppose $n$ is an integer greater than $N$ and $\vec{\xi}:=(\xi_{1},\ldots,\xi_{n})$ is a vector in $[0,1]^{n}$ with the property that $\xi_{1}=1$, the final $N$ entries are all zero, and for any consecutive $N$ entries, at most one is non-zero. Suppose $K=\{1=k_{1}<k_{2}<\cdots<k_{m}\}$ is any set of indices, containing $1$, at which $\vec{\xi}$ is $1$; put $k_{m+1}:=n+1$. Then, 
	\begin{equation}
	\label{Wn decomp}
	W_{n}(\vec{\xi})=\diag\left(W_{k_{2}-k_{1}}(\xi_{k_{1}},\ldots,\xi_{k_{2}-1}),\ldots,W_{k_{m+1}-k_{m}}(\xi_{k_{m}},\ldots,\xi_{k_{m+1}-1})\right),
	\end{equation}
	where $N$ is the fixed positive integer being used to define $W_{n},W_{k_{2}-k_{1}},\ldots,W_{k_{m+1}-k_{m}}$ in \autoref{ltmatrix}.
\end{lemma}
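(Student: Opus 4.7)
The plan is to induct on the cardinality $m$ of the index set $K$. The base case $m=1$ is immediate: then $K=\{1\}$, $k_{m+1}=n+1$, and the right-hand side of \autoref{Wn decomp} reduces to the single block $W_n(\vec\xi)$.

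For the inductive step, assume $m\geq 2$ and the result for all smaller values. First I would extract the arithmetic consequences of the hypotheses that are needed to invoke \autoref{calculation} with $t=k_2$. The condition $\xi_1=1$ together with the spacing assumption forces $\xi_2=\cdots=\xi_N=0$, so $k_2\geq N+1$. Similarly, $\xi_{k_2}=1$ forces $\xi_{k_2-N+1}=\cdots=\xi_{k_2-1}=0$ and $\xi_{k_2+1}=\cdots=\xi_{k_2+N-1}=0$. Finally, since the last $N$ entries of $\vec\xi$ vanish and $\xi_{k_m}=1$, we get $k_m\leq n-N$, and in particular $k_2\leq n-N$. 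Thus $N<k_2\leq n-N$ and the subvector $(\xi_{N+1},\ldots,\xi_{k_2-1})$ has its final $N-1$ entries all zero, which is exactly the input needed for \autoref{calculation}.

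The heart of the proof is then the following manipulation. Because $\xi_{k_2}=1$, the factor of $W_n(\vec\xi)$ with index $k=k_2-1$ equals $u^n_{k_2-1,n}(1)=U[\sigma^n_{k_2-1,n}]$, so I can split the defining product and apply \autoref{calculation} to the initial segment, obtaining
\begin{align*}
W_n(\vec\xi)=\bigl(U[\gamma^n_{1,k_2-1}]^N \textstyle\prod_{k=N}^{k_2-2} u^n_{k,k_2-1}(\xi_{k+1})\bigr)\cdot\bigl(U[\gamma^n_{k_2,n}]^N \textstyle\prod_{k=k_2}^{n-1} u^n_{k,n}(\xi_{k+1})\bigr).
\end{align*}
The first bracketed factor involves only transpositions among indices in $\{1,\dots,k_2-1\}$, and inspection against the definition of $W_{k_2-1}$ shows it equals $\diag\bigl(W_{k_2-1}(\xi_1,\ldots,\xi_{k_2-1}),\,1_{n-k_2+1}\bigr)$ (with both sides being $1_n$ in the degenerate case $k_2-1=N$). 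For the second bracketed factor, the spacing zeros $\xi_{k_2+1},\ldots,\xi_{k_2+N-1}$ make the terms with $k=k_2,\ldots,k_2+N-2$ equal to $1_n$, so they can be dropped, and the remaining transpositions all act within $\{k_2,\dots,n\}$. The shift $i\mapsto i-k_2+1$ then identifies this factor with $\diag\bigl(1_{k_2-1},\,W_{n-k_2+1}(\xi_{k_2},\ldots,\xi_n)\bigr)$. Multiplying these two commuting block-diagonal pieces collapses them to $\diag\bigl(W_{k_2-1}(\xi_1,\ldots,\xi_{k_2-1}),\,W_{n-k_2+1}(\xi_{k_2},\ldots,\xi_n)\bigr)$.

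To finish, the subvector $(\xi_{k_2},\ldots,\xi_n)$ of length $n-k_2+1\geq N+1$ satisfies all three standing hypotheses, and its own set of indices at which it equals $1$, obtained by translating $\{k_2,\ldots,k_m\}$ by $1-k_2$, has cardinality $m-1$. The inductive hypothesis decomposes $W_{n-k_2+1}(\xi_{k_2},\ldots,\xi_n)$ into the tail blocks $W_{k_3-k_2}(\xi_{k_2},\ldots,\xi_{k_3-1}),\ldots,W_{k_{m+1}-k_m}(\xi_{k_m},\ldots,\xi_{k_{m+1}-1})$, and combining with the leading block $W_{k_2-1}=W_{k_2-k_1}$ gives \autoref{Wn decomp}. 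The principal obstacle I anticipate is the index bookkeeping required to see that the two bracketed factors above really are the block embeddings $\diag(W_{k_2-1},1)$ and $\diag(1,W_{n-k_2+1})$; this is routine but depends on carefully matching the definitions of $\gamma^n_{\bullet,\bullet}$, $\sigma^n_{\bullet,\bullet}$, $u^n_{k,n}$, and $W_\bullet$ under the shift.
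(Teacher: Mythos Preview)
Your proposal is correct and follows essentially the same approach as the paper: induction on $m$, splitting the defining product of $W_n$ at the factor $u^n_{k_2-1,n}(\xi_{k_2})=U[\sigma^n_{k_2-1,n}]$, applying \autoref{calculation} with $t=k_2$ to obtain the two-block factorization $\diag(W_{k_2-1},1)\cdot\diag(1,W_{n-k_2+1})$, and then invoking the inductive hypothesis on the tail block. Your handling of the edge cases and the index bookkeeping (in particular dropping the identity factors $k=k_2,\ldots,k_2+N-2$ via the spacing hypothesis before shifting) matches the paper's computation in \autoref{induction1}--\autoref{induction2}.
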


\begin{proof}
	Fix an integer $n>N$, a vector $\vec{\xi}$, and an associated set $K$, satisfying the hypotheses of the lemma. Let us proceed by induction on the size $m$ of $K$. If $m=1$, there is nothing to show. Fix $m\geq 2$ and suppose that \autoref{Wn reduction} holds for every natural number $n'>N$, vector $\zeta$, and associated set $K'$ of size $m-1$, provided they satisfy the required hypotheses. Assume that $|K|=m$. Let us show \autoref{Wn decomp} holds in this case.
	
	Note that by assumption $\xi_{k_{2}-(N-1)},\ldots,\xi_{k_{2}-1}=0$ and
	\begin{equation}
	\label{k2upperbound}
	N<k_{2}\leq n-N.
	\end{equation}
	Therefore, we may apply \autoref{calculation} with $N$, $n$, $k_{2}$, and $(\xi_{N+1},\ldots,\xi_{k_{2}-1})$ to conclude that
	\begin{equation}
	\label{calculation2}
		\begin{aligned}
			U[\gamma^{n}_{1,n}]^{N}\left(\prod_{k=N}^{k_{2}-2}u^{n}_{k,n}(\xi_{k+1})\right)U[\sigma^{n}_{k_{2}-1,n}]
			=U[\gamma^{n}_{1,k_{2}-1}]^{N}\left(\prod_{k=N}^{k_{2}-2}u^{n}_{k,k_{2}-1}(\xi_{k+1})\right)U[\gamma^{n}_{k_{2},n}]^{N}.
		\end{aligned}
	\end{equation}
	
	By \autoref{Wn} and \autoref{k2upperbound},  
	$$
	\begin{aligned}
	W_{n}(\vec{\xi})&=U[\gamma^{n}_{1,n}]^{N}\left(\prod_{k=N}^{n-1}u^{n}_{k,n}(\xi_{k+1})\right)\\
	&=U[\gamma^{n}_{1,n}]^{N}\left(\prod_{k=N}^{k_{2}-2}u^{n}_{k,n}(\xi_{k+1})\right)u^{n}_{k_{2}-1,n}(\xi_{k_{2}})\left(\prod_{k=k_{2}}^{n-1}u^{n}_{k,n}(\xi_{k+1})\right).
	\end{aligned}
	$$
	Since $u^{n}_{k_{2}-1,n}(\xi_{k_{2}})=u^{n}_{k_{2}-1,n}(1)=U[\sigma^{n}_{k_{2}-1,n}]$, we  may apply \autoref{calculation2} to obtain
	\begin{equation}
	\label{induction}
	W_{n}(\vec{\xi})=U[\gamma^{n}_{1,k_{2}-1}]^{N}\left(\prod_{k=N}^{k_{2}-2}u^{n}_{k,k_{2}-1}(\xi_{k+1})\right)U[\gamma^{n}_{k_{2},n}]^{N}\left(\prod_{k=k_{2}}^{n-1}u^{n}_{k,n}(\xi_{k+1})\right).
	\end{equation}
	
	Let $\vec{\xi'}:=(\xi_{1},\ldots,\xi_{k_{2}-1})$ and $\vec{\xi''}:=(\xi_{k_{2}},\ldots,\xi_{n})$. By \autoref{k2upperbound}, $|\vec{\xi'}|\geq N$, so that
	\begin{equation}
	\label{induction1}
	W_{k_{2}-k_{1}}(\vec{\xi'})=U[\gamma^{k_{2}-1}_{1,k_{2}-1}]^{N}\left(\prod_{k=N}^{(k_{2}-1)-1}u^{k_{2}-1}_{k,k_{2}-1}(\xi_{k+1})\right)\in M_{k_{2}-k_{1}}.
	\end{equation}
	Furthermore, $|\vec{\xi''}|>N$, so that
	\begin{equation}
	\label{induction2}
	\begin{aligned}
	W_{n+1-k_{2}}(\vec{\xi''})&=W_{n+1-k_{2}}(\xi_{k_{2}},\ldots,\xi_{n})\\
	&=U[\gamma^{n+1-k_{2}}_{1,n+1-k_{2}}]^{N}\left(\prod_{k=N}^{n+1-k_{2}-1}u^{n+1-k_{2}}_{k, n+1-k_{2}}(\xi_{k_{2}+k})\right)\\
	&=U[\gamma^{n+1-k_{2}}_{1,n+1-k_{2}}]^{N}\left(\prod_{k=1}^{n-k_{2}}u^{n+1-k_{2}}_{k, n+1-k_{2}}(\xi_{k_{2}+k})\right)\\
	&=U[\gamma^{n+1-k_{2}}_{1,n+1-k_{2}}]^{N}\left(\prod_{k=k_{2}}^{n-1}u^{n+1-k_{2}}_{k+1-k_{2}, n+1-k_{2}}(\xi_{k+1})\right)\in M_{n+1-k_{2}},
	\end{aligned}	
	\end{equation}
	where the penultimate equality follows since $\xi_{k_{2}}=1$ and at most one of any $N$ consecutive entries of $\vec{\xi}$ is non-zero. Therefore, 
	$$
	\begin{aligned}
	&\diag(W_{k_{2}-k_{1}}(\vec{\xi'}),W_{n+1-k_{2}}(\vec{\xi''}))\\
	=&\diag(W_{k_{2}-k_{1}}(\vec{\xi'}),1_{n+1-k_{2}})\diag(1_{k_{2}-k_{1}},W_{n+1-k_{2}}(\vec{\xi''}))\\
	=&U[\gamma^{n}_{1,k_{2}-1}]^{N}\left(\prod_{k=N}^{(k_{2}-1)-1}u^{n}_{k,k_{2}-1}(\xi_{k+1})\right) U[\gamma^{n}_{k_{2},n}]^{N}\left(\prod_{k=k_{2}}^{n-1}u^{n}_{k, n}(\xi_{k+1})\right),
	\end{aligned}
	$$
	where in the last equality the indices in the $\gamma$'s and $u$'s have been altered appropriately from the ones in \autoref{induction1} and \autoref{induction2} to accommodate for the identity factors in the diagonal. Combining this with \autoref{induction} yields that
	\begin{equation}
	\label{breakup}
	W_{n}(\vec{\xi})=\diag(W_{k_{2}-k_{1}}(\vec{\xi'}),W_{n+1-k_{2}}(\vec{\xi''})).
	\end{equation}
	We may apply the inductive hypothesis to $n'=|\vec{\xi''}|>N$, vector $\vec{\xi''}$, and associated set $K'=\{k_{2},\ldots,k_{m}\}$ of size $m-1$ to conclude that
	$$
	\begin{aligned}
	W_{n+1-k_{2}}(\vec{\xi''})
	=\diag\left(W_{k_{3}-k_{2}}(\xi_{k_{2}},\ldots,\xi_{k_{3}-1}),\ldots,W_{k_{m+1}-k_{m}}(\xi_{k_{m}},\ldots,\xi_{k_{m+1}-1})\right).
	\end{aligned}
	$$
	Substituting this into \autoref{breakup} yields that
	$$
	W_{n}(\vec{\xi})=\diag\left(W_{k_{2}-k_{1}}(\xi_{k_{1}},\ldots,\xi_{k_{2}-1}),\ldots,W_{k_{m+1}-k_{m}}(\xi_{k_{m}},\ldots,\xi_{k_{m+1}-1})\right),
	$$
	which proves \autoref{Wn reduction}. 
\end{proof}

\subsection{The Main Lemmas}
\label{sc: main lemmas}

With the results of \cref{sc: prelim lemmas} in hand, we are now in position to prove the lemmas listed in \autoref{chart}, which are needed to prove \autoref{main} in the sequel.

We start with a lemma that characterizes when a unital injective limit of subhomogeneous algebras is simple in terms of the corresponding maps between their spectra. This is essentially Proposition 2.1 of \cite{4Rom}, except that ours discusses the general unital subhomogeneous case. The proof is very similar.

Given unital subhomogeneous $\mathrm{C}^*$-algebras $A$ and $B$ and a unital $^{*}$-homomorphism $\psi\colon A\to B$, an irreducible representation $\pi$ of $B$ yields a representation $\pi\circ\psi$ of $A$. The finite-dimensional representation $\pi\circ\psi$ is unitarily equivalent to a direct sum $\tau_{1}\oplus\cdots\oplus\tau_{s}$ of irreducible representations of $A$. In this way, we get a map $\hat{\psi}\colon\hat{B}\to\mathcal{P}(\hat{A})$ given by $\hat{\psi}([\pi]):=\{[\tau_{1}],\ldots,[\tau_{s}]\}$, where multiplicities are ignored. 

\begin{lemma}
	\label{simplicity}
	Suppose we have an inductive limit of the form 
	$$
	A_{1}\overset{\psi_{1}}{\longrightarrow} A_{2}\overset{\psi_{2}}{\longrightarrow} A_{3}\overset{\psi_{3}}{\longrightarrow}\cdots\longrightarrow A:=\varinjlim A_{i},
	$$
	where $A$ is unital and for each $i\in \NN$, $A_{i}$ is subhomogeneous and $\psi_{i}$ is injective. Let $\psi_{j,i}:=\psi_{j-1}\circ\cdots\circ\psi_{i}$. Then, the following statements are equivalent.
	\begin{enumerate}
		\item $A$ is simple.
		\item For all $i\in\NN$ and all non-empty open $U\subset \hat{A_{i}}$, there is a  $j>i$ such that $\hat{\psi}_{j,i}([\pi])\cap U\not=\varnothing$ for all $[\pi]\in \hat{A_{j}}$.
		\item For all $i\in\NN$, if $f\in A_{i}$ is non-zero, then there is a $j > i$ such that $\pi(\psi_{j,i}(f))\not=0$ for every non-zero irreducible representation $\pi$ of $A_{j}$. 
	\end{enumerate} 
\end{lemma}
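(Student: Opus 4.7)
My plan is to prove the three-way equivalence by establishing (1)$\Leftrightarrow$(3) via the standard ideal-theoretic characterization of simplicity for injective inductive limits, and then (2)$\Leftrightarrow$(3) via the correspondence between open subsets of the spectrum of a subhomogeneous algebra and non-zero elements.

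For (1)$\Leftrightarrow$(3), the key observation is that since each $\psi_i$ is injective and $A = \varinjlim A_i$, $A$ is simple if and only if for every $i \in \NN$ and every non-zero $a \in A_i$, the closed two-sided ideal generated by $\psi_{j,i}(a)$ inside $A_j$ is all of $A_j$ for some $j > i$ (otherwise one can build a proper, non-trivial ideal of $A$ as the inductive limit of the ideals generated by $\psi_{j,i}(a)$, and conversely any proper closed ideal of $A$ pulls back to proper ideals in cofinally many $A_j$). Because each $A_j$ is subhomogeneous, hence liminary, every proper closed two-sided ideal is the intersection of the kernels of the irreducible representations containing it. Therefore $\psi_{j,i}(a)$ generates $A_j$ as an ideal if and only if no irreducible representation of $A_j$ annihilates it; this is exactly (3).

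For (3)$\Rightarrow$(2), given $i \in \NN$ and a non-empty open set $U \subset \hat{A_i}$, use the hull-kernel correspondence: there is a non-zero $f \in A_i$ vanishing on every $[\sigma] \in \hat{A_i} \setminus U$ (existence follows e.g. from Proposition 4.2.5 of \cite{dix} together with the fact that every closed subset of the spectrum of a liminary algebra is the hull of some ideal). Apply (3) to $f$ to obtain $j > i$ such that $\pi(\psi_{j,i}(f)) \neq 0$ for every $[\pi] \in \hat{A_j}$. Decomposing $\pi \circ \psi_{j,i} \sim \tau_1 \oplus \cdots \oplus \tau_s$, at least one $\tau_k$ must satisfy $\tau_k(f) \neq 0$, forcing $[\tau_k] \in U$; hence $\hat{\psi}_{j,i}([\pi]) \cap U \neq \varnothing$. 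For the converse (2)$\Rightarrow$(3), given a non-zero $f \in A_i$, the set
\[
U := \{[\sigma] \in \hat{A_i} : \sigma(f) \neq 0\}
\]
is open in the hull-kernel topology and is non-empty (otherwise $f$ would lie in the intersection of all primitive ideals of the liminary algebra $A_i$, which is zero). Applying (2) to $U$ yields $j > i$ such that for every $[\pi] \in \hat{A_j}$ some constituent $[\tau_k]$ of $\hat{\psi}_{j,i}([\pi])$ lies in $U$, i.e.\ $\tau_k(f) \neq 0$. Since $\pi \circ \psi_{j,i} \sim \bigoplus_k \tau_k$, this forces $\pi(\psi_{j,i}(f)) \neq 0$, which is (3).

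The main technical obstacle will be the set-up of the spectrum-ideal correspondence used in (3)$\Rightarrow$(2): one needs the existence of a non-zero element in $A_i$ supported inside a prescribed non-empty open subset of $\hat{A_i}$. For a general $\mathrm{C}^*$-algebra this requires the open set to correspond to a non-zero ideal, which for liminary (and hence subhomogeneous) algebras is automatic because the hull-kernel topology on the spectrum is induced precisely by the lattice of closed two-sided ideals. Once this is in hand, the rest of the argument is a routine translation between the language of irreducible representations and the map $\hat{\psi}_{j,i}$ on spectra, together with the standard fact that an injective inductive limit of type I algebras is simple exactly when no non-zero element is eventually annihilated by any irreducible representation.
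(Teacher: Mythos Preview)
Your argument is correct. The paper proves the cycle (1)$\Rightarrow$(2)$\Rightarrow$(3)$\Rightarrow$(1), whereas you establish (1)$\Leftrightarrow$(3) and (2)$\Leftrightarrow$(3); your steps (2)$\Rightarrow$(3) and (3)$\Rightarrow$(1) match the paper's almost verbatim. The one substantive difference is how (2) is reached from (1): the paper argues the contrapositive directly, building from a ``bad'' open set $U\subset\hat{A_i}$ the increasing family of ideals $I_j=\bigcap\{\ker\pi:[\pi]\in\hat{A_j},\ \hat{\psi}_{j,i}([\pi])\cap U=\varnothing\}$ and checking that $\overline{\bigcup_j\mu_j(I_j)}$ is a proper non-zero ideal of $A$; you instead route through (3) by first choosing a non-zero $f\in A_i$ supported in $U$ and tracking it forward. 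Both approaches rest on the same ideal--spectrum correspondence and neither is more general; the paper's version avoids having to produce a specific element supported in $U$, while yours makes the role of the single element $f$ slightly more transparent.
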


\begin{proof}
	For $n\in\NN$, let $\mu_{n}\colon A_{n}\to A$ denote the map in the construction of the inductive limit. Since the $\psi_{j}$'s are injective and $A$ is unital, we may assume that the $A_{j}$'s are all unital and that the $\mu_{j}$'s are injective and unit-preserving.
	
	Let us start by showing that \textit{(1)} implies \textit{(2)}. Suppose that \textit{(2)} is false. To show \textit{(1)} is false, let us construct a closed proper non-zero two-sided ideal of $A$. Choose $i\in\mathbb{N}$ and a non-empty open set $U\subset \hat{A_{i}}$ such that for all $j > i$ there is a $[\pi]\in\hat{A_{j}}$ with $\hat{\psi}_{j,i}([\pi])\cap U=\varnothing$. We may assume that $U\not=\hat{A_{i}}$. For $j > i$, set $F_{j}:=\{[\pi]\in \hat{A_{j}}:\hat{\psi}_{j,i}([\pi])\cap U=\varnothing\}$ and set $I_{j}:=\{f\in A_{j}:f\in\bigcap_{[\pi]\in F_{j}}\ker\pi\}$. It is straightforward to verify that for all $j>i$, $I_{j}$ is a closed proper non-zero two-sided ideal of $A_{j}$.
	For $k>j>i$ and $[\pi]\in \hat{A}_{k}$, we have $\hat{\psi}_{k,i}([\pi])=\hat{\psi}_{j,i}(\hat{\psi}_{k,j}([\pi]))$, from which it follows that $\hat{\psi}_{k,j}(F_{k})\subset F_{j}$. Thus, $\psi_{k,j}(I_{j})\subset I_{k}$ for all $k>j>i$. Hence, $\{\mu_{j}(I_{j})\}_{j>i}$ is an increasing sequence of $\mathrm{C}^*$-algebras, and so $I:=\overline{\bigcup_{j>i}\mu_{j}(I_{j})}$ is a sub-$\mathrm{C}^*$-algebra of $A$. It is not hard to see that $I$ is a closed two-sided ideal of $A$. Since the $\mu_{j}$'s are injective and the $I_{j}$'s are non-zero, $I\not=\{0\}$. If $1_{A}\in I$, then for large enough $j$, $I_{j}$ contains $1_{A_{j}}$, contradicting that $I_{j}$ is proper. Hence, $I$ is the desired closed proper non-zero two-sided ideal of $A$. This proves that \textit{(1)} implies \textit{(2)}.
	
	Let us now show that \textit{(2)} implies \textit{(3)}. Fix $i\in\NN$ and suppose $0\not=f\in A_{i}$. Let $U:=\{[\rho]\in\hat{A_{i}}:\rho(f)\not=0\}$. $U$ is a non-empty open subset of $\hat{A_{i}}$. By \textit{(2)}, there is a $j>i$ such that $\hat{\psi}_{j,i}([\pi])\cap U\not=\varnothing$ for all $[\pi]\in \hat{A_{j}}$. Thus, if $\pi$ is any irreducible representation of $A_{j}$, $\pi(\psi_{j,i}(f))\not=0$, which proves \textit{(3)}.
	
	Finally, let us prove that \textit{(3)} implies \textit{(1)}. Suppose $J$ is a non-zero closed two-sided ideal of $A$. For $j\in\NN$, put $J_{j}:=\mu_{j}^{-1}(J)$. Then for all $j\in\NN$, $J_{j}$ is a closed two-sided ideal of $A_{j}$. It will be shown that $J_{j}=A_{j}$ for some $j\in\NN$.	Take $0\not=a\in J$. It is well known that $J=\overline{\bigcup_{j=1}^{\infty}(\mu_{j}(A_{j})\cap J)}$. Hence, there must be an $i$ and an $a_{i}\in A_{i}$ such that $0\not=\mu_{i}(a_{i})\in J$. Thus, $a_{i}\not=0$. By \textit{(3)}, there is a $j>i$ such that for all irreducible representations $\pi$ of $A_{j}$, $\pi(\psi_{j,i}(a_{i}))\not=0$. Since $\mu_{j}(\psi_{j,i}(a_{i}))=\mu_{i}(a_{i})\in J$, it follows that $\psi_{j,i}(a_{i})\in J_{j}$. The bijective correspondence between closed two-sided ideals of $A_{j}$ and closed subsets of $\hat{A_{j}}$ thus forces $J_{j}$ to be all of $A_{j}$. Hence, $1_{A}=\mu_{j}(1_{A_{j}})\in J$, which shows that $J=A$. Therefore, $A$ is simple, which proves \textit{(1)}.
\end{proof}

\begin{lemma}
	\label{perturb}
	Let $A$ be a DSH algebra of length $ l $. Let $\epsilon >0$. Suppose that $f\in A$ is not invertible. Then, there is an $f'\in A$ with $\|f-f'\|\leq\epsilon$ and there are unitaries $w,v\in A$ such that for some $1\leq i\leq  l $, $(wf'v)_{i}$ has a zero cross at index $1$ everywhere on some non-empty set $U\subset \hat{A}\cap (X_{i}\setminus Y_{i})$, which is open with respect to the hull-kernel topology on $\hat{A}$. Moreover, there is a $\Delta\in A$ such that for every $1\leq j\leq l $ and $x\in X_{j}$, $\Delta_{j}(x)$ is a diagonal matrix with entries in $[0,1]$, where $\Delta_{j}(x)_{k,k}>0$ implies $(wf'v)_{j}(x)$ has a zero cross at index $k$; moreover, $\Delta_{i}(z)_{1,1}=1$ for all $z\in U$. 
\end{lemma}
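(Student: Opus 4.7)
The plan is to locate a point $x_{0}$ in the spectrum of $A$ at which $f$ evaluates to a non-invertible matrix, move---after a small perturbation---to such an $x_{0}$ lying in a hull--kernel-open piece of the spectrum, use a singular value decomposition to convert the non-invertibility into a zero cross at index $1$ via conjugation by unitaries in $A$, and finally perturb once more to upgrade this zero cross from a single point to an entire open neighbourhood.

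First, \autoref{non-invertible at point} yields an index $1\leq i\leq l$ and a point $x_{0}\in X_{i}\setminus Y_{i}$ at which $f_{i}(x_{0})\in M_{n_{i}}$ is not invertible. Let $F_{i}\subset X_{i}$ denote the closed set (by \autoref{not decomposition}) of points appearing in the decomposition of some $y\in Y_{j}$ with $j>i$. The first task is to arrange, possibly after a perturbation of $f$ of norm at most $\epsilon/2$, that $x_{0}$ lies in the open set $X_{i}\setminus(Y_{i}\cup F_{i})$, which will then be hull--kernel-open in $\hat{A}$ by \autoref{open in spectrum}. If $x_{0}$ already does, nothing is needed. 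Otherwise, \autoref{empty interior} (density of $X_{j}\setminus Y_{j}$ in $X_{j}$) together with continuity of $f$ allows one to transport the non-invertibility to a nearby point of some $X_{j}\setminus(Y_{j}\cup F_{j})$, by a small bump perturbation supported in that open set (and hence automatically in $A$). This is expected to be the main technical hurdle: one must argue that a ``good'' point exists at some level, possibly higher than $i$.

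Next, a singular value decomposition of $f_{i}(x_{0})$ supplies unitaries $u_{1},u_{2}\in \mathcal{U}(M_{n_{i}})$ such that $u_{1}f_{i}(x_{0})u_{2}$ is diagonal with a $0$ in the $(1,1)$-entry, so it has a zero cross at index $1$. By path-connectedness of $\mathcal{U}(M_{n_{i}})$, choose continuous paths $\alpha_{k}\colon[0,1]\to \mathcal{U}(M_{n_{i}})$ with $\alpha_{k}(0)=1_{n_{i}}$ and $\alpha_{k}(1)=u_{k}$. Pick open neighbourhoods $U\subset W\subset V$ of $x_{0}$ in $X_{i}$ with $\overline{U}\subset W$, $\overline{W}\subset V$, and $\overline{V}\subset X_{i}\setminus(Y_{i}\cup F_{i})$, together with a continuous $\sigma\colon X_{i}\to[0,1]$ equal to $1$ on $W$ and $0$ off $V$. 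Define $w,v\in\bigoplus_{j=1}^{l}C(X_{j},M_{n_{j}})$ by $w_{j}=v_{j}=1_{n_{j}}$ for $j\neq i$ and $w_{i}(x)=\alpha_{1}(\sigma(x))$, $v_{i}(x)=\alpha_{2}(\sigma(x))$. These are unitaries in $A$: any point in the decomposition of a point in some $Y_{k}$ lies in $F_{i'}$ at its level $i'$, so when $i'=i$ it lies off $V$, and $w,v$ reduce to the identity there, making the block-diagonal consistency required by \autoref{DSH def} automatic.

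Finally, to upgrade the zero cross from $x_{0}$ alone to all of $U$, let $Q:=1_{n_{i}}-e_{1,1}\in M_{n_{i}}$ and define $h\in A$ by $h_{j}=0$ for $j\neq i$ and $h_{i}(x):=\tau(x)\bigl[(wfv)_{i}(x)-Q(wfv)_{i}(x)Q\bigr]$, where $\tau\colon X_{i}\to[0,1]$ is a bump equal to $1$ on $W$ and supported in $V$. Membership $h\in A$ is verified exactly as for $w,v$. Since $(wfv)_{i}(x_{0})=u_{1}f_{i}(x_{0})u_{2}$ has a zero cross at index $1$, the bracketed difference vanishes at $x_{0}$, so by continuity, after shrinking $V$ we may assume $\|h\|\leq\epsilon/2$. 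Setting $f':=f-w^{*}hv^{*}$ gives $\|f-f'\|\leq\epsilon$, and on $W$ one has $(wf'v)_{i}=Q(wfv)_{i}Q$, which has a zero cross at index $1$. Finally, define $\Delta\in A$ by $\Delta_{j}=0$ for $j\neq i$ and $\Delta_{i}(x):=\tau'(x)\,e_{1,1}$ for a bump $\tau'\colon X_{i}\to[0,1]$ equal to $1$ on $U$ and supported in $W$; one checks $\Delta\in A$ (exactly as for $w,v,h$, since $\tau'$ vanishes on $F_{i}\cup Y_{i}$), and it satisfies the stated properties, namely $\Delta_{i}(z)_{1,1}=1$ on $U$ and $\Delta_{j}(x)_{k,k}>0$ forcing $(wf'v)_{j}(x)$ to have a zero cross at index $k$ (non-trivially only for $j=i$, $k=1$, $x\in W$).
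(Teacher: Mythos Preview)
Your overall strategy matches the paper's: locate a non-invertible value via \autoref{non-invertible at point}, move to a point lying in a hull--kernel-open piece of the spectrum (the complement of $Y_i\cup F_i$), conjugate by unitaries supported in that free region to produce a zero cross at index $1$, and perturb to spread the zero cross over a neighbourhood. Your $Q$-compression device for the final perturbation is a clean alternative to the paper's ``make $f_i$ constant on a neighbourhood'' trick, and your construction of $w,v,\Delta$ is correct for the same reason the paper's is (support inside $X_i\setminus(Y_i\cup F_i)$ forces all gluing conditions to be trivially satisfied).

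The one genuine gap is the step you yourself flag as the main technical hurdle: arranging, after an $\epsilon/2$-perturbation, that the non-invertible point lies in $X_j\setminus(Y_j\cup F_j)$ for some $j$. Invoking \autoref{empty interior} and ``continuity of $f$'' is not enough on its own: nearby values of $f$ may all be invertible, so one cannot simply slide $x_0$ sideways. The paper resolves this precisely as follows. If $x_0\in F_i$, let $i'$ be the \emph{largest} index for which $x_0$ appears in the decomposition of some $y\in Y_{i'}$. Maximality of $i'$ forces $y\notin F_{i'}$, and since $f_i(x_0)$ is a diagonal block of $f_{i'}(y)$, the latter is also non-invertible. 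Now \autoref{not decomposition} gives an open $U_1\ni y$ disjoint from $F_{i'}$, and \autoref{empty interior} supplies $x'\in U_1\cap(X_{i'}\setminus Y_{i'})$; shrinking $U_1$ so that $\|f_{i'}(y)-f_{i'}(\cdot)\|\leq\epsilon/2$ on it, one perturbs $f_{i'}$ on a neighbourhood of $x'$ inside $U_1\cap(X_{i'}\setminus Y_{i'})$ to take the non-invertible value $f_{i'}(y)$ there. This perturbation is supported in $X_{i'}\setminus(Y_{i'}\cup F_{i'})$ and hence lies in $A$. With this filled in, the rest of your argument goes through unchanged (working at level $i'$ instead of $i$).
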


\begin{proof}
	Using \autoref{non-invertible at point}, choose $1\leq i\leq l $ and $x\in X_{i}\setminus Y_{i}$ such that $f_{i}(x)$ is a non-invertible matrix. We break the proof up into two cases.
	
	Case one: $x$ is not in the decomposition of any point in $Y_{j}$ for any $j>i$. By \autoref{not decomposition}, there is set $U_{1}\subset X_{i}$ containing $x$, which is open in $X_{i}$ and has the property that no point in it is in the decomposition of any point in $Y_{j}$ for any $j>i$. Since $Y_{i}$ is closed in $X_{i}$, the set $U_{1}\cap(X_{i}\setminus Y_{i})$ is open in $X_{i}$. By shrinking $U_{1}$, we may assume that $\|f_{i}(x)-f_{i}(z)\|\leq\epsilon$ for all $z\in U_{1}$. Choose a set $U_{2}$ that is open in $X_{i}$ and satisfies $x\in U_{2}\subset\overline{U_{2}}^{X_{i}}\subset U_{1}\cap(X_{i}\setminus Y_{i})$. Using Urysohn's Lemma, we can define a function $h\in C(X_{i},M_{n_{i}})$ such that $h|_{\overline{U_{2}}^{X_{i}}}\equiv f_{i}(x)$,  $h|_{X_{i}\setminus(U_{1}\cap(X_{i}\setminus Y_{i}))}=f_{i}|_{X_{i}\setminus(U_{1}\cap(X_{i}\setminus Y_{i}))}$, and $\|f_{i}-h\|\leq\epsilon$. Define $f'$ coordinate-wise by $f':=(f_{1},\ldots,f_{i-1},h,f_{i+1},\ldots,f_{ l })$. Since $h|_{Y_{i}}=f_{i}|_{Y_{i}}$, we have $(f_{1},\ldots,f_{i-1},h)\in A^{(i)}$.  Since no point in $U_{1}$ is in the decomposition of any point in $Y_{j}$ for any $j>i$, and because $h$ may only differ from $f_{i}$ on $U_{1}\cap (X_{i}\setminus Y_{i})\subset U_{1}$, this perturbation does not violate the diagonal decomposition at any point. Thus, $f'\in A$ since $f\in A$, and $\|f-f'\|\leq\epsilon$ because $\|f_{i}-h\|\leq\epsilon$. Since $f_{i}(x)$ is a non-invertible matrix, there are unitary matrices $W$ and $V$ in $M_{n_{i}}$ with the property that $Wf_{i}(x)V$ has a zero cross at index $1$. Since the unitary group in $M_{n_{i}}$ is connected we may, using the same reasoning as above, define unitaries $w,v\in A$ coordinate-wise with $w_{j}=v_{j}\equiv 1_{n_{j}}$ for all $j\not=i$ and $w_{i},v_{i}\in C(X_{i},M_{n_{i}})$ satisfying $w_{i}|_{\overline{U_{2}}^{X_{i}}}\equiv W$, $v_{i}|_{\overline{U_{2}}^{X_{i}}}\equiv V$, and $w_{i}|_{X_{i}\setminus(U_{1}\cap(X_{i}\setminus Y_{i}))}=v_{i}|_{X_{i}\setminus(U_{1}\cap(X_{i}\setminus Y_{i}))}\equiv 1_{n_{i}}$. Finally, choose a set $U_{3}$ that is open in $X_{i}$ and satisfies $x\in U_{3}\subset\overline{U_{3}}^{X_{i}}\subset U_{2}$. Define $\Delta\in A$ coordinate-wise as follows: $\Delta_{j}\equiv 0$ for $j\not=i$; let $g\colon X_{i}\to [0,1]$ be any continuous function such that $g|_{\overline{U_{3}}^{X_{i}}}\equiv 1$ and $g|_{X_{i}\setminus U_{2}}\equiv 0$, and put $\Delta_{i}:=\diag(g,0,\ldots,0)\in C(X_{i},M_{n_{i}})$. As argued above for $f'$, we have $\Delta\in A$. Take $U:=U_{3}$. Applying \autoref{open in spectrum}, we conclude $U$ is open in $\hat{A}$. Since $(wf'v)_{i}$ has a zero cross at index $1$ everywhere on $U_{2}$ and since $\Delta$ vanishes outside $U_{2}$, the lemma holds in this case. 
	
	Case two: There is a $j>i$ such that $x$ is in the decomposition of some point in $Y_{j}$. In this case, we cannot define $f'$ as above, because we are not guaranteed a neighbourhood around $x$ in which we may freely perturb $f$ while remaining in $A$. Let $i'$ denote the largest integer for which $x$ is in the decomposition of some point in $Y_{i'}$. Choose $y\in Y_{i'}$ such that $x$ is in the decomposition of $y$. Then $f_{i'}(y)$ is a non-invertible matrix. Since $x$ is not in the decomposition of any point in $Y_{j'}$ for any $j'>i'$, neither is $y$. Hence, by \autoref{not decomposition}, there is a set $U_{1}\subset X_{i'}$ containing $y$ that is open in $X_{i'}$ with the property that no point in $U_{1}$ is in the decomposition of any point in $Y_{j'}$ for any $j'>i'$. Hence, as in case one, we are able to perturb $f$ on $U_{1}\cap (X_{i'}\setminus Y_{i'})$, while remaining in $A$.  By shrinking $U_{1}$, we may assume that $\|f_{i'}(y)-f_{i'}(z)\|\leq \epsilon$ for all $z\in U_{1}$. By \autoref{empty interior}, we may assume that $Y_{i'}$ has empty interior and, thus, that there is a point $x'\in U_{1}\cap (X_{i'}\setminus Y_{i'})$. Choose a set $U_{2}$ which is open in $X_{i'}$ and satisfies $x'\in U_{2}\subset\overline{U_{2}}^{X_{i'}}\subset U_{1}\cap(X_{i'}\setminus Y_{i'})$. As in case one, we may define $f'\in A$ with $\|f-f'\|\leq \epsilon$, $f_{j'}'=f_{j'}$ for $j'\not=i'$, $f_{i'}'|_{\overline{U_{2}}^{X_{i'}}}\equiv f_{i'}(y)$, and $f_{i'}'|_{X_{i'}\setminus(U_{1}\cap (X_{i'}\setminus Y_{i'}))}=f_{i'}|_{X_{i'}\setminus(U_{1}\cap (X_{i'}\setminus Y_{i'}))}$. Choose unitary matrices $W,V\in M_{n_{i'}}$ such that $Wf_{i'}(y)V$ has a zero cross at index $1$. Then the rest of the proof proceeds verbatim as the proof of case one with $i'$ in place of $i$ and $x'$ in place of $x$. 
\end{proof}

The following two lemmas guarantee the existence of certain indicator-function-like elements in DSH algebras. As outlined in \cref{sc: outline of proof}, these unitaries are used, together with the results from \cref{sc: prelim lemmas}, in the proofs of future lemmas to construct the unitaries needed to prove \autoref{main}. It is for these two key lemma that we require the base spaces of a given DSH algebra to be metrizable.

\begin{lemma}
	\label{indicatorprep}
	Suppose $A$ is a DSH algebra of length $ l $. Suppose $M\in\NN$ and $K:=\{K_{1}< K_{2}<\cdots<K_{m}\}$ are such that $K_{1}\geq 0$, $K_{m}<\mathfrak{s}(A)-M$, and $K_{t+1}-K_{t}\geq M$ for $1\leq t < m$. Then, there is a function $\Phi\in A$ such that:
	\begin{enumerate}
		\item for all $1\leq i\leq  l $ and $x\in X_{i}$, $\Phi_{i}(x)$ is a diagonal matrix with entries in $[0,1]$ whose final $M$ diagonal entries are all $0$, and such that at most one of every $M$ consecutive diagonal entries is non-zero;
		\item for all $1\leq i\leq \ell$ and $1\leq j\leq n_{i}$: $\Phi_{i}(x)_{j,j}=1$ if and only if there is a $1\leq t\leq m$ such that $x\in B_{i,j-K_{t}}$. 
	\end{enumerate}
\end{lemma}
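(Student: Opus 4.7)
The plan is to construct $\Phi$ by induction on $i \in \{1,\ldots,\ell\}$, producing successive components $\Phi_i \in C(X_i, M_{n_i})$ so that $\Phi^{(i)} := (\Phi_1,\ldots,\Phi_i) \in A^{(i)}$ satisfies the analogues of (1) and (2) on $X_1,\ldots,X_i$. For the base case $i = 1$, since $Y_1 = \varnothing$ the decomposition structure is vacuous, so I would take $\Phi_1$ to be the constant diagonal matrix with entries $1$ at positions $\{1+K_1,\ldots,1+K_m\}$ and $0$ elsewhere. The bounds $K_m < \mathfrak{s}(A) - M \leq n_1 - M$ and $K_{t+1}-K_t \geq M$ immediately give condition (1), and condition (2) follows from $B_{1,1}=X_1$ and $B_{1,k}=\varnothing$ for $k>1$ (\autoref{Bik}).

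For the inductive step, set $g := \varphi_i(\Phi^{(i)}) \in C(Y_{i+1}, M_{n_{i+1}})$, whose values are forced by the diagonal decomposition. I would first verify that $g$ itself satisfies (1) and (2) on $Y_{i+1}$: if $y \in Y_{i+1}$ decomposes into $x_1,\ldots,x_t$ with block starts $k_s := 1 + n_{i_1} + \cdots + n_{i_{s-1}}$, then $g(y)_{j,j} = \Phi_{i_s}(x_s)_{j',j'}$ where $j = k_s + j' - 1$; since each $x_s$ is irreducible, \autoref{Bik} and the inductive hypothesis give $\Phi_{i_s}(x_s)_{j',j'} = 1$ iff $j' = 1 + K_t$ for some $t$. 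Combined with \autoref{Bik}(3), this translates to $g(y)_{j,j} = 1$ iff $y \in \bigcup_t B_{i+1,\, j-K_t}$; the hypothesis $K_m < \mathfrak{s}(A) - M$ is used precisely to rule out the possibility that $j - K_t$ coincides with the block-start of any block other than the one containing $j$. The same spacing bounds then force the non-zero positions of $g(y)$ to be at least $M$ apart (within a block via $K_{t+1}-K_t \geq M$, across blocks via $\mathfrak{s}(A) - K_m > M$) and to lie at indices $\leq n_{i+1} - M$.

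I would then extend $g$ to $\Phi_{i+1} \in C(X_{i+1}, M_{n_{i+1}})$ entry by entry on the diagonal (with off-diagonal entries zero). For $j \in \{1+K_t\}_t$, set $\Phi_{i+1}(\cdot)_{j,j} \equiv 1$: this matches $g$ on $Y_{i+1}$ (since $B_{i+1,1}=X_{i+1}$) and satisfies (2). For $j > n_{i+1} - M$, set $\Phi_{i+1}(\cdot)_{j,j} \equiv 0$, which agrees with $g|_{Y_{i+1}}$ and is consistent with $B_{i+1,\, j-K_t} = \varnothing$ (a direct consequence of $K_m < \mathfrak{s}(A) - M$ together with \autoref{Bik}(3)). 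For the remaining indices, let $F_j := \bigcup_t B_{i+1,\, j-K_t} \subset Y_{i+1}$. A compactness-and-subsequence argument in the spirit of \autoref{not decomposition} shows that each $B_{i+1, k}$ for $k > 1$ is closed in $X_{i+1}$, so $F_j$ is closed. I would then apply Tietze's theorem to extend $g(\cdot)_{j,j}$ to some $\hat{h}_j \in C(X_{i+1}, [0,1])$, and multiply by a continuous cutoff $\psi_j$ that equals $1$ on $Y_{i+1}$ and is supported in a small metric neighbourhood of $Y_{i+1}$, together with a further factor that pulls the value strictly below $1$ off $F_j$; the product then agrees with $g$ on $Y_{i+1}$ and equals $1$ precisely on $F_j$.

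The principal obstacle will be the sparsity clause of condition (1) at points $x \in X_{i+1} \setminus Y_{i+1}$, because the Tietze extensions $\hat{h}_j$ for different indices $j$ could \emph{a priori} have overlapping supports. I would address this by shrinking the cutoffs $\psi_j$ uniformly (using compactness of $X_{i+1}$ together with uniform continuity of the finitely many relevant $\hat{h}_j$) so that for every $x \in X_{i+1}$ and every window of $M$ consecutive indices, at most one $\psi_{j'}(x)$ is non-zero; the sparsity already established for $g$ on $Y_{i+1}$ then propagates to a thin neighbourhood of $Y_{i+1}$ by continuity, while outside this neighbourhood the only non-zero diagonal entries of $\Phi_{i+1}$ lie at the well-separated positions $\{1+K_t\}_t$. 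Once (1) and (2) have been verified throughout $X_{i+1}$, the pair $(\Phi^{(i)}, \Phi_{i+1})$ lies in $A^{(i+1)}$ by construction, and iterating completes the induction at $i = \ell$ to yield $\Phi = \Phi^{(\ell)} \in A$ with the required properties.
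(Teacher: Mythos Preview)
Your overall inductive framework (on $i$), the base case, the verification that $g:=\varphi_i(\Phi^{(i)})$ satisfies (1) and (2) on $Y_{i+1}$, and the handling of the extreme indices $j\in\{1+K_t\}_t$ and $j>n_{i+1}-M$ all match the paper and are correct.

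The genuine gap is in your extension step for the intermediate indices, specifically the sparsity clause of (1). Your claim that one can shrink the cutoffs $\psi_j$ so that ``for every $x$ and every window of $M$ consecutive indices, at most one $\psi_{j'}(x)$ is non-zero'' is literally impossible: you required each $\psi_j$ to equal $1$ on all of $Y_{i+1}$, so at any $y\in Y_{i+1}$ every $\psi_j(y)=1$. More substantively, the appeal to continuity for sparsity to ``propagate to a thin neighbourhood of $Y_{i+1}$'' does not work. Even though $\supp(h_j')$ and $\supp(h_{j+1}')$ are disjoint subsets of $Y_{i+1}$, their closures can touch, and independent Tietze extensions $\hat h_j,\hat h_{j+1}$ will typically both be strictly positive at nearby points of $X_{i+1}\setminus Y_{i+1}$; no choice of a uniform neighbourhood of $Y_{i+1}$ fixes this, since ``small'' is not ``zero''.

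What is missing is a mechanism that \emph{coordinates} the extensions so that $\supp(h_j)$ is genuinely disjoint from $\supp(h_{j-1}),\ldots,\supp(h_{j-M+1})$. The paper achieves this by constructing the $h_j$ sequentially in $j$: at each step one uses perfect normality of $X_i$ to produce an extension $f_j$ of $h_j'$ that vanishes on $\bigcup_{t=1}^{M-1}\overline{\supp(h_{j-t})}$ and is strictly below $1$ off $Y_i$, and separately extends $g_j^0:=h_j'-\sum_{t=1}^{M-1}h_{j+t}'$ and takes its positive part, so that the resulting $h_j:=\min(f_j,g_j)$ also vanishes on a neighbourhood of $\bigcup_{t=1}^{M-1}\supp(h_{j+t}')$. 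The latter is precisely what makes the disjoint-support hypothesis available at step $j+1$. Your proposal needs an analogue of this sequential coordination; Tietze plus a single cutoff depending only on distance to $Y_{i+1}$ is not enough.
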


\begin{proof}
	We define $\Phi$ coordinate-wise inductively. To start, put $\Phi_{1}\equiv\diag(\chi_{K}(0),\ldots, \chi_{K}(n_{1}-1))$, where $\chi_{K}$ is the indicator function corresponding to the set $K=\{K_{1},\ldots,K_{m}\}$. By the assumption on the set $K$, condition \textit{(1)} holds for $\Phi_{1}$. To see that \textit{(2)} holds, suppose $\Phi_{1}(x)_{j,j}=1$. Then $\chi_{K}(j-1)=1$, so there is a $1\leq t\leq m$ such that $j=K_{t}+1$. By \autoref{Bik},  $x\in X_{1}= B_{1,1}=B_{1,j-K_{t}}$. Conversely, if there is a $1\leq t\leq m$ such that $x\in B_{1,j-K_{t}}$, then by \autoref{Bik}, $j-K_{t}=1$, so that $\Phi_{1}(x)_{j,j}=\chi_{K}(j-1)=1$, which proves \textit{(2)}. 
	
	Now suppose we have a fixed $1<i\leq  l $ and assume we have defined $(\Phi_{1},\ldots,\Phi_{i-1})\in A^{(i-1)}$ such that for all $i'<i$ and $x\in X_{i'}$: 
	\begin{enumerate}[label=(\Roman*)]
		\item the matrix $\Phi_{i'}(x)$ satisfies the properties of conditions \textit{(1)} and \textit{(2)};
		\item $\Phi_{i'}(x)_{j,j} = \chi_{K}(j-1)$ for all $1\leq j\leq \mathfrak{s}(A)$.
	\end{enumerate}
	Let $\Phi'_{i}:=\varphi_{i-1}((\Phi_{1},\ldots,\Phi_{i-1}))\in C(Y_{i},M_{n_{i}})$. Fix $y\in Y_{i}$ and suppose $y$ decomposes into $x_{1}\in X_{i_{1}}\setminus Y_{i_{1}},\ldots,x_{r}\in X_{i_{r}}\setminus Y_{i_{r}}$. Let us first check that conditions \textit{(1)} and \textit{(2)} hold for $\Phi_{i}'(y)=\diag(\Phi_{i_{1}}(x_{1}),\ldots,\Phi_{i_{r}}(x_{r}))$. By the inductive hypothesis, $\Phi_{i}(y)$ is a diagonal matrix with entries in $[0,1]$ and the last $M$ diagonal entries of $\Phi_{i}'(y)$ are all $0$. Given $M$ consecutive entries down the diagonal of $\Phi_{i}'(y)$, if they are all contained in one of the diagonal blocks, then by the inductive hypothesis applied to that one block, at most one of these entries is non-zero. If instead the $M$ consecutive entries span two blocks $\Phi_{i_{q}}(x_{q})$ and $\Phi_{i_{q+1}}(x_{q+1})$, then by the inductive hypothesis, the last $M$ diagonal entries of $\Phi_{i_{q}}(x_{q})$ are $0$ and at most $1$ of the first $M$ diagonal entries of $\Phi_{i_{q+1}}(x_{q+1})$ can be non-zero. This shows that \textit{(1)} holds for $\Phi_{i}'(y)$. Let us now show that \textit{(2)} holds for $\Phi_{i}'(y)$. Fix $1\leq j\leq n_{i}$. Let $1\leq q\leq r$ and $1\leq j'\leq n_{i_{q}}$ be such that $\Phi_{i}'(y)_{j,j}=\Phi_{i_{q}}(x_{q})_{j',j'}$. Note that $j=n_{i_{1}}+\cdots+n_{i_{q-1}}+j'$. Given $1\leq t\leq m$, we know by \autoref{Bik} that $y\in B_{i,j-K_{t}}$ if and only if there is a $1\leq p\leq r$ such that 
	\begin{equation}
		\label{pq}
		j'-K_{t}+n_{i_{1}}+\cdots +n_{i_{q-1}}=j-K_{t}=1+n_{i_{1}}+\cdots+ n_{i_{p-1}}
	\end{equation}
	(the right-hand side is $1$ if $p=1$). We claim that if \autoref{pq} holds, then $p=q$. Indeed, using the upper and lower bounds on $j'$ and $K_{t}$, we have
	$$
	1-\mathfrak{s}(A)< 1-(\mathfrak{s}(A)-M-1)\leq j'-K_{t}\leq n_{i_{q}},
	$$
	whence
	$$
	1+n_{i_{1}}+\cdots+n_{i_{q-1}}-\mathfrak{s}(A)<1+n_{i_{1}}+\cdots +n_{i_{p-1}}\leq n_{i_{1}}+\cdots +n_{i_{q-1}}+n_{i_{q}}.
	$$
	The first inequality and the definition of $\mathfrak{s}(A)$ imply that $q\leq p$, while the second inequality forces $q\geq p$, so that $p=q$. Therefore, since $x_{q}\in X_{i_{q}}\setminus Y_{i_{q}}$, the above and \autoref{Bik} show that
	$$
	y\in B_{i,j-K_{t}}\iff j-K_{t}=1+n_{i_{1}}+\cdots+n_{i_{q-1}}\iff j'-K_{t}=1\iff x_{q}\in B_{i_{q},j'-K_{t}}.
	$$
	Since the matrix $\Phi_{i_{q}}(x_{q})$ satisfies \textit{(2)} by the inductive hypothesis, it follows that there is a $1\leq t\leq m$ with $y\in B_{i,j-K_{t}}$ if and only if $\Phi_{i}'(y)_{j,j}=\Phi_{i_{q}}(x_{q})_{j',j'}=1$, which proves that \textit{(2)} holds for $\Phi_{i}'(y)$. 
	
	Let us now define $\Phi_{i}\in C(X_{i},M_{n_{i}})$ to be a suitable extension of $\Phi_{i}'$. Write $\Phi_{i}'=\diag(h_{1}',\ldots,h_{n_{i}}')$, where $h'_{j}\in C(Y_{i},[0,1])$ for $1\leq j\leq n_{i}$. We define $\Phi_{i}=\diag(h_{1},\ldots,h_{n_{i}})$ by specifying each $h_{j}$ to be a continuous function $h_{j}\colon X_{i}\to [0,1]$ that extends $h_{j}'$. For $1\leq j\leq \mathfrak{s}(A)$, put $h_{j}\equiv \chi_{K}(j-1)$ to insure that (II) in the inductive hypothesis is verified and set $h_{j}\equiv 0$ for $n_{i}-M+1\leq j\leq n_{i}$ (since (I) and (II) hold for $\Phi_{1},\ldots,\Phi_{i-1}$, these $h_{j}$'s do indeed extend the corresponding $h_{j}'$'s). We define $h_{j}$ for $\mathfrak{s}(A)+1\leq j\leq n_{i}-M$ inductively. Fix $\mathfrak{s}(A)+1\leq j\leq n_{i}-M$ and assume we have defined $h_{1},\ldots,h_{j-1}$ so that the following property holds:
	\begin{itemize}
		\item[($\clubsuit$)] $\bigcup_{t=1}^{M-1}\overline{\supp(h_{j-t})}\subset X_{i}$ is disjoint from $\supp(h_{j}')\subset Y_{i}$.
	\end{itemize}
	Note that $\bigcup_{t=1}^{M-1}\overline{\supp(h_{\mathfrak{s}(A)+1-t})}=\varnothing$, and so ($\clubsuit$) holds for the base case $j=\mathfrak{s}(A)+1$. Since $X_{i}$ is a metric space and, hence, perfectly normal, we may use $(\clubsuit)$ to extend $h_{j}'$ to a function $f_{j}$ in $C(X_{i},[0,1])$ that vanishes on $\bigcup_{t=1}^{M-1}\overline{\supp(h_{j-t})}$ and is strictly less than $1$ on $X_{i}\setminus Y_{i}$. Define $g^{0}_{j}:=h_{j}'-\sum_{t=1}^{M-1}h_{j+t}'\in C(Y_{i})$. Then the range of $g_{j}^{0}$ is contained in $[-1,1]$ since by (I) at most one of $h_{j}',\ldots,h_{j+M-1}'$ is non-zero at any given point in $Y_{i}$. Extend $g_{j}^{0}$ to a function $g_{j}'$ in $C(X_{i},[-1,1])$. Put $g_{j}:=\max(g_{j}',0)$ and note that $g_{j}|_{Y_{i}}=h_{j}'$. Since $h_{j}'(y)=0$ for each $y\in \bigcup_{t=1}^{M-1}\supp(h_{j+t}')$, we may choose an open subset $U\supset \bigcup_{t=1}^{M-1}\supp(h_{j+t}')$ of $X_{i}$ on which $g_{j}'$ is strictly negative, so that $g_{j}|_{U}\equiv 0$. Define $h_{j}:=\min(f_{j},g_{j})\in C(X_{i},[0,1])$ and note that $h_{j}|_{Y_{i}}=h_{j}'$. Since $h_{j}|_{U}\equiv 0$, we have $\supp(h_{j})\cap U=\varnothing$, from which it follows that $\overline{\supp(h_{j})}\cap\left(\bigcup_{t=1}^{M-1}\supp(h_{j+t}')\right)=\varnothing$. This ensures that $(\clubsuit)$ holds with $j+1$ in place of $j$ and, hence, that $\Phi_{i}:=\diag(h_{1},\ldots,h_{n_{i}})$ is well defined. 
	
	To conclude the proof, let us check that $\Phi_{i}$ satisfies \textit{(1)} and \textit{(2)}. In light of the analysis above, we may restrict ourselves to the diagonal entries $\mathfrak{s}(A)+1\leq j\leq n_{i}-M$. By definition, the range of each $h_{j}$ is contained in $[0,1]$. If $h_{j}(x)>0$ for some $x\in X_{i}$, then $f_{j}(x)>0$ and, hence, by the definition of $f_{j}$, $x\notin \bigcup_{t=1}^{M-1}\overline{\supp(h_{j-t})}$. This proves that at most one of any $M$ consecutive entries down the diagonal of $\Phi_{i}(x)$ is non-zero. Hence, \textit{(1)} is established. To prove \textit{(2)}, suppose $x\in X_{i}$ satisfies $h_{j}(x)=1$. Then $f_{j}(x)=1$, which implies that $x\in Y_{i}$. Thus $h_{j}'(x)=1$ and we already established that $x\in B_{i,j-K_{t}}$ for some $t$ in this case. Conversely, suppose $x\in B_{i,j-K_{t}}$ for some $t$. If $j-K_{t}\not=1$, then by \autoref{Bik}, $x\in Y_{i}$ and we already concluded in this case that $h_{j}(x)=h_{j}'(x)=1$. If instead $j-K_{t}=1$, then it must be that $j< \mathfrak{s}(A)$ and we previously defined $h_{j}\equiv 1$ in this case. Therefore, property \textit{(2)} holds.
	
	We verified that both (I) and (II) hold for $\Phi_{i}=\diag(h_{1},\ldots,h_{ l })$, and since $\Phi_{i}|_{Y_{i}}=\Phi_{i}'=\varphi_{i-1}((\Phi_{1},\ldots,\Phi_{i-1}))$, it follows that $(\Phi_{1},\ldots,\Phi_{i})\in A^{(i)}$. Thus, by induction, we obtain $\Phi:=(\Phi_{1},\ldots,\Phi_{ l })\in A$, which satisfies the requirements of the lemma. 
\end{proof}

\begin{lemma}
	\label{indicators}
	Suppose $A$ is a DSH algebra of length $ l $. Suppose $M\in\NN$ and $K:=\{K_{1}< K_{2}<\cdots<K_{m}\}$ are such that $K_{1}\geq 0$, $K_{m}<\mathfrak{s}(A)-M$, and $K_{t+1}-K_{t}\geq M$ for $1\leq t < m$. Suppose that for each $1\leq i\leq  l $ and $1\leq j\leq n_{i}$, we have a set $F_{i,j}\subset X_{i}$ that is closed in $X_{i}$ and disjoint from each set $B_{i,j-K_{t}}$ (see \autoref{Bik def}) for $1\leq t\leq m$. Then, there is a function $\Theta\in A$ such that:
	\begin{enumerate}
		\item for all $1\leq i\leq  l $ and $x\in X_{i}$, $\Theta_{i}(x)$ is a diagonal matrix with entries in $[0,1]$ whose final $M$ diagonal entries are all $0$, and such that at most one of every $M$ consecutive diagonal entries is non-zero;
		\item for all $1\leq i\leq  l $, $1\leq j\leq n_{i}$, and $x\in F_{i,j}$, we have $\Theta_{i}(x)_{j,j}=0$;
		\item for all $1\leq i\leq  l $ and $1\leq j\leq n_{i}$, there is a (possibly empty) open subset $U_{i,j}\subset X_{i}$ containing $B_{i,j}$ with the property that if $x\in U_{i,j}$, then $\Theta_{i}(x)_{j+K_{t},j+K_{t}}=1$ for all $1\leq t\leq m$. 
	\end{enumerate}
\end{lemma}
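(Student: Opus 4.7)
The plan is to proceed by induction on the length $\ell$ of $A$, adapting the construction of $\Phi$ in \autoref{indicatorprep}. Before starting the induction, I first enlarge each given $F_{i,j}$ to a closed set $\tilde F_{i,j}\subset X_i$ by downward propagation along decompositions: whenever $y\in Y_{i'}\cap \tilde F_{i',j'}$ decomposes into $x_1\in X_{i_1}\setminus Y_{i_1},\ldots,x_r\in X_{i_r}\setminus Y_{i_r}$ at positions $k_1<\cdots<k_r$ and the entry $(j',j')$ of $\Theta_{i'}'(y)$ lives inside the block $\Theta_{i_q}(x_q)$ at internal position $(\tilde j,\tilde j)$ with $\tilde j:=j'-k_q+1$, I adjoin $x_q$ to $\tilde F_{i_q,\tilde j}$. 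Iterating through the finitely many levels and taking closures, and using the subsequence-compactness argument from the proof of \autoref{not decomposition}, each $\tilde F_{i,j}$ is closed in $X_i$.

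The central consistency check is that every $\tilde F_{i,j}$ remains disjoint from each $B_{i,j-K_s}$, as required by the hypothesis. Whenever $y\in F_{i',j'}$ appears with $x_q$ at position $k_q$ in its decomposition, one has $y\in B_{i',k_q}$ by the definition of $B_{i',k_q}$, yet $y\notin B_{i',j'-K_s}$ by the given hypothesis, so $k_q\neq j'-K_s$, whence $\tilde j\neq 1+K_s$ for any $s$. Since $B_{i_q,\tilde j-K_s}\subset Y_{i_q}$ whenever $\tilde j-K_s\geq 2$ while the propagated points lie in $X_{i_q}\setminus Y_{i_q}$, disjointness is preserved at every step.

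With the $\tilde F$'s in hand, the inductive step mimics that of \autoref{indicatorprep}. The pulled-back matrix $\Theta_i':=\varphi_{i-1}((\Theta_1,\ldots,\Theta_{i-1}))$ on $Y_i$ automatically satisfies $\Theta_i'(y)_{j,j}=0$ on $Y_i\cap \tilde F_{i,j}$ (via the propagation and the inductive hypothesis that $\Theta_{i_q}$ vanishes at the relevant diagonal position on $\tilde F_{i_q,\tilde j}$), and equals $1$ on a set containing a relatively open neighborhood of $\bigcup_t B_{i,j-K_t}$ in $Y_i$ (via a closed-map argument modelled on \autoref{not decomposition}, lifting the inductive hypothesis on the lower-level $U$'s). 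I then extend each $h_j^{(i)'}$ to $h_j^{(i)}\in C(X_i,[0,1])$ via a variant of the $\min(f_j,g_j)$ construction of \autoref{indicatorprep}: the auxiliary $f_j$ is additionally required to vanish on $\tilde F_{i,j}$ (consistent because $h_j^{(i)'}=0$ on $\tilde F_{i,j}\cap Y_i$) and the extension $g_j'$ of $g_j^0$ is chosen to equal $1$ on a suitable open neighborhood $W_{i,j}\subset X_i$ of $\{y\in Y_i:h_j^{(i)'}(y)=1\}$, selected to avoid $\tilde F_{i,j}$ and the supports of the non-neighboring $h_{j'}^{(i)}$'s being constructed. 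The spacing $K_{t+1}-K_t\geq M$ keeps these constraints compatible with the at-most-one-nonzero-in-$M$-consecutive-entries structural property. Finally, I set $U_{i,j}:=\bigcap_{t}W_{i,j+K_t}$ (intersection over those $t$ with $j+K_t\leq n_i$), which is an open subset of $X_i$ containing $B_{i,j}$ on which every relevant diagonal entry of $\Theta_i$ is $1$.

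The main obstacle will be the preprocessing step: iterating the propagation through the finitely many levels while simultaneously closing up each $\tilde F_{i,j}$ (requiring repeated applications of the argument in \autoref{not decomposition}) and checking that the Case A versus Case B dichotomy above prevents collisions with the $B$-sets at every intermediate stage. A secondary technical point is arranging the extension $h_j^{(i)}$ so that the value-$1$ set contains an open neighborhood of $\{h_j^{(i)'}=1\}$ simultaneously with the required vanishing on $\tilde F_{i,j}$; this combines a Tietze-type extension with the spacing hypothesis on $K$.
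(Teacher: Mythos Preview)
Your plan is workable in principle but takes a substantially harder route than the paper does. The paper does \emph{not} redo the inductive construction of \autoref{indicatorprep} with extra constraints; instead it takes the function $\Phi$ already produced by that lemma and post-processes it with a single scalar function. One picks $\delta\in[0,1)$, defines $g\colon[0,1]\to[0,1]$ to be $0$ on $[0,\delta]$, $1$ on $[(1+\delta)/2,1]$, and linear in between, and sets $\Theta_i(x)_{j,j}:=g(\Phi_i(x)_{j,j})$. Because $g$ is applied entrywise to a diagonal element already in $A$, membership in $A$ and property~(1) are automatic with no compatibility checks. Property~(2) follows from compactness: by property~(b) of $\Phi$, the level set $\{\Phi_i(\cdot)_{j,j}=1\}$ equals $\bigcup_t B_{i,j-K_t}$, which is disjoint from the compact $F_{i,j}$, so $\Phi_i(\cdot)_{j,j}\le\delta_{i,j}<1$ on $F_{i,j}$; taking $\delta:=\max_{i,j}\delta_{i,j}$ kills those entries. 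Property~(3) is immediate since $g\equiv 1$ near $1$. Your approach, by contrast, rebuilds the entire induction while carrying the vanishing and value-$1$ requirements through every level, which forces the downward propagation/closure of the $F_{i,j}$'s and the delicate simultaneous Tietze-type extensions you describe. These steps can be made rigorous (in particular, disjointness does survive closure, essentially because each $B_{i,k}$ is closed---a fact you use implicitly but would need to prove), yet all of that machinery is rendered unnecessary by the one-line functional-calculus trick.
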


\begin{proof}	
	Using the hypotheses of this lemma, \autoref{indicatorprep} furnishes a function $\Phi\in A$ such that: \begin{enumerate}[font=\itshape, label=(\alph*)]
		\item for all $1\leq i\leq  l $ and $x\in X_{i}$, $\Phi_{i}(x)$ is a diagonal matrix with entries in $[0,1]$ whose final $M$ diagonal entries are all $0$, and such that at most one of every $M$ consecutive diagonal entries is non-zero;
		\item for all $1\leq i\leq \ell$ and $1\leq j\leq n_{i}$: $\Phi_{i}(x)_{j,j}=1$ if and only if there is a $1\leq t\leq m$ such that $x\in B_{i,j-K_{t}}$. 
	\end{enumerate}
	Let us use $\Phi$ to construct a function $\Theta\in A$ satisfying conditions \textit{(1)} to \textit{(3)}. 
	
	Given $\delta\in [0,1)$, define $g\colon[0,1]\to[0,1]$ by
	$$
	\begin{aligned}
	g(x):=
	\begin{cases}
	0& \text{ if }  0\leq x\leq\delta\\
	\text{linear} &\text{ if }  \delta\leq x\leq\frac{1+\delta}{2}\\
	1&\text{ if } \frac{1+\delta}{2}\leq x\leq 1.	
	\end{cases}
	\end{aligned}
	$$
	For $1\leq i\leq  l $, define $\Theta_{i}\colon X_{i}\to M_{n_{i}}$ by 
	$$
	\Theta_{i}(x):=\diag(g(\Phi_{i}(x)_{1,1}),\ldots,g(\Phi_{i}(x)_{n_{i},n_{i}})).
	$$
	Then $\Theta:=\bigoplus_{i=1}^{ l }\Theta_{i}\in\bigoplus_{i=1}^{ l }C(X_{i},M_{n_{i}})$. Since each diagonal entry of $\Phi$ is modified in the same way in the definition of $\Theta$, it is straightforward to check that $\Theta$ is compatible with the diagonal structure of $A$. Hence, $\Theta\in A$. Moreover, since $\Theta_{i}(x)_{j,j}=0$ whenever $\Phi_{i}(x)_{j,j}=0$, it is clear that $\Theta$ satisfies \textit{(1)} since $\Phi$ satisfies \textit{(a)}.
		
	To see that $\Theta$ satisfies \textit{(2)}, fix $1\leq i\leq  l $ and $1\leq j\leq n_{i}$. Since $F_{i,j}$ is disjoint from each $B_{i,j-K_{t}}$ (for $1\leq t\leq m$), condition \textit{(b)} guarantees that $\Phi_{i}(x)_{j,j}<1$ for all $x\in F_{i,j}$. Since $F_{i,j}$ is compact, there is a $\delta_{i,j}\in [0,1)$ such that $\Phi_{i}(x)_{j,j}\leq \delta_{i,j}$ for all $x\in F_{i,j}$. On choosing $\delta:=\max\{\delta_{i,j}:1\leq i\leq  l ,1\leq j\leq n_{i}\}\in [0,1)$ in our definition of $g$ above, it follows that $\Theta_{i}(x)_{j,j}=0$ whenever $1\leq i\leq  l $, $1\leq j\leq n_{i}$, and $x\in F_{i,j}$, which proves \textit{(2)}.
		
	Finally, to see that $\Theta$ satisfies \textit{(3)}, fix $1\leq i\leq  l $ and $1\leq j\leq n_{i}$. If $j>n_{i}-(\mathfrak{s}(A)-1)$, we may take $U_{i,j}=\varnothing$ since $B_{i,j}=\varnothing$ by \autoref{Bik} for such $j$. For $j\leq n_{i}-(\mathfrak{s}(A)-1)$, note that if $x\in B_{i,j}$, then by \textit{(b)}, $\Phi_{i}(x)_{j+K_{t},j+K_{t}}=1$ for all $1\leq t\leq m$. Since $g$ is $1$ in a neighbourhood of $1$, it follows that for each $t$, there is an open set $U_{t}\supset B_{i,j}$ on which the function $\Theta_{i}(\,\cdot\,)_{j+K_{t},j+K_{t}}\colon X_{i}\to [0,1]$ is equal to $1$. Taking $U_{i,j}:=\bigcap_{1\leq t\leq m} U_{t}$ yields \textit{(3)} and proves the lemma. 
\end{proof}

%Before proceeding with the next few lemmas, we need some definitions.

Given a sequence of DSH algebras $A_{1},A_{2},\ldots$, we denote by $ l (j)$ the length of the DSH algebra $A_{j}$. We denote the base spaces of $A_{j}$ by $X^{j}_{1},\ldots,X^{j}_{ l (j)}$ and the corresponding closed subspaces by $Y^{j}_{1},\ldots,Y^{j}_{ l (j)}$. We denote the size of the matrix algebras in the pullback definition of $A_{j}$ by $n^{j}_{1},\ldots,n^{j}_{ l (j)}$. Finally, we denote the sets defined in \autoref{Bik def} corresponding to $A_{j}$ by $B^{j}_{i,k}$. 

\begin{lemma}
	\label{big enough lemma}
	Suppose 
	$$
	A_{1}\overset{\psi_{1}}{\longrightarrow} A_{2}\overset{\psi_{2}}{\longrightarrow} A_{3}\overset{\psi_{3}}{\longrightarrow}\cdots\longrightarrow A:=\varinjlim A_{i}
	$$
	is a simple limit of infinite-dimensional DSH algebras with injective diagonal maps. Then, for all $j,m\in\NN$, there is a $j'>j$ such that $\mathfrak{s}(A_{j'})>m$ (where, recall, $\mathfrak{s}(A_{j})=\min\{n^{j}_{t}:1\leq t\leq l (j)\}$). 
\end{lemma}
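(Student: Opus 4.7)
The plan is to argue by contradiction: assume there exist $j_{0}, m \in \mathbb{N}$ such that $\mathfrak{s}(A_{j'}) \leq m$ for every $j' \geq j_{0}$. I split the analysis into two cases, depending on whether some $j' \geq j_{0}$ admits a matrix block of size greater than $m$, i.e.\ whether $\mathfrak{S}(A_{j'}) > m$ for some such $j'$.

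In the first case, suppose $\mathfrak{S}(A_{j'}) > m$ for some $j' \geq j_{0}$ and let $i^{*} := \max\{i : n^{j'}_{i} > m\}$. No point of $X^{j'}_{i^{*}} \setminus Y^{j'}_{i^{*}}$ can appear in the decomposition of any $y \in Y^{j'}_{i}$ with $i > i^{*}$, because such a decomposition would require a block of size $n^{j'}_{i^{*}} > m$ inside a matrix of size $n^{j'}_{i} \leq m$. Hence, by \autoref{open in spectrum}, the set $U := X^{j'}_{i^{*}} \setminus Y^{j'}_{i^{*}}$ is open in $\hat{A_{j'}}$; by \autoref{empty interior} it is nonempty, and each of its points corresponds to an irreducible representation of dimension $n^{j'}_{i^{*}} > m$. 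Applying \autoref{simplicity} to $U$ yields $j'' > j'$ with $\hat{\psi}_{j'', j'}(\pi) \cap U \neq \varnothing$ for every $\pi \in \hat{A_{j''}}$. However, since $\mathfrak{s}(A_{j''}) \leq m$, one may pick $\pi \in \hat{A_{j''}}$ of dimension at most $m$; its decomposition under the diagonal map $\psi_{j'', j'}$ consists of irreducibles whose dimensions sum to $\dim \pi \leq m$, so each has dimension at most $m$ and none lies in $U$, a contradiction.

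The second case is when $\mathfrak{S}(A_{j'}) \leq m$ for all $j' \geq j_{0}$: every irreducible representation of every such $A_{j'}$ then has dimension at most $m$. By the Amitsur--Levitzki theorem, each such $A_{j'}$ satisfies the standard polynomial identity $s_{2m}(x_{1}, \ldots, x_{2m}) = 0$; since $s_{2m}$ is jointly norm-continuous and preserved by $*$-homomorphisms, this identity descends to $A = \varinjlim A_{j'}$. A standard density argument (any dense $*$-subalgebra of $B(H)$ satisfying $s_{2m}$ forces $\dim H \leq m$) then shows that every irrep of $A$ has dimension at most $m$, and combined with the simplicity of $A$ this forces $A \cong M_{k}$ for some $k \leq m$, contradicting the infinite-dimensionality of $A$ (which follows from the injectivity of the maps $\mu_{j'}$ and the infinite-dimensionality of each $A_{j'}$). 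I expect the second case to be the main obstacle, as the simplicity criterion cannot be invoked directly in the absence of an exposed high-dimensional piece of the spectrum; the argument must instead rely on the external input that the class of $m$-subhomogeneous $\mathrm{C}^{*}$-algebras is closed under inductive limits.
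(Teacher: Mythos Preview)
Your proof is correct, but it takes a genuinely different and more circuitous route than the paper's.

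The paper's argument is direct rather than by contradiction: since $A_{j}$ is infinite-dimensional, some base space $X^{j}_{i}$ is infinite; taking the largest such $i$, the empty-interior assumption forces $Y^{j}_{i'}=\varnothing$ for all $i'>i$ and $X^{j}_{i}\setminus Y^{j}_{i}$ to be infinite. One then picks $m+1$ pairwise disjoint open subsets $\mathcal{O}_{1},\ldots,\mathcal{O}_{m+1}$ of $X^{j}_{i}\setminus Y^{j}_{i}$, which are automatically open in $\hat{A}_{j}$ by \autoref{open in spectrum}. A single application of the simplicity criterion \autoref{simplicity} then produces $j'>j$ such that every $[\pi]\in\hat{A}_{j'}$ has $\hat{\psi}_{j',j}([\pi])$ meeting each $\mathcal{O}_{t}$; since the $\mathcal{O}_{t}$ are disjoint, this forces at least $m+1$ points in every decomposition, hence $n^{j'}_{i}\geq m+1$ for all $i$.

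By contrast, your argument splits into two cases and, in the second, imports the Amitsur--Levitzki theorem and the closure of $m$-subhomogeneity under inductive limits---machinery entirely external to the paper. Your Case~1 is in spirit close to the paper's idea (locate an open piece of the spectrum and apply \autoref{simplicity}), but you use it only to derive a contradiction rather than to construct $j'$ directly. The paper's approach is shorter, self-contained, and constructive; your approach would perhaps adapt more readily to settings where one knows the limit is infinite-dimensional without having direct access to an infinite base space, but that generality is not needed here.
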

\begin{proof}
	Since $A_{j}$ is infinite-dimensional, at least one of the base spaces must be infinite. Let $1\leq i\leq  l (j)$ be the largest integer for which $X_{i}^{j}$ is infinite. By \autoref{empty interior}, $X_{i}^{j}\setminus Y_{i}^{j}$ is also infinite and $Y_{i'}^{j}=\varnothing$ for $i<i'\leq  l (j)$. Choose pairwise-disjoint open in $X_{i}^{j}$ sets $\mathcal{O}_{1},\ldots,\mathcal{O}_{m+1}\subset X_{i}^{j}\setminus Y_{i}^{j}$. \autoref{open in spectrum} guarantees that $\mathcal{O}_{1},\ldots,\mathcal{O}_{m+1}$ are all open with respect to the hull-kernel topology on $\hat{A}_{j}$. By \autoref{simplicity}, there is a $j'>j$ such that for all $x\in \hat{A}_{j'}$, $\hat{\psi}_{j',j}(x)$ contains a point from each of $\mathcal{O}_{1},\ldots,\mathcal{O}_{m+1}$. Hence, $n_{i}^{j'}\geq m+1$ for all $1\leq i\leq  l (j')$, which proves the lemma.
\end{proof}

\begin{lemma}
	\label{far out}
	Suppose 
	$$
	A_{1}\overset{\psi_{1}}{\longrightarrow} A_{2}\overset{\psi_{2}}{\longrightarrow} A_{3}\overset{\psi_{3}}{\longrightarrow}\cdots\longrightarrow A:=\varinjlim A_{i}
	$$
	is a simple limit of infinite-dimensional DSH algebras with injective diagonal maps. Suppose that $f$ is a non-invertible element belonging to some $A_{j}$ and that $\epsilon > 0$. Then, there exist $f'\in A_{j}$ with $\|f-f'\|\leq \epsilon$ and $M\in\NN$ such that for all $N\in\NN$ there exist $j'>j$ satisfying $\mathfrak{s}(A_{j'})>NM$ and unitaries $V,V'\in A_{j'}$ with the following properties:
	\begin{enumerate}
		\item for any $1\leq i\leq  l (j')$ and $1\leq k\leq n_{i}^{j'}$, there is a (possibly empty) open subset $U_{i,k}$ of $X^{j'}_{i}$ containing $B_{i,k}^{j'}$ such that for all $x\in U_{i,k}$, $(V\psi_{j',j}(f')V')_{i}(x)$ has zero crosses at indices $k, k+M, k+2M,\ldots,k+(N-1)M$;
		\item for all $1\leq i\leq l (j')$ and $x\in X^{j'}_{i}$, we have $\mathfrak{r}((V\psi_{j',j}(f')V')_{i}(x))\leq \mathfrak{S}(A_{j})+M-1$ (where, recall, $\mathfrak{S}(A_{j})=\max\{n^{j}_{t}:1\leq t\leq l (j)\}$). 
	\end{enumerate}
\end{lemma}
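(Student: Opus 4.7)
The plan follows the dependency diagram in \autoref{chart}: first invoke \autoref{perturb} to get a nearby element with a zero cross over an open set in the spectrum; then combine \autoref{simplicity} and \autoref{big enough lemma} to pass to a later stage $A_{j'}$ in which, at every point of the spectrum, the diagonal image has at least $N$ such zero crosses coming from $U$; finally, use \autoref{indicators} together with the continuous unitary paths from \cref{sc: prelim lemmas} to build $V,V'\in A_{j'}$ that transport the zero crosses to the required target indices while controlling the bandwidth.

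First I would apply \autoref{perturb} to $f$ and $\epsilon$ to obtain $f'\in A_j$ with $\|f-f'\|\leq\epsilon$, unitaries $w,v\in A_j$, an index $i_0$, a non-empty subset $U\subset \hat{A}_j\cap(X^j_{i_0}\setminus Y^j_{i_0})$ open in $\hat{A}_j$, and a diagonal element $\Delta\in A_j$ with values in $[0,1]$ that tracks the zero crosses of $wf'v$; in particular $(wf'v)_{i_0}$ has a zero cross at index $1$ everywhere on $U$ and $\Delta_{i_0}(z)_{1,1}=1$ for $z\in U$. Set $M:=\mathfrak{S}(A_j)$. Given $N\in\NN$, apply \autoref{simplicity} to the non-empty open set $U$ to choose $j_0>j$ so that $\hat{\psi}_{j_0,j}(y)\cap U\neq\varnothing$ for every $y\in\hat{A}_{j_0}$; then use \autoref{big enough lemma} to pick $j'>j_0$ with $\mathfrak{s}(A_{j'})>\max\{NM,\,N\cdot\mathfrak{S}(A_{j_0})\}$. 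The second inequality forces the diagonal decomposition of any $x\in\hat{A}_{j'}$ under $\psi_{j',j_0}$ to contain at least $N$ (multiplicity-counted) representations, each of which, via $\hat{\psi}_{j_0,j}$, yields at least one representation in $U$. Hence at every $x\in X^{j'}_i$ the block-diagonal matrix $\psi_{j',j}(wf'v)_i(x)$ has at least $N$ sub-blocks (each of size $n^j_{i_0}\leq M$) carrying a zero cross at its first internal index, and its bandwidth is at most $\mathfrak{S}(A_j)$.

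It remains to build unitaries $V_0,V_0'\in A_{j'}$ whose conjugation brings those zero crosses into the target positions $k,k+M,\ldots,k+(N-1)M$ on a neighbourhood of each $B_{i,k}^{j'}$. I would feed the set $K:=\{0,M,2M,\ldots,(N-1)M\}$ (admissible since $\mathfrak{s}(A_{j'})>NM$) into \autoref{indicators}, with closed sets $F_{i,k}$ chosen to be disjoint from every $B_{i,k-K_t}^{j'}$ (locating neighbourhoods of the $B$'s via \autoref{not decomposition} and passing to complements). The resulting element $\Theta\in A_{j'}$ marks the target indices with $1$'s, vanishing elsewhere. A parallel application of \autoref{indicators}, read off against $\psi_{j',j}(\Delta)$, yields an analogous indicator element marking the source positions of the zero crosses. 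The unitaries $V_0,V_0'$ are then defined pointwise as products of the continuous paths of permutation-type unitaries constructed in \autoref{matrix conjugator} and \autoref{transposition prod}, parameterized by the diagonal entries of these two indicator elements. Setting $V:=V_0\psi_{j',j}(w)$ and $V':=\psi_{j',j}(v)V_0'$ then yields the desired unitaries; property (1) follows by tracking the zero crosses over the open neighbourhoods $U_{i,k}$ supplied by \autoref{indicators}, and the bandwidth bound (2) follows from parts (b)--(d) of \autoref{matrix} together with the fact that the spacing $M$ caps the distance any non-zero entry can be moved from the diagonal.

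The hard part will be coordinating the construction of $V_0$ and $V_0'$ so that they simultaneously (i) are well-defined elements of $A_{j'}$, i.e.\ their values at any $y\in Y_i^{j'}$ are precisely the diagonal combinations dictated by the decomposition of $y$; (ii) actually transport every zero cross from its source position (the starting index of a sub-block inherited from $U$) to one of the target positions $k,k+M,\ldots,k+(N-1)M$; and (iii) introduce no more than $M-1$ additional bandwidth to the originally $\mathfrak{S}(A_j)$-banded block-diagonal matrix. The DSH-compatibility in (i) is precisely why the continuous paths of \cref{sc: prelim lemmas} are driven by the indicator elements of \autoref{indicators} rather than by the pointwise locations of the zero crosses themselves: the indicators are built so that whenever a row or column is swapped at a point in $Y_i^{j'}$, the corresponding rows and columns in each diagonal sub-block are swapped in a compatible manner, thereby preserving the pullback structure.
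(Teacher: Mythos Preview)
Your overall architecture---perturb, then pass to a later stage via simplicity and \autoref{big enough lemma}, then build unitaries driven by \autoref{indicators}---matches the paper. The gap is in your choice of $M$ and, consequently, in the transport mechanism that is supposed to produce property \textit{(1)} while preserving the bandwidth bound \textit{(2)}.

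You set $M:=\mathfrak{S}(A_j)$ and then, via a two-step factorization through $j_0$, guarantee that $\psi_{j',j}(wf'v)_i(x)$ has at least $N$ zero crosses \emph{somewhere} along its diagonal. But the target positions in \textit{(1)} are the specific indices $k,k+M,\ldots,k+(N-1)M$, and you give no mechanism that moves a zero cross from its actual source position (the start of some $U$-sub-block, which can lie anywhere within an $A_{j_0}$-block of size up to $\mathfrak{S}(A_{j_0})$) to the nearest target while moving entries by at most $M-1$. With your $M$, the distance from a target index to the nearest available source can be of order $\mathfrak{S}(A_{j_0})\gg M$, and any transposition-path covering that distance destroys the bandwidth bound $\mathfrak{S}(A_j)+M-1$. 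Your appeal to \autoref{matrix} does not help: that lemma collects zero crosses at indices $1,\ldots,m$, not at an arithmetic progression with spacing $M$, and its unitary is not parameterized in a way that yields an element of $A_{j'}$.

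The paper resolves this by first applying \autoref{simplicity} to obtain $j''$, then setting $M:=2\mathfrak{S}(A_{j''})$. The point of this choice is density, not multiplicity: any window of $M$ consecutive diagonal indices of $\Delta':=\psi_{j',j}(\Delta)$ must fully contain at least one $A_{j''}$-block and hence at least one entry equal to $1$. The unitary is then a local search
\[
u_k^i(x)=\prod_{t=1}^{M-1}u_{(k\ k+t)}\bigl(\Theta_i(x)_{k,k}\,\Delta'_i(x)_{k+t,k+t}\bigr),
\]
which, for each active target $k$, scans only the window $[k,k+M-1]$; \autoref{untouched crosses}(c) guarantees a zero cross lands at $k$, and because the active targets (controlled by $\Theta$) are $\geq M$ apart, the windows are disjoint and the bandwidth grows by at most $M-1$. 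Your proposal is missing precisely this ``every $M$-window contains a source'' step, and without it neither \textit{(1)} nor \textit{(2)} goes through.
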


\begin{proof}
	Let $f',w,v,\Delta\in A_{j}$ and $U\subset\hat{A}_{j}$ be given as in \autoref{perturb} (when applied to $f$ and $\epsilon$) and set $g:=wf'v$. Then, at every point in $U$, $g$ has a zero cross at index $1$ and the $(1,1)$-entry of $\Delta$ is $1$. By \autoref{simplicity} and \autoref{DSH spectrum}, there is a $j''>j$ such that $\hat{\psi}_{j'',j}([\ev_{x}])$ contains a point in $U$ for all $x\in \bigsqcup_{i=1}^{ l (j'')}(X_{i}^{j''}\setminus Y_{i}^{j''})$. Since $\psi_{j'',j}$ is diagonal, this means that for $1\leq i\leq l (j'')$ and $x\in X^{j''}_{i}\setminus Y_{i}^{j''}$, at least one of the points $x$ decomposes into under $\psi_{j'',j}$ lies in $U$, so that the matrices $\psi_{j'',j}(g)_{i}(x)$ and $\psi_{j'',j}(\Delta)_{i}(x)$ have a zero cross and a $1$, respectively, at the same index along their respective diagonal. Owing to the decomposition structure of $A_{j''}$, these two results hold, in fact, for all $1\leq i\leq  l (j'')$ and $x\in X_{i}^{j''}$. Take $M:=2\mathfrak{S}(A_{j''})$ and let $N\in\NN$ be arbitrary. 
	
	By \autoref{big enough lemma}, there is a $j'>j''$ such that 
	\begin{equation}
	\label{big enough}	
		\mathfrak{s}(A_{j'})>NM.
	\end{equation}
	Let $\Delta':=\psi_{j',j}(\Delta)=\psi_{j',j''}(\psi_{j'',j}(\Delta))$ and $g':=\psi_{j',j}(g)=\psi_{j',j''}(\psi_{j'',j}(g))$. Given $1\leq i\leq  l (j')$ and $x\in X^{j'}_{i}$ and regarding $\Delta'$ as a diagonal image under $\psi_{j',j''}$, it follows from the definition of $M$ that any $M$ consecutive entries down the diagonal of $\Delta_{i}'(x)$ must contain a $1$. Moreover, regarding $g'$ and $\Delta'$ as diagonal images under $\psi_{j',j}$ shows that $g'_{i}(x)$ has a zero cross at index $k$ whenever $\Delta'_{i}(x)_{k,k}>0$ (as a consequence of the conclusion of \autoref{perturb}) and that $\mathfrak{r}(g_{i}'(x))\leq \mathfrak{S}(A_{j})$.
	
	We now apply \autoref{indicators} with the natural number $M$, with $m=N$, $K_{1}=0, K_{2}=M, \ldots, K_{N}=(N-1)M$, and $F_{i,k}=\varnothing$ for $1\leq i\leq  l (j')$ and $1\leq k\leq n_{i}^{j'}$ (note that $K_{N}<\mathfrak{s}(A_{j'})-M$ by \autoref{big enough}). This furnishes a function $\Theta\in A_{j'}$ with the following properties:
	\begin{enumerate}[label=(\Roman*)]
		\item for all $1\leq i\leq  l (j')$ and $x\in X_{i}^{j'}$, $\Theta_{i}(x)$ is a diagonal matrix with entries in $[0,1]$ whose final $M$ diagonal entries are all $0$, and such that at most one of every $M$ consecutive diagonal entries is non-zero;
		\item for all $1\leq i\leq  l (j')$ and $1\leq k\leq n_{i}^{j'}$, there is a (possibly empty) open subset $U_{i,k}\subset X_{i}^{j'}$ containing $B_{i,k}^{j'}$ with the property that if $x\in U_{i,k}$, then $\Theta_{i}(x)_{k+aM,k+aM}=1$ for all $0\leq a\leq N-1$.
	\end{enumerate}
	Fix $1\leq i\leq l (j')$. Given $x\in X_{i}^{j'}$ and $1\leq k\leq n_{i}^{j'}-(M-1)$, let
	$$
	%\label{uik}
	u_{k}^{i}(x):=\prod_{t=1}^{M-1}u^{i}_{(k \ k+t)}\left(\Theta_{i}(x)_{k,k} \Delta_{i}'(x)_{k+t,k+t}\right)\in M_{n_{i}^{j'}},
	$$
	where each $u^{i}_{(k \ k+t)}\colon [0,1]\to M_{n_{i}^{j'}}$ is a connecting path of unitaries as described in \autoref{unitaries}. 	
	Define $W_{i}\in C(X_{i}^{j'}, M_{n_{i}^{j'}})$ to be the unitary
	$$
	W_{i}(x):=\prod_{k=1}^{n_{i}^{j'}-M}u_{k}^{i}(x).
	$$
	Set $W:=(W_{1},\ldots,W_{ l (j')})$ and take $V:=W\psi_{j',j}(w)$ and $V':=\psi_{j',j}(v)W^{*}$. Before showing that $W\in A_{j'}$, let us prove that statements \textit{(1)} and \textit{(2)} of \autoref{far out} hold.
	
	Fix $x\in X_{i}^{j'}$. Note that if $\Theta_{i}(x)_{k',k'}=0$, then $u_{k'}^{i}(x)=1_{n_{i}^{j'}}$. Let $\{k_{1}<\cdots<k_{s}\}$ denote the set of indices $r$ at which $\Theta_{i}(x)_{r,r}>0$. Then, $W_{i}(x)=u_{k_{1}}^{i}(x)\cdots u_{k_{s}}^{i}(x)$, where, by (I) above, $k_{p+1}-k_{p}\geq M$ for $1\leq p < s$ and $k_{s}\leq n_{i}^{j'}-M$. Note that conjugating any matrix by $u_{k_{p}}^{i}(x)$ only affects the $k_{p},\ldots, k_{p}+(M-1)$ rows and columns of that matrix. Thus, for $p\not=q$, the indices of the rows and columns affected when conjugating by $u_{k_{p}}^{i}(x)$ do not overlap with the indices of the rows and columns affected when conjugating by $u_{k_{q}}^{i}(x)$. This observation will be used to prove \textit{(1)} and \textit{(2)} below. 
	
	To prove \textit{(1)}, fix $1\leq k\leq n_{i}^{j'}$ and assume $x\in U_{i,k}$. For $p=s,s-1,\ldots,1$, let 
	$$
	D_{p}:=u_{k_{p}}^{i}(x)\cdots u_{k_{s}}^{i}(x)g_{i}'(x)u_{k_{s}}^{i}(x)^{*}\cdots u_{k_{p}}^{i}(x)^{*}
	$$
	(setting $D_{s+1}:=g'_{i}(x)$) and apply part \textit{(b)} of \autoref{untouched crosses} with $M$, $n=n^{j'}_{i}$, $l=k_{p}$, $\xi_{l+t}=\Theta_{i}(x)_{l,l}\Delta_{i}'(x)_{l+t,l+t}$ for $t=0,1,\ldots M-1$, $D=D_{p+1}$, and $U=u_{k_{p}}^{i}(x)$ to conclude that $D_{p}$ has a zero cross at any index among $\{1,\ldots,n_{i}^{j'}\}\setminus\{k_{p},\ldots,k_{p}+M-1\}$ whenever $D_{p+1}$ does. 
	
	Now, fix an integer $0\leq a\leq N-1$. Let us show that $W_{i}(x)g_{i}'(x)W_{i}(x)^{*}$ has a zero cross at index $k+aM$. By (II) above, $\Theta_{i}(x)_{k+aM, k+aM}=1$. Let $r$ denote the unique integer such that $k_{r}=k+aM$. Applying the result obtained just above inductively $s-r$ times, it follows that for every $q\in\{k_{r},\ldots,k_{r}+M-1\}$,  $D_{r+1}$ has a zero cross at index $q$ whenever $g_{i}'(x)$ does; in particular, for any such $q$, $D_{r+1}$ has a zero cross at index $q$ provided that $\Delta_{i}'(x)_{q,q}>0$. Hence, since any $M$ consecutive entries along the diagonal of $\Delta_{i}'(x)$ must contain a $1$, the assumptions of part \textit{(c)} of \autoref{untouched crosses} are satisfied with $M$, $n=n^{j'}_{i}$, $l=k+aM$, $\xi_{l+t}=\Theta_{i}(x)_{l,l}\Delta_{i}'(x)_{l+t,l+t}=\Delta_{i}'(x)_{l+t,l+t}$ for $t=0,1,\ldots, M-1$, $D=D_{r+1}$, and $U=u_{k_{r}}^{i}(x)$. Thus, we may apply that part of the lemma to deduce that $D_{r}$ has a zero cross at index $k+aM$. Appealing to the conclusion of the previous paragraph inductively $r-1$ times, it follows that $D_{1}=W_{i}(x)g_{i}'(x)W_{i}(x)^{*}$ has a zero cross at index $k+aM$, since $k+aM=k_{r}$ is not among the indices affected upon conjugation by $u_{k_{1}}^{i}(x)\cdots u_{k_{r-1}}^{i}(x)$. This proves \textit{(1)}.
	
	Next, recall that $g'$ is the diagonal image of $g$, which has bandwidth at most $\mathfrak{S}(A_{j})$ at every point. To prove \textit{(2)}, therefore, it suffices to show that for any given matrix $D=(D_{q,t})\in M_{n_{i}^{j'}}$, we have $\mathfrak{r}(W_{i}(x)DW_{i}(x)^{*})\leq \mathfrak{r}(D)+M-1$. This is most easily seen by drawing a picture and examining which rows and columns are potentially affected upon conjugation by the $u_{k_{p}}^{i}$'s:
	
	\begin{figure}[H]
		\begin{minipage}{0.5\textwidth}
			\centering
			\includegraphics[scale=.9]{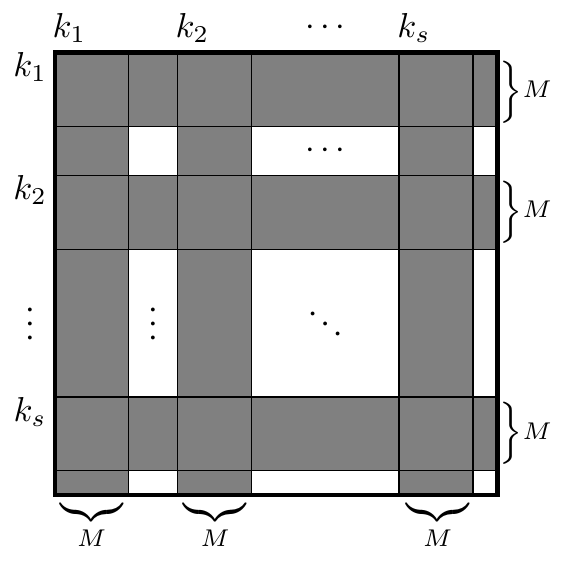}
			\caption{Affected Rows/Columns}
			\label{affected rows/columns}
		\end{minipage}\hfill
		\begin{minipage}{.5\textwidth}
			Since the sets $\{k_{p},\ldots,k_{p}+(M-1)\}$ for $1\leq p\leq s$ are disjoint, the block rows and columns are disjoint. Suppose we are given an index $(q,t)$ that lies in the shaded region of the diagram in \autoref{affected rows/columns}, and suppose that $\lambda$ is the number at entry $(q,t)$ of $W_{i}(x)DW_{i}(x)^{*}$. Upon partitioning this shaded region, it follows that the index $(q,t)$ lies in one of the following three shaded subregions in \autoref{regionA} to \autoref{regionC}:
		\end{minipage}	
		
	\end{figure}
	\begin{figure}[H]
		\centering
		\begin{minipage}{0.333\textwidth}
			\centering
			\includegraphics[scale=.81]{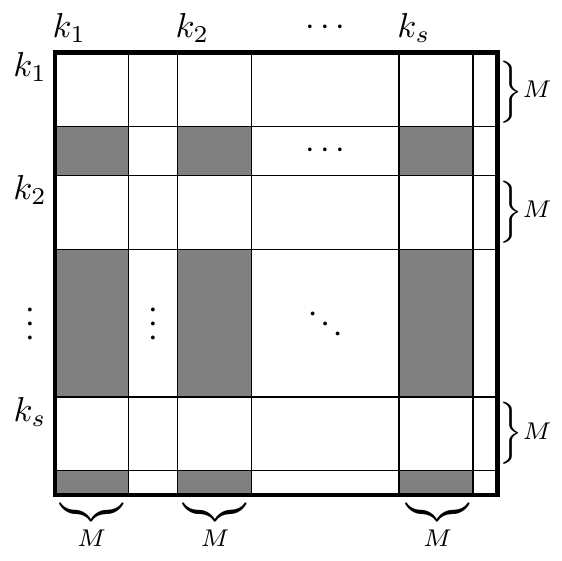}
			\caption{Region A}
			\label{regionA}
		\end{minipage}\hfill
		\begin{minipage}{0.333\textwidth}
			\centering
			\includegraphics[scale=.81]{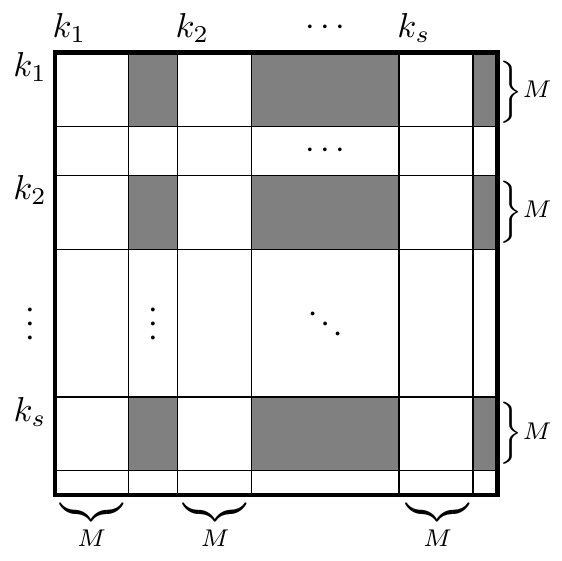}
			\caption{Region B}
			\label{regionB}
		\end{minipage}\hfill
		\begin{minipage}{0.333\textwidth}
			\centering
			\includegraphics[scale=.81]{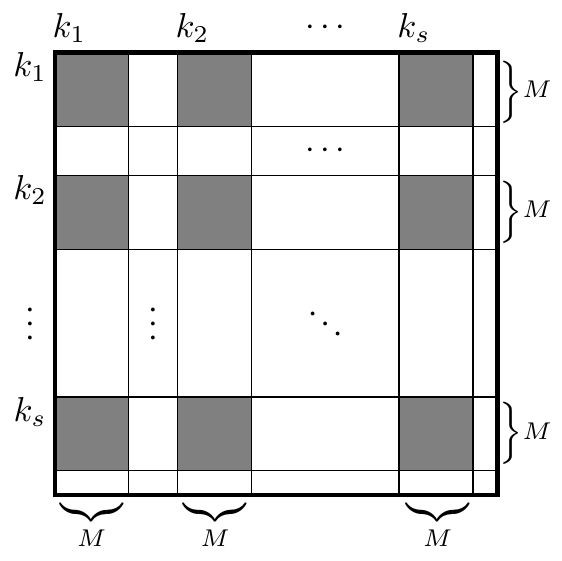}
			\caption{Region C}
			\label{regionC}
		\end{minipage}
	\end{figure}
	
	On \nameref{regionA}, the matrices $W_{i}(x)DW_{i}(x)^{*}$ and $DW_{i}(x)^{*}$ are equal. Hence, if $(q,t)$ lies in \nameref{regionA} and $p$ is such that $k_{p}\leq t\leq k_{p}+M-1$, then $\lambda$ is a linear combination of $D_{q,k_{p}},\ldots,D_{q,k_{p}+M-1}$. Thus, $\lambda$ can be non-zero only if one of $D_{q,k_{p}},\ldots,D_{q,k_{p}+M-1}$ is non-zero. Hence, no non-zero entry in this region is more than $M-1$ indices away from a non-zero entry in $D$. On \nameref{regionB}, the matrices $W_{i}(x)DW_{i}(x)^{*}$ and $W_{i}(x)D$ are equal, and so a symmetrical analysis shows that the same is true also for non-zero entries in this region. If $(q,t)$ lies in one of the $s^{2}$ disjoint $M\times M$ blocks in \nameref{regionC}, then $\lambda$ is a linear combination of the corresponding entries in $D$ lying in that block. Hence, in this case $\lambda$ is $0$ unless that $M\times M$ block in $D$ contains a non-zero entry. Thus, no non-zero entry of $W_{i}(x)DW_{i}(x)^{*}$ in \nameref{regionC} is more than $M-1$ units further away from the diagonal than a non-zero entry of $D$. This analysis proves that $\mathfrak{r}(W_{i}(x)DW_{i}(x)^{*})\leq \mathfrak{r}(D)+M-1$, yielding \textit{(2)}.
	
	To conclude, let us show that $W\in A_{j'}$. Fix $1\leq i\leq  l (j')$ and suppose that $y\in Y_{i}^{j'}$ decomposes into $x_{1}\in X_{i_{1}}^{j'}\setminus Y_{i_{1}}^{j'},\ldots,x_{s}\in X_{i_{s}}^{j'}\setminus Y_{i_{s}}^{j'}$. For $1\leq k\leq s$, let $p_{k}:=1+n^{j'}_{i_{1}}+\cdots+n^{j'}_{i_{k-1}}$. Note that by \autoref{big enough}, $p_{s}\leq n_{i}^{j'}-\mathfrak{s}(A_{j'})+1\leq n_{i}^{j'}-M$. Thus, we may write
	\begin{equation}
	\label{Wi}
	W_{i}(y)=\prod_{k=1}^{n_{i}^{j'}-M}u_{k}^{i}(y)=\prod_{m=1}^{s-1}\prod_{k=p_{m}}^{p_{m+1}-1}u_{k}^{i}(y)\times\prod_{k=p_{s}}^{n_{i}^{j'}-M}u_{k}^{i}(y).
	\end{equation}
	Fix $1\leq m<s$. Then,
	$$
	\prod_{k=p_{m}}^{p_{m+1}-1}u_{k}^{i}(y)=\prod_{k=p_{m}}^{p_{m+1}-1}\prod_{t=1}^{M-1}u_{(k \ k+t)}^{i}(\Theta_{i}(y)_{k,k}\Delta_{i}'(y)_{k+t,k+t}).
	$$ 
	By (I), the last $M$ entries of $\Theta_{i_{m}}(x_{m})$ are zero. Hence, on account of the diagonal decomposition of $\Theta_{i}(y)$, the quantity above is equal to
	$$
	\prod_{k=p_{m}}^{(p_{m+1}-1)-M}\prod_{t=1}^{M-1}u_{(k \ k+t)}^{i}(\Theta_{i_{m}}(x_{m})_{k-p_{m}+1,k-p_{m}+1}\Delta_{i_{m}}'(x_{m})_{k-p_{m}+1+t,k-p_{m}+1+t}),
	$$
	which, upon relabelling indices, becomes
	\begin{equation}
	\label{reindex}
	\prod_{q=1}^{p_{m+1}-p_{m}-M}\prod_{t=1}^{M-1}u^{i}_{(q+p_{m}-1 \ q+p_{m}-1+t)}(\Theta_{i_{m}}(x_{m})_{q,q}\Delta_{i_{m}}'(x_{m})_{q+t,q+t}).
	\end{equation}
	For each $1\leq q\leq p_{m+1}-p_{m}-M$ and $1\leq t\leq M-1$, note that 
	$$
	u^{i}_{(q+p_{m}-1 \ q+p_{m}-1+t)}=\diag(1_{p_{m}-1},u_{(q \ q+t)}^{i_{m}},1_{n_{i}^{j'}-p_{m+1}+1}).
	$$
	Hence, we may rewrite \autoref{reindex} as
	$$
	\begin{aligned}
	%&\diag\left(1_{p_{m}-1},\prod_{q=1}^{p_{m+1}-p_{m}-M}\prod_{t=1}^{M-1}u_{(q \ q+t)}^{i_{m}}(\Theta_{i_{m}}(x_{m})_{q,q}\Delta_{i_{m}}'(x_{m})_{q+t,q+t}),1_{n_{i}^{j'}-p_{m+1}+1}\right)\\
	\diag\left(1_{p_{m}-1},\prod_{q=1}^{p_{m+1}-p_{m}-M}u_{q}^{i_{m}}(x_{m}),1_{n_{i}^{j'}-p_{m+1}+1}\right)
	=\diag\left(1_{p_{m}-1},W_{i_{m}}(x_{m}),1_{n_{i}^{j'}-p_{m+1}+1}\right).
	\end{aligned}
	$$
	Therefore, for all $1\leq m <s$,
	$$
	\prod_{k=p_{m}}^{p_{m+1}-1}u_{k}^{i}(y)=\diag\left(1_{p_{m}-1},W_{i_{m}}(x_{m}),1_{n_{i}^{j'}-p_{m+1}+1}\right)
	$$
	and, similarly, 
	$$
	\prod_{k=p_{s}}^{n_{i}^{j'}-M}u_{k}^{i}(y)=\diag(1_{p_{s}-1},W_{i_{s}}(x_{s})).
	$$
	Plugging this into \autoref{Wi} shows that $W_{i}(y)=\diag(W_{i_{1}}(x_{1}),\ldots,W_{i_{s}}(x_{s}))$, which proves that $W\in A$. The proof of \autoref{far out} is now complete.
\end{proof}

\begin{definition}[Block point]
	Given a matrix $D\in M_{n}$ and $1\leq k\leq n$, we say that $D$ has a \textit{block point at index $k$} provided that $D_{i,j}=0$ if either $i\geq k$ and $j<k$ or $i<k$ and $j\geq k$. 
\end{definition}

\begin{lemma}
	\label{block on set}
	Suppose $A$ is a DSH algebra of length $ l $. Suppose $f\in A$ and $\epsilon>0$. Then there is a $g\in A$ with $\|g-f\|\leq \epsilon$ and with the property that for all $1\leq i\leq  l $ and $1\leq k\leq n_{i}$, there are (possible empty) open sets $\mathcal{O}_{i,k}\supset B_{i,k}$ in $X_{i}$ such that $g_{i}(x)$ has a block point at index $k$ whenever $x\in \mathcal{O}_{i,k}$. Moreover, $g$ can be chosen so that for each $1\leq i\leq l $ and $x\in X_{i}$, $g_{i}(x)$ has a zero cross at index $k$ whenever $f_{i}(x)$ does, and $\mathfrak{r}(g_{i}(x))\leq \mathfrak{r}(f_{i}(x))$. 
\end{lemma}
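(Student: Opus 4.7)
The plan is to construct $g$ by induction on the DSH level $i$, building $(g_1,\ldots,g_i)\in A^{(i)}$ at each stage. For the base case $i=1$, set $g_1:=f_1$; since $B_{1,k}=\varnothing$ for $k>1$ by \autoref{Bik} and the block point at index $k=1$ is trivial, all required properties hold with $\mathcal{O}_{1,1}:=X_1$. For the inductive step, assume $(g_1,\ldots,g_{i-1})\in A^{(i-1)}$ has already been constructed with open sets $\mathcal{O}_{j,k'}\supset B_{j,k'}$ in $X_j$ on which $g_j$ has a block point at $k'$, together with the approximation, zero-cross, and bandwidth-preservation conditions. DSH compatibility forces $g_i|_{Y_i}=\tilde{g}_i:=\varphi_{i-1}((g_1,\ldots,g_{i-1}))\in C(Y_i,M_{n_i})$, which is automatically continuous and close to $f_i|_{Y_i}$.

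The technical heart is a \emph{cascading claim}: for each $1\le k\le n_i$, the set $\{y\in Y_i:\tilde{g}_i(y)\text{ has a block point at }k\}$ contains an open-in-$Y_i$ neighbourhood $\mathcal{V}_{i,k}$ of $B_{i,k}$. To verify this, fix $y_0\in B_{i,k}$ with decomposition $(x_1^0,\ldots,x_{t_0}^0)$ so that $k=1+n_{i_1^0}+\cdots+n_{i_{j-1}^0}$ for some $j$; for $y$ nearby in $Y_i$ with its own decomposition $(x_1,\ldots,x_{t_y})$, each off-block-at-$k$ entry $\tilde{g}_i(y)_{p,q}$ either vanishes structurally (when $p,q$ lie in distinct blocks of $y$'s decomposition) or equals the local off-block-at-$(k-p_m+1)$ entry of $g_{i_m}(x_m)$ for the block $m$ containing both $p,q$. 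Using \autoref{unique} and the continuity of the DSH pullback, such $x_m$ must be arbitrarily close to a point that lies in $B_{i_m,k-p_m+1}$ (arising from the further decomposition that accommodates $y_0$'s block boundary at $k$), hence eventually lies in the inductively-supplied open set $\mathcal{O}_{i_m,k-p_m+1}$, forcing $\tilde{g}_i(y)_{p,q}=0$.

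Granted the cascading claim, the extension of $\tilde{g}_i$ to $g_i\in C(X_i,M_{n_i})$ is a standard gluing. For each off-diagonal $(p,q)$, let $S(p,q):=\{k:(p,q)\text{ off-block at }k\}$, $Z_{p,q}:=\bigcup_{k\in S(p,q)}B_{i,k}$, and choose a closed neighbourhood $V_{p,q}$ of $Z_{p,q}$ in $X_i$ satisfying $V_{p,q}\cap Y_i\subset\bigcup_{k\in S(p,q)}\mathcal{V}_{i,k}$ and with $|f_i(\cdot)_{p,q}|$ small on $V_{p,q}$ (possible by continuity, since $f_i(\cdot)_{p,q}$ vanishes on $Z_{p,q}$). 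Let $T_p,T_q\subset X_i$ be the closed sets on which $f_i$ has a zero cross at $p$, respectively $q$, and put $\mathcal{B}_{p,q}:=\{x\in X_i:\mathfrak{r}(f_i(x))\le |p-q|\}$, which is also closed. On the closed set $E_{p,q}:=Y_i\cup V_{p,q}\cup T_p\cup T_q\cup\mathcal{B}_{p,q}$, specify $h_{p,q}$ to equal $\tilde{g}_i(\cdot)_{p,q}$ on $Y_i$ and $0$ on the remaining pieces of $E_{p,q}$; consistency on all overlaps follows from the cascading claim together with the fact that zero crosses and bandwidths of $f_i$ pass to $\tilde{g}_i$ through the block-diagonal pullback (by the inductive hypothesis applied to each $g_{i_m}$). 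Applying the Tietze extension theorem to $h_{p,q}-f_i(\cdot)_{p,q}$, which is bounded by the desired accuracy on $E_{p,q}$, produces $g_i(x)_{p,q}\in C(X_i)$ close to $f_i(\cdot)_{p,q}$; diagonal entries are treated analogously, omitting the block-point constraint. Setting $\mathcal{O}_{i,k}:=\inte\bigl(\bigcap_{(p,q)\text{ off-block at }k}V_{p,q}\bigr)$ yields open sets in $X_i$ that contain $B_{i,k}$ and on which $g_i$ has a block point at $k$.

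The main obstacle is the cascading claim itself, which requires careful analysis of how the DSH decomposition of $y\in Y_i$ can vary with $y$ (including possible changes in block structure as decomposition points enter lower $Y$-subspaces) and of how the inductively-constructed open block regions of the lower-level $g_{i_m}$'s combine through the pullback; the remaining gluing of the specified values on $E_{p,q}$ and the Tietze-based extension are comparatively routine.
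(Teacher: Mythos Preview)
Your inductive Tietze-extension approach is plausible and may be made to work, but it is far more elaborate than the paper's argument, and the ``cascading claim'' --- which you rightly identify as the technical heart --- is only sketched. The difficulty you flag (that the block decomposition of $y\in Y_i$ can jump as $y$ varies, so one must track which lower-level $x_m$ lands in which $\mathcal{O}_{i_m,k'}$) is real, and the sentence ``such $x_m$ must be arbitrarily close to a point that lies in $B_{i_m,k-p_m+1}$'' needs a genuine argument rather than an appeal to continuity of the pullback, since the indices $i_m$ and $p_m$ themselves depend discontinuously on $y$.

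The paper sidesteps all of this with a single observation: if $h\colon\mathbb{C}\to\mathbb{C}$ is the soft-threshold function $h(z)=\frac{z}{|z|}\max(0,|z|-\delta)$ with $\delta=\epsilon/\mathfrak{S}(A)^2$, then defining $g$ by applying $h$ to every matrix entry of $f$ does everything at once. Because $h$ is applied entrywise and uniformly, and because the DSH compatibility condition $f_i(y)=\diag(f_{i_1}(x_1),\ldots,f_{i_t}(x_t))$ is itself an entrywise statement, $g$ automatically lies in $A$ --- no induction, no cascading claim, no Tietze. Since $|h(z)-z|\le\delta$ one gets $\|g-f\|\le\epsilon$; since $h(0)=0$ zero crosses and bandwidth are preserved; and since $h$ vanishes on $\{|z|\le\delta\}$, each off-block entry $g_i(\cdot)_{s,t}$ is zero on an open neighbourhood of any point where $f_i(\cdot)_{s,t}=0$, in particular on a neighbourhood of $B_{i,k}$. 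Your construction buys nothing extra and costs a delicate structural lemma; the entrywise trick is the idea you want here.
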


\begin{proof}
	Given $1\leq i\leq  l $ and $1\leq s,t\leq n_{i}$, let $f_{i}(\, \cdot\,)_{s,t}\in C(X_{i})$ denote the function taking $x$ into $f_{i}(x)_{s,t}$. Let $\delta=\epsilon/\mathfrak{S}(A)^{2}$. Define $h\in  C(\mathbb{C})$ by $h(z):=\frac{z}{|z|}\cdot\max(0,|z|-\delta)$, where it is understood that $h(0)=0$. Note that for any $z\in \mathbb{C}$, if $|z|\leq\delta$, then $|h(z)-z|=|z|\leq \delta$, and if $|z|> \delta$, then $|h(z)-z|=\left|\frac{z}{|z|}(|z|-\delta)-z\right|=\left|\frac{z}{|z|}\delta\right|=\delta$. Thus, for all $z\in \mathbb{C}$, $|h(z)-z|\leq\delta$ and, hence, $|f_{i}(x)_{s,t}-h(f_{i}(x)_{s,t})|\leq \delta$ given any $1\leq i\leq l $, $1\leq s,t\leq n_{i}$, and $x\in X_{i}$. Define $g_{i}(x)_{s,t}:=h(f_{i}(x)_{s,t})$ and denote by $g_{i}$ the matrix-valued function in $C(X_{i},M_{n_{i}})$ given by $(g_{i}(\,\cdot\,)_{s,t})_{s,t}$. Set $g:=(g_{1},\ldots,g_{ l })\in\bigoplus_{i=1}^{ l }C(X_{i},M_{n_{i}})$. For $x\in X_{i}$, $$
	\|f_{i}(x)-g_{i}(x)\|\leq\sum_{1\leq s,t\leq n_{i}}\|f_{i}(x)_{s,t}-g_{i}(x)_{s,t}\|\leq n_{i}^{2}\delta\leq \epsilon.
	$$
	Hence, $\|f-g\|\leq\epsilon$.
	
	To see that $g\in A$, observe that if $y\in Y_{i}$ decomposes into $x_{1}\in X_{i_{1}}\setminus Y_{i_{1}},\ldots,x_{t}\in X_{i_{t}}\setminus Y_{i_{t}}$, then $f_{i}(y)=\diag(f_{i_{1}}(x_{1}),\ldots,f_{i_{t}}(x_{t}))$. Applying $h$ to each coordinate yields that
	$g_{i}(y)=\diag(g_{i_{1}}(x_{1}),\ldots,g_{i_{t}}(x_{t}))$. Furthermore, since $h(0)=0$, $g_{i}(x)$ must have a zero cross at any index that $f_{i}(x)$ does, and $\mathfrak{r}(g_{i}(x))\leq \mathfrak{r}(f_{i}(x))$. 
	
	Lastly, fix $1\leq i\leq l $ and $1\leq k\leq n_{i}$. Let us show how to construct $\mathcal{O}_{i,k}$. If $B_{i,k}=\varnothing$, take $\mathcal{O}_{i,k}:=\varnothing$. Otherwise, suppose $x\in B_{i,k}$. Then, $f_{i}(x)$ has a block point at index $k$. Let $I\subset \{1,\ldots,n_{i}\}^{2}$ denote the set of indices $(s,t)$ such that $s\geq k$ and $t<k$ or such that $s<k$ and $t\geq k$. Given $(s,t)\in I$, it follows that $f_{i}(x)_{s,t}=0$, and hence, that $g_{i}( \, \cdot \,)_{s,t}$ is $0$ on an open set $U_{s,t}(x)\subset X_{i}$ containing $x$. Then $U_{s,t}:=\bigcup_{x\in B_{i,k}}U_{s,t}(x)$ is an open set containing $B_{i,k}$ on which $g_{i}( \, \cdot \,)_{s,t}$ vanishes. Take $\mathcal{O}_{i,k}:=\bigcap_{(s,t)\in I}U_{s,t}$. By construction, then, $g_{i}(x)_{s,t}=0$ whenever $x\in \mathcal{O}_{i,k}$ and $(s,t)\in I$. Thus, $g_{i}(x)$  has a block point at index $k$ provided that $x\in \mathcal{O}_{i,k}$, which completes the proof.
\end{proof}

\begin{lemma}
	\label{cross shift}
	Suppose $A$ is a DSH algebra of length $ l $ and that $M,N\in\NN$ with $NM<\mathfrak{s}(A)$. Suppose $f$ is an element of $A$ with the property that for all $1\leq i\leq  l $ and $1\leq j\leq n_{i}$, there is a (possibly empty) open set $U_{i,k}\supset B_{i,k}$ in $X_{i}$ such that if $x\in U_{i,k}$, then $f_{i}(x)$ has zero crosses at indices $k,k+M,\ldots,k+(N-1)M$ and a block point at index $k$. Then, there exists a unitary $V\in A$ with the following properties:
	\begin{enumerate}
		\item for all $1\leq i\leq l $ and $1\leq k\leq n_{i}$, there are open sets $\mathcal{O}_{i,k}\supset B_{i,k}$ in $X_{i}$ such that $V_{i}(x)f_{i}(x)V_{i}(x)^{*}$ has zero crosses at indices $k,k+1,\ldots,k+N-1$ whenever $x\in \mathcal{O}_{i,k}$;
		\item$\mathfrak{r}(V_{i}(x)f_{i}(x)V_{i}(x)^{*})\leq \mathfrak{r}(f_{i}(x))+2$ for all $1\leq i\leq  l $ and $x\in X_{i}$.
	\end{enumerate} 
\end{lemma}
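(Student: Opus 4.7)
The plan is to build $V$ coordinate-wise as a product, indexed by starting indices $k$, of window-localised unitary paths from \autoref{matrix}, with each factor switched on or off by a matrix-valued indicator element furnished by \autoref{indicators}. The hypothesis that $f_i(x)$ has a block point at $k$ whenever $x \in U_{i,k}$ is what will allow a window-localised conjugation to act only on the lower-right sub-block beginning at $k$, leaving the remainder of $f_i(x)$ undisturbed; the shift of the zero crosses from $k, k+M, \ldots, k+(N-1)M$ to the consecutive indices $k, k+1, \ldots, k+N-1$ can then take place block by block.

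I will first apply \autoref{indicators} with the natural number $NM$ (in place of $M$), the singleton $K := \{0\}$, and the closed sets $F_{i,k} := X_i \setminus U_{i,k}$ (disjoint from $B_{i,k}$ since $B_{i,k} \subset U_{i,k}$). The condition $K_m < \mathfrak{s}(A) - NM$ is then exactly the hypothesis $NM < \mathfrak{s}(A)$. This produces $\Theta \in A$ such that $\Theta_i(x)$ is diagonal with entries in $[0,1]$, its last $NM$ entries are zero, at most one of any $NM$ consecutive entries is non-zero, $\Theta_i(x)_{k,k} > 0$ implies $x \in U_{i,k}$, and on some open set $\mathcal{V}_{i,k} \supset B_{i,k}$ one has $\Theta_i(x)_{k,k} = 1$. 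Then, for each $i$ and each $1 \leq k \leq n_i - (N-1)M$, let $\widetilde W_k^{[i]} \in C([0,1], M_{n_i - k + 1})$ denote the path provided by \autoref{matrix} in $M_{n_i - k + 1}$ with $z_p := 1 + (p-1)M$ for $p = 1, \ldots, N$, and set $W_k^{[i]}(\xi) := \diag(1_{k-1}, \widetilde W_k^{[i]}(\xi)) \in M_{n_i}$; each $W_k^{[i]}$ is the identity outside the window $\{k, \ldots, k+(N-1)M\}$, and $W_k^{[i]}(0) = 1_{n_i}$. Finally I set
\[
V_i(x) := \prod_{k=1}^{n_i - (N-1)M} W_k^{[i]}(\Theta_i(x)_{k,k}), \qquad V := (V_1, \ldots, V_\ell).
\]

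Three things will remain to verify. First, that $V \in A$: for $y \in Y_i$ with decomposition $x_1, \ldots, x_t$ and block boundaries $k_p := 1 + n_{i_1} + \cdots + n_{i_{p-1}}$, the fact that $\Theta \in A$ combined with the vanishing of the last $NM$ entries of each $\Theta_{i_p}(x_p)$ will force every factor $W_k^{[i]}(\Theta_i(y)_{k,k})$ with $\Theta_i(y)_{k,k} > 0$ and $k$ in block $p$ to have its window $[k, k+(N-1)M]$ strictly inside $[k_p, k_{p+1}-1]$; the spacing bound from \autoref{indicators} will further make all such active factors have pairwise disjoint windows, so that they commute and reassemble block-wise into $V_i(y) = \diag(V_{i_1}(x_1), \ldots, V_{i_t}(x_t))$, paralleling the last part of the proof of \autoref{far out}. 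Second, condition (1) with $\mathcal{O}_{i,k} := \mathcal{V}_{i,k}$: at $x \in \mathcal{V}_{i,k}$ one has $\Theta_i(x)_{k,k} = 1$, so $W_k^{[i]}(1)$ applies the full permutation from \autoref{matrix}(b) to the lower-right block, which together with the block point at $k$ shifts the zero crosses to $k, k+1, \ldots, k+N-1$, and no other active factor touches those indices because the windows are disjoint. Third, condition (2) follows immediately from the $+2$ bandwidth bound of \autoref{matrix}(b), applied within each window independently. The main technical obstacle will be the $V \in A$ bookkeeping, and the choice of $NM$ rather than $M$ as the input to \autoref{indicators} is precisely what guarantees that each active factor's window fits inside a single decomposition block.
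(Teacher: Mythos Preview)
Your plan is essentially the paper's proof: apply \autoref{indicators} with spacing $NM$ and $K=\{0\}$ against the closed sets $X_i\setminus U_{i,k}$, then at each active index $k$ conjugate by the path from \autoref{matrix} with $z_p=1+(p-1)M$, embedded as a window starting at $k$; the block-point hypothesis forces $f_i(x)$ to be block-diagonal along the active indices, so the windowed conjugations act independently, and the $V\in A$ check goes exactly as you describe. The only substantive slip is in your justification of \textit{(2)}: part \textit{(b)} of \autoref{matrix} alone does not suffice, because within each block $Q_t$ the conjugator is $\diag(W(\xi),1_{d_t})$, so the off-diagonal pieces $D_{12}$ and $D_{21}$ of $Q_t$ are multiplied by $W(\xi)$ on only one side --- you need parts \textit{(c)} and \textit{(d)} of \autoref{matrix} for those, exactly as the paper does.
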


\begin{proof}
	%Let us first establish the following sublemma, to facilitate the rest of the proof.

	Apply \autoref{indicators} with the natural number $NM$, the index set $K=\{0\}$, and closed sets $F_{i,k}:=X_{i}\setminus U_{i,k}$ for $1\leq i\leq  l $ and $1\leq k\leq n_{i}$ to obtain a function $\Theta\in A$ possessing the following properties:
	\begin{enumerate}[label=(\Roman*)]
		\item for all $1\leq i\leq  l $ and $x\in X_{i}$, $\Theta_{i}(x)$ is a diagonal matrix with entries in $[0,1]$ whose final $NM$ diagonal entries are all $0$ and such that at most one of any $NM$ consecutive diagonal entries is non-zero;
		\item for all $1\leq i\leq  l $ and $1\leq k\leq n_{i}$, if $x\notin U_{i,k}$, then $\Theta_{i}(x)_{k,k}=0$;
		\item for all $1\leq i\leq  l $ and $1\leq k\leq n_{i}$, there is a (possibly empty) open subset $\mathcal{O}_{i,k}\subset X_{i}$ containing $B_{i,k}$ with the property that $\Theta_{i}(x)_{k,k}=1$ whenever $x\in\mathcal{O}_{i,k}$.
	\end{enumerate}
	
	Now, fix $1\leq i\leq l $. For $1\leq k\leq n_{i}-NM$, let $u_{k}\in C(X_{i},M_{n_{i}})$ be the unitary
	$$
	u_{k}(x):=\diag(1_{k-1}, W(\Theta_{i}(x)_{k,k}),1_{n_{i}-(NM+k-1)}),
	$$
	where $W$ is the unitary in $C([0,1],M_{NM})$ given by \autoref{matrix} with $z_{1}:=1$, $z_{2}:=1+M$, \ldots, $z_{N}:=1+(N-1)M$. For $n_{i}-NM<k\leq n_{i}$, set $u_{k}\equiv1_{n_{i}}$. Define $V_{i}\in C(X_{i},M_{n_{i}})$ to be the unitary
	$$
	V_{i}:=\prod_{k=1}^{n_{i}}u_{k}.
	$$
	
	For $x\in X_{i}$, let $K(x):=\{1\leq k\leq n_{i}:\Theta_{i}(x)_{k,k}>0\}$ and write $K(x)=\{k_{1},\ldots,k_{s}\}$, where $k_{1}<\cdots<k_{s}$ and put $k_{s+1}:=n_{i}+1$. Note that $k_{1}=1$ by (III) above since $B_{i,1}=X_{i}$ by \autoref{Bik}, and for $1\leq t\leq s$, $k_{t+1}-k_{t}\geq NM$ by (I) above. If $k\notin K(x)$, then $u_{k}\equiv 1_{n_{i}}$. Hence, we may write
	\begin{equation}
	\label{Vi}
	\begin{aligned}
	V_{i}(x)&=\prod_{t=1}^{s}u_{k_{t}}(x)\\
	&=\diag\left(W(\Theta_{i}(x)_{k_{1},k_{1}}),1_{d_{1}},W(\Theta_{i}(x)_{k_{2},k_{2}}),1_{d_{2}},\ldots,W(\Theta_{i}(x)_{k_{s},k_{s}}),1_{d_{s}}\right),
	\end{aligned}
	\end{equation}
	where $d_{t}:=k_{t+1}-(k_{t}+NM)$ for $1\leq t\leq s$. 
	
	Let $V:=(V_{1},\ldots,V_{ l })$. In order to prove \autoref{cross shift}, let us show that \textit{(1)} holds, then that \textit{(2)} holds, and finally that $V\in A$. 
	
	To prove \textit{(1)} and \textit{(2)}, fix $1\leq i\leq  l $ and $x\in X_{i}$, and let $K(x)=\{k_{1},\ldots,k_{s}\}$ and $k_{s+1}$ be defined as above. For $1\leq t\leq s$, we have $\Theta_{i}(x)_{k_{t},k_{t}}>0$. Hence, by (II) above, it must be that $x\in U_{i,k_{t}}$ and, thus, $f_{i}(x)$ has a block point at index $k_{t}$ and zero crosses at indices $k_{t},k_{t}+M,\ldots,k_{t}+(N-1)M$ by the assumption of the lemma. Thus, $f_{i}(x)=\diag(Q_{1},Q_{2},\ldots,Q_{s})$, where $Q_{t}$ is a $k_{t+1}-k_{t}$ block for $1\leq t\leq s$ and has zero crosses at $1,1+M,\ldots,1+(N-1)M$. Therefore, in light of the decomposition of $V_{i}(x)$ in \autoref{Vi}, we may view $V_{i}(x)f_{i}(x)V_{i}(x)^{*}$ as a block-diagonal matrix $\diag(B_{1},\ldots,B_{s})$ with
	$$
	B_{t}=\diag(W(\Theta_{i}(x)_{k_{t},k_{t}}),1_{d_{t}})\cdot Q_{t}\cdot\diag(W(\Theta_{i}(x)_{k_{t},k_{t}}),1_{d_{t}})^{*}.
	$$
	Thus, to prove \textit{(2)}, it suffices to show that $\mathfrak{r}(B_{t})\leq\mathfrak{r}(Q_{t})+2$. Furthermore, if $x\in \mathcal{O}_{i,k}$ for some $1\leq k\leq n_{i}$, then by (III), $\Theta_{i}(x)_{k,k}=1>0$, and so $k=k_{t}$ for some $1\leq t\leq s$. Since the block $B_{t}$ begins at index $k_{t}$ down the diagonal of $V_{i}(x)f_{i}(x)V_{i}(x)^{*}$, to prove \textit{(1)} it suffices to show that $B_{t}$ has zero crosses at indices $1, 2,\ldots,N$ whenever $\Theta_{i}(x)_{k_{t},k_{t}}=1$. 
	
	To this end, fix $1\leq t\leq s$ and write
	$$
	Q_{t}=
	\begin{pmatrix}
	D_{11} & D_{12}\\
	D_{21} & D_{22}
	\end{pmatrix},
	$$
	where $D_{11}\in M_{NM}$, $D_{22}\in M_{d_{t}}$, and $D_{12}$ and $D_{21}$ are $NM\times d_{t}$ and $d_{t}\times NM$ matrices, respectively. Note that $D_{11}$ has zero crosses at indices $1,1+M,\ldots,1+(N-1)M$, while the rows of $D_{12}$ and the columns of $D_{21}$ at these same indices consist entirely of zeros. We may write
	%\begin{equation}
	%\label{Bt}
	$$\begin{aligned}
	B_{t}=&
	\begin{pmatrix}
	W(\Theta_{i}(x)_{k_{t},k_{t}}) & 0_{NM\times d_{t}}\\
	0_{d_{t}\times NM} & 1_{d_{t}}
	\end{pmatrix}
	\begin{pmatrix}
	D_{11} & D_{12}\\
	D_{21} & D_{22}
	\end{pmatrix}
	\begin{pmatrix}
	W(\Theta_{i}(x)_{k_{t},k_{t}})^{*} & 0_{NM\times d_{t}}\\
	0_{d_{t}\times NM} & 1_{d_{t}}
	\end{pmatrix}\\
	=&\begin{pmatrix}
	W(\Theta_{i}(x)_{k_{t},k_{t}})D_{11}W(\Theta_{i}(x)_{k_{t},k_{t}})^{*} & W(\Theta_{i}(x)_{k_{t},k_{t}})D_{12}\\
	D_{21}W(\Theta_{i}(x)_{k_{t},k_{t}})^{*} & D_{22}
	\end{pmatrix}.
	\end{aligned}
	$$%\end{equation}
	
	If $\Theta_{i}(x)_{k_{t},k_{t}}=1$, then 
	$$
	B_{t}=
	\begin{pmatrix}
	W(1)D_{11}W(1)^{*} & W(1)D_{12}\\
	D_{21}W(1)^{*} & D_{22}
	\end{pmatrix}.
	$$
	By our definition of $W$, we may apply \autoref{matrix} to conclude that $W(1)D_{11}W(1)^{*}$ has zero crosses at indices $1,2,\ldots,N$, and that the first $1,2,\ldots, N$ rows of $W(1)D_{12}$ and columns of $D_{21}W(1)^{*}$ consist only of zeros. It follows that $B_{t}$ has zero crosses at indices $1,2,\ldots, N$, which, based on the aforementioned analysis, proves \textit{(1)}. 
	
	Let us now prove \textit{(2)} by showing that $\mathfrak{r}(B_{t})\leq\mathfrak{r}(Q_{t})+2$. By our definition of $W$, we may apply \autoref{matrix} to obtain the following estimates:
	\begin{itemize}
		\item $\mathfrak{r}(W(\Theta_{i}(x)_{k_{t},k_{t}})D_{11}W(\Theta_{i}(x)_{k_{t},k_{t}})^{*})\leq \mathfrak{r}(D_{11})+2;$
		\vspace{5pt}
		\item $\mathfrak{r}\left(
		\begin{pmatrix}
		0_{NM\times NM} & W(\Theta_{i}(x)_{k_{t},k_{t}})D_{12}\\
		0_{d_{t}\times NM} & 0_{d_{t}\times d_{t}}
		\end{pmatrix}
		\right)\leq\mathfrak{r}\left(
		\begin{pmatrix}
		0_{NM\times NM} & D_{12}\\
		0_{d_{t}\times NM} & 0_{d_{t}\times d_{t}}
		\end{pmatrix}
		\right);$
		\vspace{5pt}
		\item $\mathfrak{r}\left(
		\begin{pmatrix}
		0_{NM\times NM} & 0_{NM\times d_{t}}\\
		D_{21}W(\Theta_{i}(x)_{k_{t},k_{t}})^{*} & 0_{d_{t}\times d_{t}}
		\end{pmatrix}
		\right)\leq\mathfrak{r}\left(
		\begin{pmatrix}
		0_{NM\times NM} & 0_{NM\times d_{t}}\\
		D_{21} & 0_{d_{t}\times d_{t}}
		\end{pmatrix}
		\right).$
	\end{itemize}	
	Combining these estimates gives 
	$$
	\mathfrak{r}(B_{t})=\mathfrak{r}\left(\begin{pmatrix}
	W(\Theta_{i}(x)_{k_{t},k_{t}})D_{11}W(\Theta_{i}(x)_{k_{t},k_{t}})^{*} & W(\Theta_{i}(x)_{k_{t},k_{t}})D_{12}\\
	D_{21}W(\Theta_{i}(x)_{k_{t},k_{t}})^{*} & D_{22}
	\end{pmatrix}\right)
	\leq \mathfrak{r}(Q_{t})+2,
	$$
	which proves \textit{(2)}. 
	
	Finally, let us verify that $V\in A$. Suppose $y\in Y_{i}$ decomposes into $x_{1}\in X_{i_{1}}\setminus Y_{i_{1}},\ldots,x_{r}\in X_{i_{r}}\setminus Y_{i_{r}}$. We need to show that $V_{i}(y)=\diag(V_{i_{1}}(x_{1}),\ldots,V_{i_{r}}(x_{r}))$. Let $K(y)=\{1\leq k\leq n_{i}:\Theta_{i}(y)_{k,k}>0\}$, as defined above. Write $K(y)=\{k_{1},\ldots,k_{s}\}$, where $1=k_{1}<\cdots<k_{s}$, and put $k_{s+1}:=n_{i}+1$. As before, let $d_{t}:=k_{t+1}-(k_{t}+NM)$ for $1\leq t\leq s$. Define $B(y):=\{1\leq k\leq n_{i}:y\in B_{i,k}\}$. By (III) above, $B(y)\subset K(y)$. Hence by \autoref{Bik}, for each $1\leq j\leq r$, there is a $t_{j}\in\{1,\ldots,s\}$ such that $1+n_{i_{1}}+\cdots+n_{i_{j-1}}=k_{t_{j}}$ (where $k_{t_{1}}=1=k_{1}$, so that $t_{1}=1$); set $t_{r+1}:=s+1$, so that $k_{t_{r+1}}=k_{s+1}=n_{i}+1$. 
	
	Now, fix $1\leq j\leq r$ and observe that $\Theta_{i_{j}}(x_{j})_{k,k}=\Theta_{i}(y)_{k_{t_{j}}+k-1, k_{t_{j}}+k-1}$. Therefore, 
	$$
	\begin{aligned}
	K(x_{j})&=\{1\leq k\leq n_{i_{j}}:\Theta_{i_{j}}(x_{j})_{k,k}>0\}\\
	&=\{k-k_{t_{j}}+1:k\in K(y)\text{ and }k_{t_{j}}\leq k<k_{t_{j+1}}\}\\
	&=\{k_{t}-k_{t_{j}}+1:t_{j}\leq t<t_{j+1}\}.
	\end{aligned}
	$$
	Moreover, if $t_{j}\leq t<t_{j+1}$, then $(k_{t+1}-k_{t_{j}}+1)-(k_{t}-k_{t_{j}}+1+NM)=d_{t}$. Given matrices $E_{1},\ldots,E_{p}$, let $\bigoplus_{q=1}^{p}E_{q}:=\diag(E_{1},\ldots,E_{p})$. Then, by the computation of $K(x_{j})$ above and \autoref{Vi}, it follows that
	$$
	\begin{aligned}
	V_{i_{j}}(x_{j})
	&=\bigoplus_{t_{j}\leq t<t_{j+1}}\diag\left(W(\Theta_{i_{j}}(x_{j})_{k_{t}-k_{t_{j}}+1,k_{t}-k_{t_{j}}+1}),1_{(k_{t+1}-k_{t_{j}}+1)-(k_{t}-k_{t_{j}}+1+NM)}\right)\\
	&=\bigoplus_{t_{j}\leq t<t_{j+1}}\diag\left(W(\Theta_{i}(y)_{k_{t},k_{t}}),1_{d_{t}}\right).
	\end{aligned}
	$$
	Therefore, 
	$$
	\begin{aligned}
	\diag(V_{i_{1}}(x_{1}),\ldots,V_{i_{r}}(x_{r}))&=\bigoplus_{1\leq j\leq r}\bigoplus_{t_{j}\leq t<t_{j+1}}\diag\left(W(\Theta_{i}(y)_{k_{t},k_{t}}),1_{d_{t}}\right)\\
	&=\bigoplus_{1\leq t\leq s}\diag\left(W(\Theta_{i}(y)_{k_{t},k_{t}}),1_{d_{t}}\right)\\
	&=V_{i}(y),
	\end{aligned}
	$$
	where the last equality follows by \autoref{Vi}. This shows that $V\in A$. The proof of \autoref{cross shift} is now complete.
\end{proof}

%The final lemma in this section requires a new class of unitaries.

\begin{lemma}
	\label{lower triangular}
	Suppose $A$ is a DSH algebra of length $ l $ and that $1\leq N<\mathfrak{s}(A)$. Suppose $f\in A$ is such that for all $1\leq i\leq l $ and $1\leq k\leq n_{i}$, there is an open subset $U_{i,k}\subset X_{i}$ containing $B_{i,k}$ with the property that if $x\in U_{i,k}$, then $f_{i}(x)$ has zero crosses at indices $k,k+1,\ldots,k+N-1$, and such that $\mathfrak{r}(f_{i}(x))\leq N$ for all $x\in X_{i}$. Then, there is a unitary $V\in A$ such that for all $1\leq i\leq  l $ and $x\in X_{i}$, the matrix $(fV)_{i}(x)$ is strictly lower triangular.
\end{lemma}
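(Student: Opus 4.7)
The plan is to define $V$ coordinate-wise by $V_i(x) := W_{n_i}\bigl(\Theta_i(x)_{1,1}, \ldots, \Theta_i(x)_{n_i,n_i}\bigr)$, where $W_{n_i}$ is the unitary of \autoref{ltmatrix} (with the fixed parameter $N$) and $\Theta \in A$ is a suitable indicator-like element. I obtain $\Theta$ by applying \autoref{indicators} with $M := N$, $K := \{0\}$, and $F_{i,k} := X_i \setminus U_{i,k}$; the hypotheses hold since $B_{i,k} \subset U_{i,k}$ gives disjointness of $F_{i,k}$ from $B_{i,k}$, and $K_m = 0 < \mathfrak{s}(A) - N$ follows from $N < \mathfrak{s}(A)$. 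The resulting $\Theta$ satisfies: diagonal entries in $[0,1]$ with the last $N$ vanishing and at most one of any $N$ consecutive non-zero; $\Theta_i(x)_{k,k} > 0$ implies $x \in U_{i,k}$; and $\Theta_i(x)_{1,1} = 1$ for every $x$, since $B_{i,1} = X_i$. The hypothesis $N < \mathfrak{s}(A)$ also ensures $n_i > N$, putting us in the non-trivial branch of \autoref{ltmatrix}.

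To show $V \in A$, fix $y \in Y_i$ decomposing into $x_1 \in X_{i_1}, \ldots, x_t \in X_{i_t}$, and set $p_j := 1 + n_{i_1} + \cdots + n_{i_{j-1}}$. Since $\Theta \in A$ yields $\Theta_i(y) = \diag\bigl(\Theta_{i_1}(x_1), \ldots, \Theta_{i_t}(x_t)\bigr)$ and property (3) of \autoref{indicators} gives $\Theta_i(y)_{p_j,p_j} = \Theta_{i_j}(x_j)_{1,1} = 1$, I can invoke \autoref{Wn reduction} with $K' := \{p_1 < p_2 < \cdots < p_t\}$ to obtain
\begin{equation*}
V_i(y) = \diag\bigl(V_{i_1}(x_1), \ldots, V_{i_t}(x_t)\bigr),
\end{equation*}
using that $p_{j+1} - p_j = n_{i_j}$.

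To establish strict lower triangularity of $f_i(x)V_i(x)$, fix $x \in X_i$ and let $K(x) = \{k_1 = 1 < \cdots < k_s\}$ (with $k_{s+1} := n_i + 1$) enumerate the indices at which $\Theta_i(x)$ is strictly positive. From the hypothesis of the lemma and the above property of $\Theta$, $f_i(x)$ has zero crosses at every index in $\bigcup_j \{k_j, k_j+1, \ldots, k_j + N - 1\}$, which together with $\mathfrak{r}(f_i(x)) \leq N$ forces $f_i(x)$ to decompose as $\diag(R_1, \ldots, R_s)$, where each block $R_j$ of size $k_{j+1} - k_j$ has its top $N$ rows and left $N$ columns zero. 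A direct columnwise check then shows that $f_i(x) \cdot U[\gamma^{n_i}_{1,n_i}]^N$ is strictly lower triangular: the cyclic shift by $N$ replaces column $c$ by column $\gamma^N(c) = c + N \pmod{n_i}$ of $f_i(x)$, and for $c \in \{n_i - N + 1, \ldots, n_i\} \cup \bigcup_{j \geq 2}\{k_j - N, \ldots, k_j - 1\}$ the source column is a zero-cross column, while for the remaining $c$ the bandwidth bound and block structure force every non-zero entry of the new column $c$ to lie strictly below row $c$.

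The main obstacle is that $\Theta$ may take intermediate values in $(0,1)$, so $V_i(x)$ is a priori neither a permutation nor a pure block-diagonal unitary. This is overcome by exploiting the precise structure of \autoref{ltmatrix}: writing $V_i(x) = U[\gamma^{n_i}_{1,n_i}]^N \prod_{k=N}^{n_i - 1} u^{n_i}_{k,n_i}\bigl(\Theta_i(x)_{k+1,k+1}\bigr)$, each factor is non-trivial only when $\Theta_i(x)_{k+1,k+1} > 0$, i.e.\ when $k+1 = k_j$ for some $j \geq 2$; and by \autoref{transposition prod}, such a factor acts only on the columns $\{k_j - N, \ldots, k_j - 1\} \cup \{n_i - N + 1, \ldots, n_i\}$, which are exactly the columns just shown to vanish in $f_i(x) \cdot U[\gamma^{n_i}_{1,n_i}]^N$. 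An induction over the factors in order of increasing $k$ therefore shows that each factor only mixes already-zero columns and leaves the other columns untouched, so $f_i(x) V_i(x) = f_i(x) \cdot U[\gamma^{n_i}_{1,n_i}]^N$, which is strictly lower triangular, completing the proof.
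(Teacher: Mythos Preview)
Your proposal is correct and follows essentially the same approach as the paper: the same application of \autoref{indicators} (with $M=N$, $K=\{0\}$, $F_{i,k}=X_i\setminus U_{i,k}$), the same definition $V_i(x)=W_{n_i}(\Theta_i(x)_{1,1},\ldots,\Theta_i(x)_{n_i,n_i})$, the same use of \autoref{Wn reduction} to verify $V\in A$, and the same reduction of $(fV)_i(x)$ to $f_i(x)U[\gamma^{n_i}_{1,n_i}]^N$ by arguing that each nontrivial factor $u^{n_i}_{k,n_i}$ only mixes columns that are already zero. The block decomposition $f_i(x)=\diag(R_1,\ldots,R_s)$ you insert is valid but not actually needed---the bandwidth bound alone already forces every non-zero entry of the shifted column $c$ to lie strictly below row $c$, which is how the paper argues it.
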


\begin{proof}
	Apply \autoref{indicators} with the natural number $N$, the index set $K=\{0\}$, and the closed sets $F_{i,k}:=X_{i}\setminus U_{i,k}$ for $1\leq i\leq  l $ and $1\leq k\leq n_{i}$. This yields a function $\Theta\in A$ with the following properties:
	\begin{enumerate}[label=(\Roman*)]
		\item for all $1\leq i\leq  l $ and $x\in X_{i}$, $\Theta_{i}(x)$ is a diagonal matrix with entries in $[0,1]$ whose final $N$ entries are all $0$ and such that at most one of every $N$ consecutive diagonal entries is non-zero;
		\item for all $1\leq i\leq  l $ and $1\leq k\leq n_{i}$, if $x\notin U_{i,k}$, then $\Theta_{i}(x)_{k,k}=0$;
		\item for all $1\leq i\leq  l $ and $1\leq k\leq n_{i}$, if $x\in B_{i,k}$, then $\Theta_{i}(x)_{k,k}=1$. 
	\end{enumerate}
	
	Since $N<\mathfrak{s}(A)$, we may, for each $1\leq i\leq  l $,  define $W_{n_{i}}\in C([0,1]^{n_{i}},M_{n_{i}})$ as in \autoref{ltmatrix}. For $1\leq i\leq l$ and $x\in X_{i}$, define the unitary $V_{i}\in C(X_{i},M_{n_{i}})$ by 
	$$
	V_{i}(x):=W_{n_{i}}(\Theta_{i}(x)_{1,1},\ldots,\Theta_{i}(x)_{n_{i},n_{i}})
	$$
	and set $V:=(V_{1},\ldots,V_{\ell})$. Let us first argue that $(fV)_{i}(x)$ is strictly lower triangular for all $1\leq i\leq  l $ and $x\in X_{i}$, and then show that $V\in A$.
		
	Fix $1\leq i\leq  l $ and $x\in X_{i}$. From \autoref{Wn}, we have
	\begin{equation}
	\label{fV}
	\begin{aligned}
	(fV)_{i}(x)&=f_{i}(x)W_{n_{i}}(\Theta_{i}(x)_{1,1},\ldots,\Theta_{i}(x)_{n_{i},n_{i}})\\
	&=f_{i}(x)U[\gamma_{1,n_{i}}^{n_{i}}]^{N}\prod_{k=N}^{n_{i}-1}u^{n_{i}}_{k,n_{i}}(\Theta_{i}(x)_{k+1,k+1}).
	\end{aligned}
	\end{equation}
	If we write $f_{i}(x)=\big[\, C_{1}\, |\, \cdots\, |\, C_{n_{i}}\, \big]$, where $C_{j}$ is the $j$th column of $f_{i}(x)$, then $f_{i}(x)U[\gamma_{1,n_{i}}^{n_{i}}]^{N}=\big[C_{N+1}\, |\, \cdots\, |\, C_{n_{i}}\, |\, C_{1}\, |\, \cdots\, |\, C_{N}\, \big]$. By the assumption of the lemma, $\mathfrak{r}(f_{i}(x))\leq N$. Hence, all non-zero entries in the first $n_{i}-N$ columns of the matrix $f_{i}(x)U[\gamma_{1,n_{i}}^{n_{i}}]^{N}$ must lie strictly below the diagonal. But by \autoref{Bik}, $x\in B_{i,1}$ and, hence, by the assumptions of the lemma, $f_{i}(x)$ has zero crosses at indices $1,\ldots, N$. In particular, the columns $C_{1},\ldots,C_{N}$ consist entirely of zeros. Therefore, $f_{i}(x)U[\gamma_{1,n_{i}}^{n_{i}}]^{N}$ is strictly lower triangular. To show that $(fV)_{i}(x)$ is strictly lower triangular, we thus only need to verify that $(fV)_{i}(x)=f_{i}(x)U[\gamma_{1,n_{i}}^{n_{i}}]^{N}$. To do this, it is enough, by \autoref{fV}, to check that for each integer $N\leq k\leq n_{i}-1$, 
	\begin{equation}
	\label{final}
	f_{i}(x)U[\gamma_{1,n_{i}}^{n_{i}}]^{N}u^{n_{i}}_{k,n_{i}}(\Theta_{i}(x)_{k+1,k+1})=f_{i}(x)U[\gamma_{1,n_{i}}^{n_{i}}]^{N}.
	\end{equation}
	
	To this end, fix $N\leq k\leq n_{i}-1$. If $\Theta_{i}(x)_{k+1,k+1}=0$, then $u^{n_{i}}_{k,n_{i}}(\Theta_{i}(x)_{k+1,k+1})=1_{n_{i}}$ and there is nothing to show, and so we may assume $\Theta_{i}(x)_{k+1,k+1}>0$. Then, by (II) above, necessarily $x\in U_{i,k+1}$. Hence, by the assumption of the lemma, $f_{i}(x)$ has zero crosses at indices $k+1, \ldots,k+N$, from which it follows that the columns $C_{k+1},\ldots,C_{k+N}$ consist entirely of zeros. As noted above, these correspond to the columns of $f_{i}(x)U[\gamma_{1,n_{i}}^{n_{i}}]^{N}$ at indices $k+1-N,\ldots,k$. By \autoref{transposition prod}, the columns of $f_{i}(x)U[\gamma_{1,n_{i}}^{n_{i}}]^{N}u^{n_{i}}_{k,n_{i}}(\Theta_{i}(x)_{k+1,k+1})$ at indices $k-N+1,\ldots,k$ and $n_{i}-N+1,\ldots,n_{i}$ are linear combinations of the same set of columns of $f_{i}(x)U[\gamma_{1,n_{i}}^{n_{i}}]^{N}$ (i.e., of $C_{k+1},\ldots,C_{k+N},C_{1},\ldots,C_{N}$). But every column in this latter set consists entirely of zeros. Hence, since multiplying by the unitary $u^{n_{i}}_{k,n_{i}}(\Theta_{i}(x)_{k+1,k+1})$ on the right only alters columns at indices $k-N+1,\ldots,k$ and $n_{i}-N+1,\ldots,n_{i}$, \autoref{final} holds. 
	
	Let us now ensure that $V\in A$. Suppose that $2\leq i\leq  l $ and that $y\in Y_{i}$ decomposes into $x_{1}\in X_{i_{1}}\setminus Y_{i_{1}},\ldots,x_{s}\in X_{i_{s}}\setminus Y_{i_{s}}$. Then,
	$$
	\begin{aligned}
	&(\Theta_{i}(y)_{1,1},\ldots,\Theta_{i}(y)_{n_{i},n_{i}})\\
	=&(\Theta_{i_{1}}(x_{1})_{1,1},\ldots,\Theta_{i_{1}}(x_{1})_{n_{i_{1}},n_{i_{1}}},\ldots\ldots\ldots,\Theta_{i_{s}}(x_{s})_{1,1},\ldots,\Theta_{i_{s}}(x_{s})_{n_{i_{s}},n_{i_{s}}}).
	\end{aligned}
	$$
	Define $B(y):=\{1\leq k\leq n_{i}:y\in B_{i,k}\}$. By \autoref{Bik}, $B(y)=\{1=k_{1}<\cdots<k_{s}\}$, where $k_{t}= 1+n_{i_{1}}+\cdots+n_{i_{t-1}}$ for $1\leq t\leq s$. Set $k_{s+1}:=n_{i}+1$ and note that $k_{t+1}-k_{t}=n_{i_{t}}$ for all $1\leq t\leq s$. By assumption $n_{i}\geq\mathfrak{s}(A)>N$, and so, in light of (I) and (III) above, we may apply \autoref{Wn reduction} with the vector $(\Theta_{i}(y)_{1,1},\ldots,\Theta_{i}(y)_{n_{i},n_{i}})$ and the set $B(y)$, to obtain
	$$
	\begin{aligned}
	V_{i}(y)&=W_{n_{i}}(\Theta_{i}(y)_{1,1},\ldots,\Theta_{i}(y)_{n_{i},n_{i}})\\	&=\diag\left(W_{n_{i_{1}}}(\Theta_{i_{1}}(x_{1})_{1,1},\ldots,\Theta_{i_{1}}(x_{1})_{n_{i_{1}},n_{i_{1}}}),\ldots,W_{i_{s}}(\Theta_{i_{s}}(x_{s})_{1,1},\ldots,\Theta_{i_{s}}(x_{s})_{n_{i_{s}},n_{i_{s}}})\right)\\	&=\diag(V_{i_{1}}(x_{1}),\ldots,V_{i_{s}}(x_{s})).
	\end{aligned}
	$$
	Therefore, $V\in A$. This completes the proof of \autoref{lower triangular}.
\end{proof}

\subsection{Proof of the Main Theorem}
\label{sc: main proof}

\begin{theorem}
	\label{main}
	Suppose 
	$$
	A_{1}\overset{\psi_{1}}{\longrightarrow} A_{2}\overset{\psi_{2}}{\longrightarrow} A_{3}\overset{\psi_{3}}{\longrightarrow}\cdots\longrightarrow A:=\varinjlim A_{i}
	$$
	is a simple inductive limit of DSH algebras with diagonal bonding maps. Then $A$ has stable rank one. 	
\end{theorem}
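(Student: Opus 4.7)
The plan is to execute the strategy sketched in \cref{sc: outline of proof}, chaining together the lemmas of \cref{sc: main lemmas} in the order given by \autoref{chart}. Given $f\in A$ and $\epsilon>0$, by density it suffices to treat the case $f\in A_j$ for some finite stage $j$; if $f$ is already invertible there is nothing to do, and otherwise, invoking \autoref{injective}, we may freely assume all bonding maps in the sequence are injective. If $A$ is finite-dimensional, then stable rank one is immediate, so we may further assume each $A_j$ is infinite-dimensional, making \autoref{big enough lemma} available. The underlying idea, going back to R\o rdam \cite{Ror}, is to show that a suitable unitary translate of the image $\psi_{j',j}(f)$ in a sufficiently late stage $A_{j'}$ is close to a nilpotent element, since nilpotents in a unital $\mathrm{C}^*$-algebra are always norm limits of invertibles.

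First I would apply \autoref{perturb} to $f$ and $\epsilon/3$, obtaining $f'\in A_j$ with $\|f-f'\|\leq\epsilon/3$, unitaries $w,v\in A_j$, and a diagonal witness $\Delta$ certifying a zero cross at index $1$ of $wf'v$ on a non-empty open subset $U\subset \hat{A}_j$. Next I would feed $wf'v$ into \autoref{far out} to produce an integer $M$, and, for each $N\in\NN$ I choose below, a later stage $j'$ with $\mathfrak{s}(A_{j'})>NM$ together with unitaries $V,V'\in A_{j'}$ such that
\[
F := V\psi_{j',j}(wf'v)V'
\]
has $N$ equally-spaced zero crosses (at indices $k,k+M,\ldots,k+(N-1)M$) on some open neighbourhood of every $B^{j'}_{i,k}$, with bandwidth at most $\mathfrak{S}(A_j)+M-1$ pointwise. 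Here I would choose $N$ so large that $N\geq \mathfrak{S}(A_j)+M+1$, so that the final lower-triangularity step will apply.

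With $F$ in hand, I would apply \autoref{block on set} to perturb $F$ within $\epsilon/3$ to an element $g\in A_{j'}$ which preserves the pattern of zero crosses and the bandwidth bound, and in addition exhibits a block point at index $k$ on a neighbourhood of every $B^{j'}_{i,k}$. Then \autoref{cross shift} applied to $g$ (whose hypothesis $NM<\mathfrak{s}(A_{j'})$ holds by construction) yields a unitary $W\in A_{j'}$ such that $g':=WgW^*$ has zero crosses at the consecutive indices $k,k+1,\ldots,k+N-1$ on a neighbourhood of every $B^{j'}_{i,k}$, with bandwidth at most $\mathfrak{S}(A_j)+M+1\leq N$ pointwise. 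Finally, \autoref{lower triangular} applied to $g'$ furnishes a unitary $V''\in A_{j'}$ such that $(g'V'')_i(x)$ is strictly lower triangular at every point in every base space of $A_{j'}$; hence $g'V''$ is nilpotent. R\o rdam's observation then gives an invertible $h\in A_{j'}$ with $\|g'V''-h\|$ arbitrarily small, and unwinding the unitary conjugations together with the two $\epsilon/3$ perturbations produces an invertible in $A_{j'}$, and thus in $A$, within $\epsilon$ of $f$.

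I expect the main obstacle to be bookkeeping rather than insight: one must verify that each of the unitaries $V,V',W,V''$ actually lies in the relevant DSH algebra (as opposed to merely in the ambient direct sum of full matrix algebras over the base spaces) and that the zero-cross and bandwidth invariants propagate correctly through each conjugation. This is exactly what the heavy machinery of \autoref{indicators} and the matrix-level computations of \cref{sc: prelim lemmas} are engineered to supply; in particular, \autoref{Wn reduction} is what guarantees compatibility of the unitary built in \autoref{lower triangular} with the decomposition structure at points of the $Y_i$'s. Separability of the base spaces enters precisely in \autoref{indicatorprep}/\autoref{indicators}, where the indicator-function-like elements that select which block of rows and columns each continuous path of permutation unitaries acts on are constructed via a perfect-normality argument.
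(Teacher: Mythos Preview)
Your proposal follows the paper's approach closely and is essentially correct, but there is one redundancy worth flagging: you invoke \autoref{perturb} separately and then feed $wf'v$ into \autoref{far out}, whereas \autoref{far out} already incorporates \autoref{perturb} internally (its input is the original non-invertible $f$, and its output is the perturbed $f'$ together with $V,V'$). As written, applying \autoref{far out} to $wf'v$ would produce a \emph{further} perturbation $(wf'v)'$, and the unitaries $V,V'$ the lemma supplies act on $\psi_{j',j}((wf'v)')$, not on $\psi_{j',j}(wf'v)$; your formula for $F$ drops this second perturbation. The fix is simply to do what the paper does: apply \autoref{far out} directly to $f$ with tolerance $\epsilon/4$, take $f'':=V\psi_{j',j}(f')V'$, and proceed. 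With that adjustment (and four $\epsilon/4$'s rather than three $\epsilon/3$'s, to also absorb the initial density approximation $a\approx\mu_j(f)$), your chain \autoref{far out} $\to$ \autoref{block on set} $\to$ \autoref{cross shift} $\to$ \autoref{lower triangular} $\to$ R\o rdam is exactly the paper's argument, including the choice $N=\mathfrak{S}(A_j)+M+1$ and the bandwidth bookkeeping.
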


\begin{proof}
	For $n\in\NN$, let $\mu_{n}\colon A_{n}\to A$ denote the map in the construction of the inductive limit, which is unital (since the bonding maps are) and, by \autoref{injective} we may assume, injective. Furthermore, we may assume that the $A_{j}$'s are infinite-dimensional.
	
	Fix $\epsilon >0$ and $a\in A$. Our goal is to find an invertible element $a'\in A$ with $\|a-a'\|\leq\epsilon$. To start, choose $j\in\NN$ and $f\in A_{j}$ such that $\|a-\mu_{j}(f)\|\leq\epsilon/4$. If $f$ is invertible in $A_{j}$, then $\mu_{j}(f)$ is invertible in $A$, in which case we are finished. Thus, we may assume that $f$ is not invertible in $A_{j}$. 
	
	Since $A_{j}$ is infinite-dimensional, we may apply \autoref{far out} with $f$, $\epsilon/4$, and $N=\mathfrak{S}(A_{j})+M+1$, where $M$ is the natural number depending on $f$ and $\epsilon/4$, coming from the statement of \autoref{far out}. This yields a function $f'\in A_{j}$ with $\|f-f'\|\leq\epsilon/4$, a $j'>j$ such that $\mathfrak{s}(A_{j'})>NM$, and unitaries $V,V'\in A_{j'}$ with the following two properties (we adopt the same notation for the decomposition of $A_{j'}$ introduced just above \autoref{far out}):
	\begin{enumerate}
		\item for any $1\leq i\leq  l $ and $1\leq k\leq n_{i}$, there is a (possibly empty) open subset $U_{i,k}$ of $X_{i}$ containing $B_{i,k}$ such that $(V\psi_{j',j}(f')V')_{i}(x)$ has zero crosses at indices $k, k+M, k+2M,\ldots,k+(N-1)M$;
		\item for all $1\leq i\leq l $ and $x\in X_{i}$, we have $\mathfrak{r}((V\psi_{j',j}(f')V')_{i}(x))\leq \mathfrak{S}(A_{j})+M-1$.
	\end{enumerate}
	Let $f'':=V\psi_{j',j}(f')V'\in A_{j'}$. 
	
	Next, apply \autoref{block on set} with $A_{j'}$, $f''$, and $\epsilon/4$. This yields a function $g\in A_{j'}$ with $\|g-f''\|\leq \epsilon/4$ and, for $1\leq i\leq  l $ and $1\leq k\leq n_{i}$, open sets $\mathcal{O}_{i,k}\subset X_{i}$ containing $B_{i,k}$ on which $g_{i}$ always has a block point at index $k$; moreover, for all $1\leq i\leq l $ and $x\in X_{i}$, the matrix $g_{i}(x)$ has a zero cross at every index that $f''_{i}(x)$ does, and $\mathfrak{r}(g_{i}(x))\leq\mathfrak{r}(f''_{i}(x))\leq\mathfrak{S}(A_{j})+M-1$. Thus, intersecting the $\mathcal{O}_{i,k}$'s with the $U_{i,k}$'s, we may assume that $g_{i}(x)$ has zero crosses at indices $k,k+M,\ldots,k+(N-1)M$ whenever $x\in \mathcal{O}_{i,k}$. 
	
	Since $\mathfrak{s}(A_{j'})>NM$, we may now apply \autoref{cross shift} on $A_{j'}$ with $g$ and the $\mathcal{O}_{i,k}$'s above to obtain a unitary $W\in A_{j'}$ with the following properties:
	\begin{enumerate}[label=(\Roman*)]
		\item for all $1\leq i\leq l $ and $1\leq k\leq n_{i}$, there are open sets $\mathcal{O}_{i,k}'\supset B_{i,k}$ in $X_{i}$ such that $W_{i}(x)g_{i}(x)W_{i}(x)^{*}$ has zero crosses at indices $k,k+1,\ldots,k+N-1$ whenever $x\in \mathcal{O}_{i,k}'$;
		\item $\mathfrak{r}(W_{i}(x)g_{i}(x)W_{i}(x)^{*})\leq \mathfrak{r}(g_{i}(x))+2\leq\mathfrak{S}(A_{j})+M+1=N$ for all $1\leq i\leq  l $ and $x\in X_{i}$.
	\end{enumerate} 
	Let $g':=WgW^{*}\in A_{j'}$. 
	
	Using these properties and the fact that $\mathfrak{s}(A_{j'})>NM\geq N$, we may apply \autoref{lower triangular} on $A_{j'}$ with $g'$ and the $\mathcal{O}_{i,k}'$'s to conclude that there is a unitary $W'\in A_{j'}$ such that for all $1\leq i\leq  l $ and $x\in X_{i}$, the matrix $(g'W')_{i}(x)$ is strictly lower triangular. Thus, $g'W'$ is a nilpotent element. As observed in Section 4 of \cite{Ror}, every nilpotent element of a unital $\mathrm{C}^*$-algebra is arbitrarily close to an invertible element. Thus, there is an invertible element $h\in A_{j'}$ such that $\|g'W'-h\|\leq\epsilon/4$.
	
	Take $a':=\mu_{j'}(V^{*}W^{*}h(W')^{*}W(V')^{*})$ and observe that $a'$ is invertible in $A$. Then, since the $\mu_{n}$'s are injective,
	$$
	\begin{aligned}
	\|\mu_{j}(f')-a'\|&=\|\psi_{j',j}(f')-V^{*}W^{*}h(W')^{*}W(V')^{*}\|\\
	&=\|V^{*}W^{*}[WV\psi_{j',j}(f')V'W^{*}W'-h](W')^{*}W(V')^{*}\|\\
	&\leq\|V^{*}W^{*}\|\|Wf''W^{*}W'-h\|\|(W')^{*}W(V')^{*}\|\\
	&=\|Wf''W^{*}W'-h\|\\
	&\leq\|Wf''W^{*}W'-WgW^{*}W'\|+\|WgW^{*}W'-h\|\\
	&\leq\|W\|\|f''-g\|\|W^{*}W'\|+\|g'W'-h\|\\
	&\leq\frac{\epsilon}{4}+\frac{\epsilon}{4}\\
	&=\frac{\epsilon}{2},
	\end{aligned}
	$$
	and
	$$
	\begin{aligned}
	\|a-\mu_{j}(f')\|&\leq\|a-\mu_{j}(f)\|+\|\mu_{j}(f)-\mu_{j}(f')\|
	\leq \frac{\epsilon}{4}+\|f-f'\|
	\leq\frac{\epsilon}{2}.
	\end{aligned}
	$$
	Therefore, 
	$$
	\|a-a'\|\leq \|a-\mu_{j}(f')\|+\|\mu_{j}(f')-a'\|\leq\frac{\epsilon}{2}+\frac{\epsilon}{2}=\epsilon,
	$$
	as desired. 
\end{proof}

\subsection{Applications to Dynamical Crossed Products}
\label{sc: cp}

Let $T$ be an infinite compact metric space and let $h\colon T\to T$ be a minimal homeomorphism. In the final portion of this paper, we present two applications of  \autoref{main}, both concerning the dynamical crossed product $A:=\mathrm{C^*}(\mathbb{Z},T,h)$. We show that $A$ has stable rank one (see \autoref{main2}), thereby affirming a conjecture of Archey, Niu, and Phillips (see Conjecture 7.2 of \cite{arcphil}). We also apply a result of Thiel from \cite{thi} to conclude that classification for $A$ is determined by strict comparison (see \autoref{main3}), which establishes the Toms--Winter Conjecture for minimal dynamical crossed products. 

The Toms--Winter Conjecture dates back to 2008 and stipulates that for separable, unital, simple, non-elementary, nuclear $\mathrm{C}^{*}$-algebras three different notions of regularity are equivalent. Below is a precise statement of the conjecture.

\begin{conjecture}[Toms--Winter; \cite{ET}, \cite{WZ}]
	\label{Toms-Winter}
	Let $A$ be a separable, unital, simple, non-elementary, nuclear $\mathrm{C}^{*}$-algebra. The following statements are equivalent.
	\begin{enumerate}
		\item $A$ has finite nuclear dimension.
		\item $A$ is $\mathcal{Z}$-stable; that is, $A\otimes \mathcal{Z}\cong A$.
		\item $A$ has strict comparison of positive elements. 
	\end{enumerate}
\end{conjecture}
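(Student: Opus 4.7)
The plan is to treat the three implications separately, flagging at the outset that the Toms--Winter conjecture has no unconditional proof in the full generality stated; what one can do is assemble the known partial implications and isolate the remaining obstruction. First I would invoke Winter's theorem together with the nuclear stably finite work of Castillejos--Evington--Tikuisis--White--Winter, which between them establish $(1)\Leftrightarrow(2)$: $(1)\Rightarrow(2)$ via a construction of approximately central order-zero maps from matrix algebras into $A$, and $(2)\Rightarrow(1)$ by producing uniform finite-dimensional approximations from $\mathcal{Z}$-stability together with the structure of the tracial cone. Next I would invoke R\o rdam's theorem for $(2)\Rightarrow(3)$: $\mathcal{Z}$-absorption renders the Cuntz semigroup almost unperforated, which is precisely strict comparison of positive elements. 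After these two steps the entire conjecture reduces to the single direction $(3)\Rightarrow(2)$.

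For that remaining direction, the strategy splits along the Kirchberg--R\o rdam dichotomy. In the purely infinite regime $(3)\Rightarrow(2)$ is automatic, since a separable, unital, simple, nuclear, purely infinite $\mathrm{C}^{*}$-algebra is $\mathcal{O}_{\infty}$-absorbing and hence $\mathcal{Z}$-stable. In the stably finite regime the plan is to extract from strict comparison a sequence of complemented partitions of unity in the uniform tracial completion of $A$, following the programme of Castillejos--Evington--Tikuisis--White--Winter, and then bootstrap these into approximately central order-zero maps $M_{n}\to A$ that witness $\mathcal{Z}$-stability; this step relies on uniform property $\Gamma$ in the ultrapower, which is the technical heart of that programme. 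A parallel, more structural route is available whenever stable rank one can be secured independently: by Thiel's theorem, strict comparison then promotes directly to $\mathcal{Z}$-stability, bypassing the need for property $\Gamma$. This latter avenue is precisely the one exploited in \autoref{main3} for the minimal dynamical crossed products $\mathrm{C}^{*}(\mathbb{Z},T,h)$, where the stable rank one hypothesis is supplied by \autoref{main2}.

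The hard part will be $(3)\Rightarrow(2)$ in full generality: no known mechanism produces the requisite approximately central matrix-algebra embeddings from strict comparison alone, without some auxiliary structural hypothesis (uniform property $\Gamma$, stable rank one, a compact extremal tracial boundary, or similar) that rigidifies either the tracial state space or the projection structure of $A$. A genuinely unconditional closure of the conjecture would require a new method for converting purely Cuntz-semigroup information into asymptotic finite-dimensional approximations, and this is the barrier that has so far prevented its resolution at the level of generality stated.
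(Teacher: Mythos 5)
This statement is a conjecture, and the paper gives no proof of it: it simply records the same known partial implications you cite (R\o rdam for $(2)\Rightarrow(3)$, Winter for $(1)\Rightarrow(2)$, Castillejos--Evington--Tikuisis--White--Winter for $(2)\Rightarrow(1)$) and observes that in any concrete case only $(3)\Rightarrow(2)$ remains to be verified. Your account matches the paper's treatment essentially verbatim, including the key point that Thiel's theorem settles $(3)\Rightarrow(2)$ once stable rank one is available, which is precisely how \autoref{main3} handles the minimal dynamical crossed products via \autoref{main2} and \autoref{Ax}.
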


At the time, part of this conjecture had already been established by R\o rdam, who, in Theorem 4.5 and Corollary 4.6 of \cite{Ror2}, proved that \textit{(2)} implies \textit{(3)}. Winter showed in Corollary 7.3 of \cite{Win} that \textit{(1)} implies \textit{(2)}. The work of various hands established that \textit{(2)} implies \textit{(1)} in special cases, but very recently, in Theorem A of \cite{CETWW}, this implication was shown to hold in full generality. Therefore, to establish the conjecture for a given $\mathrm{C}^{*}$-algebra, one needs only to check that strict comparison of positive elements yields $\mathcal{Z}$-stability.

Let $\sigma\colon C(T)\to C(T)$ denote the automorphism arising from $h$ given by $\sigma(f):=f\circ h^{-1}$. Let $u$ denote the unitary in the associated crossed product $A$ implementing the $\sigma$ action, i.e., $ufu^{*}=\sigma(f)$ for all $f\in C(T)$. Then $A$ is the $\mathrm{C}^{*}$-algebra generated by $C(T)$ and $u$. Given a closed set $S\subset T$ with non-empty interior, let $A_{S}$ denote the orbit-breaking sub-$\mathrm{C}^{*}$-algebra of $A$ associated to $S$, first introduced by Putnam in \cite{Put}  for Cantor minimal systems and later by Q. Lin and Phillips for more general minimal systems (see \cite{Lin}, \cite{LP}, and \cite{LP2}); that is, $A_{S}$ is the $\mathrm{C}^{*}$-algebra generated by $\{f,ug:f\in C(T), g\in C_{0}(T\setminus S)\}$, where we adopt the shorthand $C_{0}(T\setminus S):=\{g\in C(T): g|_{S}\equiv 0\}$. In \cite{Lin}, \cite{LP}, and \cite{LP2}, Q. Lin and Phillips showed that $A_{S}$ is a recursive subhomogeneous algebra, and in fact a DSH algebra. We outline this below. For a more in-depth discussion, see Theorems 3.1-3.3. of \cite{LP}.

Given $s\in S$, let $\lambda_{S}(s):=\min\{n>0:h^{n}(s)\in S\}$ (the first return time of $s$ to $S$). Since $T$ is compact, it follows that $\sup_{s\in S}\lambda_{S}(s)$ is finite (see also Lemma 2.2 of \cite{LP2}). Thus there exist $1\leq n_{1}^{S}<n_{2}^{S}<\cdots<n_{ l (S)}^{S}$ such that $\{\lambda_{S}(s):s\in S\}=\{n_{i}^{S}:1\leq i\leq  l (S)\}$. For $1\leq i\leq  l (S)$, let $X_{i}^{S}:=\overline{\lambda_{S}^{-1}(n_{i}^{S})}$ and $Y_{i}^{S}:=X_{i}^{S}\setminus\lambda_{S}^{-1}(n_{i}^{S})$. Then, for given $1\leq i \leq  l (S)$ and $y\in Y_{i}^{S}$, there are indices $1\leq i_{1},\ldots,i_{p}< i$ with $y\in X_{i_{1}}^{S}\setminus Y_{i_{1}}^{S}$, such that $n_{i_{1}}^{S}+\cdots+n_{i_{p}}^{S}=n_{i}^{S}$ and such that $h^{k}(y)\in S$ if and only if $k=n_{i_{1}}^{S}+\cdots +n_{i_{j}}^{S}$ for some $1\leq j\leq p$. Note, too, that $h^{n_{i_{1}}^{S}+\cdots +n_{i_{j-1}}^{S}}(y)\in X_{i_{j}}^{S}\setminus Y_{i_{j}}^{S}$ for all $2\leq j\leq p$. Then, $A_{S}$ is isomorphic to a sub-$\mathrm{C}^*$-algebra of $\bigoplus_{i=1}^{ l (S)}C(X_{i}^{S},M_{n_{i}^{S}})$, where an element $(f_{1},\ldots,f_{ l (S)})$ of $\bigoplus_{i=1}^{ l (S)}C(X_{i}^{S},M_{n_{i}^{S}})$ is in $A_{S}$ if and only if, for given $1\leq i\leq  l (S)$ and $y\in Y_{i}^{S}$,
$$
f_{i}(y)=\diag\left(f_{i_{1}}(y),f_{i_{2}}(h^{n_{i_{1}}^{S}}(y)),\ldots,f_{i_{p}}(h^{n_{i_{1}}^{S}+\cdots +n_{i_{p-1}}^{S}}(y))\right),
$$
where $i_{1},\ldots,i_{p}$ are as described above. It follows that $A_{S}$ is a DSH algebra. 

\begin{lemma}
	\label{returning}
	Suppose $R\subset S\subset T$. Let $1\leq i\leq  l (R)$ and $x\in X_{i}^{R}\setminus Y_{i}^{R}=\lambda_{R}^{-1}(n_{i}^{R})$. Then, there are indices $1\leq i_{1},\ldots,i_{q}\leq  l (S)$ such that:
	\begin{enumerate}
		\item $n_{i_{1}}^{S}+\cdots+n_{i_{q}}^{S}=n_{i}^{R}$;
		\item for all $1\leq k\leq n_{i}^{R}$, $h^{k}(x)\in S$ if and only if $k=n_{i_{1}}^{S}+\cdots+n_{i_{j}}^{S}$ for some $1\leq j\leq q$;
		\item $x\in X_{i_{1}}^{S}\setminus Y_{i_{1}}^{S}$ and, for all $2\leq j\leq q$, $h^{n_{i_{1}}^{S}+\cdots+n_{i_{j-1}}^{S}}(x)\in X_{i_{j}}^{S}\setminus Y_{i_{j}}^{S}$.
	\end{enumerate}
\end{lemma}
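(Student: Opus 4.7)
The plan is to unfold the $R$-return segment of $x$ into its consecutive $S$-return sub-segments. Since $x\in \lambda_R^{-1}(n_i^R)\subset R\subset S$, and likewise $h^{n_i^R}(x)\in R\subset S$, the orbit piece $x,h(x),\ldots,h^{n_i^R}(x)$ starts and ends in $S$. Enumerate in increasing order the indices $k\in\{1,2,\ldots,n_i^R\}$ for which $h^k(x)\in S$; because $h^{n_i^R}(x)\in S$, this finite set is non-empty, so write it as $0=k_0<k_1<\cdots<k_q=n_i^R$ (formally including $k_0:=0$ for bookkeeping).

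Next, I set $d_j:=k_j-k_{j-1}$ for $1\le j\le q$ and claim that $d_j$ is the first return time of $h^{k_{j-1}}(x)$ to $S$. Indeed, $h^{k_{j-1}}(x)\in S$ (for $j=1$ this is $x\in R\subset S$; for $j\ge 2$ it is the defining property of $k_{j-1}$), and by the very definition of $k_{j-1}$ and $k_j$ as consecutive elements of the enumeration, we have $h^{k_{j-1}+m}(x)\notin S$ for $1\le m<d_j$ while $h^{k_{j-1}+d_j}(x)=h^{k_j}(x)\in S$. Thus $\lambda_S(h^{k_{j-1}}(x))=d_j$, so there exists a (unique) index $i_j\in\{1,\ldots,\ell(S)\}$ with $d_j=n_{i_j}^S$, and by definition of $X^S_{i_j}\setminus Y^S_{i_j}=\lambda_S^{-1}(n_{i_j}^S)$, we obtain $h^{k_{j-1}}(x)\in X^S_{i_j}\setminus Y^S_{i_j}$. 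Note that the $i_j$'s need not be distinct; the statement only requires us to list indices, with repetition allowed.

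With these $i_1,\ldots,i_q$ chosen, each of the three conclusions is a bookkeeping check. Condition (1) is telescoping: $n_{i_1}^S+\cdots+n_{i_q}^S=\sum_{j=1}^q(k_j-k_{j-1})=k_q-k_0=n_i^R$. Condition (2) follows because $\{k\in\{1,\ldots,n_i^R\}:h^k(x)\in S\}=\{k_1,\ldots,k_q\}=\{n_{i_1}^S+\cdots+n_{i_j}^S:1\le j\le q\}$ by the defining enumeration and the telescoping identity. Condition (3) is exactly what was established in the preceding paragraph, since $k_{j-1}=n_{i_1}^S+\cdots+n_{i_{j-1}}^S$ (with the convention that the empty sum is $0$), giving $x=h^{k_0}(x)\in X^S_{i_1}\setminus Y^S_{i_1}$ and $h^{n_{i_1}^S+\cdots+n_{i_{j-1}}^S}(x)=h^{k_{j-1}}(x)\in X^S_{i_j}\setminus Y^S_{i_j}$ for $2\le j\le q$.

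There is no substantive obstacle in this proof; it is essentially the observation that because $R\subset S$, the first-return partition of an $R$-orbit segment refines into consecutive first-return segments for $S$. The only point requiring any care is the base of the enumeration (being explicit that $k_0=0$ corresponds to $x\in S$ and that $k_q=n_i^R$ corresponds to the terminal return to $R\subset S$), so that the telescoping identity in (1) matches the sums appearing in (2) and (3) exactly.
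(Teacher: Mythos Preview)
Your proof is correct and follows essentially the same approach as the paper: both decompose the $R$-return orbit segment of $x$ into consecutive $S$-return sub-segments using that $R\subset S$. The paper builds the indices $i_1,i_2,\ldots$ one at a time by an iterative construction, whereas you enumerate all the $S$-hitting times in $\{1,\ldots,n_i^R\}$ at once and then read off the $i_j$'s; the content is the same.
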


\begin{proof}
	Since $R\subset S$ and the sets $X_{j}^{S}\setminus Y_{j}^{S}$ for $1\leq j\leq  l (S)$ partition $S$, there is a unique $1\leq i_{1}\leq  l (S)$ such that $x\in X_{i_{1}}^{S}\setminus Y_{i_{1}}^{S}$. Moreover, $n_{i_{1}}^{S}=\lambda_{S}(x)\leq \lambda_{R}(x)=n_{i}^{R}$. If $n_{i_{1}}^{S}=n_{i}^{R}$, then there is nothing to show. Otherwise, there is an $i_{2}$ such that $h^{n_{i_{1}}^{S}}(x)\in X_{i_{2}}^{S}\setminus Y_{i_{2}}^{S}$. Note that $n_{i_{2}}^{S}=\lambda_{S}(h^{n_{i_{1}}^{S}}(x))\leq n_{i}^{R}-n_{i_{1}}^{S}$. If $n_{i_{2}}^{S}=n_{i}^{R}-n_{i_{1}}^{S}$, the desired result follows. Otherwise, we let $i_{3}$ be such that $h^{n_{i_{1}}^{S}+n_{i_{2}}^{S}}(x)\in X_{i_{3}}^{S}\setminus Y_{i_{3}}^{S}$ and proceed as before. Eventually, this process terminates (when $n_{i_{1}}^{S}+\cdots+n_{i_{q}}^{S}=n_{i}^{R}$) and yields indices with the desired properties. This proves the lemma. 
\end{proof}

By \cite{LP2}, Proposition 2.4, there is a unique homomorphism $\gamma_{S}\colon A_{S}\to \bigoplus_{i=1}^{ l (S)}C(X_{i}^{S},M_{n_{i}^{S}})$ with the property that for $f\in C(T)$ and $g\in C_{0}(T\setminus S)$, 
\begin{equation}
	\label{function identification}
	\gamma_{S}(f)_{i}=\diag\left(f\circ h|_{X_{i}^{S}},f\circ h^{2}|_{X_{i}^{S}},\ldots,f\circ h^{n_{i}^{S}}|_{X_{i}^{S}}\right)
\end{equation}
and
\begin{equation}
	\label{ufunction identification}
	\begin{aligned}
		\gamma_{S}(ug)_{i}=
		\begin{pmatrix}
			0 & & & & \\
			g\circ h|_{X_{i}^{S}} & 0 & & & \\
			& g\circ h^{2}|_{X_{i}^{S}} & \ddots & &\\
			& & \ddots & 0 &\\
			& & & g\circ h^{n_{i}^{S}-1}|_{X_{i}^{S}} & 0
		\end{pmatrix}	
	\end{aligned}
\end{equation}
for $1\leq i\leq  l (S)$. 

Now, fix $R\subset S\subset T$. By examining the generating sets, it follows that $A_{S}$ is contained in $A_{R}$. Let $\psi\colon A_{S}\to A_{R}$ denote the inclusion map. 

\begin{lemma}
	\label{diagonal inclusion}
	$\psi$ is a diagonal map (see \autoref{diagonal map}) between DSH algebras. 
	\label{putnam diagonal}
\end{lemma}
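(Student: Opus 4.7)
The plan is to read off the diagonal decomposition of $\psi$ directly from Lemma~\ref{returning}, and then verify the required identity by checking it on the generators of $A_S$ using the explicit formulas \eqref{function identification} and \eqref{ufunction identification} for $\gamma_R$ and $\gamma_S$. By the remark after Definition~\ref{diagonal map}, it suffices to treat $x\in X_i^R\setminus Y_i^R$ for each $1\le i\le \ell(R)$.

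First I would fix $1\le i\le\ell(R)$ and $x\in X_i^R\setminus Y_i^R=\lambda_R^{-1}(n_i^R)$ and apply Lemma~\ref{returning} (with the inclusion $R\subset S$) to produce indices $1\le i_1,\ldots,i_q\le\ell(S)$ and the \emph{candidate decomposition points}
$$x_1:=x,\qquad x_j:=h^{\,n_{i_1}^S+\cdots+n_{i_{j-1}}^S}(x)\ \text{for } 2\le j\le q,$$
which satisfy $x_j\in X_{i_j}^S\setminus Y_{i_j}^S$, $\sum_{j=1}^q n_{i_j}^S=n_i^R$, and $h^k(x)\in S$ if and only if $k=n_{i_1}^S+\cdots+n_{i_j}^S$ for some $1\le j\le q$. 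I will then show that for every $a\in A_S$,
\begin{equation}\label{eq:plan-diag}
\gamma_R(\psi(a))_i(x)=\diag\bigl(\gamma_S(a)_{i_1}(x_1),\,\ldots,\,\gamma_S(a)_{i_q}(x_q)\bigr),
\end{equation}
which is exactly the diagonality condition of Definition~\ref{diagonal map}.

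Since the set of $a\in A_S$ for which \eqref{eq:plan-diag} holds is a norm-closed $^*$-subalgebra of $A_S$, it is enough to verify it on the generating set $C(T)\cup\{ug:g\in C_0(T\setminus S)\}$. For $f\in C(T)$, both sides of \eqref{eq:plan-diag} equal $\diag(f(h(x)),f(h^2(x)),\ldots,f(h^{n_i^R}(x)))$: the left-hand side by \eqref{function identification} applied in $A_R$, and the right-hand side by stacking the $q$ blocks given by \eqref{function identification} applied in $A_S$ at each $x_j$, using $f(h^k(x_j))=f(h^{k+n_{i_1}^S+\cdots+n_{i_{j-1}}^S}(x))$ and $\sum_{j=1}^q n_{i_j}^S=n_i^R$. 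For generators of the form $ug$ with $g\in C_0(T\setminus S)$, formula \eqref{ufunction identification} places both sides on the subdiagonal. Within each of the $q$ blocks prescribed by the right-hand side of \eqref{eq:plan-diag}, the subdiagonal entries match the left-hand side after the coordinate shift. The only positions where the two sides could disagree are the \emph{interface positions}
$$\bigl(\,n_{i_1}^S+\cdots+n_{i_j}^S+1,\ n_{i_1}^S+\cdots+n_{i_j}^S\,\bigr),\quad 1\le j\le q-1,$$
where the right-hand side has a $0$ because the block-diagonal structure introduces zeros at block boundaries. At each such position the left-hand side is $g(h^{n_{i_1}^S+\cdots+n_{i_j}^S}(x))$, and this is also zero because property~(2) of Lemma~\ref{returning} guarantees $h^{n_{i_1}^S+\cdots+n_{i_j}^S}(x)\in S$, while $g$ vanishes on $S$. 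Thus \eqref{eq:plan-diag} holds on generators and hence on all of $A_S$.

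The main content of the proof lies precisely in that last observation: the fact that the \emph{missing} subdiagonal entries from the block-diagonal description are exactly the ones where $g$ must vanish. This is the only genuinely non-formal step, and it is exactly why the orbit-breaking construction fits the DSH framework so cleanly; everything else is bookkeeping. Once \eqref{eq:plan-diag} is established, the identification of the $x_j$'s as the points into which $x$ decomposes under $\psi$ (with $x_j$ beginning at index $1+n_{i_1}^S+\cdots+n_{i_{j-1}}^S$ down the diagonal) matches Definition~\ref{diagonal map} verbatim, completing the proof.
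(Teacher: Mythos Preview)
Your proof is correct and follows essentially the same approach as the paper: both fix $x\in X_i^R\setminus Y_i^R$, invoke Lemma~\ref{returning} to produce the candidate points $x_j=h^{\beta_{j-1}}(x)$, verify the diagonal identity on the generators $f\in C(T)$ and $ug$ with $g\in C_0(T\setminus S)$ using \eqref{function identification} and \eqref{ufunction identification}, and single out the key point that the interface subdiagonal entries vanish because $h^{\beta_j}(x)\in S$ and $g|_S\equiv 0$. Your extension to all of $A_S$ via the observation that the set of $a$ satisfying \eqref{eq:plan-diag} is a norm-closed $^*$-subalgebra is slightly cleaner than the paper's explicit limit-of-words argument, but the two are equivalent.
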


\begin{proof}
	Fix $1\leq i\leq  l (R)$ and $x\in X_{i}^{R}\setminus Y_{i}^{R}=\lambda_{R}^{-1}(n_{i}^{R})$. By \autoref{returning}, there are indices $1\leq i_{1},\ldots,i_{q}\leq  l (S)$ such that:
	\begin{enumerate}
		\item $n_{i_{1}}^{S}+\cdots+n_{i_{q}}^{S}=n_{i}^{R}$;
		\item for all $1\leq k\leq n_{i}^{R}$, $h^{k}(x)\in S$ if and only if $k=\beta_{j}$ for some $1\leq j\leq q$, where $\beta_{j}:=n_{i_{1}}^{S}+\cdots+n_{i_{j}}^{S}$;
		\item for all $1\leq j\leq q$, $h^{\beta_{j-1}}(x)\in X_{i_{j}}^{S}\setminus Y_{i_{j}}^{S}$ (here $\beta_{0}:=0$, so that $x\in X_{i_{1}}^{S}\setminus Y_{i_{1}}^{S}$).
	\end{enumerate}	
	Let us show that $x$ decomposes into $h^{\beta_{0}}(x)=x,h^{\beta_{1}}(x),\ldots,h^{\beta_{q-1}}(x)$ under $\psi$. 
	
	Suppose that $f\in C(T)$. Let us begin by verifying that
	\begin{equation}
		\label{diagonal f}
		\begin{aligned}
			\gamma_{R}(\psi(f))_{i}(x)=\diag\left(\gamma_{S}(f)_{i_{1}}(x),\gamma_{S}(f)_{i_{2}}(h^{\beta_{1}}(x)),\ldots,\gamma_{S}(f)_{i_{q}}(h^{\beta_{q-1}}(x))\right).
		\end{aligned}
	\end{equation}
	Fix $1\leq j\leq q$. By \autoref{function identification},  
	$$
	\begin{aligned}
	\gamma_{S}(f)_{i_{j}}(h^{\beta_{j-1}}(x))&=\diag\left(f(h(h^{\beta_{j-1}}(x))),\ldots,f(h^{n_{i_{j}}^{S}}(h^{\beta_{j-1}}(x)))\right)\\
	&=\diag\left(f(h^{\beta_{j-1}+1}(x)),\ldots,f(h^{\beta_{j}}(x))\right).
	\end{aligned}
	$$
	Hence, 
	$$
	\begin{aligned}
	&\diag\left(\gamma_{S}(f)_{i_{1}}(x),\gamma_{S}(f)_{i_{2}}(h^{\beta_{1}}(x)),\ldots,\gamma_{S}(f)_{i_{q}}(h^{\beta_{q-1}}(x))\right)\\
	=&\diag\left(f(h^{\beta_{0}+1}(x)),\ldots,f(h^{\beta_{1}}(x)),\ldots\ldots,f(h^{\beta_{q-1}+1}(x)),\ldots,f(h^{\beta_{q}}(x))\right)\\
	=&\diag\left(f(h(x)),\ldots,f(h^{n_{i}^{R}}(x))\right)\\
	=&\gamma_{R}(f)_{i}(x),
	\end{aligned}
	$$
	which yields \autoref{diagonal f}.
	
	Next, suppose $g\in C_{0}(T\setminus S)$. Let us show that 
	\begin{equation}
		\label{diagonal ug}
		\begin{aligned}
			\gamma_{R}(\psi(ug))_{i}(x)=\diag\left(\gamma_{S}(ug)_{i_{1}}(x),\gamma_{S}(ug)_{i_{2}}(h^{\beta_{1}}(x)),\ldots,\gamma_{S}(ug)_{i_{q}}(h^{\beta_{q-1}}(x))\right).
		\end{aligned}
	\end{equation}
	By \autoref{ufunction identification}, 
	$$
	\begin{aligned}
	\gamma_{R}(ug)_{i}(x)=
	\begin{pmatrix}
	0 & & & & \\
	g(h(x)) & 0 & & & \\
	& g(h^{2}(x)) & \ddots & &\\
	& & \ddots & 0 &\\
	& & & g(h^{n_{i}^{R}-1}(x)) & 0
	\end{pmatrix}	
	\end{aligned}.
	$$
	For $1\leq j\leq q$, we have $h^{\beta_{j}}(x)\in S$ by property (2) above, so that $g(h^{\beta_{j}}(x))=0$. Hence, partitioning $\{1,2,\ldots,n_{i}^{R}\}$ into the sets $\{\beta_{j-1}+1,\ldots,\beta_{j}\}$ for $1\leq j\leq q$, we may view $\gamma_{R}(ug)_{i}(x)$ as a block-diagonal matrix $\diag(B_{1},\ldots,B_{q})$, where
	$$
	\begin{aligned}
	B_{j}&:=\begin{pmatrix}
	0 & & & & \\
	g(h^{\beta_{j-1}+1}(x)) & 0 & & & \\
	& g(h^{\beta_{j-1}+2}(x)) & \ddots & &\\
	& & \ddots & 0 &\\
	& & & g(h^{\beta_{j}-1}(x)) & 0
	\end{pmatrix}
	%&=\begin{pmatrix}
	%	0 & & & & \\
	%	g(h(h^{\beta_{j-1}}(x))) & 0 & & & \\
	%& g(h^{2}(h^{\beta_{j-1}}(x))) & \ddots & &\\
	%& & \ddots & 0 &\\
	%	& & & g(h^{n_{i_{j}}^{S}-1}(h^{\beta_{j-1}}(x))) & 0
	%	\end{pmatrix}\\
	=\gamma_{S}(ug)_{i_{j}}(h^{\beta_{j-1}}(x)),
	\end{aligned}
	$$
	which yields \autoref{diagonal ug}.
	
	We have shown that $x$ decomposes into $h^{\beta_{0}}(x)=x,h^{\beta_{1}}(x),\ldots,h^{\beta_{q-1}}(x)$ under $\psi$ on the generators of $A_{S}$. Let us now use continuity to prove that this decomposition is maintained for all elements of $A_{S}$. Let $a\in A_{S}$ be arbitrary. By definition, we may write $a=\lim_{n\to\infty}w_{n}$, where for each $n\in\NN$, $w_{n}$ is a word in $C(T)\cup uC_{0}(T\setminus S)\cup C_{0}(T\setminus S)u^{*}$. By \autoref{diagonal f} and \autoref{diagonal ug}, 
	$$
	\begin{aligned}
	\gamma_{R}(\psi(w_{n}))_{i}(x)
	=\diag\left(\gamma_{S}(w_{n})_{i_{1}}(x),\gamma_{S}(w_{n})_{i_{2}}(h^{\beta_{1}}(x)),\ldots,\gamma_{S}(w_{n})_{i_{q}}(h^{\beta_{q-1}}(x))\right)
	\end{aligned}
	$$
	for all $n\in\NN$. Hence, by continuity of $^*$-homomorphisms,
	$$
	\begin{aligned}
	\gamma_{R}(\psi(a))_{i}(x)
	&=\lim_{n\to\infty}\gamma_{R}(\psi(w_{n}))_{i}(x)\\
	&=\lim_{n\to\infty}\diag\left(\gamma_{S}(w_{n})_{i_{1}}(x),\gamma_{S}(w_{n})_{i_{2}}(h^{\beta_{1}}(x)),\ldots,\gamma_{S}(w_{n})_{i_{q}}(h^{\beta_{q-1}}(x))\right)\\
	&=\diag\left(\gamma_{S}(a)_{i_{1}}(x),\gamma_{S}(a)_{i_{2}}(h^{\beta_{1}}(x)),\ldots,\gamma_{S}(a)_{i_{q}}(h^{\beta_{q-1}}(x))\right),
	\end{aligned}
	$$
	which yields the desired diagonal decomposition and completes the proof of \autoref{diagonal inclusion}.
\end{proof}

\begin{theorem}
	\label{Ax}
	Let $T$ be an infinite compact metric space and let $h\colon T\to T$ be a minimal homeomorphism. Given a non-isolated point $x\in T$, the orbit-breaking subalgebra $A_{\{x\}}$ of $A:=\mathrm{C^*}(\mathbb{Z},T,h)$ is a simple inductive limit of DSH algebras with diagonal maps. In particular, $A_{\{x\}}$ has stable rank one.
\end{theorem}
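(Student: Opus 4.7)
The plan is to realise $A_{\{x\}}$ as the inductive limit of a sequence of orbit-breaking subalgebras $A_{Y_n}$, each of which is a DSH algebra by the discussion preceding \autoref{returning}, with diagonal bonding maps furnished by \autoref{putnam diagonal}. Stable rank one will then follow immediately from \autoref{main}. Concretely, since $x$ is non-isolated and $T$ is metric, I would pick a strictly decreasing sequence $Y_1\supsetneq Y_2\supsetneq\cdots$ of closed balls about $x$ of radii $r_n\searrow 0$. Each $Y_n$ is a closed set with non-empty interior (it contains an open ball about the non-isolated point $x$), so each $A_{Y_n}$ is a DSH algebra in the sense of the setup of this subsection. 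Because $Y_{n+1}\subset Y_n$, there is a natural inclusion $A_{Y_n}\hookrightarrow A_{Y_{n+1}}$, and \autoref{putnam diagonal} applied with $R=Y_{n+1}$ and $S=Y_n$ ensures that this inclusion is a diagonal map between DSH algebras.

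Next, I would identify $\overline{\bigcup_n A_{Y_n}}$ with $A_{\{x\}}$. The inclusion $\bigcup_n A_{Y_n}\subset A_{\{x\}}$ is immediate from $\{x\}\subset Y_n$, which gives $C_0(T\setminus Y_n)\subset C_0(T\setminus\{x\})$. For the reverse inclusion, $A_{\{x\}}$ is generated by $C(T)\cup\{ug:g\in C_0(T\setminus\{x\})\}$, and $C(T)$ sits inside every $A_{Y_n}$, so it is enough to approximate each $ug$. Extend $g$ to a continuous function on $T$ with $g(x)=0$, and use Urysohn's lemma to pick bump functions $\phi_n\in C(T,[0,1])$ with $\phi_n|_{Y_n}\equiv 0$ and $\phi_n\equiv 1$ outside $Y_{n-1}$. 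Then $\phi_n g\in C_0(T\setminus Y_n)$, so $u\phi_n g\in A_{Y_n}$. Since $g$ is continuous with $g(x)=0$ and $Y_n\searrow\{x\}$, a short uniform-continuity argument gives $\phi_n g\to g$ uniformly, whence $u\phi_n g\to ug$ in norm. Simplicity of $A_{\{x\}}$ under minimality of $h$ is a known result of Q.\ Lin and Phillips (see \cite{LP}, \cite{LP2}). Therefore $A_{\{x\}}$ is a simple unital inductive limit of DSH algebras with diagonal bonding maps, and \autoref{main} yields stable rank one.

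The principal obstacle has already been cleared in \autoref{putnam diagonal}, namely the verification that the algebraic inclusions $A_{Y_n}\hookrightarrow A_{Y_{n+1}}$ respect the DSH diagonal structure; the combinatorics of first-return times encoded in \autoref{returning} is the technical heart of that argument. Beyond this, what remains is bookkeeping: selecting $Y_n$ with non-empty interior (trivial because $x$ is non-isolated), checking density of $\bigcup_n A_{Y_n}$ in $A_{\{x\}}$ by approximating the $ug$ generators via the bump functions $\phi_n$ above, and citing the Lin--Phillips simplicity theorem.
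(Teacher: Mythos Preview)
Your proposal is correct and follows essentially the same route as the paper: choose a decreasing sequence of closed sets with non-empty interior shrinking to $\{x\}$, invoke \autoref{putnam diagonal} for diagonality of the inclusions, cite Lin--Phillips for simplicity of $A_{\{x\}}$, and apply \autoref{main}. You supply more detail on the density $\overline{\bigcup_n A_{Y_n}}=A_{\{x\}}$ than the paper does (it simply asserts this), and your bump-function argument is fine; note only that in the paper's convention $C_0(T\setminus\{x\})$ already consists of functions in $C(T)$ vanishing at $x$, so no extension of $g$ is needed.
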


\begin{proof}
	Choose a sequence $S_{1}\supset S_{2}\supset\cdots$ of closed sets with non-empty interior such that $\bigcap_{n=1}^{\infty}S_{n}=\{x\}$. For each $n\in\NN$, let $A_{S_{n}}\subset A$ denote the subalgebra as described above and let $\psi_{n}\colon A_{S_{n}}\to A_{S_{n+1}}$ denote the canonical inclusion. Since $\overline{\bigcup_{n=1}^{\infty}A_{S_{n}}}=A_{\{x\}}$, it follows by \autoref{putnam diagonal} that $A_{\{x\}}$ is an inductive limit of DSH algebras with diagonal maps. Moreover, by Theorem 1.2 of \cite{LP},  $A_{\{x\}}$ is simple (see Proposition 2.5 of \cite{xinP} for a proof). Therefore, by \autoref{main}, $A_{\{x\}}$ has stable rank one.
\end{proof}

\begin{corollary}[cf. \cite{arcphil}, Conjecture 7.2]
	\label{main2}
	Let $T$ be an infinite compact metric space and let $h\colon T\to T$ be a minimal homeomorphism. The dynamical crossed product $A:=\mathrm{C^*}(\mathbb{Z},T,h)$ has stable rank one. 
\end{corollary}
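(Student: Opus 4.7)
The plan is to reduce stable rank one for the full crossed product $A$ to stable rank one for an orbit-breaking subalgebra, and then invoke \autoref{Ax}. First, I would observe that since $T$ is infinite, compact, and $h$ acts minimally, the space $T$ has no isolated points: if some $p \in T$ were isolated, then because $h$ is a homeomorphism, every point in the orbit $\{h^n(p):n\in\mathbb{Z}\}$ would be isolated as well; minimality would force this orbit to be dense, hence all of $T$, which would make $T$ a discrete compact set, and therefore finite, contradicting the hypothesis.

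Next, I would fix any point $x \in T$, which is automatically non-isolated by the above, and form the orbit-breaking subalgebra $A_{\{x\}} \subset A$. By \autoref{Ax}, $A_{\{x\}}$ is a simple inductive limit of DSH algebras with diagonal bonding maps, and hence has stable rank one by \autoref{main} (equivalently, by the ``in particular'' assertion of \autoref{Ax}).

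The last step is to transfer stable rank one from $A_{\{x\}}$ up to $A$. This is where I would invoke the machinery of Archey and Phillips from \cite{arcphil}, which establishes that for $x$ non-isolated the orbit-breaking subalgebra $A_{\{x\}}$ is a \emph{centrally large subalgebra} of $A$ (in the sense of Phillips), and that stable rank one is inherited by the ambient algebra from any stably finite centrally large subalgebra. Combining these two results with the stable rank one of $A_{\{x\}}$ obtained above immediately yields that $A$ itself has stable rank one.

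The only conceptual obstacle is really at the level of \autoref{main} and \autoref{Ax}; once those are in hand, the step here is a clean application of the Archey--Phillips ``large subalgebra'' formalism, and no further technical work is required.
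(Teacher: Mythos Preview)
Your proposal is correct and follows essentially the same route as the paper: pick a non-isolated point $x$, invoke \autoref{Ax} to get stable rank one for $A_{\{x\}}$, then use the Archey--Phillips centrally-large-subalgebra machinery (\cite{phil2}, \cite{arcphil}) to transfer stable rank one up to $A$. Your preliminary observation that $T$ has no isolated points is a pleasant strengthening of the paper's ``let $x$ be any non-isolated point,'' but otherwise the arguments coincide.
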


\begin{proof}
	Let $x$ be any non-isolated point in $T$. By \autoref{Ax}, $A_{\{x\}}$ has stable rank one. Since $h$ is minimal and $T$ is infinite, $h^{n}(x)\not=x$ for all $n\in\NN$. Thus, on combining Theorem 7.10 of \cite{phil2} with Theorem 4.6 of \cite{arcphil}, it follows that $A_{\{x\}}$ is a centrally large subalgebra of $A$. But by Theorem 6.3 of \cite{arcphil}, any infinite-dimensional unital simple separable $\mathrm{C}^*$-algebra containing a centrally large subalgebra with stable rank one must itself have stable rank one. 
\end{proof}	

\begin{corollary}
	\label{main3}
	Let $T$ be an infinite compact metric space and let $h\colon T\to T$ be a minimal homeomorphism. The dynamical crossed product $A:=\mathrm{C^*}(\mathbb{Z},T,h)$ is $\mathcal{Z}$-stable if and only if it has strict comparison of positive elements. 
\end{corollary}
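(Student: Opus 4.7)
The plan is to reduce the corollary to an invocation of two established results. First, I would verify that $A$ satisfies the standing hypotheses of the Toms--Winter dichotomy: $A$ is unital because $T$ is compact, separable because $T$ is metrizable, simple by the minimality of $h$ on an infinite space, nuclear because $\mathbb{Z}$ is amenable, and non-elementary. Moreover, the existence of invariant Borel probability measures on $T$ (by Krylov--Bogolyubov) yields tracial states on $A$, so $A$ is stably finite. These properties are the input needed by both directions of the argument.

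The forward implication, that $\mathcal{Z}$-stability implies strict comparison of positive elements, is immediate from R\o rdam's Theorem 4.5 and Corollary 4.6 of \cite{Ror2}, which provide strict comparison for every simple, unital, stably finite, $\mathcal{Z}$-stable $\mathrm{C}^{*}$-algebra. No new ingredient from this paper is required here.

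The converse is where the contributions of this paper come into play. Assuming $A$ has strict comparison of positive elements, the plan is to combine \autoref{main2} of this paper, which asserts that $A$ has stable rank one, with Thiel's theorem from \cite{thi}. Thiel's result states, roughly, that for a simple, separable, unital, nuclear $\mathrm{C}^{*}$-algebra, stable rank one together with strict comparison of positive elements is sufficient for $\mathcal{Z}$-stability. Applying it directly to $A$ yields the desired implication.

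The substantive work for this corollary has already been carried out in the proof of \autoref{main2}; once stable rank one is in hand, the result reduces to an invocation of known theorems. The only remaining check, which I do not expect to be a serious obstacle, is to confirm that $A$ meets the exact hypotheses demanded by Thiel's theorem, and this is transparent from the structural properties of $A$ recorded in the opening paragraph above.
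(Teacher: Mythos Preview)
Your argument is correct, but it takes a more direct route than the paper's own proof. The paper does not apply Thiel's theorem to $A$ itself; instead it applies Theorem~9.6 of \cite{thi} to the orbit-breaking subalgebra $A_{\{x\}}$ (whose stable rank one comes from \autoref{Ax}), obtaining the equivalence of $\mathcal{Z}$-stability and strict comparison for $A_{\{x\}}$, and then invokes two separate transfer results---Theorem~3.3 and Corollary~3.5 of \cite{ABP} for $\mathcal{Z}$-stability, and Theorem~6.14 of \cite{phil2} for strict comparison---to pass the equivalence from $A_{\{x\}}$ up to $A$. Your approach bypasses these large-subalgebra transfer arguments entirely by feeding $A$ directly into Thiel's theorem, which is legitimate because \autoref{main2} already gives stable rank one for $A$ itself and $A$ plainly satisfies the remaining hypotheses (separable, simple, unital, nuclear, non-elementary). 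Your route is shorter and uses strictly fewer external ingredients; the paper's route keeps the orbit-breaking subalgebra in the foreground and makes explicit that the Toms--Winter conjecture also holds for $A_{\{x\}}$, which may be of independent interest.
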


\begin{proof}
	Let $x$ be any non-isolated point in $T$. By \autoref{Ax}, $A_{\{x\}}$ has stable rank one. Thus, by Theorem 9.6 of \cite{thi}, the Toms--Winter Conjecture (\autoref{Toms-Winter}) holds for $A_{\{x\}}$. In particular, $A_{\{x\}}$ is $\mathcal{Z}$-stable if and only if it has strict comparison of positive elements. But by Theorem 3.3 and Corollary 3.5 of \cite{ABP}, $A$ is $\mathcal{Z}$-stable if and only if $A_{\{x\}}$ is. Furthermore, by Theorem 6.14 of \cite{phil2}, $A$ has strict comparison if and only if $A_{\{x\}}$ does. Therefore, \autoref{main3} follows. 
\end{proof}

\begin{remark}
	\label{referee}
	Using the same ideas as those in the proof of \autoref{Ax}, one can show that the orbit-breaking simple subalgebras constructed by Deeley, Putnam, and Strung in \cite{DPS} also have stable rank one, despite possibly not being $\mathcal{Z}$-stable. Although, the closed subset of the underlying infinite compact metric space used in their construction need not be a singleton set, it still has the property that it meets every orbit exactly once, and thus, is a simple inductive limit of DSH algebras with diagonal maps.
\end{remark}

\subsection*{Acknowledgement}
The authors would like to thank Zhuang Niu, Chris Phillips, and Andrew Toms for a lot of helpful encouragement, as well as George Elliott and Maria Grazia Viola for all of their support and for proofreading an earlier draft of this paper. We are also very grateful to the referee for providing a lot of prudent comments concerning the structure of this paper and for bringing the observation in \autoref{referee} to our attention.


\begin{thebibliography}{99} 
\bibitem{alb} M. Alboiu. \textit{The stable rank of diagonal ASH algebras}. Ph.D. Thesis, University of Toronto, 2021

\bibitem{ABP} D. Archey, J. Buck, N. C. Phillips. \textit{Centrally large subalgebras and tracial $\mathcal{Z}$-absorption}. Int. Math. Res. Not., 2018, No. 6, 1857--1877. URL: \href{https://doi.org/10.1093\%2Fimrn\%2Frnw292}{https://doi.org/10.1093\%2Fimrn\%2Frnw292}
	
\bibitem{arcphil} D. Archey, N. C. Phillips. \textit{Permanence of stable rank one for centrally large subalgebras and crossed products by minimal homeomorphisms}. J. Operator Theory, 2020, Vol. 83, No. 2., 353--389.% URL: \href{http://dx.doi.org/10.7900/jot.2018oct10.2236}{http://dx.doi.org/10.7900/jot.2018oct10.2236}
	
\bibitem{4Rom} M. D\u{a}d\u{a}rlat, G. Nagy, A. N\'{e}methi, C. Pasnicu. \textit{Reduction of topological stable rank in inductive limits of $\mathrm{C}^*$-algebras}. Pacific J. Math., 1992, Vol. 153, No. 2, 267--276. URL: \href{https://doi.org/10.2140\%2Fpjm.1992.153.267}{https://doi.org/10.2140\%2Fpjm.1992.153.267}

\bibitem{CETWW} J. Castillejos, S. Evington, A. Tikuisis, S. White, W. Winter. \textit{Nuclear dimension of simple $\mathrm{C}^{*}$-algebras}. Invent. Math., 2021, Vol. 224, No.1, 245--290. URL: \href{https://doi.org/10.1007/s00222-020-01013-1}{https://doi.org/10.1007/s00222-020-01013-1}

\bibitem{DPS} R. J. Deeley, I. F. Putnam, K. R. Strung. \textit{Classifiable $\mathrm{C}^{*}$-algebras from minimal $\mathbb{Z}$-actions and their orbit-breaking subalgebras}. 2020. URL: \href{https://arxiv.org/pdf/2012.10947.pdf}{https://arxiv.org/pdf/2012.10947.pdf}, \href{https://arxiv.org/abs/2012.10947}{arxiv:2012.10947}	

\bibitem{dix}J. Dixmier. \textit{$\mathrm{C}^*$-algebras}. North-Holland Publishing Co., Amsterdam-New York-Oxford, 1977

\bibitem{EHT} G. A. Elliott, T. M. Ho, A. Toms. \textit{A class of simple $\mathrm{C}^*$-algebras with stable rank one}. J. Funct. Anal., 2009, Vol. 256, No. 2, 307--322. URL: \href{https://doi.org/10.1016/j.jfa.2008.08.001}{https://doi.org/10.1016/j.jfa.2008.08.001}

\bibitem{ET} G. A. Elliott, A. S. Toms. \textit{Regularity properties in the classification program for separable amenable $\mathrm{C}^{*}$-algebras}. Bull. Amer. Math. Soc., 2008, Vol. 45, No. 2, 229--245. URL: \href{https://doi.org/10.1090\%2Fs0273-0979-08-01199-3}{https://doi.org/10.1090\%2Fs0273-0979-08-01199-3}

\bibitem{eng} R. Engelking. \textit{General topology: revised and completed edition}. Heldermann Verlag, 1989

\bibitem{Good} K. Goodearl. \textit{Notes on a class of simple $\mathrm{C^*}$-algebras with real rank zero}. Publ. Mat., 1992, Vol. 36, No. 2A, 637--654. URL: \href{https://doi.org/10.5565\%2Fpublmat_362a92_23}{https://doi.org/10.5565\%2Fpublmat\_362a92\_23}

\bibitem{Ho} T. M. Ho. \textit{On inductive limits of homogeneous $\mathrm{C}^*$-algebras with diagonal maps between the building blocks}. Ph.D. Thesis, University of Toronto, 2006

\bibitem{Lin} Q. Lin. \textit{Analytic structure of the transformation group $\mathrm{C}^*$-algebra associated with minimal dynamical systems}. Preprint

\bibitem{LP} Q. Lin, N. C. Phillips. \textit{Ordered K-Theory for $\mathrm{C}^*$-algebras of minimal homeomorphisms}. Contemp. Math., 1998, Vol. 228, 289--314. URL: \href{https://doi.org/10.1090\%2Fconm\%2F228\%2F03293}{https://doi.org/10.1090\%2Fconm\%2F228\%2F03293}

\bibitem{LP2} Q. Lin, N. C. Phillips. \textit{Direct limit decomposition for $\mathrm{C}^*$-algebras of minimal diffeomorphisms}. In Operator Algebras and Applications, Vol. 38 of Adv. Stud. Pure Math., pages 107--133. Math. Soc. Japan, Tokyo, 2004. URL: \href{https://doi.org/10.2969\%2Faspm\%2F03810107}{https://doi.org/10.2969\%2Faspm\%2F03810107}

\bibitem{xinP} H. Lin, N. C. Phillips. \textit{Crossed products by minimal homeomorphisms}. J. Reine Angew. Math., 2010, No. 641., 95--122. URL: \href{https://doi.org/10.1515\%2Fcrelle.2010.029}{https://doi.org/10.1515\%2Fcrelle.2010.029}

\bibitem{phil} N. C. Phillips. \textit{Recursive subhomogeneous algebras}. Trans. Amer. Math. Soc., 2007, Vol. 359, 4595--4623. URL: \href{https://doi.org/10.1090\%2Fs0002-9947-07-03850-0}{https://doi.org/10.1090\%2Fs0002-9947-07-03850-0}

\bibitem{phil2} N. C. Phillips. \textit{Large subalgebras}. 2014. URL: \href{https://arxiv.org/pdf/1408.5546.pdf}{https://arxiv.org/pdf/1408.5546.pdf}, \href{https://arxiv.org/abs/1408.5546}{arxiv:1408.5546}	

\bibitem{Put} I. F. Putnam. \textit{The $\mathrm{C}^*$-algebras associated with minimal homeomorphisms of the Cantor set}. Pacific J. Math., 1989, Vol. 136, No. 2, 329--353. URL: \href{https://doi.org/10.2140\%2Fpjm.1989.136.329}{https://doi.org/10.2140\%2Fpjm.1989.136.329}

\bibitem{rie} M. A. Rieffel. \textit{Dimension and stable rank in the $\mathrm{K}$-theory of $\mathrm{C}^{*}$-algebras}. Proc. Lon. Math. Soc., 1983, Vol. s3-46, No. 2, 301--333. URL: \href{https://doi.org/10.1112\%2Fplms\%2Fs3-46.2.301}{https://doi.org/10.1112\%2Fplms\%2Fs3-46.2.301}

\bibitem{Ror} M. R\o rdam. \textit{On the structure of simple $\mathrm{C}^*$-algebras tensored with a UHF-algebra}. J. Funct. Anal., 1991, Vol. 100, No. 1, 1--17. URL: \href{https://doi.org/10.1016\%2F0022-1236\%2891\%2990098-p}{https://doi.org/10.1016\%2F0022-1236\%2891\%2990098-p}

\bibitem{Ror2} M. R\o rdam. \textit{The stable and real rank of $\mathcal{Z}$-absorbing $\mathrm{C}^{*}$-algebras}. Int, J. Math., 2004, Vol. 15, No. 10, 1065--1084. URL: \href{https://doi.org/10.1142\%2Fs0129167x04002661}{https://doi.org/10.1142\%2Fs0129167x04002661}

\bibitem{thi} H. Thiel. \textit{Ranks of operators in simple $\mathrm{C}^{*}$-algebras with stable rank one}. Comm. Math. Phys., 2019, Vol. 377, No. 1, 37--76. \href{https://doi.org/10.1007\%2Fs00220-019-03491-8}{https://doi.org/10.1007\%2Fs00220-019-03491-8}

\bibitem{vil1} J. Villadsen. \textit{Simple $\mathrm{C^*}$-algebras with perforation}. J. Funct. Anal., 1998, Vol. 154, No. 1, 110--116. URL: \href{https://doi.org/10.1006\%2Fjfan.1997.3168}{https://doi.org/10.1006\%2Fjfan.1997.3168} 

\bibitem{vil2}
J. Villadsen. \textit{On the stable rank of simple $\mathrm{C^*}$-algebras}. J. Amer. Math. Soc., 1999, Vol. 12, No. 4, 1091--1102. URL: \href{https://doi.org/10.1090/S0894-0347-99-00314-8}{https://doi.org/10.1090/S0894-0347-99-00314-8}

\bibitem{Win}
W. Winter. \textit{Nuclear dimension and $\mathcal{Z}$-stability of pure $\mathrm{C}^{*}$-algebras}. Invent. Math., 2011, Vol. 187, No.2, 259--342. URL: \href{https://doi.org/10.1007\%2Fs00222-011-0334-7}{https://doi.org/10.1007\%2Fs00222-011-0334-7}

\bibitem{WZ}
W. Winter, J. Zacharias. \textit{The nuclear dimension of $\mathrm{C}^{*}$-algebras}. Adv. Math., 2010, Vol. 224. No. 2, 461--498. URL: \href{https://doi.org/10.1016/j.aim.2009.12.005}{https://doi.org/10.1016/j.aim.2009.12.005}
\end{thebibliography}
\end{document}